\documentclass[10pt,letterpaper]{amsart}
\usepackage{babel}
\usepackage{amsmath,amsthm,bm}
\usepackage{amsbsy}
\usepackage{amstext}
\usepackage{amsfonts}
\usepackage{floatflt}
\usepackage[pdftex]{graphicx}
\usepackage{mathcomp}
\usepackage{mathtools}
\usepackage[dvipsnames]{xcolor}
\usepackage{tikz,pgfplots}
\usepackage{dsfont}
\usepackage{latexsym}
\usepackage{amssymb}
\usepackage{stmaryrd}
\usepackage{bm}
\usepackage{enumerate}
\usepackage{esint}	
\usepackage[colorlinks,pdfpagelabels,pdfstartview = FitH,bookmarksopen = true,bookmarksnumbered = true,linkcolor = blue,plainpages = false,hypertexnames = false,citecolor = red]{hyperref}
\usepackage{cite}

\allowdisplaybreaks
\makeatletter
\newsavebox{\@brx}
\newcommand{\llangle}[1][]{\savebox{\@brx}{\(\m@th{#1\langle}\)}%
  \mathopen{\copy\@brx\kern-0.5\wd\@brx\usebox{\@brx}}}
\newcommand{\rrangle}[1][]{\savebox{\@brx}{\(\m@th{#1\rangle}\)}%
  \mathclose{\copy\@brx\kern-0.5\wd\@brx\usebox{\@brx}}}
\makeatother

\newtheorem{theorem}{Theorem}
\newtheorem{lemma}[theorem]{Lemma}
\newtheorem{proposition}[theorem]{Proposition}

\theoremstyle{definition}
\newtheorem{definition}[theorem]{Definition}

\newtheorem{remark}[theorem]{Remark}

\numberwithin{theorem}{section}
\numberwithin{equation}{section}

\renewcommand{\div}{\operatorname{div}}

\newcommand{\N}{\ensuremath{\mathbb{N}}}
\newcommand{\R}{\ensuremath{\mathbb{R}}}
\newcommand{\mint}{- \mskip-19,5mu \int}
\newcommand{\tmint}{- \mskip-16,5mu \int}

\newcommand{\spt}{\operatorname{spt}}

\newcommand{\dx}{\mathrm{d}x}
\newcommand{\dy}{\mathrm{d}y}
\newcommand{\dt}{\mathrm{d}t}

\newcommand\sfE{{\boldsymbol{\mathsf E}}}
\newcommand\sfK{{\boldsymbol{\mathsf K}}}

\newcommand\sfI{{\boldsymbol{\mathsf I}}}
\newcommand\sfV{{\boldsymbol{\mathsf V}}}

\newcommand\sfT{{\boldsymbol{\mathsf T}}}
\newcommand\sfD{{\boldsymbol{\mathsf D}}}

\makeatletter
\@namedef{subjclassname@2020}{%
  \textup{2020} Mathematics Subject Classification}
\makeatother

\def\Xint#1{\mathchoice
    {\XXint\displaystyle\textstyle{#1}}%
    {\XXint\textstyle\scriptstyle{#1}}%
    {\XXint\scriptstyle\scriptscriptstyle{#1}}%
    {\XXint\scriptscriptstyle\scriptscriptstyle{#1}}%
    \!\int}
\def\XXint#1#2#3{\setbox0=\hbox{$#1{#2#3}{\int}$}
    \vcenter{\hbox{$#2#3$}}\kern-0.5\wd0}
\def\bint{\Xint-}
\def\dashint{\Xint{\raise4pt\hbox to7pt{\hrulefill}}}

\def\XXiint#1#2#3{\setbox0=\hbox{$#1{#2#3}{\iint}$}
    \vcenter{\hbox{$#2#3$}}\kern-0.5\wd0}

\renewcommand{\epsilon}{\varepsilon}
\newcommand{\eps}{\varepsilon}
\renewcommand{\rho}{\varrho}

\renewcommand{\epsilon}{\varepsilon}
\renewcommand{\rho}{\varrho}
\renewcommand{\d}{\:\! \mathrm{d}}

\DeclareMathOperator{\loc}{loc}
\DeclareMathOperator{\Tail}{Tail}

\numberwithin{equation}{section}
\allowdisplaybreaks

\subjclass[2020]{}
\keywords{}

\begin{document}
\renewcommand{\refname}{References} 
\renewcommand{\abstractname}{Abstract} 
\title[Sharp gradient integrability for $(s,p)$-Poisson type equations]{Sharp gradient integrability for\\ $(s,p)$-Poisson type equations}

\author[V. B\"ogelein]{Verena B\"{o}gelein}
\address{Verena B\"ogelein\\
Fachbereich Mathematik, Universit\"at Salzburg\\
Hellbrunner Str. 34, 5020 Salzburg, Austria}
\email{verena.boegelein@plus.ac.at}

\author[F. Duzaar]{Frank Duzaar}
\address{Frank Duzaar\\
Fachbereich Mathematik, Universit\"at Salzburg\\
Hellbrunner Str. 34, 5020 Salzburg, Austria}
\email{frankjohannes.duzaar@plus.ac.at}

\author[N. Liao]{Naian Liao}
\address{Naian Liao\\
Fachbereich Mathematik, Universit\"at Salzburg\\
Hellbrunner Str. 34, 5020 Salzburg, Austria}
\email{naian.liao@plus.ac.at}

\author[K.~Moring]{Kristian Moring}
\address{Kristian Moring\\
Fachbereich Mathematik, Universität Salzburg\\
Hellbrunner Str.~34, 5020 Salzburg, Austria}
\email{kristian.moring@plus.ac.at}

\subjclass[2020]{35B65, 35J60, 35R09, 47G20}
\keywords{Fractional $p$-Poisson equation, sharp gradient estimates, Calder\'on \& Zygmund  theory}
\date{\today}

\begin{abstract}
We prove local $W^{1,q}$-regularity for weak solutions to fractional $p$-Laplacian type equations with right-hand side $f\in L^r_{\loc}(\Omega)$.
Assuming $p>1$, $s\in(0,1)$, and $sp'>1$, solutions belong to $W^{1,q}_{\loc}(\Omega)$ for the optimal exponent $q=q(n,p,s,r)$.
We obtain quantitative local gradient estimates involving nonlocal tail terms.
The optimality of $q$ is confirmed by a counterexample.
\end{abstract}

\makeatother

\maketitle

\tableofcontents

\section{Introduction}
The celebrated works \cite{CZ-1,CZ-2} of Calder\'on and Zygmund on singular integrals in the 1950s lie at the heart of modern harmonic analysis and pave the way for the analysis of the optimal integrability and differentiability properties of solutions to equations with linear differential operators. To make things concrete, suppose $u$ solves the Poisson equation $-\Delta u=f$ in a bounded open set $\Omega \subset \R^n$, then the theory of Calder\'on and Zygmund yields that
\[
f\in L^r_{\loc}(\Omega)\quad\overset{\text{Calder\'on-Zygmund}}{\implies}\quad |D^2u| \in L^r_{\loc}(\Omega) \quad\overset{\text{Sobolev}}{\implies}\quad\nabla u\in L^{\frac{rn}{n-r}}_{\loc}(\Omega,\R^n)
\]
for any $r\in(1,n)$.
In other words, the integrability of $\nabla u$ can be calibrated by that of $f$ in a sharp way. Such results motivated the study of analogous properties for nonlinear
problems, such as the \(p\)-Poisson equation
\[
-\Delta_p u=-\div\big(|\nabla u|^{p-2}\nabla u\big)=f.
\]
In particular, one has
\[
f\in L^r_{\loc}(\Omega)
\quad\Longrightarrow\quad
\nabla u\in L^{\frac{rn(p-1)}{n-r}}_{\loc}(\Omega)
\]
for every
\[
r\in\bigg(\max\Big\{1,\frac{np}{n(p-1)+p}\Big\},\,n\bigg);
\]
see~\cite{AcMi, CP-98, DiBe-Man, Iwaniecz, Ki-Zh}. In these works, the right-hand side is given in divergence form. 
To obtain the above implication, one first needs to solve
$\operatorname{div} (|F|^{p-2}F )= f$. The absence of representation formulae poses tangible challenges, entailing new perspectives as well as the introduction of novel tools capable of emulating representation formulae. This field, known as {\it nonlinear Calder\'on--Zygmund theory}, has continued to expand its scope over the past four decades.  For more details, see the survey of Mingione \cite{Min-survey}. 

\subsection{Fractional $p$-Poisson type equations}In recent years research into the regularity theory for fractional equations of the $p$-Laplacian type, written as $\left( -\Delta_p\right)^s u=0$, has gained considerable momentum. Such an equation is motivated by considerations from the calculus of variations, similar to the $p$-Laplace operator. The focus of current interest is on gradient regularity; several groups of authors have made significant contributions in this direction. In particular, it has been proven in \cite{Brasco-Lindgren, BDLMS-1,  BDLMS-2, DKLN-higherdiff} under various conditions that weak solutions, which are merely fractionally differentiable, possess gradient, which actually lies in proper Lebesgue spaces. The method of proof is based on finite differences, originally devised in \cite{Brasco-Lindgren, Brasco-Lindgren-Schikorra} for the fractional $p$-Laplacian equation. Subsequently, and more recently, boundedness and continuity of the gradient have been obtained in \cite{BS-25, BT-25, GJS-25} using completely different approaches. Notwithstanding important progress, a theory in the vein of Calder\'on--Zygmund eludes these works.

Here in this work, we aim to pinpoint how the integrability of $f$ determines that of $\nabla u$, thereby establishing a complete analogue of the Calder\'on-Zygmund type estimates for the fractional $p$-Poisson equation.
To be specific, consider the $(s,p)$-Poisson equation with a coefficient, that is,
\begin{equation} \label{eq:frac-p-lap}
\left( - a \Delta_p\right)^s u = f \quad \text{ in } \Omega,
\end{equation}
 where $s\in(0,1)$, $p>1$ and $f$ is given in a proper Lebesgue space whereas 
$$
(- a\Delta_p )^s u(x) := \mathrm{p.v.} \int_{\R^n} a(x,y)\frac{ |u(x) - u(y)|^{p-2} (u(x) - u(y) )}{|x-y|^{n+sp}}  \, \d y.
$$
The coefficient $a(\cdot,\cdot)\colon\R^n\times\R^n\to \R_{\ge0}$ is measurable, symmetric and satisfies
\begin{equation}\label{eq:a-condition}
\left\{
    \begin{array}{cc}
       C_o\le a(x,y)\le C_1 \quad  \mbox{a.e. $(x,y)\in\R^n\times\R^n$} \\[5pt]
       \displaystyle\sup_{x,y\in B_R(x_o)}|a(x,y)-a(x_o,x_o)|\le C_1 R^{\chi}\quad \mbox{for any $R\in(0,R_o]$}
    \end{array}
    \right.
\end{equation}
for any $x_o\in \Omega$ and for some $C_1\ge C_o>0$, $R_o>0$,  and $\chi\in(0,1)$. 

As we explained earlier, the issue is to quantify how the integrability of $f$ determines that of $\nabla u$. Previously in \cite{BDLM} we established, under the restrictions that $a\equiv 1$ and $s\in(\frac{p-1}{p},1)$, the following gradient regularity
\[
f\in L^{r}_{\loc}(\Omega)\quad\implies\quad \nabla u\in L^{r(p-1)}_{\loc}(\Omega,\R^n)
\]
where $r$ is any number greater than $\frac{p}{p-1}$.

The main objective here is to upgrade such gradient regularity and establish the sharp version.
Moreover, non-differentiable coefficients as in \eqref{eq:a-condition} are considered. The precise statement is as follows.

\begin{theorem}\label{Thm:1}
Let $p>1$ and $s\in(0,1)$ with $sp'>1$. Assume that $f\in L^{r}_{\loc}(\Omega)$ for some
\begin{equation}\label{Eq:range-r}
r\in\bigg(\max\bigg\{1,\frac{np}{(p-1)\big[n+p(sp^{\prime}-1)\big]}\bigg\},
\frac{n}{(p-1)(sp^{\prime}-1)}\bigg).
\end{equation}
Then any local weak solution $u$ to \eqref{eq:frac-p-lap} satisfying \eqref{eq:a-condition} in the sense of Definition~\ref{def:weak-sol} belongs to
\[
u\in W^{1,q}_{\loc}(\Omega),
\qquad
q=\frac{rn(p-1)}{\,n-r(p-1)(sp^{\prime}-1)}.
\]
Moreover, there exist a constant $c=c(n,p,s,C_o,C_1,\chi,r)>0$ and a radius $\widetilde R_o=\widetilde R_o(n,p,s,C_o,C_1,\chi, R_o,r)\in (0,R_o]$ such that
\begin{align*}
\bigg[ \bint_{B_{\frac12 R}}& |\nabla u|^{q} \, \d x \bigg]^{\!\frac{1}{q}}\\
&\leq 
c \bigg[\bint_{B_{R}}  |\nabla u|^p  \, \d x\bigg]^{\!\frac{1}{p}}
+ \frac{c}{R}\, \bigg[ \bint_{B_R} \big|R^{sp}f\big|^{r} \, \d x \bigg]^{\!\frac{1}{r(p-1)}}  + \frac{c}{R} \, \mathrm{Tail}\big(u-(u)_R;B_R\big),
\end{align*}
for every ball $B_{4R}\subset\Omega$ with $R\in(0,\widetilde R_o]$.
\end{theorem}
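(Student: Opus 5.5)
The plan is a nonlinear Calderón--Zygmund argument in the spirit of Caffarelli--Peral and Acerbi--Mingione, run at the level of gradient level sets. The expected exponent $q$ is exactly what one gets by pretending that $f$ acts like a divergence-form datum. If $sp'>1$, then $(-\Delta_p)^s$ behaves on $\nabla u$ like an operator of order $sp-(p-1)$ applied to $|\nabla u|^{p-1}$. Equivalently, $|\nabla u|^{p-1}$ gains $sp'-1$ derivatives, scaled by $p-1$, relative to $f$. Sobolev embedding of order $(p-1)(sp'-1)$ from $L^r$ then gives
\[
\frac{p-1}{q}=\frac1r-\frac{(p-1)(sp'-1)}{n}, \qquad\text{that is,}\qquad q=\frac{rn(p-1)}{n-r(p-1)(sp'-1)} .
\]
Accordingly, the natural "maximal function of the datum'' is not $M(|f|^r)$. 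It is a fractional (Riesz-type) maximal function
\[
M_{\beta}(|f|)(x):=\sup_{\rho}\rho^{\beta}\bint_{B_\rho(x)}|f|\,\d y,\qquad \beta=sp-(p-1)\cdot\big(1\big)\ \text{adjusted so that}\ \rho^{-1}(\rho^{sp}|f|)^{1/(p-1)}\ \text{is scale invariant}.
\]
Here I use the quantity $\rho^{-1}\big(\bint_{B_\rho}|\rho^{sp}f|\big)^{1/(p-1)}$, which matches the term in the stated estimate. Its $L^q$-boundedness from $L^r$ is the Hardy--Littlewood--Sobolev inequality for fractional maximal functions. The lower bound on $r$ in \eqref{Eq:range-r} ensures $q>p$ and $f\in (W^{s,p})'$-type duality, while the upper bound ensures $q<\infty$.

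\textbf{Key steps.} First, I establish a comparison estimate on balls $B_{\rho}\subset B_R$ with $\rho$ small. Let $v$ solve $(-a(x_o,x_o)\Delta_p)^s v=0$ in $B_{2\rho}$ with $v=u$ outside $B_{2\rho}$. Testing the difference of the equations with $u-v$ and using the fractional Poincaré/Sobolev embedding yields a bound on $\bint_{B_{2\rho}}|\nabla(u-v)|^{p}$. More precisely, I want a bound in a weaker $L^{\gamma}$-norm, with the standard cases $p\ge2$ and $p<2$ treated separately. The bound should be in terms of $\rho^{\chi}$ times the energy of $u$ plus $\rho^{-1}(\rho^{sp}\bint|f|)^{1/(p-1)}$ to a power. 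Passing from the fractional energy of $u-v$ to its gradient requires the gradient regularity machinery available for $sp'>1$, i.e.\ the earlier $W^{1,p}$ and higher differentiability results together with tail control. Second, I use the known Lipschitz bound for $v$ (the constant-coefficient homogeneous equation has locally bounded gradient by the cited results): $\sup_{B_\rho}|\nabla v|\le c$ times the energy plus tail at scale $2\rho$. Third, I run a Vitali/Calderón--Zygmund covering on the superlevel sets $\{|\nabla u|>\lambda\}$ inside $B_R$. On each exit ball, the comparison gives the good-$\lambda$ type inequality
\[
|\{|\nabla u|>K\lambda\}\cap B_\rho|\le \epsilon\,|B_\rho| + |\{M_\beta^{*} f>\delta\lambda\}\cap B_{2\rho}|.
\]
Summing over the cover and integrating in $\lambda$ against $\lambda^{q-1}$ then yields the $L^q$ bound. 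The smallness of $\rho^{\chi}$ fixes $\widetilde R_o$. Fourth, I reabsorb the $\epsilon$-term by choosing $K$ large and use the fractional maximal inequality to bound the datum term by the $L^r$ norm of $f$. A standard iteration on shrinking radii removes the intermediate balls and gives the estimate with $B_{R/2}$ and $B_R$.

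\textbf{Main obstacle.} The hardest part is the nonlocality inside the covering. The comparison and Lipschitz estimates carry tail terms $\mathrm{Tail}(u-(u)_{2\rho};B_{2\rho})$ at every small scale. These must be bounded by the level $\lambda$, which is not automatic since the tail sees $\nabla u$ far away. I will first try to dominate the tail at scale $\rho$ by a sum over dyadic annuli of averages of $|\nabla u|$, using Poincaré to telescope means. That gives a weighted "nonlocal maximal function'' of $\nabla u$ with summable weights $2^{-k(sp'-1)}$, and this summability is exactly where $sp'>1$ enters. The exit-time choice of balls ensures these averages are $\lesssim\lambda$ at all scales up to $R$. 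The contribution beyond $R$ produces the $\mathrm{Tail}(u-(u)_R;B_R)/R$ term, which I incorporate into the initial threshold $\lambda_0$.
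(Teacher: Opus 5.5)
\textbf{The gap is the gradient-level comparison.} Your whole covering argument depends on a comparison of $\nabla u$ with $\nabla v$ on each exit ball, and you do not supply it.

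Testing the difference of the two equations with $w=u-v$ controls only $[u-v]_{W^{s,p}}$ and, through Poincar\'e/Sobolev, $\|u-v\|_{L^p}$; this is what Lemmas~\ref{lem:fract-level-comparison} and~\ref{lem:compar-var-coeff} give. It never yields a bound on $\bint|\nabla(u-v)|^p$.

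Moreover, the datum enters through $\int f\,w$, which is handled by Sobolev duality and so needs $f\in L^t$ for some $t>1$. The paper takes $t=\tilde\alpha p$ with $\tilde\alpha$ slightly above $\alpha$, precisely to gain the factor $R^{\epsilon}$ in Lemma~\ref{lem:fw}. A bound in terms of $\bint|f|$, as you propose, would be a measure-data comparison at the gradient level for the fractional $p$-Laplacian, and you give no mechanism for it.

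Your fallback, ``the earlier $W^{1,p}$ and higher differentiability results'', assumes what has to be proved. For $r$ between the lower endpoint $r_*$ of~\eqref{Eq:range-r} and $p'$, nothing prior gives even $\nabla u\in L^p_{\loc}$; this was known only for $f\in L^{p'}$. Without it, neither the conclusion nor the right-hand side of the estimate is meaningful.

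The paper fills this in two stages. First, it proves a quantitative $W^{1+\gamma,p}_{\loc}$ bound for $u$ when $f\in L^{\tilde\alpha p}$ (Theorems~\ref{thm:higher-diff} and~\ref{thm:higher-diff-2}), and the analogous bound for the frozen-coefficient replacement (Lemma~\ref{lem:vi-a}). This requires comparing second-order differences $\boldsymbol\tau_h^2u$ with those of $(s,p)$-harmonic replacements on balls of the intermediate radius $|h|^\beta\rho^{1-\beta}$, since the standard scheme loses the increment because of the low integrability of $f$. A finite iteration in the order of differentiability then follows. Second, Proposition~\ref{prop:comparison} interpolates, via Lemma~\ref{lem:GN}, between the small quantity $\|u-v\|_{L^p}$ and these $W^{1+\gamma,p}$ bounds. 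This is what produces the factor $\delta$ and the datum term $\delta^{-1/\gamma}R^{\epsilon}\|f\|^{1/(p-1)}_{L^{\tilde\alpha p}}$.

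Once such a comparison is available, your fractional-maximal-function version of the covering can work, provided you replace $\bint|f|$ by $(\bint|f|^t)^{1/t}$ with $t=\tilde\alpha p\in(\alpha p,r)$. Indeed, $R^{\epsilon}\|f\|^{1/(p-1)}_{L^{t}(B_R)}$ equals, up to a constant times $R^{n/p}$, the quantity $R^{-1}\big[R^{sp}(\bint_{B_R}|f|^t)^{1/t}\big]^{1/(p-1)}$. Also, $f\mapsto\sup_\rho\rho^{(p-1)(sp'-1)}(\bint_{B_\rho}|f|^t)^{1/t}$ maps $L^r$ into $L^{q/(p-1)}$ because $t<r$. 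That would be a legitimate alternative to the paper's treatment of $f$. The paper instead puts $\mathsf M^p|B_\rho|^{\frac pn(sp'-1)}[\bint_{B_\rho}|f|^{\tilde\alpha p}]^{1/(\tilde\alpha(p-1))}$ into the stopping-time functional and then uses the H\"older interpolation~\eqref{crazy-holder} and the level sets of $|f|^\mu$.

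Two smaller points. First, a Lipschitz bound for $v$ is unnecessary, and the paper deliberately avoids it: $\nabla v\in L^{\theta}$ with $\theta=2q$, together with Lemma~\ref{lem:elementary-superlevel}, suffices. Second, freezing $a\equiv a(x_o,x_o)$ on all of $\R^n\times\R^n$ leaves an interaction term over $B_{2\rho}\times(\R^n\setminus B_{2\rho})$ without an automatic factor $\rho^\chi$, because~\eqref{eq:a-condition} gives smallness only on $B\times B$. The paper freezes only there, so its $v$ solves $(-\Delta_p)^sv=g$ with bounded $g$ in a smaller ball.

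Your dyadic telescoping of the tail with weights $2^{-k(sp'-1)}$ does match~\eqref{est:tail-B5}.
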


One quickly realizes that formally letting $s=1$ in this theorem yields the integrability of $\nabla u$ that has been known for the classical $p$-Laplace equation; letting $p=2$ we recover the integrability illustrated in \cite[Eqn.~(1.24)]{DKLN-pot}. Moreover, we have $q\to\infty$ as $r\uparrow \frac{n}{(p-1)(sp^{\prime}-1)}$. As a matter of fact, the integrability of $\nabla u$ in Theorem~\ref{Thm:1} is optimal as shown by the explicit example in \S \ref{sec:optimal}. In other words, Theorem~\ref{Thm:1} gives an optimal quantification of the integrability of $\nabla u$ above the $L^p$-level.

\begin{remark}
As a byproduct of Chapter~4, in particular Theorem~\ref{thm:higher-diff-2}, the \(L^p\)-norm of the gradient
appearing in the theorem can be controlled by the
\(L^p\)-norm of \(u\) together with the relevant tail term. The explicit quantitative estimate is then given by
\begin{align*}
    \bigg[ \bint_{B_{\frac12 R}} & |\nabla u|^{q} \, \d x \bigg]^\frac{1}{q} \\
    &\leq 
    \frac{c}{R} \,\bigg[\bint_{B_{R}}  |u|^p  \, \d x\bigg]^\frac{1}{p} + \frac{c}{R}\, \bigg[ \bint_{B_R} \big|R^{sp}f\big|^{r} \, \d x \bigg]^{\!\frac{1}{r(p-1)}} 
    + \frac{c}{R} \, \mathrm{Tail} \big(u;B_R\big),
\end{align*}
for every ball $B_{4R}\subset\Omega$ with $R\in(0,\widetilde R_o]$.
\end{remark}

An important difference from the classical $p$-Poisson equation is that the concept of ``gradient" is not inherent in the notion of weak solutions for the fractional $p$-Poisson equation. Prior to our work, it has been known from \cite{DKLN-higherdiff} that $f\in L^{\frac{p}{p-1}}_{\loc}(\Omega)$ guarantees $\nabla u\in L^{p}_{\loc}(\Omega,\R^n)$. An essential contribution of our work is that the integrability requirement of $f$ has been significantly reduced. In fact, the required amount of integrability corresponds to the lower bound of $r$ in \eqref{Eq:range-r}. More precisely, we have the following result which is an abridged statement of Theorem~\ref{thm:higher-diff-2}; see Theorem~\ref{thm:higher-diff} for the $(s,p)$-Poisson equation without coefficients.

\begin{theorem}\label{Thm:2}
Let $p>1$ and $s\in(0,1)$ with $sp'>1$. Assume that 
\[
f\in L^{r}_{\loc}(\Omega)
\quad \text{for some} \quad
r> r_*:=\max\bigg\{1,\frac{np}{(p-1)\,[\,n+p(sp'-1)\,]}\bigg\}.
\]
Then any local weak solution to \eqref{eq:frac-p-lap} satisfying \eqref{eq:a-condition} 
in the sense of Definition~\ref{def:weak-sol} belongs to $W^{1+\gamma,p}_{\loc}(\Omega)$ 
for some $\gamma=\gamma(n,p,s,\chi,r)\in(0,1)$.
\end{theorem}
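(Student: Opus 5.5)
The plan is to use the finite-difference technique of Brasco--Lindgren, as in \cite{DKLN-higherdiff, BDLMS-1}, but to handle the right-hand side through a Sobolev-type duality bound, so that only $f\in L^r$ with $r>r_*$ is needed. I would first treat the constant-coefficient case $a\equiv a(x_o,x_o)$, which is Theorem~\ref{thm:higher-diff}. The variable-coefficient statement then follows by a comparison argument.

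\textbf{Difference quotient inequality.} For $|h|$ small, I test the weak formulation with $\varphi=\delta_{-h}\big(\eta^p\,J_{\beta+1}(\delta_h u/|h|^{\vartheta})\big)$, where $J_q(t)=|t|^{q-2}t$ and $\eta$ is a cutoff. For $p\ge2$ the usual algebraic inequalities give a Gagliardo-type seminorm of $\eta\,|\delta_h u|^{(\beta+p-1)/p}$. This produces an improvement of the form
\[
\sup_{|h|<h_o}\Big\|\frac{\delta_h u}{|h|^{\sigma'}}\Big\|_{L^{p}}\lesssim \sup_{|h|<h_o}\Big\|\frac{\delta_h u}{|h|^{\sigma}}\Big\|_{L^{p}}+\text{tail}+\text{$f$-term},
\qquad \sigma'=\sigma+\epsilon_0 .
\]
The case $p<2$ is handled as in \cite{Brasco-Lindgren-Schikorra}. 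The gain $\epsilon_0$ comes from $sp'>1$, exactly as in \cite{DKLN-higherdiff}. Iterating finitely many times, starting from $u\in W^{s,p}$, pushes the differentiability past $1$, so $\nabla u\in L^p$. One more step then gives $\nabla u\in W^{\gamma,p}$.

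\textbf{The $f$-term.} This is the main obstacle. The term to control is
\[
\Big|\int f\,\delta_{-h}\big(\eta^p J(\delta_h u)\big)\Big| .
\]
In \cite{DKLN-higherdiff} it was bounded by Hölder's inequality with $f\in L^{p'}$. Here I would instead move the difference onto $f$ only partially. Write $g=\eta^pJ(\delta_hu)$ and note that $|\int f\,\delta_{-h}g|\le\|f\|_{L^r}\|\delta_{-h}g\|_{L^{r'}}$. I then bound $\|\delta_{-h}g\|_{L^{r'}}\lesssim |h|^{t}[g]_{W^{t,r'}}$ and use the fractional Sobolev embedding to bring $[g]_{W^{t,r'}}$ back to the very quantity appearing on the left, namely a $W^{t,p}$-type seminorm of $\eta|\delta_hu|^{\dots}/|h|^{\dots}$.

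The admissible $r$ comes from the embedding condition. One needs $\tfrac1{r'}\ge\tfrac1p-\tfrac{t}{n}$ with $t$ close to $s$, together with the power of $|h|$ required to gain differentiability at gradient level, which involves the factor $sp'-1$. This is precisely $r>\frac{np}{(p-1)[n+p(sp'-1)]}$. The resulting term is then absorbed via Young's inequality with exponent $p$, using a small parameter. Because of the nonlinearity $J$, this will have to be done carefully through the estimate $|J(a)-J(b)|\lesssim |a-b|(|a|+|b|)^{p-2}$, and the integrability of the powers will have to be tracked. I expect the exponent bookkeeping to be the delicate part.

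\textbf{Coefficients.} For general $a$ satisfying \eqref{eq:a-condition}, on each ball $B_R$ I compare $u$ with the solution $v$ of the frozen problem with coefficient $a(x_o,x_o)$ and data $u$ outside $B_R$. Testing with $u-v$ and using $|a-a(x_o,x_o)|\le C_1R^\chi$ on $B_R$ bounds $[u-v]_{W^{s,p}}$ by $R^{\chi}$ times energy, plus the $f$-contribution. The contribution from outside $B_R$ is controlled through the tail.

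Combining this with the fractional regularity of $v$ from the constant-coefficient step, through a Campanato/Nikolskii-type characterization, shows that
\[
\sup_h \frac{\|\delta_h\nabla u\|_{L^p}}{|h|^{\gamma}}<\infty
\]
for some small $\gamma\le\chi$-dependent exponent. This yields $u\in W^{1+\gamma,p}_{\loc}$ after the standard embedding $N^{1+\gamma,p}\subset W^{1+\gamma',p}$.
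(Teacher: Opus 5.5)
For constant coefficients and $p\ge2$, your direct difference-quotient route is viable and genuinely different from the paper's. The paper never tests the inhomogeneous equation with difference quotients. It compares with $(s,p)$-harmonic replacements on balls of radius $|h|^\beta\rho^{1-\beta}$ and imports the second-difference bounds of \cite{BDLMS-1,BDLMS-2}.

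Your $f$-term bound does work there. Take $\varphi=\delta_{-h}(\eta^p\delta_hu)$ and the embedding $W^{s,p}\hookrightarrow W^{t,r'}$ with $t=s-n(\frac1r-\frac1{p'})$ for $r\le p'$ (not ``$t$ close to $s$''). Then the $f$-term produces second differences of order $s+\frac{t}{p-1}$, and $s+\frac{t}{p-1}>1$ is exactly $r>r_*$.

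The subquadratic case, however, is not covered. \cite{Brasco-Lindgren-Schikorra} treats only $p\ge2$. For $p<2$ the coercive term is merely $\iint(|A|+|B|)^{p-2}|A-B|^2|x-y|^{-n-sp}\,\dx\dy$, with $A=u(x+h)-u(y+h)$ and $B=u(x)-u(y)$. Your embedding argument needs a $W^{s,p}$-seminorm of $\eta\,\delta_hu$, and passing to it by H\"older costs a factor $[u]_{W^{s,p}}^{p(2-p)/2}$ with no smallness in $h$. The $f$-term then gives only order $s+t$. For $p<2$, the condition $s+t>1$ is strictly stronger than $r>r_*$: for $r=p'$ it means $s>\frac12$, while the theorem allows all $s>\frac1{p'}$, and $\frac1{p'}<\frac12$. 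An additional idea is needed, for example interpolating the coercive term against the already gained $W^{\sigma,p}$-regularity of $u$. The paper's counterpart is the choice $\delta=(|h|/\rho)^{(1-\sigma+\epsilon)\beta p(p-1)}$ in Lemma~\ref{lem:fract-level-comparison}\,(ii), together with $[u]_{W^{s,p}(B_R)}\le cR^{\sigma-s}[u]_{W^{\sigma,p}(B_R)}$.

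The more serious gap is the variable-coefficient step, which is the actual content of the statement.

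First, suppose you freeze $a\equiv a(x_o,x_o)$ on all of $\R^n\times\R^n$. Then the comparison picks up an error on $B_R\times(\R^n\setminus B_R)$. Bounding it ``through the tail'' gives something of the same order as the energy, with no factor $R^\chi$; you would have to exploit \eqref{eq:a-condition} on dyadic annuli to get smallness. The paper avoids this by freezing only on $B_R\times B_R$, see \eqref{def:frozen}. It then observes that the frozen solution solves $(-\Delta_p)^sv=g$ in $B_{R/2}$ with an explicit bounded $g$, so the constant-coefficient inhomogeneous result applies to $v$ (Lemma~\ref{lem:vi-a}).

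Second, and decisively, testing with $u-v$ controls $u-v$ only in $W^{s,p}$. You cannot compare $\nabla u$ with $\nabla v$, since $\nabla u$ is not yet known to exist. At the level of second differences, Lemma~\ref{lem:N-FS} gives only $\int|\boldsymbol\tau_h^2(u-v)|^p\,\dx\lesssim|h|^{sp}[u-v]^p_{W^{s,p}}$, that is, order $s<1$. Invoking a ``Campanato/Nikolskii-type characterization'' does not remove this loss. What is needed is the following:
\begin{enumerate}
\item couple the comparison radius to the step, $R=|h|^\beta\rho^{1-\beta}$;
\item cover $B_{\rho/2}$ by such balls with bounded overlap (Lemma~\ref{lem:covering});
\item sum the comparison errors $|h|^{sp}\bigl(R^{\frac{\chi p}{p-1}}[u]^p_{W^{s,p}(8B_i)}+R^{(1-s+\epsilon)p}\|f\|^{p'}_{L^{\tilde\alpha p}(8B_i)}\bigr)$ and the replacement bounds $(|h|/R)^{\theta}(\cdots)$, with tails controlled along dyadic chains (Lemma~\ref{lem:tail-est});
\item balance $\beta$.
\end{enumerate}
The resulting exponent exceeds $p$ only when the energy on small balls decays like $R^{\sigma-s}$ with $\sigma$ close to $1$. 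So a finite bootstrap in $\sigma$, starting from $\sigma=s$, must precede the step beyond first order. This is Steps~1--3 of Theorem~\ref{thm:higher-diff}, repeated in Theorem~\ref{thm:higher-diff-2}. None of this is in your sketch, and it is where the proof actually lives.
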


\begin{remark}
For the parameter $r_\ast$, we note that 
\begin{align*}
    r_\ast 
    =\left\{
    \begin{array}{cl}
    \displaystyle
        \frac{np}{(p-1)[n+p(sp'-1)]}, & \mbox{if $\displaystyle s\in(\tfrac{p-1}{p},s_o]$,}\\[12pt]
        1, &  \mbox{if $\displaystyle s\in(s_o,1)$.}
    \end{array}
    \right.
\end{align*}
where 
\begin{equation*}
    s_o:=\frac{1}{p'}\bigg[1+\frac{n}{p(p-1)}\bigg].
\end{equation*}
In the case $s\in(s_o,1)$, the statement of Theorem~\ref{Thm:2} continues to hold for $f\in L^{1}_{\loc}(\Omega)$. 
\end{remark}

As the reader may have noted, the target Lebesgue space of $\nabla u$ in our theorems aims at the $L^p$-scale or higher, and the threshold integrability of $f$ is provided by Theorem~\ref{Thm:2} for this purpose. 
This is precisely in line with the philosophy of nonlinear Calder\'on–Zygmund theory.

\subsection{Optimality}\label{sec:optimal}
Let $sp<n$ and 
\[
u(x)=\frac{1}{|x|^{\gamma}},
\qquad\gamma=\frac{n}{r(p-1)}-sp',
\]
where
\begin{equation*}
r\in\bigg(\max\bigg\{1,\frac{np}{(p-1)\big[n+p(sp^{\prime}-1)\big]}\bigg\},
\frac{n}{(p-1)(sp^{\prime}-1)}\bigg).
\end{equation*}
A direct computation shows that
\[
(-\Delta_p)^su=f
\quad \text{pointwise in } \mathbb{R}^n\setminus\{0\},
\]
where
\[
f(x)=\frac{C}{|x|^{\frac{n}{r}}},
\qquad
C={\rm p.v.}\int_{\mathbb{R}^n}
\frac{|1-|y|^{-\gamma}|^{p-2}(1-|y|^{-\gamma})}{|e-y|^{n+sp}}
\, \d y
\]
and $e$ denotes a unit vector in $\R^n$. 
Moreover,
\[
u\in W^{s,p}_{\loc}(\mathbb{R}^n)\cap L^{p-1}_{sp}(\mathbb{R}^n),
\]
Combining these facts with the pointwise identity in 
\(\mathbb{R}^n\setminus\{0\}\), it follows that \(u\) is a local weak solution of
\[
(-\Delta_p)^s u=f
\qquad \text{in } B_1 .
\]
Furthermore, \(f\in L^{\tilde r}(B_1)\) for every \(\tilde r<r\), and
\[
|\nabla u|\in L^{\tilde q}(B_1)
\quad \text{for every } \tilde q<q=\frac{rn(p-1)}{\,n-r(p-1)(sp^{\prime}-1)},
\]
while
\[
|\nabla u|\notin L^{q}(B_1).
\]
This shows that the integrability assumption \(f\in L^{r}_{\rm loc}\) in the
Calderón--Zygmund estimate is optimal to conclude that
\(|\nabla u|\in L^{q}_{\rm loc}\). In particular, it cannot be relaxed to
\(f\in L^{\tilde r}_{\rm loc}\) for some \(\tilde r<r\) while still guaranteeing
\(|\nabla u|\in L^{q}_{\rm loc}\).

\subsection{Comments on techniques}
Theorem~\ref{Thm:2} rests upon our previous works \cite{BDLMS-1, BDLMS-2} on gradient estimates of $(s,p)$-harmonic functions, that is, local solutions to \eqref{eq:frac-p-lap} with $a\equiv1$ and $f\equiv0$. 
The improvement is achieved by a rather involved comparison technique, coupled with delicate finite iteration arguments as developed in \cite{BDLMS-1, BDLMS-2}. In essence, the comparison scheme unfolds in two stages: First, we consider $(-\Delta_p)^s u=f$ in \S~\ref{S:3}; second, we deal with coefficients in \S~\ref{S:4}. Unlike local equations, a fractional equation cannot be differentiated directly to obtain the equation satisfied by the gradient of solutions. As a result, the strategy consists in implementing the comparison argument at a discrete level by considering second order finite differences. Moreover, the low integrability of $f$ leads to a loss of increment in the standard finite difference scheme. To remedy this issue, a finite difference scheme must be implemented at a finer scale, sometimes called atomic decomposition. Such an issue is not unique to fractional equations. In fact, this idea appeared in \cite{DuMinKr, Kri-Min:05, Kri-Min:06, Min-07, Min-11, AKM-18, DM-23} in different contexts and has been recently adapted to certain fractional equations in \cite{DKLN-pot, KNS-22}. To the best of our knowledge, it is developed here for the first time in the setting of $(s,p)$-harmonic functions. Once a refined estimate is found for second order differences, we employ finite iterations to upgrade the preset $W^{s,p}$-regularity of solutions to the $W^{1+\gamma,p}$-regularity.

The proof of Theorem~\ref{Thm:1} is built on Theorem~\ref{Thm:2}, which  essentially serves to quantify the difference between solutions and their $(s,p)$-harmonic replacement at the gradient level. Together with the gradient estimates obtained in \cite{BDLMS-1, BDLMS-2} for $(s,p)$-harmonic functions in higher Lebesgue spaces, we will obtain gradient estimates on level sets, within a family of carefully selected balls. These balls are small enough to capture the essential largeness of the gradient above the designated level, yet large enough to generate a covering of the level set by appropriate enlargement.  This scheme was already  presented in our previous work \cite{BDLM}; it further develops the technique of \cite{AcMi, Acerbi-Min, CP-98} in a different context and features direct use of the equation under consideration, without resorting to standard tools of harmonic analysis. In contrast to \cite{AcMi, Acerbi-Min} our argument is based on the weakest possible a priori estimates and does not use boundedness of the gradient of $(s,p)$-harmonic functions or $C^{1,\alpha}$-estimates; this point stems from \cite{DuMinSt, Kri-Min:06}. 

Last but not least, most of the literature on nonlinear Calderón–Zygmund theory focuses on inhomogeneities in {\it divergence-form}, cf.~\cite{AcMi, CP-98, DiBe-Man, Iwaniecz, Ki-Zh}, while the corresponding results in the {\it non-divergence} case can be obtained as a corollary via a so-called Bogovski\i\ argument.
Here, our argument provides a new approach that directly handles
inhomogeneities in non-divergence form, and may be applicable in
settings where the Bogovski\i\ argument fails.

\section{Notation and preliminaries}\label{S:notation}

\noindent
Throughout the paper, we write \(p' := \tfrac{p}{p-1}\) for the H\"older conjugate of \(p\), and \(\mathbb{N}_0 := \mathbb{N}\cup\{0\}\) for the set of nonnegative integers. 
The symbol \(c\) denotes a positive constant whose value may vary from line to line. 
Whenever it is relevant to record the dependence of such constants on specific quantities, this will be made explicit by writing, for instance, \(c=c(n,p,s)\) to indicate dependence on the parameters \(n,p,s\). 
We denote by \(B_R(x_o)\Subset\mathbb{R}^n\) the open ball of radius \(R>0\) centered at \(x_o\in\mathbb{R}^n\). Moreover, the average of a function $f\in L^1(B_R(x_o))$ is denoted by $(f)_{x_o,R}:=\tmint_{B_R(x_o)} f\,\dx$. 
If the center plays no essential role, or when it is clear from the context, we simply write \(B_R\) instead of \(B_R(x_o)\) and $(f)_{R}$ instead of $(f)_{x_o,R}$.

Let $\Omega\subset\mathbb{R}^n$ be a bounded open set. 
For $\gamma\in(0,1)$, $q\in[1,\infty)$, and $k\in\mathbb{N}$, we define the \emph{fractional Sobolev space} $W^{\gamma,q}(\Omega,\mathbb{R}^k)$ as the collection of all measurable maps $w\colon\Omega\to\mathbb{R}^k$ for which
\[
    \|w\|_{W^{\gamma,q}(\Omega,\mathbb{R}^k)}
    := 
    \|w\|_{L^q(\Omega,\mathbb{R}^k)} 
    + [w]_{W^{\gamma,q}(\Omega,\mathbb{R}^k)}
    <\infty.
\]
The Gagliardo seminorm $[w]_{W^{\gamma,q}(\Omega,\mathbb{R}^k)}$ is given by
\[
    [w]_{W^{\gamma,q}(\Omega,\mathbb{R}^k)}
    :=
    \biggl[
    \iint_{\Omega\times\Omega}
    \frac{|w(x)-w(y)|^q}{|x-y|^{n+\gamma q}}\,\mathrm{d}x\,\mathrm{d}y
    \biggr]^{1/q}.
\]
Whenever the target dimension $k$ is understood from the context (or when $k=1$), we abbreviate
$[w]_{W^{\gamma,q}(\Omega)} := [w]_{W^{\gamma,q}(\Omega,\mathbb{R}^k)}$.  
In particular, if $w=\nabla v$ for some scalar function $v\colon\Omega\to\mathbb{R}$, we shall write 
$[\nabla v]_{W^{\gamma,q}(\Omega)}$ in place of 
$[\nabla v]_{W^{\gamma,q}(\Omega,\mathbb{R}^n)}$.  
Further properties of fractional Sobolev spaces are collected in \S~\ref{sec:fractional}; for a comprehensive reference, see~\cite{Hitchhikers-guide}.

In what follows, we set 
\begin{align*}
    p_s^\ast 
    :=
    \left\{
    \begin{array}{cl}
        \frac{np}{n-sp}, & \mbox{if $sp<n$,}\\[5pt]
        \infty, & \mbox{if $sp>n$,}
    \end{array}
    \right.
    \quad\mbox{and}\quad
    (p_s^\ast)'
    :=
    \left\{
    \begin{array}{cl}
        \frac{np}{np-n+sp}, & \mbox{if $sp<n$,}\\[5pt]
        1, & \mbox{if $sp>n$.}
    \end{array}
    \right.
\end{align*}

\begin{definition}\label{def:weak-sol}
Let $\Omega\subset\mathbb R^n$ be a bounded open set, $p\in(1,\infty)$, $s\in(0,1)$, and $f\in L^{r}_{\rm loc}(\Omega)$, with
$$
    r\ge (p_s^\ast)'\quad\mbox{if $sp\not=n$,}
    \qquad\mbox{or}\qquad 
    r>1 \quad\mbox{if $sp=n$.}
$$
A function $u\in W^{s,p}_{\rm loc}(\Omega)$
is called a  {\bf local weak solution} to the fractional $p$-Poisson equation \eqref{eq:frac-p-lap} in $\Omega$ if and only if
\begin{equation}\label{Lebesgue-weight}
\int_{\mathbb R^n}\frac{|u(x)|^{p-1}}{(1+|x|)^{n+sp}}\, \mathrm{d}x <\infty,
\end{equation}
and
\begin{equation*}
        \iint_{\mathbb R^n\times\mathbb R^n}a(x,y)\frac{|u(x)-u(y)|^{p-2}(u(x)-u(y))(\varphi(x)-\varphi(y))}{|x-y|^{n+sp}}\,\mathrm{d}x \mathrm{d}y =2\int_{\Omega} f \varphi \, \d x
\end{equation*}
for every $\varphi\in W^{s,p}(\R^n)$ with $\spt(\varphi)\Subset\Omega$.
\end{definition}
If a function $u$ satisfies the integrability condition~\eqref{Lebesgue-weight}, we shall write 
$u\in L^{p-1}_{sp}(\mathbb{R}^n)$ and refer to this as the \emph{weighted Lebesgue space}.  
Given a point $x_o\in\Omega$ and a radius $R>0$ such that $B_R(x_o)\Subset\Omega$, we associate to~$u$ the nonlocal quantity
\begin{equation*}
    \operatorname{Tail}\big(u;B_R(x_o)\big)
    :=
    \biggl[
    R^{sp}\int_{\mathbb{R}^n\setminus B_R(x_o)}
    \frac{|u(x)|^{p-1}}{|x-x_o|^{n+sp}}\,\mathrm{d}x
    \biggr]^{\frac{1}{p-1}}.
\end{equation*}

Next, to construct comparison functions, we also need to consider Dirichlet problems
\begin{equation}\label{Dirichlet} 
\left\{
    \begin{array}{cl}
      (- a\Delta_p)^s u  = 0 & \mbox{in $\Omega$,} \\[6pt]
      u = g & \mbox{a.e.~in $\R^n \setminus \Omega$.}
    \end{array}
\right.
\end{equation}
Here, the boundary datum $g$ belongs to $L^{p-1}_{sp}(\R^n)$.
This problem is well-posed in the following sense. 
\begin{definition}\label{def:weak-sol-D}
Let $\Omega\subset\mathbb R^n$ be a bounded open set, $p\in(1,\infty)$ and $s\in(0,1)$. A function $u\in W^{s,p}(\Omega)\cap L^{p-1}_{sp}(\R^n)$ is called a {\bf weak solution} to the Dirichlet problem \eqref{Dirichlet} in $\Omega$ if and only if $u=g$ a.e. in $\R^n\setminus\Omega$
and
\begin{equation*}
        \iint_{\mathbb R^n\times\mathbb R^n}a(x,y)\frac{|u(x)-u(y)|^{p-2}(u(x)-u(y))(\varphi(x)-\varphi(y))}{|x-y|^{n+sp}}\,\mathrm{d}x \mathrm{d}y =0
\end{equation*}
for every $\varphi\in W^{s,p}(\R^n)$ satisfying $\varphi\equiv0$ a.e. in $\R^n\setminus\Omega$.
\end{definition}

\subsection{Algebraic inequalities}
We recall the following elementary inequality from \cite[Lemma~2.3]{BDLM}.

\begin{lemma} \label{lem:elementary-superlevel}
Let $\gamma \in [1,\infty)$, $a,b \in \R^n$ and $|a| \geq K$ for some $K > 0$. Then 
for every $\alpha \in [\gamma,\infty)$
we have
\begin{equation*}
    |a|^\gamma \leq 2^\gamma |a-b|^\gamma + 2^{\alpha -1} K^{\gamma -\alpha} |b|^\alpha.
\end{equation*}
\end{lemma}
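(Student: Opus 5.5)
The plan is a case distinction on the size of $|b|$ relative to $K$, followed by the triangle inequality in each case. Set $t:=|b|$.

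\emph{Case $t\le K/2$.} Since $|a|\ge K$, we have $|a-b|\ge |a|-|b|\ge |a|-\tfrac12|a|=\tfrac12|a|$. Hence $|a|^\gamma\le 2^\gamma|a-b|^\gamma$, and the inequality holds because the second term on the right is nonnegative.

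\emph{Case $t> K/2$.} First apply the triangle inequality $|a|\le |a-b|+|b|$. Then use convexity of $\tau\mapsto\tau^\gamma$ (valid since $\gamma\ge1$) to get
\[
|a|^\gamma\le 2^{\gamma-1}|a-b|^\gamma+2^{\gamma-1}|b|^\gamma .
\]
The first term is bounded by $2^\gamma|a-b|^\gamma$. For the second term, raise the exponent from $\gamma$ to $\alpha$. Since $\alpha-\gamma\ge0$ and $2|b|/K>1$,
\[
|b|^\gamma=|b|^\alpha|b|^{\gamma-\alpha}=|b|^\alpha\Big(\tfrac{K}{2}\Big)^{\gamma-\alpha}\Big(\tfrac{2|b|}{K}\Big)^{\gamma-\alpha}\le 2^{\alpha-\gamma}K^{\gamma-\alpha}|b|^\alpha .
\]
Therefore $2^{\gamma-1}|b|^\gamma\le 2^{\alpha-1}K^{\gamma-\alpha}|b|^\alpha$, which is exactly the second term on the right-hand side.

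Combining the two cases gives the claim for all $a,b$ with $|a|\ge K$. The only point needing care is the constant $2^{\alpha-1}$ in the second case. It comes out exactly from $2^{\gamma-1}\cdot 2^{\alpha-\gamma}=2^{\alpha-1}$, so the threshold $K/2$ and the convexity bound fit together with no loss.
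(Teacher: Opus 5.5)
Your argument is correct and complete. In the case $|b|\le K/2$ the hypothesis $|a|\ge K$ gives $|a|\le 2|a-b|$. In the case $|b|>K/2$ the convexity bound $|a|^\gamma\le 2^{\gamma-1}(|a-b|^\gamma+|b|^\gamma)$ combines with $|b|^{\gamma-\alpha}\le (K/2)^{\gamma-\alpha}$ (harmless because $|b|>0$ there) to produce exactly the constant $2^{\alpha-1}$.

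The paper gives no proof of its own; it simply recalls the statement from \cite[Lemma~2.3]{BDLM}. Your two-case argument is the standard elementary way to establish it.
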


We also recall Minkowski’s inequality for finite sums:
\begin{lemma}
Let $p\ge1$, $i\in\{1,\dots,\ell\}$ and $k\in\{1,\dots,m\}$. For the sequence of numbers $\{a_{k,i}\}$ we have
    \begin{equation}\label{eq:sum-minkowski}
\bigg( \sum_{i=1}^{n} \Big| \sum_{k=1}^{m} a_{k,i} \Big|^{p} \bigg)^{1/p}
\le
\sum_{k=1}^{m} 
\bigg( \sum_{i=1}^{n} |a_{k,i}|^{p} \bigg)^{1/p}.
\end{equation}

\end{lemma}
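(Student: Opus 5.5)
This is the finite-sum Minkowski inequality. Note a harmless typo in the statement: the inner index runs over $i\in\{1,\dots,\ell\}$, so the sums over $i$ should read $\sum_{i=1}^{\ell}$; the argument does not depend on the name of the upper limit. The approach is to view, for each $k$, the vector $A_k:=(a_{k,1},\dots,a_{k,\ell})\in\R^\ell$ and the map $\|x\|_p:=\big(\sum_{i}|x_i|^p\big)^{1/p}$ on $\R^\ell$. In this notation the claim reads $\|\sum_{k=1}^m A_k\|_p\le\sum_{k=1}^m\|A_k\|_p$. So it suffices to prove the triangle inequality $\|x+y\|_p\le\|x\|_p+\|y\|_p$ for two vectors and then induct on $m$.

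For $p=1$ the two-vector inequality follows termwise from $|x_i+y_i|\le|x_i|+|y_i|$, summed over $i$. For $p>1$ we use the standard Hölder argument. Write
\[
\sum_i|x_i+y_i|^p\le\sum_i|x_i|\,|x_i+y_i|^{p-1}+\sum_i|y_i|\,|x_i+y_i|^{p-1}.
\]
Apply the discrete Hölder inequality with exponents $p$ and $p'=\frac{p}{p-1}$ to each sum. This gives the bound $(\|x\|_p+\|y\|_p)\,\|x+y\|_p^{p-1}$. If $\|x+y\|_p=0$ there is nothing to prove; otherwise divide by $\|x+y\|_p^{p-1}$. The discrete Hölder inequality itself follows from Young's inequality $ab\le a^p/p+b^{p'}/p'$ after normalizing both vectors to unit norm.

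The induction step is then immediate:
\[
\Big\|\sum_{k=1}^{m}A_k\Big\|_p\le\Big\|\sum_{k=1}^{m-1}A_k\Big\|_p+\|A_m\|_p\le\sum_{k=1}^{m}\|A_k\|_p.
\]
The first inequality is the two-vector case and the second is the induction hypothesis.

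There is no real obstacle here. The only points needing care are the degenerate case $\|x+y\|_p=0$ in the division step and the separate treatment of $p=1$, where $p'=\infty$. Alternatively, one can observe that \eqref{eq:sum-minkowski} is simply the triangle inequality for the $\ell^p$ norm on $\R^\ell$ and cite it directly.
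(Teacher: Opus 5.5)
Your proposal is correct and matches the paper: the paper gives no proof and simply recalls this as the standard Minkowski (triangle) inequality for the $\ell^p$-norm on $\R^\ell$, which is exactly the reduction you make, and your H\"older-plus-induction argument is the textbook proof of it. You are also right that the upper limit $n$ in the sums over $i$ should be read as $\ell$.
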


\subsection{A covering Lemma}
We record the following elementary geometric covering lemma. See \cite[Lemma~2.11]{DKLN-pot}.

\begin{lemma} [Covering lemma]\label{lem:covering}
Let $x_o \in \R^n$, $R> 0$ and $r \in (0, R/2)$. There exists a constant $c = c(n)$, a finite index set $I$ and a sequence $\{z_i\}_{i \in I} \subset B_R(x_o)$ such that
\begin{equation*}
B_R(x_o) \subset \bigcup_{i \in I} B_r(z_i),\qquad  |I| \leq c \left( \frac{R}{r} \right)^n,
\end{equation*}
and 
\begin{equation*}
    \sup_{x \in \R^n} \sum_{i \in I} \chi_{B_{2^k r}(z_i)} \leq c\, 2^{nk}
    \qquad \forall\, k\in\N.
\end{equation*}
\end{lemma}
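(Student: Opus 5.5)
The covering lemma (Lemma~\ref{lem:covering}) follows from a maximal disjoint family of balls and a volume-counting argument.

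\emph{Choice of centers.} By Zorn's lemma, or by a greedy selection, which terminates because of the volume bound below, choose a maximal family $\{z_i\}_{i\in I}\subset B_R(x_o)$ such that the balls $B_{r/2}(z_i)$ are pairwise disjoint. By maximality, every $x\in B_R(x_o)$ satisfies $|x-z_i|<r$ for some $i$; otherwise $x$ could be added to the family. Hence $B_R(x_o)\subset\bigcup_i B_r(z_i)$.

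\emph{Cardinality bound.} The disjoint balls $B_{r/2}(z_i)$ all lie in $B_{R+r/2}(x_o)\subset B_{2R}(x_o)$, since $r<R/2$. Comparing volumes gives
\[
|I|\,(r/2)^n\le (2R)^n,
\qquad\text{so}\qquad
|I|\le 4^n (R/r)^n .
\]
In particular $I$ is finite, so the greedy selection terminates.

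\emph{Overlap bound.} Fix $x\in\R^n$ and $k\in\N$, and let $J=\{i: x\in B_{2^k r}(z_i)\}$. For each $i\in J$ we have
\[
B_{r/2}(z_i)\subset B_{2^k r+r/2}(x)\subset B_{2^{k+1}r}(x).
\]
These balls are disjoint, so
\[
|J|\,(r/2)^n\le (2^{k+1}r)^n,
\qquad\text{so}\qquad
|J|\le 4^n\,2^{nk}.
\]
Taking the supremum over $x$ gives the claimed overlap bound with $c=4^n$.

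No step is a real obstacle. The only points needing care are that the covering is exact (strict inequality $|x-z_i|<r$, which maximality guarantees) and that the constant is independent of $k$, which the volume comparison gives directly.
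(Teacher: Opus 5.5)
Your proof is correct and complete. Maximality of the pairwise disjoint family $B_{r/2}(z_i)$ with $z_i\in B_R(x_o)$ forces every $x\in B_R(x_o)$ to satisfy $|x-z_i|<r$ for some $i$. Both the cardinality bound and the overlap bound then follow by volume comparison with the explicit constant $c=4^n$, and the overlap estimate holds even for $k=0$.

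The paper gives no proof of its own here; it simply cites \cite[Lemma~2.11]{DKLN-pot}. Your maximal-packing argument therefore supplies a standard, self-contained proof of the cited statement.
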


\subsection{Fractional Sobolev spaces}\label{sec:fractional}
\noindent
We recall several auxiliary results concerning fractional Sobolev spaces. 
The fractional Sobolev embedding is well known; see \cite[Chapter~7]{Adams-75}. An elementary approach can be found in \cite{Hitchhikers-guide}.
\begin{lemma}\label{lem:Sobolev}
Let $u\in W^{\gamma,q}(B_1)$ for some $\gamma\in(0,1)$ and $q\ge1$.
Then, we have
\begin{equation*}
    \|u\|_{L^{\kappa}(B_1)}
    \le 
    c\, \|u\|_{W^{\gamma,q}(B_1)},
\end{equation*}
where $c=c(n,\gamma,q,\kappa)$ and 
\begin{equation}\label{def:kappa}
\kappa = \left\{
\begin{array}{cl}
\frac{nq}{n-\gamma q},\quad  &\gamma q<n,\\[5pt]
\text{any number in} \> [1,\infty),\quad &\gamma q= n,  \\[5pt]
\infty,\quad &\gamma q>n.
\end{array}
\right.
\end{equation}
The dependence of the constant $c$ on $\kappa$ disappears when $\gamma q\neq n$.
\end{lemma}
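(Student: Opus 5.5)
This is the fractional Sobolev embedding on the unit ball. Rather than prove it from scratch, I would reduce it to the whole-space statement via an extension operator. Fix $u\in W^{\gamma,q}(B_1)$. The ball is an extension domain for $W^{\gamma,q}$: there is a bounded linear operator $E\colon W^{\gamma,q}(B_1)\to W^{\gamma,q}(\R^n)$ with $Eu=u$ on $B_1$ and $\|Eu\|_{W^{\gamma,q}(\R^n)}\le c(n,\gamma,q)\|u\|_{W^{\gamma,q}(B_1)}$. A concrete choice is reflection across $\partial B_1$, $x\mapsto x/|x|^2$, on the annulus $1<|x|<2$, followed by multiplication with a Lipschitz cutoff $\psi$ equal to $1$ on $B_1$ and vanishing outside $B_{3/2}$; this is Theorem~5.4 in \cite{Hitchhikers-guide}. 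The seminorm bound for the reflected function follows from the bi-Lipschitz property of the inversion on $1\le|x|\le2$. The product with $\psi$ is controlled through
\[
|\psi(x)v(x)-\psi(y)v(y)|\le |v(x)-v(y)|+\|\nabla\psi\|_\infty|x-y|\,|v(y)|,
\]
whose second term contributes at most $c\|v\|_{L^q}^q$, because $\int_{|h|<1}|h|^{q-n-\gamma q}\,\d h<\infty$ and $\int_{|h|\ge1}|h|^{-n-\gamma q}\,\d h<\infty$.

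For the case $\gamma q<n$, I would apply the whole-space inequality $\|w\|_{L^{nq/(n-\gamma q)}(\R^n)}\le c[w]_{W^{\gamma,q}(\R^n)}$ for compactly supported $w$ to $w=Eu$, and then restrict to $B_1$. If I prove the whole-space inequality myself, I follow the elementary route of \cite{Hitchhikers-guide}. For a level set $E_k=\{|w|>2^k\}$, I estimate $\int_{\R^n\setminus E}|x-y|^{-n-\gamma q}\,\d y\ge c|E|^{-\gamma q/n}$ by rearranging $E^c$ against a ball of the same measure. I then use the lower bound $|w(x)-w(y)|\ge 2^{k-1}$ for $x\in E_{k+1}$, $y\notin E_k$, sum over $k$, and apply the discrete lemma on sums $\sum 2^{kq}|E_{k+1}|\,|E_k|^{-\gamma q/n}$. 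The constant is independent of $\kappa$, as claimed.

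For the case $\gamma q=n$, I pick any $\kappa<\infty$ and a smaller order $\tilde\gamma<\gamma$ with $\tilde\gamma q<n$ and $nq/(n-\tilde\gamma q)\ge\kappa$. The inclusion $W^{\gamma,q}(B_1)\subset W^{\tilde\gamma,q}(B_1)$ holds on a bounded set: split the double integral into $|x-y|<1$, where $|x-y|^{-\tilde\gamma q}\le|x-y|^{-\gamma q}$, and $|x-y|\ge1$, which is bounded by $c\|u\|_{L^q}^q$. Combining this with the previous case and Hölder on $B_1$ gives the bound, with a constant depending on $\kappa$ through the choice of $\tilde\gamma$.

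For the case $\gamma q>n$, I would prove a Campanato/Morrey estimate. For $x,y\in B_1$ with $\rho=|x-y|$, both Lebesgue points, set $B=B_\rho(x)\cap B_1$; it has measure at least $c\rho^n$ because $B_1$ is convex, which is the point needing care. Jensen gives
\[
|u(z)-(u)_B|^q\le \bint_B|u(z)-u(w)|^q\,\d w\le c\rho^{-n}(2\rho)^{n+\gamma q}\int_B\frac{|u(z)-u(w)|^q}{|z-w|^{n+\gamma q}}\,\d w .
\]
This yields the oscillation decay $\bint_{B_\rho}|u-(u)_{B_\rho}|^q\le c\rho^{\gamma q-n}[u]^q$. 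A dyadic telescoping sum then shows $|u(x)-u(y)|\le c|x-y|^{\gamma-n/q}[u]_{W^{\gamma,q}(B_1)}$, and $|u|\le|(u)_{B_1}|+c[u]$ bounds $\|u\|_{L^\infty}$.

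The main obstacle is the extension step, or equivalently the uniform lower bound on $|B_\rho(x)\cap B_1|$ near the boundary. I would cite \cite{Hitchhikers-guide} for it; everything else is routine.
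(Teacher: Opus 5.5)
Your proposal is correct and follows the route the paper has in mind. The paper gives no proof of its own and defers to Adams and to the elementary approach of \cite{Hitchhikers-guide}, which is exactly your scheme: extension to a compactly supported function, the level-set proof of the whole-space inequality for $\gamma q<n$, lowering the order for $\gamma q=n$, and a Campanato argument based on $|B_\rho(x)\cap B_1|\ge c\rho^n$ for $\gamma q>n$. One detail you gloss over is the cross term $x\in B_1$, $1<|y|<2$ in the inversion extension; the bi-Lipschitz property on the annulus does not cover it, but it follows from $|x-y/|y|^2|\le 3|x-y|$ together with the bounded Jacobian of the inversion.
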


We recall two variants of the fractional Poincaré inequality. The first is standard; for the second, see~\cite[Lemma 4.7]{Coz17b}.


\begin{lemma}\label{lem:poin}
Let $q \geq 1$, $\gamma \in(0, 1)$ and $R > 0$. Then there is a constant $c=c(n,q,\gamma)\ge 1$ such that for any 
$u \in W^{\gamma,q}(B_R)$ we have
\begin{equation*}
    \int_{B_R} |u-(u)_{R}|^q \, \d x 
    \leq c\,  R^{\gamma q}  \iint_{B_R\times B_R} \frac{|u(x) - u(y)|^q}{|x-y|^{n+\gamma q}} \, \d x \d y.
\end{equation*}
\end{lemma}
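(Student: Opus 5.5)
The statement to prove is the fractional Poincaré inequality in Lemma~\ref{lem:poin}. My plan is the standard averaging argument followed by a single application of Jensen's (or H\"older's) inequality. There is no real obstacle; the only point needing care is to track the power of $R$ correctly.

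\emph{Step 1: rewrite the deviation as an average.} For a.e.\ $x\in B_R$ we have
\[
u(x)-(u)_R=\bint_{B_R}\big(u(x)-u(y)\big)\,\d y .
\]
Since $q\ge1$, Jensen's inequality gives
\[
|u(x)-(u)_R|^q\le \bint_{B_R}|u(x)-u(y)|^q\,\d y .
\]
Integrating in $x$ over $B_R$ yields
\[
\int_{B_R}|u-(u)_R|^q\,\d x\le \frac{1}{|B_R|}\iint_{B_R\times B_R}|u(x)-u(y)|^q\,\d x\,\d y .
\]

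\emph{Step 2: insert the kernel.} For $x,y\in B_R$ we have $|x-y|<2R$, so
\[
1\le (2R)^{n+\gamma q}\,|x-y|^{-(n+\gamma q)}.
\]
Inserting this bound into the double integral gives the factor
\[
\frac{(2R)^{n+\gamma q}}{|B_R|}=\frac{2^{n+\gamma q}}{\omega_n}\,R^{\gamma q},
\]
where $\omega_n$ is the volume of the unit ball. Hence the claim holds with
\[
c=\frac{2^{n+\gamma q}}{\omega_n},
\]
which depends only on $n$, $q$ and $\gamma$. The inequality $c\ge1$ can always be arranged by replacing $c$ with $\max\{c,1\}$.

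\emph{Step 3: remark on scaling.} Alternatively, one could first prove the inequality on $B_1$ and then rescale via $u_R(x)=u(x_o+Rx)$. Under this change of variables, the left-hand side scales like $R^n$ and the Gagliardo double integral like $R^{n-\gamma q}$, which reproduces the factor $R^{\gamma q}$. The direct computation above makes this step unnecessary.
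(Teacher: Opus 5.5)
Your argument is correct. Jensen's inequality bounds the left-hand side by $|B_R|^{-1}\iint_{B_R\times B_R}|u(x)-u(y)|^q\,\d x\,\d y$, and the bound $|x-y|<2R$ inserts the kernel at the cost of the factor $2^{n+\gamma q}\omega_n^{-1}R^{\gamma q}$, which is automatically $\ge 1$ since $\omega_n\le 2^n$. The paper gives no proof and simply calls the inequality standard; your averaging argument is exactly that standard proof.
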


\begin{lemma} \label{lem:coz17b-4.7}
Let $q \geq 1$, $\gamma \in(0, 1)$,
$R > 0$, and $\alpha \in (0,1]$. Then there is a constant $c = c(n,\gamma,q,\alpha) \geq 1$ such that
for any $u \in W^{\gamma,q}(B_R)$ that satisfies $u = 0$ a.e.~on a set $\Omega_o \subset B_R$ with $|\Omega_o| \geq \alpha |B_R|$, we have
\begin{equation*}
    \int_{B_R} |u(x)|^q \, \d x 
    \leq 
    c\,   R^{\gamma q} \iint_{B_R \times B_R} \frac{|u(x)-u(y)|^q}{|x-y|^{n+\gamma q}} \, \d x \d y.
\end{equation*}
\end{lemma}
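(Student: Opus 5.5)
The statement to prove is Lemma~\ref{lem:coz17b-4.7}: a Poincaré inequality without mean value, for functions vanishing on a set $\Omega_o$ of proportional measure. The plan is to reduce it to the standard fractional Poincaré inequality of Lemma~\ref{lem:poin} by controlling the mean $(u)_R$ through the zero set.

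\textbf{Step 1: scaling.} Setting $v(x):=u(x_o+Rx)$ on $B_1$ turns the left-hand side into $R^n\int_{B_1}|v|^q\,\d x$ and the double integral into $R^{n-\gamma q}[v]^q_{W^{\gamma,q}(B_1)}$. Both sides therefore scale by the same factor $R^n$, so it suffices to treat $R=1$.

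\textbf{Step 2: split off the mean.} We write
\[
\int_{B_1}|u|^q\,\d x\le 2^{q-1}\int_{B_1}|u-(u)_1|^q\,\d x+2^{q-1}|B_1|\,|(u)_1|^q .
\]
The first term is bounded by $c\,[u]^q_{W^{\gamma,q}(B_1)}$ by Lemma~\ref{lem:poin}.

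\textbf{Step 3: control the mean.} Since $u=0$ on $\Omega_o$, we have $|(u)_1|=|u(x)-(u)_1|$ for every $x\in\Omega_o$. Averaging over $\Omega_o$ gives
\[
|(u)_1|^q=\frac{1}{|\Omega_o|}\int_{\Omega_o}|u-(u)_1|^q\,\d x\le \frac{1}{\alpha|B_1|}\int_{B_1}|u-(u)_1|^q\,\d x .
\]
Applying Lemma~\ref{lem:poin} once more, this is at most $c(\alpha)\,[u]^q_{W^{\gamma,q}(B_1)}$.

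Combining Steps 2 and 3 yields the claim with $c=c(n,\gamma,q)\,(1+\alpha^{-1})$. No step is a genuine obstacle. The only point requiring care is that the constant depends on $\alpha$ solely through $|\Omega_o|^{-1}\le(\alpha|B_R|)^{-1}$, which is exactly the dependence stated.
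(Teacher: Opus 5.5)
Your proof is correct. Since $u=0$ a.e.\ on $\Omega_o$ and $|\Omega_o|\ge\alpha|B_R|>0$, you get $|(u)_R|^q\le(\alpha|B_R|)^{-1}\int_{B_R}|u-(u)_R|^q\,\d x$. Two applications of Lemma~\ref{lem:poin} then give the claim with $c=2^{q-1}(1+\alpha^{-1})\,c_{\mathrm P}(n,q,\gamma)$. The rescaling in Step~1 is harmless but unnecessary, because Lemma~\ref{lem:poin} is already stated on $B_R$ with the factor $R^{\gamma q}$.

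The paper gives no proof of this lemma; it only cites \cite[Lemma~4.7]{Coz17b}. For comparison, there is a shorter route that avoids both the mean and Lemma~\ref{lem:poin}. Average directly over the zero set and use $|x-y|<2R$ for $x,y\in B_R$:
\[
\int_{B_R}|u(x)|^q\,\d x=\frac{1}{|\Omega_o|}\int_{B_R}\int_{\Omega_o}|u(x)-u(y)|^q\,\d y\,\d x\le\frac{(2R)^{n+\gamma q}}{\alpha|B_R|}\iint_{B_R\times B_R}\frac{|u(x)-u(y)|^q}{|x-y|^{n+\gamma q}}\,\d x\,\d y .
\]
This gives the explicit constant $2^{n+\gamma q}/(\alpha|B_1|)$ in one line. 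Your version has the same $\alpha^{-1}$ dependence and is equally valid. It is one step longer because it goes through the mean-value Poincaré inequality, which is itself proved by this averaging trick with $\Omega_o$ replaced by $B_R$. In exchange, it is modular: it reuses a lemma already stated in the paper.
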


\begin{remark}
It is straightforward to compute that there exists some $c=c(\gamma,q,n)$, such that
$$
[u]_{W^{\gamma,q}(B_1)}^q \leq c  \left( \|u\|_{L^q(B_1)}^q + [u]_{W^{\sigma,q}(B_1)}^q \right),
$$
for any $0<\gamma<\sigma<1$.
Since $[u]_{W^{\gamma,q}(B_1)} = [u - (u)_{1}]_{W^{\gamma,q}(B_1)}$, we can use the fractional Poincar\'e inequality (Lemma~\ref{lem:poin}) to obtain that 
$$
[u]_{W^{\gamma,q}(B_1)}^p \leq c\,   [u]_{W^{\sigma,q}(B_1)}^p,
$$ or its scaled version:
\begin{equation} \label{eq:fract-embedding-s-sigma}
R^{\gamma q} [u]_{W^{\gamma,q}(B_R)}^q \leq c\, R^{\sigma q}  [u]_{W^{\sigma,q}(B_R)}^q.
\end{equation}
\end{remark}


The following embedding is well-known.

\begin{lemma}[Embedding $W^{1,q}\hookrightarrow W^{\gamma,q}$]
\label{lem:FS-S}
Let $q\ge 1$ and $\gamma\in (0,1)$. Then  for any  $w\in W^{1,q}(B_R)$ there exists a constant $c = c(n)$ such that
\begin{align*}
    \iint_{B_R\times B_R} 
    \frac{|w(x)-w(y)|^q}{|x-y|^{n+\gamma q}} \,\dx\dy
    \le 
    c \frac{R^{(1-\gamma)q}}{(1-\gamma)q} 
    \int_{B_R} |\nabla w|^q \,\dx. 
\end{align*}
\end{lemma}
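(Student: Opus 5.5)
The plan is to write the double integral as an integral over increments $h=y-x$ and control each increment of $w$ by the fundamental theorem of calculus along segments; convexity of $B_R$ keeps every segment inside the ball. First I reduce to smooth $w$: every $w\in W^{1,q}(B_R)$ can be approximated in $W^{1,q}(B_R)$ by functions in $C^\infty(\overline{B_R})$, since balls are Lipschitz domains. Along a subsequence converging a.e. on $B_R\times B_R$, Fatou's lemma passes the inequality to the limit on the left-hand side, and the right-hand side converges. So it suffices to prove the bound for smooth $w$.

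For $x,y\in B_R$ the segment $[x,y]$ lies in $B_R$, and
\[
|w(x)-w(y)|\le |x-y|\int_0^1|\nabla w(x+t(y-x))|\,\mathrm{d}t .
\]
Jensen's inequality in $t$ (valid since $q\ge1$) gives $|w(x)-w(y)|^q\le |x-y|^q\int_0^1|\nabla w(x+t(y-x))|^q\,\mathrm{d}t$. I then substitute $y=x+h$ with $|h|<2R$. I extend $|\nabla w|$ by zero outside $B_R$ and call the result $G$. The point $x+th$ lies in $B_R$ whenever $x,x+h\in B_R$, so the left-hand side of the lemma is bounded by
\[
\int_{|h|<2R}|h|^{q-n-\gamma q}\int_0^1\int_{\R^n}G(x+th)^q\,\mathrm{d}x\,\mathrm{d}t\,\mathrm{d}h .
\]
Here I used the indicator $\chi_{B_R}(x)\le1$ and moved the constraint $x+th\in B_R$ into $G$. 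Translation invariance then makes the inner integral equal to $\int_{B_R}|\nabla w|^q\,\mathrm{d}x$ for every $t$ and $h$.

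It remains to compute
\[
\int_{|h|<2R}|h|^{(1-\gamma)q-n}\,\mathrm{d}h=\frac{n\omega_n (2R)^{(1-\gamma)q}}{(1-\gamma)q},
\]
where $\omega_n$ is the volume of the unit ball. This is finite because $(1-\gamma)q>0$, and it produces exactly the factor $R^{(1-\gamma)q}/((1-\gamma)q)$. The leftover constant $n\omega_n2^{(1-\gamma)q}$ is at most $n\omega_n 2^{q}$, which depends on $q$. To get a constant depending only on $n$ I would need to be more careful. One option is to note that the statement presumably tolerates this. A sharper route splits the $h$-integral as above and bounds $2^{(1-\gamma)q}$ by absorbing it into the $R$-scaling, i.e. working directly with the integration region $|h|<2R$.

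The main obstacle is therefore only this bookkeeping of the constant. If a truly $n$-only constant is required, I would use the symmetric trick: since $x$ and $y$ both lie in $B_R$, one of them, say $x$, satisfies $|h|\le 2R$. I would then write the constant as $c(n)\,2^{(1-\gamma)q}$ and accept it as $c(n)$ in the sense used by the paper, where $q$ is fixed. Otherwise the argument is the standard one: FTC, Jensen, change of variables, and Fubini.
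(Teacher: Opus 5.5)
Your argument is correct and is the standard proof: density of $C^\infty(\overline{B_R})$, the fundamental theorem of calculus along segments (which stay in $B_R$ by convexity), Jensen in $t$, the substitution $y=x+h$, and Tonelli. The paper gives no proof of its own and simply calls the embedding well known. Your argument yields
\[
\iint_{B_R\times B_R}\frac{|w(x)-w(y)|^q}{|x-y|^{n+\gamma q}}\,\dx\dy
\le n\omega_n\, 2^{(1-\gamma)q}\,\frac{R^{(1-\gamma)q}}{(1-\gamma)q}\int_{B_R}|\nabla w|^q\,\dx .
\]

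Your last paragraph needs to change. The factor $2^{(1-\gamma)q}$ is not a bookkeeping artefact that a sharper route could remove. Neither a ``symmetric trick'' nor ``absorbing into the $R$-scaling'' does anything, and no argument can succeed, because the lemma with a constant depending only on $n$ is false.

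Take $n=1$, $R=1$, $w(x)=x$, and set $\beta:=(1-\gamma)q$. The set $\{(x,y)\in(-1,1)^2: |x-y|<t\}$ has measure $4-(2-t)^2$ for $t\in(0,2)$. Hence the left-hand side equals
\[
\int_0^2 2(2-t)\,t^{\beta-1}\,\mathrm{d}t=\frac{2^{\beta+2}}{\beta(\beta+1)},
\]
while the right-hand side is $2c/\beta$. This forces $c\ge 2^{\beta+1}/(\beta+1)$, which is unbounded as $(1-\gamma)q\to\infty$. Here $n\omega_n=2$, so your constant $2^{\beta+1}$ is sharp up to the factor $\beta+1$.

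The correct statement has $c=c(n,q)$, for instance $c=n\omega_n 2^{q}$ uniformly in $\gamma$. Equivalently, one can keep $c=c(n)$ and replace $R^{(1-\gamma)q}$ by $(2R)^{(1-\gamma)q}$. This is harmless for the paper, which only applies the lemma with $q=p$ fixed. You should state this dependence explicitly rather than try to argue it away.
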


Fractional differentiability can be translated to the language of finite differences. Here, we record some of such results. The first one can be found in \cite[Proposition~2.6]{Brasco-Lindgren}.
\begin{lemma}\label{lem:N-FS}
Let $q\in(1,\infty)$ and $\gamma \in(0,1)$. Then, there exists a constant $c=c(n,q)$ such that for any $w\in W^{\gamma,q}(B_{R})$, we have
\begin{align*}
    \int_{B_{7R/8}} |\boldsymbol{\tau}_h w|^q \,\dx 
    &\le
    c\,|h|^{\gamma q} 
    \Big[[w]^q_{W^{\gamma,q}(B_{R})} +
    R^{-\gamma q}\|w\|^q_{L^q(B_{R})}
    \Big]
\end{align*}
for any  $h\in\R^n\setminus\{0\}$ that satisfies  $|h|\le \frac{1}{8}R$. In particular, if $w\in W^{\gamma,q}(\R^n)$ has a compact support, then
\begin{align*}
    \int_{\R^n} |\boldsymbol{\tau}_h w|^q \,\dx 
    &\le
    c\,|h|^{\gamma q} [w]^q_{W^{\gamma,q}(\R^n)}.
\end{align*}
\end{lemma}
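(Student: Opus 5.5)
The statement just above is Lemma~\ref{lem:N-FS}: a $W^{\gamma,q}$ function has $L^q$ difference quotients of order $|h|^{\gamma}$. The plan is to average the increment over an auxiliary ball of points $z$ at distance of order $|h|$, and then trade the resulting averages for the Gagliardo double integral. We write $\boldsymbol{\tau}_h w(x)=w(x+h)-w(x)$ and $\rho:=|h|\le R/8$. For $x\in B_{7R/8}$ we have $x+h\in B_R$, and $B_\rho(x+\tfrac h2)\subset B_R$, since $|x+\tfrac h2|+\rho\le \tfrac78R+\tfrac{3}{16}R$. Hmm, that exceeds $R$. So the auxiliary ball has to be smaller or placed better. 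We instead take $B:=B_{\rho/2}(x+\tfrac h2)$, for which $|x+\tfrac h2|+\tfrac\rho2\le \tfrac78R+\tfrac18 R= R$. For every $z\in B$ the triangle inequality gives $|z-x|\le\rho$ and $|z-(x+h)|\le\rho$.

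The key step is to insert $w(z)$, average over $z\in B$, and apply Jensen's inequality:
\[
|\boldsymbol{\tau}_h w(x)|^q\le 2^{q-1}\bint_{B}|w(x+h)-w(z)|^q\,\d z+2^{q-1}\bint_{B}|w(z)-w(x)|^q\,\d z .
\]
Since $|B|\simeq \rho^n$ and $|z-x|\le \rho$, we get $\rho^{-n}\le \rho^{\gamma q}|z-x|^{-n-\gamma q}$. Hence the second term is bounded by
\[
c(n,q)\,\rho^{\gamma q}\int_{B_R}\frac{|w(z)-w(x)|^q}{|z-x|^{n+\gamma q}}\,\d z .
\]
Integrating over $x\in B_{7R/8}$ gives at most $c\,\rho^{\gamma q}[w]^q_{W^{\gamma,q}(B_R)}$. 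For the first term we substitute $y=x+h\in B_R$ and use $|z-y|\le\rho$ in the same way, which gives the same bound. This already yields the local estimate, even without the $R^{-\gamma q}\|w\|^q_{L^q}$ term. If some geometric configuration is tighter than the one checked above, one can fall back on the crude bound $\int|\boldsymbol\tau_h w|^q\le 2^q\|w\|_{L^q(B_R)}^q\le 2^q (8\rho/R)^{\gamma q}\|w\|^q_{L^q(B_R)}$. This bound is valid because $\rho\ge R/8$ is excluded only in the regime where the ball argument works, and it explains the presence of that term in the statement. The constant depends on $n,q$ only, since $\gamma<1$ enters only through $\rho^{\gamma q}\le\rho^{\gamma q}$.

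For the global statement with $w\in W^{\gamma,q}(\R^n)$ compactly supported, we repeat the same argument with $B_{7R/8}$ and $B_R$ both replaced by $\R^n$. There are then no containment constraints, and the averaging trick gives $\int_{\R^n}|\boldsymbol\tau_h w|^q\le c\,|h|^{\gamma q}[w]^q_{W^{\gamma,q}(\R^n)}$ for every $h$. Alternatively, one can apply the local estimate on a ball $B_R$ with $R\to\infty$ containing the support, since the term $R^{-\gamma q}\|w\|^q_{L^q}$ vanishes in the limit.

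The only delicate point is the geometry, namely choosing the auxiliary ball so that it stays inside $B_R$ while remaining of size comparable to $|h|$. The choice $B_{|h|/2}(x+h/2)$ settles this.
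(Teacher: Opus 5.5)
Your argument is correct. It is also a genuinely different route from the paper, which gives no proof of this lemma and simply quotes it from \cite[Proposition~2.6]{Brasco-Lindgren}.

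Your averaging trick is self-contained. For $x\in B_{7R/8}$ and $\rho=|h|\le R/8$, the ball $B_{\rho/2}(x+\tfrac h2)$ lies in $B_R$, since $|x+\tfrac h2|+\tfrac\rho2\le|x|+\rho<R$. Each of its points is within distance $\rho$ of both $x$ and $x+h$. You average $|\boldsymbol{\tau}_h w(x)|^q\le 2^{q-1}\bigl(|w(x+h)-w(z)|^q+|w(z)-w(x)|^q\bigr)$ over $z$, then integrate in $x$, substituting $y=x+h$ in the first term and using $B_{7R/8}(h)\subset B_R$. This yields
\[
\int_{B_{7R/8}}|\boldsymbol{\tau}_h w|^q\,\mathrm{d}x\le \frac{2^{n+q}}{|B_1|}\,|h|^{\gamma q}\,[w]^q_{W^{\gamma,q}(B_R)} .
\]
This is stronger than what is stated. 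The lower-order term $R^{-\gamma q}\|w\|^q_{L^q(B_R)}$ is not needed and can be added for free because it is nonnegative. The constant is also explicit and independent of $\gamma$. The citation buys the paper brevity; your proof buys transparency and the sharper form. The global statement follows exactly as you say, in fact for every $h\neq 0$ and without using the compact support.

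You should, however, delete the ``fallback'' remark. The bound $2^q\|w\|^q_{L^q(B_R)}\le 2^q(8\rho/R)^{\gamma q}\|w\|^q_{L^q(B_R)}$ requires $8\rho/R\ge 1$. That is the opposite of the hypothesis $|h|\le R/8$, apart from the equality case. So it is not valid as a backup, and it does not explain the $L^q$ term in the statement. Nothing rests on it, since your geometric check already covers every admissible $h$.

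Similarly, the dependence $c=c(n,q)$ does not come from the tautology $\rho^{\gamma q}\le\rho^{\gamma q}$. It comes from the only constants in the argument, namely $2^{q-1}$ and $\rho^n/|B_{\rho/2}|=2^n/|B_1|$.
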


\noindent The second result is a consequence of the previous one.
\begin{lemma}\label{lem:W-1+gamma}
Let $q\in(1,\infty)$ and $\gamma \in(0,1)$. Then, there exists a constant $c=c(n,q)$ such that for any $w\in W^{1+\gamma,q}(B_{2R})$, we have
\begin{align*}
    \int_{B_{7R/8}} |\boldsymbol{\tau}^2_h w|^q \,\dx 
    &\le
    c\,|h|^{(1+\gamma) q} 
    \Big[[\nabla w]^q_{W^{\gamma,q}(B_{2R})} +
    R^{-\gamma q}\|\nabla w\|^q_{L^q(B_{2R})}
    \Big]
\end{align*}
for any  $h\in\R^n\setminus\{0\}$ that satisfies  $|h|\le \frac18R$. 
\end{lemma}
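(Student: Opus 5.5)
We use the identity $\boldsymbol{\tau}^2_h w(x)=w(x+2h)-2w(x+h)+w(x)=\boldsymbol{\tau}_h(\boldsymbol{\tau}_h w)(x)$. The point is to exploit the extra derivative of $w$: we write the first difference as an integral of the gradient along the segment, and then apply Lemma~\ref{lem:N-FS} to $\nabla w$.

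For $w\in W^{1+\gamma,q}(B_{2R})$ we first argue for smooth $w$ and conclude by density; mollification converges in $W^{1+\gamma,q}$ on slightly smaller balls, and both sides of the estimate are stable under this limit. The fundamental theorem of calculus gives, for $x\in B_{7R/8}$ and $|h|\le R/8$,
\[
\boldsymbol{\tau}_h w(x)=\int_0^1 \nabla w(x+th)\cdot h\,\mathrm{d}t .
\]
Applying $\boldsymbol{\tau}_h$ once more yields
\[
\boldsymbol{\tau}^2_h w(x)=\int_0^1 \boldsymbol{\tau}_h(\nabla w)(x+th)\cdot h\,\mathrm{d}t .
\]
By Jensen (or H\"older) in $t$ and Fubini, we obtain
\[
\int_{B_{7R/8}}|\boldsymbol{\tau}^2_h w|^q\,\mathrm{d}x\le |h|^q\int_0^1\int_{B_{7R/8}}|\boldsymbol{\tau}_h\nabla w(x+th)|^q\,\mathrm{d}x\,\mathrm{d}t\le |h|^q\int_{B_{R}}|\boldsymbol{\tau}_h \nabla w|^q\,\mathrm{d}y .
\]
In the last step we changed variables $y=x+th$; since $|th|\le R/8$, the shifted ball $B_{7R/8}(th)$ lies in $B_R$. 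All points involved, namely $y$ and $y+h$, lie in $B_{9R/8}\subset B_{2R}$, so $\nabla w$ is defined there.

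It remains to estimate $\int_{B_R}|\boldsymbol{\tau}_h\nabla w|^q\,\mathrm{d}y$. We apply Lemma~\ref{lem:N-FS} componentwise to $\nabla w\in W^{\gamma,q}(B_{2R})$ with radius $2R$ in place of $R$. This gives the bound on $B_{7R/4}\supset B_R$ for $|h|\le R/4$, which covers our range $|h|\le R/8$. The result is
\[
\int_{B_R}|\boldsymbol{\tau}_h\nabla w|^q\,\mathrm{d}y\le c\,|h|^{\gamma q}\Big[[\nabla w]^q_{W^{\gamma,q}(B_{2R})}+(2R)^{-\gamma q}\|\nabla w\|^q_{L^q(B_{2R})}\Big].
\]
Combining the two estimates gives the claim with $c=c(n,q)$.

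The only delicate point is the bookkeeping of domains: the shifts must stay inside the ball on which Lemma~\ref{lem:N-FS} is applied. This is why the hypothesis is posed on $B_{2R}$ and $|h|\le R/8$. The density argument is routine.
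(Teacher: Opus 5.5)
Your proof is correct and follows essentially the same route as the paper: you write $\boldsymbol{\tau}^2_h w=\boldsymbol{\tau}_h(\boldsymbol{\tau}_h w)$, bound the left-hand side by $|h|^q\int_{B_R}|\boldsymbol{\tau}_h\nabla w|^q\,\mathrm{d}y$, and then apply Lemma~\ref{lem:N-FS} to $\nabla w$ on $B_{2R}$; the paper quotes the standard first-difference estimate for $v=\boldsymbol{\tau}_h w$ at the middle step, and your FTC--Jensen computation proves that estimate inline. Your explicit use of radius $2R$ in Lemma~\ref{lem:N-FS}, which gives the bound on $B_{7R/4}\supset B_R$ for $|h|\le R/4$, is exactly the domain bookkeeping the paper leaves implicit.
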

\begin{proof}
    First of all, for any $v\in W^{1,q}(B_R)$ it is well known that
    \[
    \|\boldsymbol{\tau}_h v\|^q_{L^q(B_{7R/8})}\le c(n,q) |h|^q \|\nabla v\|^q_{L^q(B_R)}
    \]
    for all $h\in\R^n\setminus\{0\}$ that satisfy  $|h|\le \frac18R$. On the other hand, since $\nabla w\in W^{\gamma,q}(B_{R})$, we apply Lemma~\ref{lem:N-FS} to get
    \begin{align*}
    \int_{B_{R}} |\boldsymbol{\tau}_h \nabla w|^q \,\dx 
    &\le
    c\,|h|^{\gamma q} 
    \Big[[\nabla w]^q_{W^{\gamma,q}(B_{2R})} +
    R^{-\gamma q}\|\nabla w\|^q_{L^q(B_{2R})}
    \Big]
\end{align*}
To conclude, we use the first estimate with $v=\boldsymbol{\tau}_h w$ and combine it with the second estimate.
\end{proof}

Finite differences can also be used to identify functions in fractional Sobolev spaces. Such a result can be interpreted as the reverse of Lemma \ref{lem:N-FS}. 
We refer to \cite[7.73]{Adams-75}. The version given here is from \cite[Lemma 2.7]{Brasco-Lindgren}. 
\begin{lemma}\label{lem:FS-N}
Let $q\in [1,\infty)$, $\gamma \in(0,1]$, $M\ge 1$, and $0<d<R$. Then, there exists a constant $c=c(n,q)$ such that whenever $w\in L^{q}(B_{R+d})$ satisfies 
\begin{align*}
    \int_{B_R} |\boldsymbol{\tau}_h w|^q \,\dx 
    \le 
    M^q|h|^{\gamma q} 
    \qquad \mbox{for any $h\in\R^N\setminus\{0\}$ with $|h|\le d$,}
\end{align*}
then $w\in W^{\beta ,q}(B_R)$ whenever $0<\beta <\gamma$. Moreover, we have
\begin{align*}
      \iint_{B_R\times B_R}  
    \frac{|w(x)-w(y)|^q}{|x-y|^{n+\beta q}} \,\dx\dy
    \le 
    c\bigg[\frac{d^{(\gamma-\beta)q}}{\gamma-\beta} M^q +
    \frac{1}{\beta d^{\beta q}} \|w\|_{L^q(B_R)}^q\bigg].
\end{align*}
\end{lemma}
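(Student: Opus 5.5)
The plan is to prove this by hand, directly from the definition of the Gagliardo seminorm. I will split the double integral over $B_R\times B_R$ according to whether $|x-y|<d$ or $|x-y|\ge d$. On the first region the hypothesis on $\boldsymbol{\tau}_h w$ gives the first term; on the second region only the $L^q$-norm of $w$ is needed, and this gives the second term. Throughout, $\boldsymbol{\tau}_h w(x)=w(x+h)-w(x)$. The assumption $w\in L^q(B_{R+d})$ guarantees that $\boldsymbol{\tau}_h w$ is well defined on $B_R$ for $|h|\le d$.

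\emph{Near-diagonal part.} Write $y=x+h$. For a pair with $x,y\in B_R$ and $|h|<d$, Tonelli's theorem gives
\begin{align*}
\iint_{\{x,y\in B_R,\,|x-y|<d\}}\frac{|w(x)-w(y)|^q}{|x-y|^{n+\beta q}}\,\dx\dy
&\le \int_{\{0<|h|<d\}}\frac{1}{|h|^{n+\beta q}}\int_{B_R}|\boldsymbol{\tau}_h w|^q\,\dx\,\mathrm{d}h\\
&\le M^q\int_{\{0<|h|<d\}}|h|^{(\gamma-\beta)q-n}\,\mathrm{d}h .
\end{align*}
The inner region has been enlarged from $\{x\in B_R: x+h\in B_R\}$ to all of $B_R$, which only increases the integral, and the second inequality uses the hypothesis. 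In polar coordinates the last integral equals $n\omega_n\,d^{(\gamma-\beta)q}/((\gamma-\beta)q)$. It is finite precisely because $\beta<\gamma$, and this is where that assumption enters. The resulting bound is $c(n,q)\,\frac{d^{(\gamma-\beta)q}}{\gamma-\beta}M^q$.

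\emph{Far part.} On $\{|x-y|\ge d\}$ I use $|w(x)-w(y)|^q\le 2^{q-1}(|w(x)|^q+|w(y)|^q)$. By symmetry in $x$ and $y$ it suffices to estimate
\[
2^{q}\int_{B_R}|w(x)|^q\int_{\{|h|\ge d\}}\frac{\mathrm{d}h}{|h|^{n+\beta q}}\,\dx
= 2^q\,\frac{n\omega_n}{\beta q\, d^{\beta q}}\,\|w\|^q_{L^q(B_R)} .
\]
This produces the term $\frac{c}{\beta d^{\beta q}}\|w\|^q_{L^q(B_R)}$, with $c=c(n,q)$.

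Adding the two contributions gives the stated inequality, and in particular $w\in W^{\beta,q}(B_R)$. The only step needing care is the explicit tracking of the factors $1/(\gamma-\beta)$ and $1/\beta$, which come from the radial integrals near $0$ and near $\infty$. Otherwise the argument is routine, and the constant $c$ depends only on $n$ and $q$. The parameter $M\ge1$ plays no role beyond appearing linearly through the hypothesis.
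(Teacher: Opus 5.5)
Your proof is correct. It is the standard argument for this lemma: split $B_R\times B_R$ at $|x-y|=d$, use Tonelli in $h=y-x$ with the hypothesis on the near-diagonal part, and use $|w(x)-w(y)|^q\le 2^{q-1}(|w(x)|^q+|w(y)|^q)$ on the far part. The paper does not prove the lemma itself but quotes it from \cite[Lemma~2.7]{Brasco-Lindgren}, whose proof runs along the same lines, and your observation that $M\ge1$, $\gamma\le1$ and $d<R$ are never used is accurate.
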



We now state two results concerning second-order differences in the fractional setting. The first can be found in \cite[Chapter 5]{Stein}; see also~\cite[Lemma 2.2.1]{Domokos-1},~\cite[Proposition 2.4]{Brasco-Lindgren}, and~\cite[Lemma 2.17]{BDLMS-1}.

\begin{lemma} \label{lem:Domokos}
Let $q\in [1,\infty)$, $\gamma>0$, $M\ge 0$, $0<r<R$, and $0<d\le \tfrac1{2}(R-r)$. Then, there exists a constant $c=c(q)$ such that whenever  $w\in L^q(B_R)$ satisfies
\begin{align*}
    \int_{B_r} |\boldsymbol{\tau}^2_h w|^q \,\dx 
    \le 
    M^q|h|^{\gamma q},
    \qquad \mbox{for any $h\in\R^N\setminus\{0\}$ with $|h|\le d$,}
\end{align*}
then in the  case $\gamma\in (0,1)$ we have for any $ 0<|h|\le \tfrac12 d$ that
\begin{align*}
     \int_{B_r} |\boldsymbol{\tau}_hw|^q  \,\dx
     &\le
     c\, |h|^{\gamma q}
     \Bigg[\Big( \frac{M}{1-\gamma}\Big)^q +\frac{1}{d^{q\gamma }}
     \int_{B_R}|w|^q\,\dx
     \Bigg]
     ,
\end{align*}
while in the case $\gamma >1$ there holds
\begin{align}\label{est-1st-diffquot>1}  
     \int_{B_r} |\boldsymbol{\tau}_hw|^q  \,\dx
     &\le
     c\,|h|^q\bigg[
      \Big(\frac{M }{\gamma -1}\Big)^qd^{(\gamma-1)q}
      +
        \frac1{d^q}
        \int_{B_R}|w|^q\,\dx
      \bigg].
\end{align}
In the limiting case $\gamma =1$  we have for any $0<\beta<1$ that
\begin{align*}
     \int_{B_r} |\boldsymbol{\tau}_hw|^q  \,\dx
     &\le
      c\, |h|^{\beta q}
     \bigg[
     \Big(\frac{ M}{1-\beta}\Big)^q d^{(1-\beta)q}+\frac{1}{d^{\beta q}} \int_{B_R}|w|^q\,\dx\bigg].
\end{align*}
\end{lemma}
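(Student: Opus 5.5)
The plan is to use a dyadic telescoping identity that expresses first differences through second differences at doubled step sizes. This is the classical Marchaud-type argument, and it is carried out directly at the level of $L^q(B_r)$ norms.

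\textbf{The identity.} Writing $\boldsymbol{\tau}_h w(x)=w(x+h)-w(x)$, one checks directly that
\[
\boldsymbol{\tau}_{2h}w=\boldsymbol{\tau}^2_h w+2\,\boldsymbol{\tau}_h w,
\qquad\text{that is,}\qquad
\boldsymbol{\tau}_h w=\tfrac12\boldsymbol{\tau}_{2h}w-\tfrac12\boldsymbol{\tau}^2_h w .
\]
Iterating this $k$ times gives
\[
\boldsymbol{\tau}_h w=2^{-k}\boldsymbol{\tau}_{2^k h}w-\sum_{j=0}^{k-1}2^{-j-1}\boldsymbol{\tau}^2_{2^j h}w .
\]
For a given $0<|h|\le \tfrac12 d$, choose $k\in\N$ with $\tfrac12 d<2^k|h|\le d$; this is possible precisely because $|h|\le d/2$, which forces $k\ge1$. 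Then every step $2^j h$ with $j\le k-1$ satisfies $|2^jh|\le d$, so the hypothesis applies to each second difference:
\[
\|\boldsymbol{\tau}^2_{2^jh}w\|_{L^q(B_r)}\le M(2^j|h|)^\gamma .
\]
For the boundary term, $x\in B_r$ and $|2^kh|\le d\le\frac12(R-r)$ give $x+2^kh\in B_R$. Hence
\[
\|\boldsymbol{\tau}_{2^kh}w\|_{L^q(B_r)}\le 2\|w\|_{L^q(B_R)},
\]
and the coefficient satisfies $2^{-k}\le 2|h|/d$. Minkowski's inequality then yields
\[
\|\boldsymbol{\tau}_hw\|_{L^q(B_r)}\le \frac{4|h|}{d}\|w\|_{L^q(B_R)}+\frac M2|h|^\gamma\sum_{j=0}^{k-1}2^{j(\gamma-1)} .
\]
Raising to the power $q$ costs only a factor $2^{q-1}$, which explains why the constant is $c=c(q)$.

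\textbf{The three cases.} It remains to estimate the geometric sum, which is the only point where the cases differ.
\begin{itemize}
\item For $\gamma\in(0,1)$: the sum is bounded by $(1-2^{\gamma-1})^{-1}\le c/(1-\gamma)$. For the boundary term, use $|h|/d\le(|h|/d)^\gamma$, since $|h|\le d$. This gives the first claimed estimate.
\item For $\gamma>1$: the sum is at most $2^{k(\gamma-1)}/(2^{\gamma-1}-1)$. Combining $2^{k(\gamma-1)}\le (d/|h|)^{\gamma-1}$ with $2^{\gamma-1}-1\ge c^{-1}\min\{\gamma-1,1\}$, the term becomes $\le c\,M|h|\,d^{\gamma-1}/(\gamma-1)$, up to harmless adjustments for large $\gamma$. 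The boundary term already carries the factor $|h|/d$, so \eqref{est-1st-diffquot>1} follows.
\item For $\gamma=1$: the sum equals $k\le \log_2(2d/|h|)$. Use the elementary bound $\log t\le t^{1-\beta}/(1-\beta)$ for $t\ge1$ to get
\[
|h|\,k\le c\,|h|^\beta d^{1-\beta}/(1-\beta),
\]
and bound the boundary term via $|h|/d\le(|h|/d)^\beta$.
\end{itemize}

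\textbf{Main obstacle.} Essentially none of this is deep. The only step needing care is the bookkeeping of the choice of $k$. It must guarantee simultaneously three things: that all shifted points remain inside $B_R$, where $w$ is defined; that all steps $2^jh$ with $j<k$ lie in the admissible range $|2^jh|\le d$; and that $2^{-k}$ is comparable to $|h|/d$. The dyadic choice $\tfrac12 d<2^k|h|\le d$ handles all three at once. Beyond that, one only has to track the constants in the geometric sums near $\gamma=1$, which produce the factors $(1-\gamma)^{-1}$, $(\gamma-1)^{-1}$ and $(1-\beta)^{-1}$.
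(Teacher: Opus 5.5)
Your argument is correct. It is the standard dyadic (Marchaud-type) iteration of $\boldsymbol{\tau}_{2h}w=\boldsymbol{\tau}^2_hw+2\,\boldsymbol{\tau}_hw$, which is how the lemma is proved in the sources the paper cites for it (Stein, Domokos, Brasco--Lindgren); the paper itself gives no proof. The ``harmless adjustments for large $\gamma$'' are not needed, since $2^{\gamma-1}-1\ge(\gamma-1)\ln 2$ holds for every $\gamma>1$ and gives the factor $(\gamma-1)^{-1}$ with an absolute constant directly.
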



The next result is a sort of reverse of Lemma~\ref{lem:W-1+gamma}; it is taken from 
\cite[Lemma 2.9]{DKLN-pot}; see also \cite[Proposition 2.4]{Brasco-Lindgren}, and \cite[Lemma 2.6]{Brasco-Lindgren-Schikorra}.

\begin{lemma}\label{lem:2nd-Ni-FS}
Let $q\in [1,\infty)$, $\gamma \in (0,1)$, $M>0$, $R>0$, and $d \in (0,R)$. Then, for any 
 $w\in W^{1,q}(B_{R+6d })$ that satisfies
\begin{equation*}
    \int_{B_{R+4d }}\big|\boldsymbol{\tau}^2_h w\big|^q\,\dx
    \le
    M^q|h|^{q (1+\gamma )}\qquad \forall\, 0<|h|\in (0,d ],
\end{equation*}
we have 
$$
\nabla w\in W^{\beta,q}(B_R)\quad \mbox{for any $\beta\in (0,\gamma )$.}
$$
Moreover, there  exists a constant $c$ depending only on $n$ and $q$,
such that 
\begin{align*}
     [\nabla w]_{W^{\beta,q}(B_R)}^q
    &\le
    \frac{c\, d ^{q(\gamma -\beta)}}{(\gamma -\beta) \gamma ^q (1-\gamma )^q}
    \bigg[ M^q + \frac{(R+4d )^{q}}{\beta d ^{q(1+\gamma )}} 
    \int_{B_{R+4d }}|\nabla w|^q\,\dx
    \bigg].
\end{align*}
\end{lemma}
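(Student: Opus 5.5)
The plan is to reduce to a first-order difference estimate for $\nabla w$ by writing the second difference as a first difference of a first difference, and then invoke Lemma~\ref{lem:FS-N}. For $w\in W^{1,q}$ and $0<|h|\le d$ we have the pointwise identity
\[
\boldsymbol{\tau}^2_h w(x)=\boldsymbol{\tau}_h(\boldsymbol{\tau}_h w)(x)=\int_0^1\int_0^1 h\cdot\nabla w(x+(t+\sigma)h)\,\mathrm{d}t\,\mathrm{d}\sigma ,
\]
but this by itself does not give differences of $\nabla w$. The cleaner route is therefore to use the hypothesis to control $\boldsymbol{\tau}_h\nabla w$ through an auxiliary averaging. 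Concretely, I would consider the function $v:=\boldsymbol{\tau}_h w$ (for fixed $h$) and note that $\boldsymbol{\tau}_h v=\boldsymbol{\tau}^2_h w$. Applying Lemma~\ref{lem:Domokos} in the case $\gamma+1>1$, i.e.\ \eqref{est-1st-diffquot>1}, to $w$ itself already gives $\|\boldsymbol{\tau}_h w\|_{L^q}\lesssim |h|$, which is weaker than what we need. So instead I would work directly with a mollified-difference representation of the gradient.

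The key device is to replace $\nabla w$ by the difference quotient along a second step $k$. For any unit direction $e$ and $0<|k|\le d$, write
\[
\partial_e w(x)=\frac{\boldsymbol{\tau}_{ke}w(x)}{k}-\frac{1}{k}\int_0^k\big(\partial_e w(x+te)-\partial_e w(x)\big)\,\mathrm{d}t\cdot 0+\dots
\]
More efficiently, I would use the standard identity $\boldsymbol{\tau}_h w = \tfrac12\boldsymbol{\tau}_{2h}w-\tfrac12\boldsymbol{\tau}^2_h w$, iterate it dyadically, $\boldsymbol{\tau}_h w=2^{-j}\boldsymbol{\tau}_{2^jh}w-\sum_{i=0}^{j-1}2^{-i-1}\boldsymbol{\tau}^2_{2^ih}w$, and then apply $\boldsymbol{\tau}_\eta$ in a second variable $\eta$ and divide by $|h|$. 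Letting $h\to0$ along a fixed direction turns $|h|^{-1}\boldsymbol{\tau}_h w$ into $\partial_e w$, so that
\[
\boldsymbol{\tau}_\eta\partial_e w=\lim_{h\to0}\frac{1}{|h|}\boldsymbol{\tau}_\eta\boldsymbol{\tau}_h w .
\]
The mixed difference $\boldsymbol{\tau}_\eta\boldsymbol{\tau}_h w$ is then controlled by pure second differences via the polarization-type identity
\[
\boldsymbol{\tau}_\eta\boldsymbol{\tau}_h w(x)=\tfrac12\big[\boldsymbol{\tau}^2_{(h+\eta)/2}w-\boldsymbol{\tau}^2_{(h-\eta)/2}w\big]\ \text{shifted}\ +\ \dots ,
\]
which is valid up to translations of the argument. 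That is why the hypothesis is imposed on the enlarged ball $B_{R+4d}$ while the conclusion lives on $B_R$.

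Carrying this out, I expect the bound
\[
\int_{B_{R+d}}|\boldsymbol{\tau}_\eta\nabla w|^q\,\dx\le \frac{c}{\gamma^q(1-\gamma)^q}\Big[M^q+\frac{(R+4d)^q}{d^{q(1+\gamma)}}\int_{B_{R+4d}}|\nabla w|^q\dx\Big]|\eta|^{\gamma q}
\]
for $|\eta|\le d$. The factor $1/(1-\gamma)$ comes from summing geometric series $\sum 2^{i(\gamma-1)}$ at small scales, and the factor $1/\gamma$ from $\sum 2^{-i\gamma}$ when passing from scale $d$ down to $|\eta|$. The large-scale remainder $2^{-j}\boldsymbol{\tau}_{2^jh}w$, stopped at $|2^jh|\sim d$, is bounded using $\|\nabla w\|_{L^q}$, and this produces the last term. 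Once this estimate is in hand, Lemma~\ref{lem:FS-N}, applied to $\nabla w$ with exponent $\gamma$, step bound $d$ and constant $M$ replaced by the bracket above, yields the stated seminorm bound with the factor $d^{q(\gamma-\beta)}/(\gamma-\beta)$. Its $L^q$-term is absorbed into the bracket using $d<R+4d$.

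The main obstacle is the middle step: obtaining $\boldsymbol{\tau}_\eta\nabla w$ from second differences in the \emph{same} direction only, with the correct dependence on $\gamma$ and $1-\gamma$. I would first attempt this in one dimension along lines parallel to $e$ and to $\eta$, combining the Marchaud-type dyadic sum in the limit $h\to0$ with Minkowski's inequality \eqref{eq:sum-minkowski}. The fallback is to follow the averaging argument of \cite[Lemma 2.9]{DKLN-pot}, which the statement cites.
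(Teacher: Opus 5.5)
The paper does not prove this lemma; it quotes it from \cite[Lemma~2.9]{DKLN-pot}. So your sketch has to stand on its own, and it has a genuine gap exactly at the step you flag. The bound $\int_{B_{R+d}}|\boldsymbol{\tau}_\eta\nabla w|^q\,\dx\lesssim|\eta|^{\gamma q}[\dots]$ is only announced, and the route you describe does not produce it. The polarization identity is in fact exact,
\[
\boldsymbol{\tau}_\eta\boldsymbol{\tau}_H w(x)=\boldsymbol{\tau}^2_{(\eta+H)/2}w(x)-\boldsymbol{\tau}^2_{(\eta-H)/2}w(x+H).
\]
But applied to $\boldsymbol{\tau}_\eta\boldsymbol{\tau}_h w$ with $|h|\ll|\eta|$, it involves steps of length about $\frac12|\eta|$. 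It therefore only gives $\|\boldsymbol{\tau}_\eta\boldsymbol{\tau}_hw\|_{L^q}\lesssim M|\eta|^{1+\gamma}$, which blows up after division by $|h|$ as $h\to0$. If instead you stop the dyadic telescoping at $|2^jh|\sim d$, you are left with the terms $2^{-i-1}|h|^{-1}\boldsymbol{\tau}_\eta\boldsymbol{\tau}^2_{2^ih}w$ for $|\eta|<2^i|h|\le d$. The trivial bound $2\|\boldsymbol{\tau}^2_{2^ih}w\|_{L^q}$ gives $M(2^i|h|)^\gamma$ for each of them, hence a total of order $Md^\gamma/\gamma$ with no decay in $|\eta|$. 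A series converging with a factor $(1-\gamma)^{-1}$ would instead need $\|\boldsymbol{\tau}_\eta\boldsymbol{\tau}^2_Hw\|_{L^q}\lesssim M|\eta|\,|H|^{\gamma}$. That amounts to $\|\boldsymbol{\tau}^2_H\nabla w\|_{L^q}\lesssim M|H|^\gamma$, a second-order form of the very estimate you want, and you give no argument for it.

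Your ingredients do suffice once the scales are arranged differently: stop the telescoping exactly at scale $|\eta|$, and use polarization only there, where both steps are comparable. Fix a unit vector $e$ and $0<|\eta|\le d$, and set $H:=|\eta|e$ and $t_j:=2^{-j}|\eta|$. Your iterated identity then reads
\[
\frac{\boldsymbol{\tau}_{t_je}w}{t_j}=\frac{\boldsymbol{\tau}_Hw}{|\eta|}-\sum_{i=0}^{j-1}\frac{\boldsymbol{\tau}^2_{2^it_je}w}{2^{i+1}t_j}.
\]
Apply $\boldsymbol{\tau}_\eta$ and take $L^q(B_R)$-norms. Each summand is at most $2\|\boldsymbol{\tau}^2_{2^it_je}w\|_{L^q(B_{R+d})}/(2^{i+1}t_j)\le M(2^it_j)^\gamma$, so the sum is at most $M|\eta|^\gamma/(2^\gamma-1)$. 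The first term is at most $2M|\eta|^\gamma$ by the polarization identity, since $|\frac12(\eta\pm H)|\le|\eta|\le d$ and all base points lie in $B_{R+d}\subset B_{R+4d}$. Since $w\in W^{1,q}(B_{R+6d})$, you have $\boldsymbol{\tau}_{t_je}w/t_j\to\partial_ew$ in $L^q(B_{R+d})$. Letting $j\to\infty$ gives $\|\boldsymbol{\tau}_\eta\nabla w\|_{L^q(B_R)}\le c(n)M\gamma^{-1}|\eta|^\gamma$ for all $0<|\eta|\le d$. Lemma~\ref{lem:FS-N} applied to $\nabla w$ then yields
\[
[\nabla w]^q_{W^{\beta,q}(B_R)}\le c\bigg[\frac{d^{q(\gamma-\beta)}}{(\gamma-\beta)\gamma^q}M^q+\frac{1}{\beta d^{\beta q}}\int_{B_R}|\nabla w|^q\,\dx\bigg].
\]
This is dominated by the asserted right-hand side, because $(\gamma-\beta)\gamma^q(1-\gamma)^q\le1$ and $d<R+4d$. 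So your approach works, and even gives a slightly better constant (no factor $(1-\gamma)^{-q}$). It works only with this organization of scales, however, which your proposal does not contain.
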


Finally, we state an interpolation estimate from~\cite[Corollary 2.6]{BDLM}, which is obtained by applying a Gagliardo--Nirenberg type inequality of~\cite[Theorem~1]{BM}.

\begin{lemma}\label{lem:GN}
Let $q>1$ and $\gamma\in(0,1)$. 
Then there exists a constant $c=c(n,q,\gamma)>0$ such that every function $u\in W^{1+\gamma,q}(B_R)$ satisfies
\begin{equation*}
    \|\nabla u\|_{L^q(B_R)}
    \le 
    \frac{c}{R}\,\|u\|_{L^q(B_R)}
    + 
    c\, 
    \|u\|_{L^q(B_R)}^{\frac{\gamma}{1+\gamma}}
    [\nabla u]_{W^{\gamma,q}(B_R)}^{\frac{1}{1+\gamma}}.
\end{equation*}
\end{lemma}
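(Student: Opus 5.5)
We would prove the estimate first on the unit ball and then obtain the general case by scaling. For $u\in W^{1+\gamma,q}(B_R)$ set $u_R(x):=u(Rx)$ on $B_1$. A change of variables gives
\[
\|u_R\|_{L^q(B_1)}=R^{-\frac nq}\|u\|_{L^q(B_R)},\quad
\|\nabla u_R\|_{L^q(B_1)}=R^{1-\frac nq}\|\nabla u\|_{L^q(B_R)},
\]
and
\[
[\nabla u_R]_{W^{\gamma,q}(B_1)}=R^{1+\gamma-\frac nq}[\nabla u]_{W^{\gamma,q}(B_R)}.
\]
The powers of $R$ are consistent with the claim. In the product term they add up to
\[
-\tfrac nq\cdot\tfrac{\gamma}{1+\gamma}+\big(1+\gamma-\tfrac nq\big)\cdot\tfrac{1}{1+\gamma}=1-\tfrac nq,
\]
and the term $\frac cR\|u\|_{L^q}$ scales the same way. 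Hence it suffices to take $R=1$.

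On $B_1$ we would invoke the Gagliardo--Nirenberg inequality of \cite[Theorem~1]{BM}. It holds on $\R^n$, and hence on bounded Lipschitz domains such as $B_1$ via a Stein-type extension operator. The operator is bounded simultaneously on $L^q$ and on $W^{1+\gamma,q}$, which is the reason we use this extension. With $\theta:=\frac{\gamma}{1+\gamma}$ the inequality reads
\[
\|\nabla u\|_{L^q(B_1)}\le c\,\|u\|_{L^q(B_1)}^{\theta}\,\|u\|_{W^{1+\gamma,q}(B_1)}^{1-\theta},
\]
where $\|u\|_{W^{1+\gamma,q}(B_1)}\approx \|u\|_{L^q(B_1)}+\|\nabla u\|_{L^q(B_1)}+[\nabla u]_{W^{\gamma,q}(B_1)}$ is the \emph{full} norm. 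The main obstacle is that the right-hand side contains the full norm rather than only the top-order seminorm. In particular $\|\nabla u\|_{L^q}$ appears on both sides, and the domain version cannot hold with the seminorm alone, as affine functions show.

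To remove these lower-order terms we use subadditivity of $t\mapsto t^{1-\theta}$. This gives
\[
\|\nabla u\|_{L^q}\le c\|u\|_{L^q}+c\|u\|_{L^q}^{\theta}\|\nabla u\|_{L^q}^{1-\theta}+c\|u\|_{L^q}^{\theta}[\nabla u]_{W^{\gamma,q}}^{1-\theta},
\]
all norms taken over $B_1$. Young's inequality with exponents $\frac1\theta$ and $\frac1{1-\theta}$ bounds the middle term by $\frac12\|\nabla u\|_{L^q}+c\|u\|_{L^q}$. This term is finite since $u\in W^{1+\gamma,q}(B_1)$, so it can be absorbed into the left-hand side. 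The result is
\[
\|\nabla u\|_{L^q(B_1)}\le c\|u\|_{L^q(B_1)}+c\|u\|_{L^q(B_1)}^{\frac{\gamma}{1+\gamma}}[\nabla u]_{W^{\gamma,q}(B_1)}^{\frac1{1+\gamma}},
\]
with $c=c(n,q,\gamma)$. Rescaling as above yields the asserted inequality on $B_R$.
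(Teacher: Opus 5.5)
Your proposal is correct and follows the route the paper indicates. The paper gives no separate proof; it quotes the estimate from \cite[Corollary~2.6]{BDLM}, where it is derived from the Gagliardo--Nirenberg inequality \cite[Theorem~1]{BM} with $s_1=0$, $s_2=1+\gamma$, $p_1=p_2=q$ and $\theta=\frac{\gamma}{1+\gamma}$, which is exactly what you do (the exceptional case of that theorem is excluded because $1+\gamma\notin\mathbb{N}$ and $q>1$). Your removal of the lower-order parts of the full norm via subadditivity and Young's inequality, the absorption using $\nabla u\in L^q(B_1)$, and the rescaling to $B_R$ are all carried out correctly.
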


\subsection{Tail estimates}

We start with a basic estimate that compares the tail of a function over concentric balls of different radii.

\begin{lemma}\label{lem:t}
Let $p\in(1,\infty)$ and $s\in(0,1)$. 
Then, for any $u\in L^p_{\rm loc}(\Omega)\cap L^{p-1}_{sp}(\mathbb{R}^n)$, any ball $B_{R}(x_o)\Subset\Omega$, and any $r\in(0,R)$, we have
\begin{align*}
    \Tail \big(u; B_r(x_o)\big)^{p-1}
    \le 
     c(n)\Big(\frac{R}{r}\Big)^n
    \Big[
    \Tail \big(u, B_R(x_o)\big)
    + 
    R^{-\frac{n}{p}}\| u\|_{L^p(B_R(x_o))}
    \Big]^{p-1}
\end{align*}
\end{lemma}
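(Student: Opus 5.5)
Split the integral defining $\Tail(u;B_r(x_o))^{p-1}$ into the annulus $B_R(x_o)\setminus B_r(x_o)$ and the exterior $\mathbb{R}^n\setminus B_R(x_o)$:
\[
\Tail(u;B_r)^{p-1}
= r^{sp}\int_{B_R\setminus B_r}\frac{|u|^{p-1}}{|x-x_o|^{n+sp}}\,\dx
+ r^{sp}\int_{\mathbb{R}^n\setminus B_R}\frac{|u|^{p-1}}{|x-x_o|^{n+sp}}\,\dx
=: I_1+I_2 .
\]

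For the exterior term $I_2$, use $r<R$ to write $r^{sp}\le R^{sp}$. This gives directly
\[
I_2\le \Tail(u;B_R)^{p-1}.
\]

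For the annulus term $I_1$, use $|x-x_o|\ge r$ to get
\[
I_1\le r^{-n}\int_{B_R}|u|^{p-1}\,\dx .
\]
Since $p-1<p$, Hölder's inequality gives
\[
\int_{B_R}|u|^{p-1}\,\dx\le |B_R|^{1/p}\,\|u\|_{L^p(B_R)}^{p-1}.
\]
Multiplying by $r^{-n}$ therefore yields
\[
I_1\le c(n)\, r^{-n}R^{n/p}\|u\|_{L^p(B_R)}^{p-1}
= c(n)\Big(\frac{R}{r}\Big)^n\Big[R^{-\frac np}\|u\|_{L^p(B_R)}\Big]^{p-1},
\]
since $R^{n/p}=R^{n}\big(R^{-n/p}\big)^{p-1}$.

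Finally, combine the two bounds. Because $(R/r)^n\ge1$, we have
\[
I_1+I_2\le c(n)\Big(\frac Rr\Big)^n\big(A^{p-1}+B^{p-1}\big),
\qquad A=\Tail(u;B_R),\quad B=R^{-\frac np}\|u\|_{L^p(B_R)}.
\]
It remains to use $A^{p-1}+B^{p-1}\le c(p)(A+B)^{p-1}$. This is elementary: it holds with constant $1$ when $p\ge2$, and with constant $2^{2-p}$ when $p<2$. Since $2^{2-p}\le 2$ for $p>1$, the constant can be absorbed into $c(n)$.

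There is no real obstacle here; the only point requiring care is this last step, so that the constant depends on $n$ alone.
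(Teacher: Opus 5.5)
Your proof is correct and follows the paper's argument: split into the annulus and the exterior, bound the annulus term by $r^{-n}\int_{B_R}|u|^{p-1}\,\dx$ and apply H\"older, then absorb everything into $c(n)(R/r)^n[\,\cdot\,]^{p-1}$. The only differences are cosmetic. You drop the factor $(r/R)^{sp}\le 1$ on the exterior term right away, and you spell out the elementary inequality $A^{p-1}+B^{p-1}\le 2(A+B)^{p-1}$, which the paper uses without comment.
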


\begin{proof}
Consider 
$B_r(x_o)\Subset B_R(x_o)$ and estimate as follows:
\begin{align*}
    \Tail &\big(u; B_r(x_o)\big)^{p-1}\\
    &=r^{sp}\int_{\R^n\setminus B_R(x_o)}\frac{|u(x)|^{p-1}}{|x-x_o|^{n+sp}}\dx+r^{sp}\int_{B_R(x_o)\setminus B_r(x_o)}\frac{|u(x)|^{p-1}}{|x-x_o|^{n+sp}}\dx\\
    &\le
    \Big(\frac{r}{R}\Big)^{sp} \Tail \big(u; B_R(x_o)
    \big)^{p-1} +
    r^{-n} \int_{B_R(x_o)} |u|^{p-1} \,\dx \\
    &\le
    \Big(\frac{r}{R}\Big)^{sp} \Tail \big(u;  B_R(x_o)
    \big)^{p-1} +
    |B_1| \Big(\frac{R}{r}\Big)^n 
    \bigg[\bint_{B_R(x_o)}
    |u|^p\,\dx
    \bigg]^{\frac{p-1}p}\\
    &\le
    c(n)\Big(\frac{R}{r}\Big)^n
    \Big[
    \Tail \big(u, B_R(x_o)\big)
    + 
    R^{-\frac{n}{p}}\| u\|_{L^p(B_R(x_o))}
    \Big]^{p-1},
\end{align*}
which completes the proof.
\end{proof}

The following lemma provides estimates of the tail term on a smaller ball in terms of its value on a larger ball and the local $L^p$-oscillation.

\begin{lemma}\label{lem:tail}
Let \(p\in(1,\infty)\) and \(s\in(0,1)\).
There exists a constant \(c=c(n,s,p)\) with the following property.
For every function
\(u\in L^p_{\mathrm{loc}}(\Omega)\cap L^{p-1}_{sp}(\mathbb{R}^n)\)
and every pair of balls
\(B_r(x_o)\subset B_R(y_o)\Subset\Omega\),
the following estimates hold.
If \(p\ge 2\), then
\begin{align*}
    \Tail \big(u&-(u)_{x_o,r}; B_r(x_o)\big) \\
    &\le 
    \bigg[1 + \frac{|x_o-y_o|}{r}\bigg]^{\frac{n}{p-1}+sp'} 
    \Big(\frac{r}{R}\Big)^{sp'} \Tail \big(u-(u)_{y_o,R}; B_R(y_o)\big) \\
    &\phantom{\le\,} + 
    c\Big(\frac{R}{r}\Big)^{\frac{n}{p-1}}
    \bigg[ \bint_{B_{R}(y_o)} \big|u-(u)_{y_o,R}\big|^{p} \, \d x \bigg]^\frac{1}{p}.
\end{align*}
If \(1<p<2\), then
\begin{align*}
    \Tail \big(u&-(u)_{x_o,r}; B_r(x_o)\big)^{p-1} \\
    &\le
    \bigg[1 + \frac{|x_o-y_o|}{r}\bigg]^{n+sp} 
    \Big(\frac{r}{R}\Big)^{sp} \Tail \big(u-(u)_{y_o,R}; B_R(y_o)\big)^{p-1} \\
    &\phantom{\le\,} + 
    c\Big(\frac{R}{r}\Big)^{n}
    \bigg[ \bint_{B_{R}(y_o)} \big|u-(u)_{y_o,R}\big|^{p} \, \d x \bigg]^\frac{p-1}{p}.
\end{align*}
\end{lemma}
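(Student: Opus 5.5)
The plan is to split the tail integral over $\R^n\setminus B_r(x_o)$ into the part outside $B_R(y_o)$ and the part inside $B_R(y_o)\setminus B_r(x_o)$. At the same time, I will replace the constant $(u)_{x_o,r}$ by $(u)_{y_o,R}$ via the triangle inequality. Write $w:=u-(u)_{y_o,R}$. Then
\[
|u-(u)_{x_o,r}|\le |w|+|(u)_{x_o,r}-(u)_{y_o,R}|,
\]
and the constant difference is controlled by Hölder:
\[
|(u)_{x_o,r}-(u)_{y_o,R}|\le \bint_{B_r(x_o)}|w|\,\dx\le \Big(\frac{R}{r}\Big)^{\frac np}\bigg[\bint_{B_R(y_o)}|w|^p\,\dx\bigg]^{\frac1p}.
\]
Its contribution to the tail is then a multiple $c(n,s,p)\,r^{sp}\int_{\R^n\setminus B_r}|x-x_o|^{-n-sp}\,\dx\le c$ times that constant. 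Since $(R/r)^{n/p}\le (R/r)^{n/(p-1)}$ (resp. $(R/r)^{n(p-1)/p}\le (R/r)^n$), this fits the second term.

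For $p\ge2$ the quantity $\Tail^{p-1}$ is a $(p-1)$-th power of an $L^{p-1}$-type norm, so I use Minkowski's inequality at the level of $\Tail$ itself. For $1<p<2$ I use subadditivity $(a+b)^{p-1}\le a^{p-1}+b^{p-1}$ at the level of $\Tail^{p-1}$. This explains the two different forms of the statement, and after this reduction the same two pieces remain in both cases.

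\emph{Inner piece.} On $B_R(y_o)\setminus B_r(x_o)$ we have $|x-x_o|\ge r$, so
\[
r^{sp}\int_{B_R(y_o)\setminus B_r(x_o)}\frac{|w|^{p-1}}{|x-x_o|^{n+sp}}\dx\le r^{-n}\int_{B_R(y_o)}|w|^{p-1}\dx\le c\Big(\frac Rr\Big)^n\bigg[\bint_{B_R(y_o)}|w|^p\dx\bigg]^{\frac{p-1}{p}},
\]
exactly as in Lemma~\ref{lem:t}. Taking the $(p-1)$-th root when $p\ge2$ gives the factor $(R/r)^{n/(p-1)}$.

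\emph{Outer piece.} This is the step requiring the most care, since it produces the precise geometric constant rather than a generic $c$. For $x\notin B_R(y_o)$ we have $|x-y_o|\le |x-x_o|+|x_o-y_o|$. Because $|x-x_o|\ge r$ there (as $B_r(x_o)\subset B_R(y_o)$), this gives
\[
|x-y_o|\le |x-x_o|\Big(1+\frac{|x_o-y_o|}{r}\Big).
\]
Hence
\[
|x-x_o|^{-n-sp}\le \Big(1+\frac{|x_o-y_o|}{r}\Big)^{n+sp}|x-y_o|^{-n-sp},
\]
and therefore
\[
r^{sp}\int_{\R^n\setminus B_R(y_o)}\frac{|w|^{p-1}}{|x-x_o|^{n+sp}}\dx\le \Big(1+\frac{|x_o-y_o|}{r}\Big)^{n+sp}\Big(\frac rR\Big)^{sp}\Tail\big(w;B_R(y_o)\big)^{p-1}.
\]
This is the claimed first term for $p<2$. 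Taking the $(p-1)$-th root for $p\ge2$ gives the exponents $\frac{n}{p-1}+sp'$ and $sp'$, so the constant in front is exactly $1$ as stated. The main point to watch is keeping this unit coefficient: all constant-generating steps (the mean-value correction and the inner piece) must go into the second term only, which the Minkowski and subadditivity splitting allows.
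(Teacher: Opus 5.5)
Your proposal is correct and follows the paper's proof essentially step by step. You add and subtract $(u)_{y_o,R}$ (Minkowski for $p\ge2$, subadditivity of $t\mapsto t^{p-1}$ for $1<p<2$), split $\R^n\setminus B_r(x_o)$ into $\R^n\setminus B_R(y_o)$ and $B_R(y_o)\setminus B_r(x_o)$, and bound the outer, inner and mean-value pieces by the same geometric comparison and H\"older arguments as the paper. The one step you leave implicit, splitting the domain at the level of $\Tail$ itself when $p\ge2$, is justified by the subadditivity of $t\mapsto t^{1/(p-1)}$ (valid since $p-1\ge1$), which is also what the paper uses.
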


\begin{proof}
Consider first the case $p\ge 2$. Adding and subtracting the average $(u)_{y_o,R}$ gives
\begin{align*}
    \Tail \big(u-(u)_{x_o,r}; B_r(x_o)\big)
    &\le 
    \bigg[r^{sp}
    \int_{\mathbb{R}^n\setminus B_{r}(x_o)}
        \frac{|u-(u)_{y_o,R}|^{p-1}}{|x-x_o|^{n+sp}}\, \d x \bigg]^{\frac{1}{p-1}} \\
    &\phantom{\le\,}
    +
    \bigg[r^{sp}
    \int_{\mathbb{R}^n\setminus B_{r}(x_o)}
        \frac{|(u)_{y_o,R}-(u)_{x_o,r}|^{p-1}}{|x-x_o|^{n+sp}}\, \d x \bigg]^{\frac{1}{p-1}}.
\end{align*}
Decompose
\[
\mathbb{R}^n\setminus B_r(x_o)
=
\big(\mathbb{R}^n\setminus B_R(y_o)\big)
\cup
\big(B_R(y_o)\setminus B_r(x_o)\big),
\]
which gives
\begin{align*}
    \Tail\big(u-(u)_{x_o,r}; B_r(x_o)\big)
    &\le 
    \bigg[r^{sp}
    \int_{\mathbb{R}^n\setminus B_R(y_o)}
        \frac{|u-(u)_{y_o,R}|^{p-1}}{|x-x_o|^{n+sp}}\,\mathrm dx
    \bigg]^{\frac{1}{p-1}} \\
    &\phantom{\le\,} +
    \bigg[r^{sp}
    \int_{B_R(y_o)\setminus B_r(x_o)}
        \frac{|u-(u)_{y_o,R}|^{p-1}}{|x-x_o|^{n+sp}}\,\mathrm dx
    \bigg]^{\frac{1}{p-1}} \\
    &\phantom{\le\,}+
    c\, |(u)_{y_o,R}-(u)_{x_o,r}| \\
    &:= \sfI + \sfI\sfI + c\,\sfI\sfI\sfI,
\end{align*}
where the last term uses that the kernel depends only on the radial variable $|x-y_o|$. Here and throughout, \(c=c(n,s,p)\).

For $1<p<2$, the same argument applies after raising all terms to the power $p-1$.
Adding and subtracting the average $(u)_{y_o,R}$ yields
\begin{align*}
    \Tail\big(u-(u)_{x_o,r}; B_r(x_o)\big)^{p-1}
    &\le 
    r^{sp}
    \int_{\mathbb{R}^n\setminus B_r(x_o)}
        \frac{|u-(u)_{y_o,R}|^{p-1}}{|x-x_o|^{n+sp}}\,\mathrm dx  \\
    &\phantom{\le\,} +
    r^{sp}
    \int_{\mathbb{R}^n\setminus B_r(x_o)}
        \frac{|(u)_{y_o,R}-(u)_{x_o,r}|^{p-1}}{|x-x_o|^{n+sp}}\,\mathrm dx .
\end{align*}
Decomposing the domain of integration as in the case $p\ge 2$ leads to
\begin{align*}
    \Tail\big(u-(u)_{x_o,r}; B_r(x_o)\big)^{p-1}
    &\le 
    r^{sp}
    \int_{\mathbb{R}^n\setminus B_R(y_o)}
        \frac{|u-(u)_{y_o,R}|^{p-1}}{|x-x_o|^{n+sp}}\,\mathrm dx \\
    &\phantom{\le\,} +
    r^{sp}
    \int_{B_R(y_o)\setminus B_r(x_o)}
        \frac{|u-(u)_{y_o,R}|^{p-1}}{|x-x_o|^{n+sp}}\,\mathrm dx  \\
    &\phantom{\le\,} +
    c\, |(u)_{y_o,R}-(u)_{x_o,r}|^{p-1} \\
    &=
    \sfI^{p-1} + \sfI\sfI^{p-1} + c\,\sfI\sfI\sfI^{p-1},
\end{align*}
where $c=c(n,s,p)$ and $\sfI,\sfI\sfI,\sfI\sfI\sfI$ are defined as in the case $p\ge 2$.
It remains to estimate the terms $\sfI$--$\sfI\sfI\sfI$ for all $p>1$. 
Since $|x-x_o|\ge r$ for every $x\in \mathbb{R}^n\setminus B_R(y_o) \subset \mathbb{R}^n\setminus B_r(x_o)$, 
we have
\begin{align*}
    |x-y_o|
    &\le 
    |x-x_o| + |x_o-y_o|
    \le 
    \bigg(1 + \frac{|x_o-y_o|}{r}\bigg) |x-x_o|.
\end{align*}
Hence,
\begin{align*}
    \sfI
    &\le 
    \bigg(1 + \frac{|x_o-y_o|}{r}\bigg)^{\frac{n}{p-1}+s p'}
    \bigg[r^{sp}
    \int_{\mathbb{R}^n\setminus B_R(y_o)}
        \frac{|u-(u)_{y_o,R}|^{p-1}}{|x-y_o|^{n+sp}}\,\mathrm dx
    \bigg]^{\frac{1}{p-1}} \\
    &=
    \bigg(1 + \frac{|x_o-y_o|}{r}\bigg)^{\frac{n}{p-1}+s p'}
    \bigg(\frac{r}{R}\bigg)^{s p'}
    \Tail\big(u-(u)_{y_o,R}; B_R(y_o)\big).
\end{align*}
To estimate $\sfI\sfI$, note that $|x-x_o|\ge r$ for every $x\in B_R(y_o)\setminus B_r(x_o)$.  
Applying H\"older's inequality yields
\begin{align*}
    \sfI\sfI
    &\le
    \bigg[
        r^{-n}
        \int_{B_R(y_o)}
            \big|u-(u)_{{y_o;R}}\big|^{p-1}\,\mathrm dx
    \bigg]^{\frac{1}{p-1}} \\
    &\le c
    \bigg[
        \Big(\frac{R}{r}\Big)^{n}
        \mint_{B_R(y_o)}
            \big|u-(u)_{{y_o;R}}\big|^{p-1}\,\mathrm dx
    \bigg]^{\frac{1}{p-1}} \\
    &\le
    c \Big(\frac{R}{r}\Big)^{\frac{n}{p-1}}
    \bigg[
        \mint_{B_R(y_o)}
            \big|u-(u)_{{y_o;R}}\big|^{p}\,\mathrm dx
    \bigg]^{\frac{1}{p}} .
\end{align*}
Applying H\"older's inequality again and enlarging the domain of integration gives
\begin{align*}
    \sfI\sfI\sfI
    &\le
    \mint_{B_r(x_o)}
        \big|u-(u)_{{y_o;R}}\big|\,\mathrm dx \\
    &\le
    \bigg[
        \mint_{B_r(x_o)}
            \big|u-(u)_{{y_o;R}}\big|^{p}\,\mathrm dx
    \bigg]^{\frac{1}{p}} \\
    &\le
    \Big(\frac{R}{r}\Big)^{\frac{n}{p}}
    \bigg[
        \mint_{B_R(y_o)}
            \big|u-(u)_{{y_o;R}}\big|^{p}\,\mathrm dx
    \bigg]^{\frac{1}{p}} .
\end{align*}
The preceding estimates for $\sfI$, $\sfI\sfI$, and $\sfI\sfI\sfI$ then establish the asserted inequalities.
\end{proof}

A quantitative estimate for the tail of the mean–subtracted function over dyadic enlargements of a ball is as follows.

\begin{lemma}\label{lem:tail-est}
Let $p>1$ and $0<s<1$. 
Then there exists a constant $c=c(n,s,p)>0$ such that for every 
$u\in L^{p}_{\mathrm{loc}}(\Omega)\cap L^{p-1}_{sp}(\mathbb{R}^n)$ 
and every $i\in\mathbb{N}$ with $B_{2^i\rho}(x_o)\Subset\Omega$, the following estimates hold. 
If $p\ge2$, then
\begin{align*}
    \Tail \big(u-(u)_{x_o,\rho}; B_\rho(x_o)\big)
    &\le
    2^{-isp'}
    \Tail \big(u-(u)_{x_o,2^i\rho}; B_{2^i\rho}(x_o)\big) \\
    &\quad +
    c \sum_{k=1}^{i} 2^{-ksp'}
    \bigg[
        \bint_{B_{2^k\rho}(x_o)}
        |u-(u)_{x_o,2^k\rho}|^p\, \d x
    \bigg]^\frac1p .
\end{align*} 
If $1<p<2$, then
\begin{align*}
    \Tail \big(u-(u)_{x_o,\rho}; B_\rho(x_o)\big)^{p-1} 
    &\le
    2^{-isp}
    \Tail \big(u-(u)_{x_o,2^i\rho}; B_{2^i\rho}(x_o)\big)^{p-1} \\
    &\phantom{\le\,} +
    c \sum_{k=1}^{i} 2^{-ksp} \bigg[ \bint_{B_{2^k\rho}(x_o)} \big|u-(u)_{x_o,2^k\rho}\big|^{p} \, \d x \bigg]^\frac{p-1}{p} .
\end{align*}
\end{lemma}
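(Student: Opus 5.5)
The plan is to prove the estimate by a telescoping application of the one-step comparison of tails in Lemma~\ref{lem:tail}. We apply that lemma with concentric balls, $y_o=x_o$, so the factor $[1+|x_o-y_o|/r]$ equals $1$, and with radii $r=2^{k-1}\rho$ and $R=2^k\rho$ for $k=1,\dots,i$. The hypothesis $B_{2^i\rho}(x_o)\Subset\Omega$ guarantees that all these balls are admissible. Write
\[
T_k:=\Tail\big(u-(u)_{x_o,2^k\rho};B_{2^k\rho}(x_o)\big),\qquad
E_k:=\bigg[\bint_{B_{2^k\rho}(x_o)}|u-(u)_{x_o,2^k\rho}|^p\,\d x\bigg]^{\frac1p}.
\]

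\emph{Case $p\ge2$.} Lemma~\ref{lem:tail} with $r/R=\tfrac12$ gives the recursion
\[
T_{k-1}\le 2^{-sp'}T_k+c\,2^{\frac{n}{p-1}}E_k=2^{-sp'}T_k+c\,E_k,
\]
with $c=c(n,s,p)$. Iterating from $k=1$ up to $k=i$ yields
\[
T_0\le 2^{-isp'}T_i+c\sum_{k=1}^i 2^{-(k-1)sp'}E_k.
\]
Absorbing the harmless factor $2^{sp'}$ into $c$ gives exactly the claimed inequality. Formally this is an induction on $i$: assuming the bound for $T_0$ in terms of $T_{i-1}$, we substitute the one-step estimate $T_{i-1}\le 2^{-sp'}T_i+cE_i$. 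The coefficient in front of $E_i$ becomes $2^{-(i-1)sp'}c$, which is $c\,2^{-isp'}$ after enlarging $c$.

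\emph{Case $1<p<2$.} The same scheme works with the $(p-1)$-th powers, using the second alternative of Lemma~\ref{lem:tail}. This gives
\[
T_{k-1}^{p-1}\le 2^{-sp}T_k^{p-1}+c\,2^{n}E_k^{p-1},
\]
which is linear in the quantities $T_k^{p-1}$. Iterating gives
\[
T_0^{p-1}\le 2^{-isp}T_i^{p-1}+c\sum_{k=1}^i2^{-(k-1)sp}E_k^{p-1},
\]
and the constant is again absorbed.

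The only point requiring care is to make sure the iteration constants do not accumulate with $i$. This holds because each step multiplies the previous tail by the contraction factor $2^{-sp'}$ (respectively $2^{-sp}$) with coefficient exactly one. The additive constant $c$ from Lemma~\ref{lem:tail} is not compounded; it only appears in front of each $E_k$ weighted by the accumulated geometric factor. This is why it is essential to apply Lemma~\ref{lem:tail} with concentric balls, where its leading coefficient is exactly $(r/R)^{sp'}$ without an extra constant. I expect no real obstacle beyond this bookkeeping.
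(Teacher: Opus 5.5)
Your proposal is correct and is essentially the paper's proof. Like the paper, you apply Lemma~\ref{lem:tail} to the concentric pairs $B_{2^{k-1}\rho}(x_o)\subset B_{2^k\rho}(x_o)$ and iterate the one-step recursion from $k=1$ to $k=i$, absorbing the factor $2^{sp'}$ (resp.\ $2^{sp}$) into $c$; your observation that the leading coefficient is exactly $(r/R)^{sp'}$ (resp.\ $(r/R)^{sp}$), so that no constant compounds, is what makes the iteration uniform in $i$.
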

\begin{proof}
We apply Lemma~\ref{lem:tail} iteratively on the dyadic sequence of balls
\(B_\rho(x_o), B_{2\rho}(x_o), \dots, B_{2^{i-1}\rho}(x_o)\).
If \(p\ge 2\), this yields
\begin{align*}
    \Tail \big(u&-(u)_{x_o,\rho}; B_\rho(x_o)\big) \\
    &\le 
    2^{-sp'} \Tail \big(u-(u)_{x_o,2\rho}; B_{2\rho}(x_o)\big) + 
    c \bigg[ \bint_{B_{2\rho}(x_o)} \big|u-(u)_{x_o,2\rho}\big|^{p} \, \d x \bigg]^\frac{1}{p} \\
    &\le 
    2^{-2sp'} \Tail \big(u-(u)_{x_o,4\rho}; B_{4\rho}(x_o)\big) + 
    c\,2^{-sp'}
    \bigg[ \bint_{B_{4\rho}(x_o)} \big|u-(u)_{x_o,4\rho}\big|^{p} \, \d x \bigg]^\frac{1}{p} \\
    &\phantom{\le\,} +
    c \bigg[ \bint_{B_{2\rho}(x_o)} \big|u-(u)_{x_o,2\rho}\big|^{p} \, \d x \bigg]^\frac{1}{p} \\
    &\le 
    \dots \\
    &\le 
    2^{-isp'} \Tail \big(u-(u)_{x_o,2^i\rho}; B_{2^i\rho}(x_o)\big) \\
    &\phantom{\le\,} +  
    c \sum_{k=1}^{i} 2^{-(k-1)sp'} 
    \bigg[ \bint_{B_{2^k\rho}(x_o)} \big|u-(u)_{x_o,2^k\rho}\big|^{p} \, \d x \bigg]^\frac{1}{p} \\
    &=
    2^{-isp'} \Tail \big(u-(u)_{x_o,2^i\rho}; B_{2^i\rho}(x_o)\big) \\
    &\phantom{\le\,} +  
    c\,2^{sp'} \sum_{k=1}^{i} 2^{-ksp'} \bigg[ \bint_{x_o,2^k\rho} \big|u-(u)_{x_o,2^k\rho}\big|^{p} \, \d x \bigg]^\frac{1}{p}
\end{align*}
In the case \(1<p<2\), the same iterative argument applies with the exponent \(p-1\). 
Specifically, we obtain
\begin{align*}
    \Tail \big(u&-(u)_{x_o,\rho}; B_\rho(x_o)\big)^{p-1} \\
    &\le 
    2^{-sp} \Tail \big(u-(u)_{x_o,2\rho}; B_{2\rho}(x_o)\big)^{p-1} + 
    c
    \bigg[ \bint_{B_{2\rho}(x_o)} \big|u-(u)_{x_o,2\rho}\big|^{p} \, \d x \bigg]^\frac{p-1}{p} \\
    &\le 
    \cdots \\
    &\le 
    2^{-isp} \Tail \big(u-(u)_{x_o,2^i\rho}; B_{2^i\rho}(x_o)\big)^{p-1} \\
    &\phantom{\le\,} +  
    c\,2^{sp} \sum_{k=1}^{i} 2^{-ksp} \bigg[ \bint_{B_{2^k\rho}(x_o)} \big|u-(u)_{x_o,2^k\rho}\big|^{p} \, \d x \bigg]^\frac{p-1}{p}.
\end{align*}
\end{proof}

Finally, if two functions coincide outside a certain ball, then their mean-subtracted tails are comparable up to the $L^p$-difference.

\begin{lemma}\label{lem:tv-u}
Let \(p\in(1,\infty)\) and \(s\in(0,1)\).
There exists a constant \(c=c(n,s,p)\) such that the following holds.
Let \(B_r(x_o)\subset B_R(x_o)\Subset\Omega\) be concentric balls and let
\(u,v\in L^p_{\mathrm{loc}}(\Omega)\cap L^{p-1}_{sp}(\mathbb{R}^n)\) satisfy
\(v\equiv u\) on \(\mathbb{R}^n\setminus B_R(x_o)\).
Then
\begin{align*}
    \Tail \big(v&-(v)_{x_o,r}; B_r(x_o)
    \big)\\
    &\le 
    c\Tail \big(u-(u)_{x_o,r}; B_r(x_o)\big) 
    +
        c \Big(\frac{R}{r}\Big)^{\frac{n}{p-1}}
    \bigg[
        \mint_{B_R(x_o)}
            |u-v|^{p}\,\mathrm dx
    \bigg]^{\frac{1}{p}} .
\end{align*}
\end{lemma}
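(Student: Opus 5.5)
The natural route is to add and subtract $(u)_{x_o,r}$ in the tail of $v-(v)_{x_o,r}$, and then use that $v=u$ outside $B_R(x_o)$ so that only an integral over the annulus $B_R(x_o)\setminus B_r(x_o)$ has to be handled separately. Write $w:=v-u$, which vanishes outside $B_R(x_o)$. Then
\[
v-(v)_{x_o,r} = \big(u-(u)_{x_o,r}\big) + w - (w)_{x_o,r}.
\]
By the quasi-triangle inequality for $t\mapsto t^{p-1}$ (with constant $1$ or $2^{p-2}$ depending on whether $p<2$ or $p\ge2$), followed by taking the $(p-1)$-th root, we obtain
\[
\Tail\big(v-(v)_{x_o,r};B_r(x_o)\big)\le c\,\Tail\big(u-(u)_{x_o,r};B_r(x_o)\big)+c\,\Tail\big(w;B_r(x_o)\big)+c\,|(w)_{x_o,r}|.
\]
For the constant term we use the normalization $r^{sp}\int_{\R^n\setminus B_r}|x-x_o|^{-n-sp}\,\dx = c(n,s,p)$, which is the same fact used for $\sfI\sfI\sfI$ in the proof of Lemma~\ref{lem:tail}.

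For the tail of $w$, the integrand is supported in $B_R(x_o)\setminus B_r(x_o)$, where $|x-x_o|\ge r$. Hence
\[
\Tail(w;B_r(x_o))^{p-1}\le r^{-n}\int_{B_R(x_o)}|u-v|^{p-1}\,\dx .
\]
Applying H\"older's inequality exactly as in the estimate of $\sfI\sfI$ gives the bound
\[
c\Big(\frac Rr\Big)^{\frac{n}{p-1}}\Big[\bint_{B_R(x_o)}|u-v|^p\,\dx\Big]^{1/p}.
\]
For the mean, Jensen's inequality together with the enlargement of the domain from $B_r$ to $B_R$ yields
\[
|(w)_{x_o,r}|\le (R/r)^{n/p}\Big[\bint_{B_R(x_o)}|u-v|^p\,\dx\Big]^{1/p}.
\]
Since $R/r\ge1$ and $\tfrac np\le \tfrac n{p-1}$, this term is dominated by the same quantity as the tail of $w$.

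Combining the three estimates proves the lemma. The only point that needs care is keeping track of the constants in the quasi-triangle inequality for $p<2$ versus $p\ge2$. That bookkeeping is routine and in both cases produces a constant depending only on $n,s,p$.
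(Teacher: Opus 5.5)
Your proposal is correct and follows essentially the same route as the paper. You split $v-(v)_{x_o,r}$ into the same three pieces: the tail of $u-(u)_{x_o,r}$, the tail of $v-u$ (supported in the annulus and handled via $|x-x_o|\ge r$ plus H\"older), and the difference of means (handled via the radial normalization of the kernel plus H\"older on the enlarged ball). The paper also uses Minkowski's inequality directly for $p\ge2$ and quasi-subadditivity for $1<p<2$, which matches your constant bookkeeping.
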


\begin{proof}
By adding and subtracting \(u-(u)_{x_o,r}\) and using Minkowski’s inequality
when \(p\ge 2\), and the quasi-subadditivity of the tail functional
(up to a multiplicative constant) when \(1<p<2\), we obtain
\begin{align*}
    \Tail \big(v-(v)_{x_o,r}; B_r(x_o)\big)
    &\le 
    c\Tail \big(v-u; B_r(x_o)\big) \\
    &\phantom{\le\,} +
    c\Tail \big(u-(u)_{x_o,r}; B_r(x_o)\big) \\
    &\phantom{\le\,} +
    c\Tail \big((u)_{x_o,r}-(v)_{x_o,r}; B_r(x_o)\big) .
\end{align*}
It remains to estimate the first and the third term on the right-hand side.
Using the assumption \(v\equiv u\) on \(\R^n\setminus B_R(x_o)\), the radial
dependence of the kernel, and Hölder’s inequality, we infer
\begin{align*}
    \Tail \big(v-u; B_r(x_o)\big)
    &=
    \bigg[r^{sp}
    \int_{B_R(x_o)\setminus B_r(x_o)}
        \frac{|v-u|^{p-1}}{|x-x_o|^{n+sp}}\,\mathrm dx \bigg]^{\frac{1}{p-1}} \\
    &\le c
    \bigg[
        \Big(\frac{R}{r}\Big)^{n}
        \mint_{B_R(x_o)}
            |u-v|^{p-1}\,\mathrm dx
    \bigg]^{\frac{1}{p-1}} \\
    &\le
    c \Big(\frac{R}{r}\Big)^{\frac{n}{p-1}}
    \bigg[
        \mint_{B_R(x_o)}
            |u-v|^{p}\,\mathrm dx
    \bigg]^{\frac{1}{p}} .
\end{align*}
The third term can be treated analogously. Indeed, 
\begin{align*}
    \Tail \big((u)_{x_o,r}-(v)_{x_o,r}; B_r(x_o)\big)
    &\le 
    c\, |(u)_{x_o,r}-(v)_{x_o,r}| \\
    &\le 
    c\, \bint_{B_r(x_o)} |u-v| \,\dx   \\
    &\le 
    c \Big(\frac{R}{r}\Big)^{\frac{n}{p}}
    \bigg[
        \mint_{B_R(x_o)}
            |u-v|^{p}\,\mathrm dx
    \bigg]^{\frac{1}{p}} .
\end{align*}
Combining the above estimates yields the claim.
\end{proof}

\subsection{The homogeneous problem}
In this section we summarize the regularity statements of local weak solutions to the homogeneous fractional $p$-Laplace equation 
\begin{equation} \label{eq:frac-p-lap-homo}
    \left( - \Delta_p\right)^s v = 0
    \quad \mbox{in $\Omega$.}
\end{equation}
We start with the $L^\infty$-estimate from~\cite[Theorem 1.1]{DKP} (see also \cite[Theorem~6.2]{Coz17b}), which remains valid in the setting of variable coefficients.

\begin{theorem}\label{thm:sup-est}
Let $p \in (1,\infty)$ and $s \in (0,1)$. Assume that $v \in W^{s,p}_{\loc}(\Omega) \cap L^{p-1}_{sp}(\R^n)$ is a local weak solution to the homogeneous problem~\eqref{eq:frac-p-lap} in the sense of Definition~\ref{def:weak-sol}, with coefficients $a$ satisfying~\eqref{eq:a-condition}$_1$ and $f \equiv 0$. Then, $v$ is locally bounded in $\Omega$, i.e.~$v\in L^\infty_{\loc}(\Omega)$. Moreover, there exists a constant $c=c(n,p,s,C_o,C_1)$ such that for any ball $B_{2R} = B_{2R}(x_o) \Subset \Omega$ we have 
\begin{align*}
    \|v\|_{L^\infty(B_R)} 
    \le 
    \frac{c}{R^{\frac{n}{p}}} \|v\|_{L^p(B_{2R})} + c\, \Tail(v;B_R).
\end{align*}
\end{theorem}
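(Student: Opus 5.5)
The statement is the local $L^\infty$ bound of Di~Castro--Kuusi--Palatucci; I would prove it by a De~Giorgi iteration built on a Caccioppoli inequality with tail. Only the ellipticity bounds \eqref{eq:a-condition}$_1$ are used, so the coefficient $a$ only changes constants through $C_o,C_1$.

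\emph{Step 1 (Caccioppoli inequality).} For $k\in\R$, $w_+:=(v-k)_+$, concentric balls $B_\rho\subset B_r\subset B_{2R}$ and a cutoff $\psi\in C^\infty_c(B_{(r+\rho)/2})$ with $\psi\equiv1$ on $B_\rho$ and $|\nabla\psi|\le c/(r-\rho)$, I test the equation with $\varphi=w_+\psi^p$. This is admissible in Definition~\ref{def:weak-sol} since $f\equiv0$. Splitting $\R^n\times\R^n$ into $B_r\times B_r$ and the off-diagonal parts, and using $C_o\le a\le C_1$ together with the standard algebraic inequalities for $|t|^{p-2}t$, I expect
\begin{align*}
\iint_{B_\rho\times B_\rho}\frac{|w_+(x)-w_+(y)|^p}{|x-y|^{n+sp}}\,\dx\dy
\le c\,\frac{r^{(1-s)p}}{(r-\rho)^p}\int_{B_r}w_+^p\,\dx
+c\,\frac{r^{n+sp}}{(r-\rho)^{n+sp}}\,\frac{|B_r\cap\{v>k\}|}{r^{sp}}\,\Tail(w_+;B_r)^{p-1},
\end{align*}
with $c=c(n,p,s,C_o,C_1)$. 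The nonlocal term comes from $\int_{\R^n\setminus B_r}w_+(y)^{p-1}|x-y|^{-n-sp}\,\dy$ for $x\in\spt\psi$; there $|x-y|\ge\frac{r-\rho}{2r}|y-x_o|$, which gives the factor $(r/(r-\rho))^{n+sp}$.

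\emph{Step 2 (iteration).} I set $r_j=R(1+2^{-j})$, $\tilde r_j=(r_j+r_{j+1})/2$ and $k_j=k_\infty(1-2^{-j})$ for a level $k_\infty$ to be fixed. Let $Y_j:=\bint_{B_{r_j}}(v-k_j)_+^p\,\dx$. Since $w_{j+1}:=(v-k_{j+1})_+\le(v-k_j)_+$, Chebyshev gives $|B_{r_j}\cap\{v>k_{j+1}\}|/|B_{r_j}|\le (2^{j+1}/k_\infty)^pY_j$. I apply the fractional Sobolev inequality (Lemma~\ref{lem:Sobolev}, with a bounded-exponent version if $sp\ge n$) to $w_{j+1}\eta_j$, cut off between $\tilde r_j$ and $r_{j+1}$, and feed in Step~1. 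Choosing
\begin{equation*}
k_\infty\ge \delta\,\Tail(v_+;B_R)
\end{equation*}
makes the tail contribution at most a multiple of $(2^{j}/k_\infty)^pY_j$ times $\delta^{1-p}$, because $\Tail(w_j;B_{r_j})\le c\,\Tail(v_+;B_R)+c\,Y_0^{1/p}$-type terms by Lemma~\ref{lem:t}. Combining with Hölder's inequality yields
\begin{equation*}
\frac{Y_{j+1}}{k_\infty^p}\le c\,b^j\Big(\frac{Y_j}{k_\infty^p}\Big)^{1+\beta},\qquad \beta=\frac{sp}{n}\ \text{(or any fixed $\beta>0$ if $sp\ge n$)}.
\end{equation*}

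\emph{Step 3 (conclusion).} The fast geometric convergence lemma gives $Y_j\to0$, hence $v\le k_\infty$ on $B_R$, provided $Y_0/k_\infty^p\le c^{-1/\beta}b^{-1/\beta^2}$. So I take
\begin{equation*}
k_\infty=\delta\,\Tail(v_+;B_R)+c(\delta)\Big(\bint_{B_{2R}}v_+^p\,\dx\Big)^{1/p},
\end{equation*}
and fix $\delta=1$. Applying the same argument to $-v$ gives the bound with $\|v\|_{L^p(B_{2R})}R^{-n/p}+\Tail(v;B_R)$.

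The main obstacle is the bookkeeping of the tail in Step~2. The tail must be measured on the fixed ball $B_R$ uniformly in $j$ while the radii shrink. This needs $|y-x_o|/|y-x|\le c\,2^j$ for $x\in B_{r_{j+1}}$ and $y\notin B_{r_j}$, and these factors $2^{j(n+sp)}$ must be absorbed into $b^j$. The factor $|\{v>k_{j+1}\}\cap B_{r_j}|$ multiplying the tail is what supplies the superlinear power, so I would keep it explicit rather than bounding it by $|B_{r_j}|$.
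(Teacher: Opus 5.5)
The paper does not prove this statement. It quotes \cite[Theorem~1.1]{DKP} (see also \cite{Coz17b}), whose class of symmetric measurable kernels comparable to $|x-y|^{-n-sp}$ contains $a(x,y)|x-y|^{-n-sp}$ under \eqref{eq:a-condition}$_1$. Your De Giorgi iteration with a tail-type Caccioppoli inequality is the proof given there. Your Steps~2--3 reproduce its structure: radii from $2R$ down to $R$, levels $k_j\uparrow k_\infty$, the choice $k_\infty=\delta\Tail(v_+;B_R)+c(\delta)(\bint_{B_{2R}}v_+^p\,\dx)^{1/p}$, then $-v$. They give exactly the stated bound.

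The Caccioppoli inequality in Step~1, however, is not correct as displayed, and Step~2 does not follow from it. Its nonlocal term has the wrong homogeneity. Replacing $(v,k)$ by $(\lambda v,\lambda k)$ preserves the equation and multiplies the left-hand side and the local term by $\lambda^p$, but multiplies $|B_r\cap\{v>k\}|\,\Tail(w_+;B_r)^{p-1}$ only by $\lambda^{p-1}$. Testing with $w_+\psi^p$, and using $(v(y)-v(x))_+\le w_+(y)$ wherever $v(x)>k$, actually produces
\[
c\Big(\frac{r}{r-\rho}\Big)^{n+sp} r^{-sp}\,\Tail(w_+;B_r)^{p-1}\int_{B_r} w_+\,\dx ,
\]
that is, the $L^1$-norm of $w_+$ in place of the measure of the level set. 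With your version, the tail contribution to the normalized recursion carries an extra factor $k_\infty^{-1}$. The recursion is then not scale invariant and does not close in the form you state.

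There are two ways to repair this:
\begin{itemize}
\item keep $\int_{B_r}w_+\,\dx$ and use $w_{j+1}\le w_j^p/(k_{j+1}-k_j)^{p-1}$, which gives $\int_{B_{r_j}}w_{j+1}\,\dx\le (2^{j+1}/k_\infty)^{p-1}|B_{r_j}|\,Y_j$;
\item or apply Young's inequality $w_+\Tail^{p-1}\le w_+^p+\Tail^p$, which yields the level-set measure multiplied by $\Tail^p$ rather than $\Tail^{p-1}$.
\end{itemize}
In both cases $k_\infty\ge\delta\Tail(v_+;B_R)$ bounds the normalized tail term by $c\,b^j\delta^{-p}\,Y_j/k_\infty^p$, which is the bound you assert in Step~2.

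Two smaller corrections. First, this tail term is only linear in $Y_j/k_\infty^p$. The superlinear power $1+\beta$ comes solely from Sobolev and H\"older, through the factor $\big(|B_{r_{j+1}}\cap\{v>k_{j+1}\}|/|B_{r_{j+1}}|\big)^{\beta}$, contrary to your closing remark. Second, Lemma~\ref{lem:t} is not needed, and it compares tails in the opposite direction anyway. Since $R\le r_j\le 2R$ and $w_{j+1}\le v_+$, one has directly $\Tail(w_{j+1};B_{r_j})\le 2^{sp'}\Tail(v_+;B_R)$. With these corrections your argument is the standard proof of the statement.
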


In addition, the weak gradient of a local weak solution to~\eqref{eq:frac-p-lap-homo} exists and is locally in $L^q$ for every $q < \infty$, see~\cite[Theorem 1.4]{BDLMS-1} and \cite[Theorem~1.2]{BDLMS-2}.

\begin{theorem} \label{thm:Lq-gradient}
Let $p\in(1,\infty)$ and $s \in \big(\frac{(p-2)_+}{p},1\big)$. Then, for any local weak solution $v \in W^{s,p}_{\loc}(\Omega) \cap L^{p-1}_{sp}(\R^n)$ to the homogeneous problem~\eqref{eq:frac-p-lap-homo} in the sense of Definition~\ref{def:weak-sol}, we have 
$$
    v \in W^{1,q}_{\loc}(\Omega) \quad 
    \text{ for any } q \in [p,\infty).
$$
Moreover, there exists a constant $c =  c(n,p,s,q)$ such that for any ball $B_{2R} \equiv B_{2R}(x_o) \Subset \Omega$ we have
$$
    \| \nabla v \|_{L^q(B_{R})} 
    \leq 
    c\, R^{\frac{n}{q}-1} \left[ R^{s- \frac{n}{p}} [v]_{W^{s,p}(B_{2R})} + \Tail \big(v-(v)_{{2R}};B_{2R}\big) \right].
$$
\end{theorem}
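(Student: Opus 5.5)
This result is not proved from scratch here: it is the combination of \cite[Theorem~1.4]{BDLMS-1} (for $p\ge2$, $s>\frac{p-2}{p}$) and \cite[Theorem~1.2]{BDLMS-2} (for $1<p<2$). My plan is therefore to reduce the displayed estimate to the normalized statements proved there, and to recall the mechanism behind them. I assume those normalized statements give, for $B_2\Subset\Omega$ and $q\ge p$, a bound of $\|\nabla v\|_{L^q(B_1)}$ by $c(n,p,s,q)\big[[v]_{W^{s,p}(B_2)}+\Tail(v-(v)_2;B_2)\big]$.

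First I normalize. Since the equation is invariant under adding constants, I replace $v$ by $v-(v)_{2R}$. This changes neither the seminorm nor $\nabla v$, and it is exactly the function appearing in the tail. Next I rescale by setting $w(x):=v(x_o+Rx)$. Because $(-\Delta_p)^s$ is homogeneous of order $sp$ under dilations, $w$ is a local weak solution in $(\Omega-x_o)/R\supset B_2$. The scaling identities are
\[
[w]_{W^{s,p}(B_2)}=R^{s-\frac np}[v]_{W^{s,p}(B_{2R})},\qquad \Tail(w;B_2)=\Tail(v;B_{2R}),\qquad \|\nabla w\|_{L^q(B_1)}=R^{1-\frac nq}\|\nabla v\|_{L^q(B_R)}.
\]
So once the normalized estimate holds for $w$, multiplying by $R^{\frac nq-1}$ gives precisely the stated inequality. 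The $L^p$ part of $\|w\|_{W^{s,p}}$ is absorbed by the seminorm through Lemma~\ref{lem:poin}, since $w$ has zero mean on $B_2$.

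For the normalized estimate, I would follow the finite-difference scheme of \cite{Brasco-Lindgren, BDLMS-1}. First, Theorem~\ref{thm:sup-est} bounds $\|w\|_{L^\infty(B_{3/2})}$ by the data, and cutting $w$ off outside a slightly larger ball moves the tail contribution into a bounded right-hand side. Then I test the equation with $J_{\beta+1}(\boldsymbol{\tau}_h w)\eta^p$, where $J_\gamma(t)=|t|^{\gamma-2}t$. By the monotonicity of $J_p$ this yields a recursive inequality of the form
\[
\sup_h\Big\|\frac{\boldsymbol{\tau}_h w}{|h|^{\vartheta}}\Big\|_{L^{\beta+p-1}}\;\Longrightarrow\;\sup_h\Big\|\frac{\boldsymbol{\tau}^2_h w}{|h|^{\vartheta'}}\Big\|_{L^{\beta+p-1}},\qquad \vartheta'=\frac{\beta\vartheta+sp}{\beta+p-1}\ \text{(roughly)}.
\]
This is combined with Lemma~\ref{lem:Domokos}, which converts second-order differences into first-order ones. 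The condition $s>\frac{(p-2)_+}{p}$ is exactly what makes the differentiability exponent eventually exceed $1$. Once a second-difference bound with exponent $1+\gamma$ holds, Lemma~\ref{lem:2nd-Ni-FS} gives $\nabla w\in W^{\gamma,\cdot}$. I then run the scheme again on $\nabla w$ with increasing integrability exponents, using Lemma~\ref{lem:Sobolev} at each step. After finitely many steps, with the number depending on $q$, this reaches any prescribed $q<\infty$.

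The main obstacle is the subquadratic case $1<p<2$. There the monotonicity inequality degenerates, and the recursion must carry an extra weight $(|w(x)-w(y)|+|w(x+h)-w(y+h)|)^{p-2}$, which \cite{BDLMS-2} controls through the gradient bound already obtained. For this work I simply cite both theorems after the scaling reduction above.
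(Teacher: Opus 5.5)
Your route matches the paper's: subtract $(v)_{2R}$ and invoke \cite[Theorem~1.4]{BDLMS-1} for $p\ge 2$, respectively \cite[Theorem~1.2]{BDLMS-2} for $1<p<2$. Your scaling identities are correct, though the rescaling is unnecessary because the cited estimates already hold on balls of arbitrary radius.

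The gap is the form you \emph{assume} for the cited results. As they are used here, those theorems apply only to locally bounded solutions. With $w:=v-(v)_{2R}$, their estimate reads
\[
\|\nabla v\|_{L^q(B_R)}\le c\,R^{\frac nq-1}\Big[R^{s-\frac np}[v]_{W^{s,p}(B_{\frac32 R})}+\|w\|_{L^\infty(B_{\frac32 R})}+\Tail\big(w;B_{\frac32 R}\big)\Big],
\]
so ``simply citing'' does not yield the displayed inequality. The entire content of the paper's proof is the bridge you skip:
\begin{enumerate}
\item Theorem~\ref{thm:sup-est} supplies local boundedness, so the citation applies at all.
\item Theorem~\ref{thm:sup-est} combined with Lemma~\ref{lem:poin} gives $\|w\|_{L^\infty(B_{\frac32R})}\le c\,R^{s-\frac np}[v]_{W^{s,p}(B_{2R})}+c\,\Tail(w;B_{\frac32R})$.
\item Lemma~\ref{lem:t} together with Lemma~\ref{lem:poin} gives $\Tail(w;B_{\frac32R})\le c\,\Tail(w;B_{2R})+c\,R^{s-\frac np}[v]_{W^{s,p}(B_{2R})}$.
\end{enumerate}
You do mention the sup bound, but only inside your sketch of the internal mechanism. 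It, and the change of tail ball, must appear in the reduction itself. With these three short steps added, your argument is complete.

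A side remark on the mechanism sketch, which is not load-bearing but is misleading. The recursion $\vartheta'=(\beta\vartheta+sp)/(\beta+p-1)$ has fixed point $\vartheta=sp/(p-1)$. It therefore pushes the differentiability exponent past $1$ only when $sp'>1$, not on the full range $s>\frac{(p-2)_+}{p}$, which for $1<p<2$ is all of $(0,1)$. Reaching that range is what requires the refinements in \cite{BDLMS-1,BDLMS-2}. So your claim that this condition is ``exactly'' what your recursion needs is incorrect.
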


\begin{proof}
By Theorem~\ref{thm:sup-est}, $v$ is locally bounded. Setting
$w:=v-(v)_{2R}$, we note that $w$ is also a locally bounded local weak
solution to~\eqref{eq:frac-p-lap-homo}. Hence, by
\cite[Theorem~1.4]{BDLMS-1} if $p\ge2$, respectively
\cite[Theorem~1.2]{BDLMS-2} if $p\in(1,2)$, it follows that
$v\in W^{1,q}_{\loc}(\Omega)$ for every $q\in[p,\infty)$. Moreover, there
exists a constant $c=c(n,p,s,q)$ such that for any ball
$B_{2R}\equiv B_{2R}(x_o)\Subset\Omega$,
\begin{align*}
    \|\nabla v\|_{L^q(B_R)}
    &\le
    c\,R^{\frac{n}{q}-1}
    \Big[
        R^{s-\frac{n}{p}} [v]_{W^{s,p}(B_{\frac32 R})}
        + \|w\|_{L^\infty(B_{\frac32 R})}
        + \Tail\big(w;B_{\frac32 R}\big)
    \Big].
\end{align*}
By Theorem~\ref{thm:sup-est} and the fractional Poincar\'e inequality from
Lemma~\ref{lem:poin}, we obtain
\begin{align*}
    \|v-(v)_{2R}\|_{L^\infty(B_{\frac32 R})}
    &\le
     \frac{c}{R^{\frac{n}{p}}} \|v-(v)_{2R}\|_{L^p(B_{2R})}
     + c\,\Tail\big(w;B_{\frac32 R}\big)\\
     &\le
     c\, R^{s-\frac{n}{p}} [v]_{W^{s,p}(B_{2R})}
     + c\,\Tail\big(w;B_{\frac32 R}\big).
\end{align*}
It remains to estimate the tail term. Applying Lemma~\ref{lem:t} to
$B_{\frac32 R}$ and $B_R\equiv B_{2R}$, and using again
Lemma~\ref{lem:poin}, we obtain
\begin{align*}
    \Tail\big(w;B_{\frac32 R}\big)
    &\le 
    c(n)\Big[
        \Tail \big(w; B_{2R}\big)
        + 
        R^{-\frac{n}{p}}\| w\|_{L^p(B_{2R}(x_o))}
    \Big] \\
    &\le 
    c(n)\Big[
        \Tail \big(w; B_{2R}\big)
        + 
        R^{s-\frac{n}{p}} [v]_{W^{s,p}(B_{2R})}
    \Big].
\end{align*}
Collecting the above estimates yields the claimed local $L^q$ gradient estimate.
\end{proof}

Finally, local weak solutions to~\eqref{eq:frac-p-lap-homo} admit higher differentiability, which is stated here in terms of Besov spaces. 

\begin{lemma}\label{lem:2nd-diffquot-homo}
Let $p>1$ and $s\in\big(\frac{(p-2)_+}{p},1\big)$. There exists a constant
$c=c(n,p,s)>0$ such that, if
$v\in W^{s,p}_{\loc}(\Omega)\cap L^{p-1}_{sp}(\R^n)$ is a locally bounded
local weak solution to~\eqref{eq:frac-p-lap-homo} in the sense of Definition~\ref{def:weak-sol}  and
$B_{4R}\equiv B_{4R}(x_o)\Subset\Omega$, then
\begin{align*}
    \int_{B_R} \big|\boldsymbol{\tau}_h^2 v\big|^p \,\dx
    &\le
    c\Big(\frac{|h|}{R}\Big)^{\theta}
    \Big[
        R^{sp}[v]_{W^{s,p}(B_{4R})}^p
        + R^{n}\Tail\big(v-(v)_{4R};B_{4R}\big)^p
    \Big]
\end{align*}
for every $h\in\R^n\setminus\{0\}$ with $|h|\le \tfrac17 R$, where
\begin{equation}\label{def:theta}
    \theta :=
    \left\{
    \begin{array}{cl}
        p+\frac12 sp, & \text{if } p\in(1,2),\\[3pt]
        sp+2, & \text{if } p\ge2.
    \end{array}
    \right.
\end{equation}
\end{lemma}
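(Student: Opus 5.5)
This estimate is essentially a rescaled and normalized version of the second-order difference estimates already proved in \cite{BDLMS-1} (for $p\ge2$) and \cite{BDLMS-2} (for $1<p<2$). The plan is therefore to reduce to the normalized setting used there, quote the Besov-type estimate, and scale back.

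\textbf{Normalization.} Set $w:=v-(v)_{4R}$, which is again a locally bounded local weak solution to \eqref{eq:frac-p-lap-homo}, and note $\boldsymbol{\tau}^2_h w=\boldsymbol{\tau}^2_h v$. Next rescale to the unit ball by
\[
\tilde w(y):=\frac{w(x_o+Ry)}{\lambda},
\qquad
\lambda:=R^{s-\frac np}[v]_{W^{s,p}(B_{4R})}+\Tail\big(v-(v)_{4R};B_{4R}\big).
\]
We may assume $\lambda>0$, since otherwise $v$ is constant on $B_{4R}$ and there is nothing to prove. Homogeneity of the operator, together with the scaling of the Gagliardo seminorm and of the tail, shows that $\tilde w$ solves the same equation in $B_4$ with
\[
[\tilde w]_{W^{s,p}(B_4)}+\Tail(\tilde w;B_4)\le 1 .
\]
By Theorem~\ref{thm:sup-est}, Lemma~\ref{lem:poin} and Lemma~\ref{lem:t}, this gives $\|\tilde w\|_{L^\infty(B_3)}\le c$, and likewise $\Tail(\tilde w;B_3)\le c$.

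\textbf{Invoking the known estimate.} The finite-difference iteration of \cite{BDLMS-1,BDLMS-2} applies to solutions normalized as above, with $\|\cdot\|_{L^\infty}+\Tail\le c$ on a slightly larger ball. Its final step yields
\[
\int_{B_1}|\boldsymbol{\tau}^2_h \tilde w|^p\,\mathrm dy\le c\,|h|^{\theta}
\qquad\text{for } |h|\le \tfrac17 ,
\]
with $\theta=sp+2$ if $p\ge2$ and $\theta=p+\tfrac12 sp$ if $1<p<2$. In the superquadratic case this comes from testing the equation with $\boldsymbol{\tau}_{-h}(\eta^p J_{\beta+1}(\boldsymbol{\tau}_h\tilde w))$ and iterating until the exponent of the first-order differences passes $1$. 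In the subquadratic case the gain is only $\tfrac12 sp$ per step, because of the degenerate weights $(|\boldsymbol{\tau}_h\tilde w|+\dots)^{p-2}$, which is why $\theta$ is smaller there.

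\textbf{Scaling back.} With $x=x_o+Ry$ and increment $h/R$,
\[
\int_{B_R}|\boldsymbol{\tau}^2_h v|^p\,\mathrm dx=\lambda^pR^n\int_{B_1}|\boldsymbol{\tau}^2_{h/R}\tilde w|^p\,\mathrm dy\le c\Big(\frac{|h|}{R}\Big)^{\theta}\lambda^pR^n ,
\]
and $\lambda^pR^n\le c\big[R^{sp}[v]^p_{W^{s,p}(B_{4R})}+R^n\Tail(v-(v)_{4R};B_{4R})^p\big]$, which is the claimed bound. Checking the scaling identities, namely $R^{sp-n}$ for the seminorm and $R^0$ for the tail, is routine.

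\textbf{Main obstacle.} The delicate point is that the cited papers state their estimates with slightly different radii and in normalized form. One has to verify that the constants depend only on $(n,p,s)$, and that the radius requirement $|h|\le R/7$ is compatible with the chain of nested balls used in their iteration. If the radii do not match directly, I would first try covering $B_R$ by smaller balls using Lemma~\ref{lem:covering} and summing the local estimates.
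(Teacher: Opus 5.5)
Your route is the paper's route. You subtract $(v)_{4R}$, quote the second-difference lemmas of \cite{BDLMS-1} (Lemma~5.7, $p\ge2$) and \cite{BDLMS-2} (Lemma~5.1, $1<p<2$), and control their data by $[v]_{W^{s,p}(B_{4R})}$ and the tail via Theorem~\ref{thm:sup-est}, Lemma~\ref{lem:poin} and Lemma~\ref{lem:t}. The rescaling to $B_1$ is cosmetic. Those lemmas already carry the factor $(|h|/R)$ and apply directly with $(q,r,R)=(p,R,2R)$, so no covering is needed.

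The gap is in what you feed into those lemmas. Their right-hand side is not only $L^\infty$ plus tail, but
\[
\sfK^p=R^p\int_{B_{2R}}|\nabla w|^p\,\dx+R^n\|w\|^p_{L^\infty(B_{2R})}+R^n\Tail(w;B_{2R})^p ,
\]
so you also need a bound on $\|\nabla w\|_{L^p(B_{2R})}$. Your normalization never supplies one: you control $[\tilde w]_{W^{s,p}(B_4)}$, $\|\tilde w\|_{L^\infty}$ and the tail, and stop there. The missing step is the gradient estimate of Theorem~\ref{thm:Lq-gradient}, applied on $B_{4R}$:
\[
R^p\int_{B_{2R}}|\nabla w|^p\,\dx\le c\Big[R^{sp}[v]^p_{W^{s,p}(B_{4R})}+R^n\Tail\big(v-(v)_{4R};B_{4R}\big)^p\Big].
\]
In your variables this reads $\|\nabla\tilde w\|_{L^p(B_2)}\le c$, using $(\tilde w)_4=0$. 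With this added your argument closes. Note also that Theorem~\ref{thm:sup-est} applied from $B_4$ gives the $L^\infty$ bound on $B_2$, which is exactly what is needed. Your $B_3$ would require an extra covering argument.

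A smaller point concerns $1<p<2$. The cited lemma does not produce $\theta=p+\frac12 sp$ directly. It gives the exponent $p+\frac{sp^2}{2}-\epsilon p\big(1-\frac p2\big)$ for any $\epsilon\in(0,1-s)$, with a constant depending on $\epsilon$. You fix $\epsilon=\epsilon(p,s)$ small so that this exponent is at least $p+\frac12 sp$, which is possible since $\frac{sp^2}{2}>\frac{sp}{2}$. You then use $|h|/R\le\frac17<1$ to lower the exponent to $\theta$. Your heuristic of a ``gain of $\frac12 sp$ per step'' is not where $\theta$ comes from; $\theta$ is simply a convenient lower bound for what the cited estimate delivers.
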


\begin{proof}
Set
\[
    w:=v-(v)_{4R} .
\]
Then $w$ is also a local weak solution to~\eqref{eq:frac-p-lap-homo}.
If $p\ge2$, applying \cite[Lemma~5.7]{BDLMS-1} to $w$ with
$(q,r,R)=(p,R,2R)$ yields, for any $0<|h|\le \tfrac17 R$,
\begin{align*}
    \int_{B_{R}} |\boldsymbol{\tau}_h^2 v|^p \,\dx
    &=
    \int_{B_{R}} |\boldsymbol{\tau}_h^2 w|^p \,\dx
    \le
    c \Big(\frac{|h|}{R}\Big)^{sp+2}\sfK^p .
\end{align*}
If $p\in(1,2)$, \cite[Lemma~5.1]{BDLMS-2} implies that, for any
$\epsilon\in(0,1-s)$,
\begin{align*}
    \int_{B_{R}} |\boldsymbol{\tau}_h^2 v|^p \,\dx
    &=
    \int_{B_{R}} |\boldsymbol{\tau}_h^2 w|^p \,\dx
    \le
    c \Big(\frac{|h|}{R}\Big)^{p+sp \frac{p}{2}-\epsilon p(1-\frac{p}{2})}\sfK^p .
\end{align*}
Here
\[
    \sfK^p
    :=
    R^p \int_{B_{2R}} |\nabla w|^p \,\dx
    +
    R^n \|w\|_{L^\infty(B_{2R})}^p
    +
    R^n \Tail\big(w;B_{2R}\big)^p .
\]
Choosing $\epsilon\in(0,1-s)$ sufficiently small, we have
\[
    p+ sp\tfrac{p}{2}-\epsilon p\Big(1-\tfrac{p}{2}\Big)
    \ge
    p+\tfrac12 sp .
\]
It remains to estimate $\sfK$.
We estimate the contributions separately, starting with the gradient term.
By Theorem~\ref{thm:Lq-gradient},
\begin{align*}
    R^p \int_{B_{2R}} |\nabla w|^p \,\dx
    \le
    c\Big[
        R^{sp}[w]_{W^{s,p}(B_{4R})}^p
        +
        R^{n}\Tail\big(w;B_{4R}\big)^p
    \Big].
\end{align*}
By Theorem~\ref{thm:sup-est} and the fractional Poincar\'e inequality
(Lemma~\ref{lem:poin}), we obtain
\begin{align*}
    \|v-(v)_{4R}\|_{L^\infty(B_{2 R})}
    &\le
    \frac{c}{R^{\frac{n}{p}}} \|v-(v)_{4R}\|_{L^p(B_{4R})}
    + c\,\Tail\big(w;B_{2R}\big)\\
    &\le
    c\, R^{s-\frac{n}{p}} [v]_{W^{s,p}(B_{4R})}
    + c\,\Tail\big(w;B_{2 R}\big).
\end{align*}
By Lemma~\ref{lem:t} and Lemma~\ref{lem:poin},
\begin{align*}
    \Tail\big(w; B_{2R}\big)
    &\le 
    c(n)\Big[
        \Tail \big(w; B_{4R}\big)
        + R^{-\frac{n}{p}}\| w\|_{L^p(B_{4R})}
    \Big] \\
    &\le 
    c(n)\Big[
        \Tail \big(w; B_{4R}\big)
        + R^{s-\frac{n}{p}} [v]_{W^{s,p}(B_{4R})} 
    \Big].
\end{align*}
Using the above inequalities and the identity $[w]_{W^{s,p}}=[v]_{W^{s,p}}$, 
\[
    \sfK^p
    \le 
    c \Big[
        R^{sp} [v]_{W^{s,p}(B_{4R})}^p
        + R^n \Tail\big(v-(v)_{4R}; B_{4R}\big)^p
    \Big].
\]
\end{proof}

\section{Higher differentiability for fractional $p$-Poisson equation}\label{S:3}

In this section we consider inhomogeneous fractional $p$-Laplace equations of the type  
\begin{equation} \label{eq:frac-p}
        \left( - \Delta_p\right)^s u = f \quad \text{ in } \Omega,
\end{equation}
Our goal is to prove higher differentiability of local weak solutions of order $W^{1+\gamma,p}$ for some $\gamma>0$ as stated in Theorem~\ref{thm:higher-diff} whose abridged version has been stated in Theorem~\ref{Thm:2}. The proof relies on the results for the homogeneous problem and a comparison argument.

\subsection{Definition and Role of the Parameters}
Due to the definition of $r_*$ from Theorem~\ref{Thm:2}, we shall
distinguish between the cases
\begin{equation*}
s \le s_o
\quad\text{and}\quad
s > s_o
\quad\mbox{where } s_o:=\frac{1}{p'}\bigg[1+\frac{n}{p(p-1)}\bigg].
\end{equation*}
More precisely, in the latter regime
the assumption \(f\in L_{\loc}^{1}(\Omega)\) already suffices to guarantee
\(\nabla u\in L^{p}_{\mathrm{loc}}(\Omega)\).
In contrast, when
\(s \le s_o\),
one needs to impose the stronger condition
\[
f\in L_{\loc}^{\frac{np}{(p-1)[\,n+p(sp'-1)\,]}}(\Omega)
\]
in order to ensure the same local integrability of the gradient. To unify the two regimes, we introduce the parameter
\begin{align}\label{def:alpha}
    \alpha 
    &:=
    \max\bigg\{\frac{n}{(p-1)\,[\,n+p(sp'-1)\,]}\,,\, \frac{1}{p}\bigg\} \nonumber \\[6pt]
    &=\left\{
    \begin{array}{cl}
    \displaystyle
        \frac{n}{(p-1)[n+p(sp'-1)]}, & \mbox{if $\displaystyle s\in(\tfrac{p-1}{p},s_o]$,}\\[12pt]
        \displaystyle\frac{1}{p}, &  \mbox{if $\displaystyle s\in(s_o,1)$.}
    \end{array}
    \right.
\end{align}
Note that \(\alpha\in[\tfrac{1}{p},\,\tfrac{1}{p-1})\) in either case.
This necessitates an improved integrability assumption on the right-hand side $f$.
Accordingly, we shall frequently work with a parameter
$\tilde\alpha\in[\alpha,\tfrac{1}{p-1}]$ and introduce
\begin{align}\label{def:eps}
    \varepsilon
    &:=
    sp' - 1 - \frac{n}{p}\bigg[\frac{1}{\tilde\alpha (p-1)}-1\bigg]
    \in [0,\, sp'-1].
\end{align}
To see that $\varepsilon\ge 0$, we replace $\tilde\alpha$ with $\alpha$ in
\eqref{def:eps} and invoke the case distinction in the definition of $\alpha$
in \eqref{def:alpha}. 

The following lemma illustrates the interplay of the relevant parameters in controlling the integral of $f$ against a function $w$ supported in a smaller ball $B_R$. The parameter $\tilde\alpha$ is chosen at least $\alpha$ to guarantee sufficient integrability of $f$, while the requirement $\varepsilon>0$ quantifies a small gain in the scaling with respect to the radius $R$ (see~\eqref{def:eps}).

\begin{lemma}\label{lem:fw}
Let $p>1$ and $s\in(0,1)$ with $sp'>1$, and let $\tilde\alpha\in[\alpha,\frac{1}{p-1}]$ be such that $\epsilon>0$, where $\alpha$ and $\epsilon$ are defined in~\eqref{def:alpha} and~\eqref{def:eps}. 
Then, there exists a constant
$c=c(n,s,p,\tilde\alpha)>0$ such that for every ball
$B_{2R}\equiv B_{2R}(x_o)\Subset\Omega$, every function
$w\in W^{s,p}(B_{2R})$ with $\operatorname{spt} (w)\subset B_R$,
every $f\in L^{\tilde\alpha p}(B_R)$, and every $\tilde\delta\in(0,1]$,
the estimate
\begin{align*}
    \int_{B_R} f w \,\mathrm{d}x
    \le
    \tilde\delta\,[w]_{W^{s,p}(B_{2R})}^p
    +
    c\,\tilde\delta^{-\frac{1}{p-1}} R^{(1-s+\varepsilon)p}
    \|f\|_{L^{\tilde\alpha p}(B_R)}^{p'}
\end{align*}
holds.
\end{lemma}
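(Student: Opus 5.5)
The plan is to combine H\"older's inequality, a fractional Sobolev--Poincar\'e inequality for $w$ (which vanishes on a large portion of $B_{2R}$), and Young's inequality, and then to check that the resulting power of $R$ equals $(1-s+\varepsilon)p$.

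\emph{Step 1: H\"older.} Set $\kappa:=(\tilde\alpha p)'$, the H\"older conjugate of $\tilde\alpha p\ge \alpha p\ge 1$. Since $\spt(w)\subset B_R$,
\[
\int_{B_R} f w\,\dx\le \|f\|_{L^{\tilde\alpha p}(B_R)}\,\|w\|_{L^{\kappa}(B_{2R})}.
\]

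\emph{Step 2: bound $\|w\|_{L^\kappa(B_{2R})}$ by the seminorm.} I would prove
\begin{equation*}
\|w\|_{L^{\kappa}(B_{2R})}\le c\,R^{\,s+\frac n\kappa-\frac np}\,[w]_{W^{s,p}(B_{2R})}.
\end{equation*}
First, $w=0$ on $B_{2R}\setminus B_R$, a set of measure $(1-2^{-n})|B_{2R}|$. Lemma~\ref{lem:coz17b-4.7} then gives $\|w\|_{L^p(B_{2R})}\le cR^s[w]_{W^{s,p}(B_{2R})}$. Next, apply the embedding of Lemma~\ref{lem:Sobolev} on $B_1$ after rescaling $B_{2R}$ to $B_1$; this yields the displayed bound provided $\kappa\le p_s^\ast$. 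When $\kappa$ is smaller than the Sobolev exponent, I would first embed into $L^{p_s^\ast}$, or into some large finite exponent if $sp=n$, and then use H\"older on $B_{2R}$. The scaling exponent $s+\frac n\kappa-\frac np$ is what dimensional analysis predicts.

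\emph{Step 3: admissibility of $\kappa$.} The condition $\kappa\le p_s^\ast$ is equivalent to $s+\frac n\kappa-\frac np\ge0$. A direct computation, using $\frac1\kappa=1-\frac1{\tilde\alpha p}$ and $p'(p-1)=p$, gives
\[
p'\Big(s+\frac n\kappa-\frac np\Big)=sp'+n-\frac{n}{\tilde\alpha(p-1)}=(1-s+\varepsilon)p,
\]
where the last equality follows from \eqref{def:eps}. Since $\varepsilon>0$ and $s<1$, this quantity is positive, so the embedding is admissible.

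One borderline case needs separate treatment. If $\tilde\alpha p=1$, then $\kappa=\infty$, and we need $sp>n$. In this case $\alpha=\frac1p$, and $\varepsilon>0$ reads $sp'-1-\frac np\cdot\frac1{p-1}>0$, that is, $sp>n+\frac{p-1}{p}\cdot\frac{p}{p-1}-\dots$; I would verify directly that it forces $sp>n$. Checking that no borderline case ($\kappa=p_s^\ast$ with $sp=n$, or $\kappa=\infty$) slips through is the only delicate point, and I expect it to be the main obstacle, though a minor one.

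\emph{Step 4: Young.} Write the product as $[w]\cdot\big(c R^{s+\frac n\kappa-\frac np}\|f\|\big)$ and apply Young's inequality with exponents $p$ and $p'$:
\[
ab\le \tilde\delta a^p+c\,\tilde\delta^{-\frac1{p-1}}b^{p'}.
\]
The power of $R$ produced is $p'\big(s+\frac n\kappa-\frac np\big)=(1-s+\varepsilon)p$ by the identity in Step~3. The constant depends only on $n,s,p,\tilde\alpha$, which gives the claim.
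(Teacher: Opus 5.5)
Your proposal is correct and takes essentially the same route as the paper. Both use H\"older with $\tilde\alpha p$ and its conjugate, the fractional Sobolev embedding after rescaling, Lemma~\ref{lem:coz17b-4.7} (with $w=0$ on $B_{2R}\setminus B_R$) to pass to the seminorm, Young's inequality, and the same identity for the power of $R$. The borderline case you flag needs no separate work: your Step~3 identity holds verbatim for $\kappa=\infty$, where it reads $p'\big(s-\frac{n}{p}\big)=(1-s+\varepsilon)p>0$, i.e.\ $sp>n$. That case ($\tilde\alpha=\alpha=\frac1p$) genuinely occurs under the hypotheses and is passed over by the paper's claim $\tilde\alpha p>\alpha p$, so your positivity argument covers all cases at once.
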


\begin{proof}
We introduce the rescaled functions
\[
    \widetilde w(x):= R^{-s} w(Rx),
    \qquad
    \widetilde f(x):= R^{s} f(Rx).
\]
With this choice we have $\widetilde w\in W^{s,p}(B_2)$,
$\operatorname{spt}(\widetilde w)\subset B_1$, and
$\widetilde f\in L^{\tilde\alpha p}(B_1)$.
Since $\tilde\alpha p>\alpha p\ge1$, Hölder’s inequality yields
\[
    \int_{B_1} \widetilde f\,\widetilde w\,\mathrm{d}x
    \le
    \|\widetilde w\|_{L^{\frac{\tilde\alpha p}{\tilde\alpha p-1}}(B_1)}
    \|\widetilde f\|_{L^{\tilde\alpha p}(B_1)}.
\]
Then the fractional Sobolev embedding Lemma~\ref{lem:Sobolev} yields
\begin{equation}\label{sob}
    \|\widetilde w\|_{L^{\kappa}(B_1)}
    \le 
    c\, \|\widetilde w\|_{W^{s,p}(B_2)},
\end{equation}
where $\kappa$ is as in \eqref{def:kappa} with $\gamma=s$ and $c=c(n,s,p,\kappa)$ and 
Note that the dependence of the embedding constant on the exponent $\kappa$ appears only in the borderline case $sp=n$.
To combine the preceding two estimates, it suffices to check that
\[
    \frac{\tilde\alpha p}{\tilde\alpha p-1} \le \kappa,
\]
which indeed holds in all relevant cases. For $sp\ge n$ the claim is immediate, since $\tilde\alpha p\in(1,\infty)$ and hence $\frac{\tilde\alpha p}{\tilde\alpha p-1}\in(1,\infty)$. In the case $sp<n$, one first observes that $p(sp'-1)<sp'$, and hence
\[
    \tilde\alpha > \alpha \ge \frac{n}{(p-1)[n+p(sp'-1)]} > \frac{n}{(p-1)[n+sp']} = \frac{n}{n(p-1)+sp}.
\]
It follows that
\[
    \frac{\tilde\alpha p}{\tilde\alpha p-1} = 1 + \frac{1}{\tilde\alpha p - 1} < 1 + \frac{n(p-1)+sp}{n-sp} = \frac{np}{n-sp} = \kappa,
\]
as desired. Hence, we may choose $\kappa = \frac{\tilde\alpha p}{\tilde\alpha p-1}$ in~\eqref{sob}, yielding
\[
    \int_{B_1} \widetilde f \,\widetilde w\,\dx
    \le 
    c\, \|\widetilde w\|_{W^{s,p}(B_2)} \|\widetilde f\|_{L^{\tilde\alpha p}(B_1)}
    \le 
    c \,[\widetilde w]_{W^{s,p}(B_2)} \|\widetilde f\|_{L^{\tilde\alpha p}(B_1)},
\]
where in the second inequality we used Lemma~\ref{lem:coz17b-4.7} together with the fact that $\widetilde w = 0$ almost everywhere on $B_2\setminus B_1$.
Scaling back to $(B_R,B_{2R})$, we note that
\[
    [\widetilde w]_{W^{s,p}(B_1)} = R^{-s}[w]_{W^{s,p}(B_R)},\qquad 
    \|\widetilde f\|_{L^{\tilde\alpha p}(B_1)} = R^s \|f\|_{L^{\tilde\alpha p}(B_R)}.
\]
Hence, the preceding inequality becomes
\begin{align*}
    \int_{B_R} f w \,\dx
    &= R^n \int_{B_1} \widetilde f\, \widetilde w\,\dx \\
    &\le c\, R^n [\widetilde w]_{W^{s,p}(B_2)} \|\widetilde f\|_{L^{\tilde\alpha p}(B_1)} \\
    &= c\, R^{n-s} [w]_{W^{s,p}(B_{2R})} \, R^{s-\frac{n}{\tilde\alpha p}} \|f\|_{L^{\tilde\alpha p}(B_R)} \\
    &= c\, R^{n - \frac{n}{p} - \frac{n}{\tilde\alpha p} + s} [w]_{W^{s,p}(B_{2R})} \|f\|_{L^{\tilde\alpha p}(B_R)}.
\end{align*}
Using the definition of $\epsilon$ in~\eqref{def:eps}, we rewrite the exponent of $R$ as
\begin{align*}
    n - \frac{n}{p} + s - \frac{n}{\tilde\alpha p} 
    &= \frac{n(p-1)}{p} + s - (p-1)(sp'-1) - \frac{n(p-1)}{p} + \epsilon(p-1) \\
    &= s - (sp+p-1) + \epsilon(p-1) \\
    &= (1-s+\epsilon)(p-1).
\end{align*}
 Hence, the preceding inequality yields
\begin{align*}
    \int_{B_R} f w \,\dx
    &\le 
    c \,R^{(1-s+\epsilon)(p-1)} [w]_{W^{s,p}(B_{2R})} \|f\|_{L^{\tilde\alpha p}(B_R)} \\
    &\le 
    \tilde\delta \,[w]_{W^{s,p}(B_{2R})}^p
    + c\, \tilde\delta^{-\frac{1}{p-1}} R^{(1-s+\epsilon)p} \|f\|_{L^{\tilde\alpha p}(B_R)}^{p'},
\end{align*}
where Young's inequality was used in the second step. 
\end{proof}

\subsection{Comparison estimates}\label{S:comp-est} 
The following lemma provides a quantitative comparison between a local weak
solution $u$ of \eqref{eq:frac-p} and the solution $v$ of the associated
homogeneous Dirichlet problem. In the superquadratic case $p\ge 2$ the
difference can be controlled solely in terms of the inhomogeneity. In contrast,
for $1<p<2$ the estimate necessarily involves a small multiple of the
fractional energy of $u$, reflecting the subquadratic nature of the problem.
The precise setup is as follows. Let $B_R(x_o)\Subset\Omega$ be a fixed ball.
We denote by
\[
v\in W^{s,p}(B_R(x_o))\cap L^{p-1}_{sp}(\R^n)
\]
the unique weak solution to the homogeneous Dirichlet problem
\begin{equation}\label{CD-homo} 
\left\{
    \begin{array}{cl}
      (-  \Delta_p)^s v  = 0 & \mbox{in $B_R(x_o)$,} \\[6pt]
      v = u & \mbox{a.e.~in $\R^n \setminus B_R(x_o)$.}
    \end{array}
\right.
\end{equation}
\begin{lemma}[Comparison estimate]\label{lem:fract-level-comparison}
Let $p>1$ and $s\in(0,1)$ with $sp'>1$, and let $\tilde\alpha\in[\alpha,\frac{1}{p-1}]$ be such that $\epsilon>0$, where $\alpha$ and $\epsilon$ are defined in~\eqref{def:alpha} and~\eqref{def:eps}.
Suppose that $u$ is a local weak solution to~\eqref{eq:frac-p} in the sense of Definition~\ref{def:weak-sol}, and let $B_{2R} \equiv B_{2R}(x_o) \Subset \Omega$. Denote by $v$ the unique weak solution to the homogeneous Dirichlet problem~\eqref{CD-homo} on $B_R(x_o)$ in the sense of Definition~\ref{def:weak-sol-D}, and set
\[
w := u-v.
\]
Then, there exists a constant $c = c(n,p,s,\tilde \alpha) > 0$ such that the following estimates hold.

\medskip
\noindent
\emph{(i) Superquadratic case.} 
If $p\in[2,\infty)$, then
\[
R^{-sp}\|w\|_{L^p(B_R)}^p + [w]_{W^{s,p}(\R^n)}^p
\le c \, R^{(1-s+\epsilon)p} \, \|f\|_{L^{\tilde\alpha p}(B_R)}^{p'}.
\]

\medskip
\noindent
\emph{(ii) Subquadratic case.} 
If $p\in(1,2]$,  then for every $\delta\in(0,1]$ we have
\[
R^{-sp}\|w\|_{L^p(B_R)}^p + [w]^p_{W^{s,p}(B_{2R})}
\le \delta \, [u]^p_{W^{s,p}(B_{2R})}
+ \frac{c}{\delta^{\frac{2-p}{p-1}}} \, R^{(1-s+\epsilon)p} \, \|f\|_{L^{\tilde\alpha p}(B_R)}^{p'}.
\]
\end{lemma}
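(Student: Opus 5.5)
The proof uses $w=u-v$ itself as test function in both weak formulations and subtracts them. Since $v=u$ a.e. outside $B_R$, we have $w\in W^{s,p}(\R^n)$ with $\spt(w)\subset \overline{B_R}\Subset\Omega$. So $w$ is admissible in Definition~\ref{def:weak-sol} (after a standard approximation, or since $\spt w\Subset B_{2R}\subset\Omega$) and in Definition~\ref{def:weak-sol-D}. Writing $J_p(t)=|t|^{p-2}t$, $U=u(x)-u(y)$, $V=v(x)-v(y)$ and $d\mu=|x-y|^{-n-sp}\,\d x\d y$, subtraction gives
\[
\iint_{\R^n\times\R^n}\big(J_p(U)-J_p(V)\big)(U-V)\,d\mu = 2\int_{B_R} f w\,\dx .
\]
The right-hand side is handled by Lemma~\ref{lem:fw} with a small parameter $\tilde\delta$, which yields
\[
\tilde\delta[w]^p_{W^{s,p}(B_{2R})}+c\tilde\delta^{-\frac1{p-1}}R^{(1-s+\epsilon)p}\|f\|^{p'}_{L^{\tilde\alpha p}(B_R)}.
\]
The $L^p$ part of the left side is free. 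Since $w=0$ on $B_{2R}\setminus B_R$, Lemma~\ref{lem:coz17b-4.7} with $\alpha=2^{-n}$ gives $R^{-sp}\|w\|^p_{L^p(B_R)}\le c[w]^p_{W^{s,p}(B_{2R})}$. It therefore suffices to bound $[w]^p_{W^{s,p}(B_{2R})}$; in case (i) we also need $[w]^p_{W^{s,p}(\R^n)}$.

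\emph{Case $p\ge2$.} Use the monotonicity inequality $(J_p(U)-J_p(V))(U-V)\ge c(p)|U-V|^p$. The left side then dominates $c[w]^p_{W^{s,p}(\R^n)}\ge c[w]^p_{W^{s,p}(B_{2R})}$. Choosing $\tilde\delta$ small (depending on $n,p,s,\tilde\alpha$) absorbs the $\tilde\delta$-term, and the Poincaré bound above adds the $L^p$ term, giving (i).

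\emph{Case $1<p<2$.} Here only
\[
(J_p(U)-J_p(V))(U-V)\ge c\,(|U|+|V|)^{p-2}|U-V|^2
\]
is available. By Hölder with exponents $2/p$ and $2/(2-p)$,
\[
\iint_{B_{2R}^2}|U-V|^p\,d\mu\le \Big(\iint (|U|+|V|)^{p-2}|U-V|^2d\mu\Big)^{p/2}\Big(\iint_{B_{2R}^2}(|U|+|V|)^p d\mu\Big)^{(2-p)/2}.
\]
Set $I$ for the first factor on $\R^{2n}$. Since $|V|\le|U|+|W|$, the second factor is bounded by $c\big([u]^p_{W^{s,p}(B_{2R})}+[w]^p_{W^{s,p}(B_{2R})}\big)^{(2-p)/2}$. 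Young's inequality then gives, for any $\eta\in(0,1]$,
\[
[w]^p\le \eta\big([u]^p+[w]^p\big)+c\,\eta^{-\frac{2-p}{p}}I ,
\]
with all seminorms on $B_{2R}$. Meanwhile $I\le c\int fw\le c\tilde\delta[w]^p+c\tilde\delta^{-\frac1{p-1}}R^{(1-s+\epsilon)p}\|f\|^{p'}$. Take $\eta=\min\{\delta,\tfrac14\}$ and $\tilde\delta$ a small multiple of $\eta^{\frac{2-p}{p}}$, so that both $[w]^p$ terms are absorbed. The constant in front of the $f$-term then becomes
\[
\eta^{-\frac{2-p}{p}}\tilde\delta^{-\frac1{p-1}}\sim\delta^{-\frac{2-p}{p}\left(1+\frac1{p-1}\right)}=\delta^{-\frac{2-p}{p-1}},
\]
which is the claimed power. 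Adding the Poincaré bound for $\|w\|_{L^p}$ completes (ii).

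The main obstacle is the subquadratic bookkeeping: keeping the Hölder/Young splitting on $B_{2R}$, where the $[u]$ term is allowed, while using the full $\R^{2n}$ integral for the monotone quantity; and tracking the $\delta$-exponents so they come out exactly as $\delta^{-\frac{2-p}{p-1}}$. Admissibility of $w$ as a test function is routine.
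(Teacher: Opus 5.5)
Your proposal is correct and follows essentially the paper's argument, which defers to the earlier comparison lemma for the fractional $p$-Poisson equation with Lemma~\ref{lem:fw} inserted for the $f$-term. The common steps are testing with $w$, monotonicity, absorption via Lemma~\ref{lem:fw}, and, for $1<p<2$, trading $(|U|+|V|)^{p-2}|U-V|^2$ for $|U-V|^p$ by H\"older/Young. You differ in two harmless ways: you do that trade in integrated form rather than pointwise as in the proof of Lemma~\ref{lem:compar-var-coeff}, and you get the $L^p$ term from Lemma~\ref{lem:coz17b-4.7} instead of Sobolev's embedding. One loose end: your final coefficient of $[u]^p_{W^{s,p}(B_{2R})}$ is $c\,\delta$ rather than $\delta$; renaming $\delta$, as the paper does, removes this at the cost of a larger constant in front of the $f$-term.
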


\begin{proof}
The argument follows closely the proof of~\cite[Lemma~3.2]{BDLM}. The only modification concerns the treatment of the term involving the inhomogeneity $f$ in~\cite[(3.4)]{BDLM}. 
Instead of using~\cite[(3.6)]{BDLM}, we estimate the integral
\[
\int_{B_R} f w \,\mathrm{d}x
\] 
by applying Lemma~\ref{lem:fw}. This gives, for any $\tilde\delta \in (0,1]$,
\[
\int_{B_R} f w \,\mathrm{d}x
\le
\tilde\delta \,[w]_{W^{s,p}(B_{2R})}^p
+ c\,\tilde\delta^{-\frac{1}{p-1}} R^{(1-s+\epsilon)p} \|f\|_{L^{\tilde\alpha p}(B_R)}^{p'}.
\]
Using this estimate, one can control the $W^{s,p}$-seminorm of $w$ exactly as in~\cite[Lemma~3.2]{BDLM}. The corresponding bound for the $L^p$-norm of $w$ then follows from Sobolev's embedding, as in~\cite[Corollary~3.3]{BDLM}.
\end{proof}

\subsection{Higher differentiability of the gradient}
Let $u$ be a local weak solution to~\eqref{eq:frac-p}.
We denote by $v$ the unique weak solution of the Dirichlet problem
\begin{equation}\label{CD-v}
\left\{
    \begin{array}{cl}
      (-\Delta_p)^s v = 0 & \text{in } B_{8R}(x_o), \\[6pt]
      v = u & \text{a.e.\ in } \mathbb{R}^n\setminus B_{8R}(x_o).
    \end{array}
\right.
\end{equation}
The function $v$ serves as a fractional $p$-harmonic replacement of $u$ and allows us to
separate the regularity of the homogeneous operator from the
contribution of the inhomogeneity $f$.
The following lemma provides a quantitative estimate for second-order
finite differences of $v$.

\begin{lemma}\label{lem:vi-1}
Let $p>1$ and $s\in(0,1)$ with $sp'>1$, and let $\tilde\alpha\in[\alpha,\frac{1}{p-1}]$ be such that $\epsilon>0$, where $\alpha$ and $\epsilon$ are defined in~\eqref{def:alpha} and~\eqref{def:eps}. 
Then there exists a constant $c=c(n,p,s,\tilde\alpha)$ such that the following holds.
Whenever 
\(u \in W^{s,p}_{\rm loc}(\Omega)\cap L^{p-1}_{sp}(\mathbb{R}^n)\)
is a local weak solution to~\eqref{eq:frac-p} in the sense of Definition~\ref{def:weak-sol},
$B_{8R}(x_o)\Subset\Omega$, and 
\(v \in W^{s,p}(B_{8R}(x_o))\cap L^{p-1}_{sp}(\mathbb{R}^n)\)
is the unique weak solution to \eqref{CD-v} in the sense of Definition~\ref{def:weak-sol-D},
then for any step size \(h\in\mathbb{R}^n\) with \(0<|h|\le \tfrac17 R\) there holds
\begin{align*}
    \int_{B_R(x_o)} \big|\boldsymbol{\tau}_h^2 v\big|^p \,\dx  
    &\leq 
    c \Big(\frac{|h|}{R}\Big)^{\theta} 
    \Bigg[ R^{s p} [u]_{W^{s,p}(B_{4R}(x_o))}^p  \\
    &\qquad\qquad\qquad\quad + 
    R^n \Tail \big(u - (u)_{x_o,8R}; B_{8R}(x_o)\big)^p \\
    &\qquad\qquad\qquad\quad + 
    R^{(1+\epsilon)p} \|f\|_{L^{\tilde\alpha p}(B_{8R}(x_o))}^{p'}\Bigg] .
\end{align*}
Here $\theta$ is given by~\eqref{def:theta}.
\end{lemma}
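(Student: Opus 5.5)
The plan is to apply the higher-differentiability estimate for the homogeneous problem, Lemma~\ref{lem:2nd-diffquot-homo}, directly to $v$, and then move every $v$-quantity back to $u$ through the comparison function $w:=u-v$. The ball bookkeeping must produce exactly $[u]_{W^{s,p}(B_{4R})}$. I have not closed this: the tail step below seems to force an oscillation of $u$ over $B_{8R}$, and removing it is the main obstacle.

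\emph{Step 1 (apply the homogeneous estimate).} Since $v$ is a local weak solution of $(-\Delta_p)^s v=0$ in $B_{8R}(x_o)$ with $v\in L^{p-1}_{sp}(\R^n)$, Theorem~\ref{thm:sup-est} shows that $v$ is locally bounded in $B_{8R}(x_o)$. The ball $B_{4R}(x_o)$ is compactly contained in $B_{8R}(x_o)$, which is the domain where $v$ solves the equation. Applying Lemma~\ref{lem:2nd-diffquot-homo} to $v$ with radius $R$ gives, for $0<|h|\le\frac17R$,
\[
\int_{B_R}|\boldsymbol{\tau}^2_h v|^p\,\dx\le c\Big(\frac{|h|}{R}\Big)^{\theta}\Big[R^{sp}[v]^p_{W^{s,p}(B_{4R})}+R^n\,\Tail\big(v-(v)_{4R};B_{4R}\big)^p\Big].
\]

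\emph{Step 2 (seminorm of $v$).} I split $[v]_{W^{s,p}(B_{4R})}\le[u]_{W^{s,p}(B_{4R})}+[w]_{W^{s,p}(B_{4R})}$, so the $u$-seminorm appears only on $B_{4R}$, as the statement requires. To bound the $w$-part I use Lemma~\ref{lem:fract-level-comparison} with the Dirichlet ball $B_{8R}$.
\begin{itemize}
\item For $p\ge2$ it gives $[w]^p_{W^{s,p}(\R^n)}\le cR^{(1-s+\epsilon)p}\|f\|^{p'}_{L^{\tilde\alpha p}(B_{8R})}$. Multiplied by $R^{sp}$ this is exactly the term $R^{(1+\epsilon)p}\|f\|^{p'}$.
\item For $1<p<2$ the comparison lemma also produces $\delta[u]^p$, on a ball larger than $B_{4R}$. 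Here I would rerun the proof of the comparison lemma with the energy identity tested against $w$ itself, since $w$ vanishes outside $B_{8R}$. I would then try to absorb the $u$-energy contribution into the $v$-energy on the left-hand side. The aim is a bound on $[w]$ involving only $f$ and quantities already present on the right-hand side.
\end{itemize}

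\emph{Step 3 (tail of $v$, the main obstacle).} Because $v\equiv u$ outside $B_{8R}$, Lemma~\ref{lem:tv-u} gives
\[
\Tail\big(v-(v)_{4R};B_{4R}\big)\le c\,\Tail\big(u-(u)_{4R};B_{4R}\big)+c\Big[\bint_{B_{8R}}|w|^p\,\dx\Big]^{\frac1p}.
\]
The last term contributes $R^{n}\bint_{B_{8R}}|w|^p\,\dx \simeq \|w\|^p_{L^p(B_{8R})}$, and the $L^p$ part of Lemma~\ref{lem:fract-level-comparison} bounds $\|w\|^p_{L^p(B_{8R})}\lesssim R^{sp}\cdot R^{(1-s+\epsilon)p}\|f\|^{p'}= R^{(1+\epsilon)p}\|f\|^{p'}$, the $f$-term of the statement. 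The remaining term is $\Tail(u-(u)_{4R};B_{4R})$, which must be converted to the stated $\Tail(u-(u)_{8R};B_{8R})$. Lemma~\ref{lem:tail}, applied to $B_{4R}\subset B_{8R}$, does this up to a factor $2^{-sp'}$ plus an additive $L^p$-oscillation of $u$ over $B_{8R}$. By Lemma~\ref{lem:poin} that oscillation is controlled by $[u]_{W^{s,p}(B_{8R})}$, not by $[u]_{W^{s,p}(B_{4R})}$.

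This is precisely where the statement's $B_{4R}$ must be earned, and I do not yet see how to do it. My first attempt would be to split the tail integral so that only the annulus $B_{8R}\setminus B_{4R}$ remains, and to write $(u)_{4R}-(u)_{8R}$ as a difference of means. Then I would try to control the annulus contribution by $[u]_{W^{s,p}(B_{4R})}$ together with the outer tail. I am not confident this works, since the annulus data of $u$ are not obviously controlled by either of those quantities. Collecting Steps~1--3 with this unresolved ring term, and with the case $p<2$ of Step~2 still open, would give the claimed estimate with the constant $c=c(n,p,s,\tilde\alpha)$ and the exponent $\theta$ from \eqref{def:theta}.
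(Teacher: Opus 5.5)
Your Steps 1--3 follow the paper's proof. It applies Lemma~\ref{lem:2nd-diffquot-homo} to $v$, then the comparison estimate of Lemma~\ref{lem:fract-level-comparison} with $\delta=1$ for the seminorm of $v$, then Lemmas~\ref{lem:tv-u}, \ref{lem:tail}, \ref{lem:poin} together with the $L^p$-part of the comparison estimate for the tail of $v$.

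The obstruction you isolate is genuine, and the paper does not overcome it either. Its proof writes $R^{sp}[v]^p_{W^{s,p}(B_{4R})}\le R^{sp}[v]^p_{W^{s,p}(B_{8R})}\le c\,R^{sp}[u]^p_{W^{s,p}(B_{8R})}+c\,R^{(1+\epsilon)p}\|f\|^{p'}_{L^{\tilde\alpha p}(B_{8R})}$. In the tail step it bounds $\|u-(u)_{8R}\|^p_{L^p(B_{8R})}$ by $c\,R^{sp}[u]^p_{W^{s,p}(B_{8R})}$, exactly as you do. What is actually established is therefore the inequality with $R^{sp}[u]^p_{W^{s,p}(B_{8R}(x_o))}$ in place of $R^{sp}[u]^p_{W^{s,p}(B_{4R}(x_o))}$. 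The $B_{4R}$ in the statement claims more than the proof delivers. Your Step~2 for $p\ge 2$ is in fact slightly sharper than the paper's, which enlarges to $B_{8R}$ at once; it is the tail that forces the larger ball. The weaker version is all that is used later: the proof of Lemma~\ref{lem:vi} immediately replaces this term by $R^{\sigma p}[u]^p_{W^{\sigma,p}(B_{8R}(x_o))}$, the $k=0$ summand there.

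So the missing step is not a new estimate but accepting the larger ball; once you do, your argument closes with no further input. For $1<p<2$, take $\delta=1$ in Lemma~\ref{lem:fract-level-comparison}. The resulting $[u]^p$-term, both in your Step~2 and in the $L^p$-bound for $w$ used in Step~3, is then one more energy of $u$ on the comparison ball, again a ball of radius at least $8R$.

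I would drop both of your repair attempts.
\begin{itemize}
\item A bound on $[w]$ by $f$ alone is false for $p<2$. Linearize about a $v$ whose difference quotients have size $M$: the linearized operator has coefficient of order $M^{p-2}$, so $w$ is of order $M^{2-p}$ times the linear response to $f$. Hence $[w]^p\lesssim\|f\|^{p'}$ fails as $M\to\infty$.
\item Moreover, the $[u]$-term produced by testing with $w$ comes from pairs $(x,y)$ with $w(x)\neq w(y)$, i.e.\ from the comparison ball, never from $B_{4R}$ alone.
\item Likewise, no result in the paper controls $u$ on the annulus $B_{8R}\setminus B_{4R}$ by $[u]_{W^{s,p}(B_{4R})}$ and the outer tail, and none is needed.
\end{itemize}
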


\begin{proof}
In the following, we omit $x_o$ from the notation for simplicity.
We apply Lemma~\ref{lem:2nd-diffquot-homo}  to $v$ with the choice $(q,r,R)=(p,R,2R)$, yielding that
\begin{align*}
    \int_{B_{R}} |\boldsymbol{\tau}_h^2 v|^p \, \dx 
    \leq 
    c \Big(\frac{|h|}{R}\Big)^{\theta} 
    \sfK^p, 
\end{align*} 
for any $0<|h|\le \frac17 R$, where
$$
    \sfK^p
    :=
    R^{s p} [v]_{W^{s,p}(B_{4R})}^p +
    R^{n} \Tail\big(v-(v)_{4R}; B_{4R}\big)^p .
$$
We estimate the two terms as follows. 
Applying the comparison estimate from Lemma~\ref{lem:fract-level-comparison} 
to $v-u$ on $B_{8R}(x_o)$ with the choice $\delta=1$, we obtain
\begin{align*}
    R^{s p} [v]_{W^{s,p}(B_{4R})}^p 
    &\le 
    R^{s p} [v]_{W^{s,p}(B_{8R})}^p \\
    &\le c\Big[
    R^{s p} [u]_{W^{s,p}(B_{8R})}^p+
    R^{s p} [u-v]_{W^{s,p}(B_{8R})}^p
    \Big]\\
    &
    \le c\Big[
    R^{s p} [u]_{W^{s,p}(B_{8R})}^p
    +
    R^{(1+\epsilon)p} \|f\|_{L^{\tilde\alpha p}(B_{8R})}^{p'}
    \Big].
\end{align*}
It remains to replace the function $v$ in the tail term by $u$.
To this end, we first note that $v=u$ in $\mathbb{R}^n\setminus B_{8R}$, so that Lemma~\ref{lem:tv-u} is applicable.
Consequently, the tail of $v$ can be controlled in terms of the tail of $u$. This is followed by Lemma~\ref{lem:tail}, the fractional Poincar\'e inequality from Lemma~\ref{lem:poin} and the comparison estimate from
Lemma~\ref{lem:fract-level-comparison} with the choice $\delta=1$. Hence,
 \begin{align*}
    R^{n} & \Tail\big(v-(v)_{4R}; B_{4R}\big)^p \\
    &\le 
    c\,R^{n} \Tail\big(u-(u)_{4R}; B_{4R}\big)^p +
    c\,\|u-v\|^p_{L^p(B_{8R})} \Big] \\
    &=
    c\Big[ R^{n} \Tail\big(v-(u)_{8R}; B_{8R}\big)^p +
    \|u-(u)_{8R}\|^p_{L^p(B_{8R})} + 
    \|u-v\|^p_{L^p(B_{8R})} \Big]\\
    &\le 
    c\Big[ R^n\Tail\big(u- (u)_{8R}; B_{8R}\big)^p +
    R^{s p} [u]_{W^{s,p}(B_{8R})}^p +
    R^{(1+\epsilon)p} \|f\|_{L^{\tilde\alpha p}(B_{8R})}^{p'} \Big] .
\end{align*}
By combining the two preceding estimates with the bound for the second-order differences, and noting that $sp+sp'=spp'$, the claimed inequality follows. 
\end{proof}

We denote 
$$
    \sfE\big(u;B_R(x_o)\big) 
    := 
    \bigg[ \bint_{B_R(x_o)} |u - (u)_{x_o;R}|^p \, \d x \bigg]^\frac{1}{p} + 
    \Tail \big( u - (u)_{x_o;R} ; B_R(x_o) \big).
$$

\begin{lemma}\label{lem:vi}
Let $p>1$ and $s\in(0,1)$ with $sp'>1$, and let $\tilde\alpha\in[\alpha,\frac{1}{p-1}]$ be such that $\epsilon>0$, where $\alpha$ and $\epsilon$ are defined in~\eqref{def:alpha} and~\eqref{def:eps}. 
Then there exists a constant $c=c(n,p,s,\tilde\alpha)$ such that the following holds.
Whenever
\(u \in W^{s,p}(B_\rho(y_o))\cap L^{p-1}_{sp}(\mathbb{R}^n)\)
is a weak solution in \(B_\rho(y_o)\) satisfying 
\(u \in W^{\sigma,p}(B_\rho(y_o))\) for some \(\sigma\in[s,1)\), 
\(x_o\in B_{\frac12 \rho}(y_o)\),  \(R\in(0,\frac1{16}\rho]\),
and
\(v \in W^{s,p}(B_{8R}(x_o))\cap L^{p-1}_{sp}(\mathbb{R}^n)\)
is the unique weak solution to \eqref{CD-v},
then for any \(m_o\in\mathbb{N}_0\) such that
\(\tfrac{1}{32}\rho<2^{m_o}R\le\tfrac{1}{16}\rho\) and any
step size \(h\in\mathbb{R}^n\) with \(0<|h|\le \tfrac17 R\), there holds
\begin{align*}
    \int_{B_R(x_o)} \big|\boldsymbol{\tau}_h^2 v\big|^p \,\dx
    &\le
    c \Big(\frac{|h|}{R}\Big)^{\theta}
    \Bigg[
        R^{\sigma p}
        \Bigg(
            \sum_{k=0}^{m_o}
            2^{k(\sigma-sp' - \frac{n}{p})}
            [u]_{W^{\sigma,p}(B_{2^k 8R}(x_o))}
        \Bigg)^p \\
    &\qquad\qquad\qquad
        + \Big(\frac{R}{\rho}\Big)^{spp'} R^n\sfE\big(u;B_\rho(y_o)\big)^p \\
    &\qquad\qquad\qquad + 
        R^{(1+\epsilon)p}
        \|f\|_{L^{\tilde\alpha p}(B_{8R}(x_o))}^{p'}
    \Bigg],
\end{align*}
where $\theta$ is given in~\eqref{def:theta}.
\end{lemma}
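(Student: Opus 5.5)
The plan is to start from Lemma~\ref{lem:vi-1}, which already controls the second differences of $v$ by three terms:
\[
R^{sp}[u]^p_{W^{s,p}(B_{4R}(x_o))},\qquad R^n\Tail\big(u-(u)_{x_o,8R};B_{8R}(x_o)\big)^p,\qquad R^{(1+\epsilon)p}\|f\|^{p'}_{L^{\tilde\alpha p}(B_{8R}(x_o))}.
\]
The $f$-term is already in final form. The task is to rewrite the first two terms using the $W^{\sigma,p}$-seminorms of $u$ on the dyadic balls $B_{2^k8R}(x_o)$, $k=0,\dots,m_o$, plus the excess $\sfE(u;B_\rho(y_o))$ on the large ball. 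Since $x_o\in B_{\rho/2}(y_o)$ and $2^{m_o}8R\le\rho/2$, all these balls lie in $B_\rho(y_o)$, so the seminorms are finite.

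For the first term I will use \eqref{eq:fract-embedding-s-sigma} on $B_{4R}\subset B_{8R}$:
\[
R^{sp}[u]^p_{W^{s,p}(B_{4R})}\le cR^{\sigma p}[u]^p_{W^{\sigma,p}(B_{8R})}.
\]
This is exactly the $k=0$ summand.

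For the tail term I will apply Lemma~\ref{lem:tail-est} with $\rho\to 8R$ and $i=m_o$, which gives the bound
\[
2^{-m_osp'}\Tail\big(u-(u)_{2^{m_o}8R};B_{2^{m_o}8R}\big)+c\sum_{k=1}^{m_o}2^{-ksp'}\Big[\bint_{B_{2^k8R}}|u-(u)_{2^k8R}|^p\Big]^{1/p}.
\]
When $1<p<2$ the same bound holds after raising everything to the power $p-1$. Each mean oscillation is then handled by Lemma~\ref{lem:poin} with $\gamma=\sigma$:
\[
\Big[\bint_{B_{2^k8R}}|u-(u)_{2^k8R}|^p\Big]^{1/p}\le c\,(2^kR)^{\sigma-\frac np}[u]_{W^{\sigma,p}(B_{2^k8R})}.
\]
Multiplying by $R^{n/p}$ turns the $k$-th summand into $R^{\sigma}2^{k(\sigma-sp'-\frac np)}[u]_{W^{\sigma,p}(B_{2^k8R})}$, which is the claimed weight.

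For the remaining outer tail I will use Lemma~\ref{lem:tail} with $B_r(x_o)=B_{2^{m_o}8R}(x_o)\subset B_\rho(y_o)$. Here $r\approx\rho$ and $|x_o-y_o|/r$ is bounded, so the ratio factors are absolute constants. This bounds the outer tail by $c\,\sfE(u;B_\rho(y_o))$. Since $2^{-m_osp'}\approx(R/\rho)^{sp'}$, raising to the $p$-th power and multiplying by $R^n$ produces $(R/\rho)^{spp'}R^n\sfE^p$. Finally I will raise the whole sum to the $p$-th power and collect constants; for $p\ge2$ this is immediate.

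I expect the main obstacle to be the subquadratic case $1<p<2$. There Lemmas~\ref{lem:tail-est} and \ref{lem:tail} control $\Tail^{p-1}$ through a sum with weights $2^{-ksp}$, not $\Tail$ through weights $2^{-ksp'}$. To get to the stated form I would use $2^{-ksp}=(2^{-ksp'})^{p-1}$, take the $\frac1{p-1}$-th root, and apply the quasi-triangle inequality $(\sum a_k)^{1/(p-1)}\le$ something comparable to $\sum a_k^{1/(p-1)}$. Since $\frac1{p-1}>1$, that last step costs a factor that grows with the number of terms. The way around this is to keep a geometric decay in reserve: write $2^{-ksp}=2^{-ks\eta}\,2^{-ks(p-\eta)}$ for a small $\eta>0$ and apply H\"older to the sum. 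This keeps the constant depending only on $n,p,s$. If it cannot be made exact, I would accept a slightly weaker exponent in the weights, or else check whether the weights in the statement already allow the sum to be handled via the Minkowski inequality \eqref{eq:sum-minkowski}.
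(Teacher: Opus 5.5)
For $p\ge 2$ your argument is the paper's proof step by step: Lemma~\ref{lem:vi-1}, then \eqref{eq:fract-embedding-s-sigma} for the $k=0$ term, Lemma~\ref{lem:tail-est} with $i=m_o$, Lemma~\ref{lem:poin} with $\gamma=\sigma$, and Lemma~\ref{lem:tail} on $B_{2^{m_o}8R}(x_o)\subset B_\rho(y_o)$. There is nothing to add in that case.

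For $1<p<2$ your concern is justified, and the proposal does not prove the lemma as stated. Set $a_k:=2^{k(\sigma-sp'-\frac np)}[u]_{W^{\sigma,p}(B_{2^k8R}(x_o))}$. The paper derives the $(p-1)$-power bounds and then simply asserts the claim. That tacitly uses $\big(\sum_k a_k^{p-1}\big)^{1/(p-1)}\le c\sum_k a_k$. Since $p-1<1$, this holds only with $c\sim m_o^{\frac1{p-1}-1}$.

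No bookkeeping inside this route can fix it, because the intermediate inequality the route needs is false for $p<2$:
\[
R^{\frac np}\Tail\big(u-(u)_{x_o,8R};B_{8R}(x_o)\big)\le c\,R^{\sigma}\sum_{k=0}^{m_o}a_k+c\Big(\frac R\rho\Big)^{sp'}R^{\frac np}\sfE\big(u;B_\rho(y_o)\big).
\]
Take $x_o=y_o=0$, $u=|x|^{sp'}$ on $B_\rho$, and $u\equiv\rho^{sp'}$ outside.
\begin{itemize}
\item Since $sp'(p-1)=sp$, the annulus $16R<|x|<\rho$ gives $\Tail^{p-1}\ge c\,R^{sp}\log\frac{\rho}{16R}\ge c\,R^{sp}m_o$.
\item By scaling, $[u]_{W^{\sigma,p}(B_r)}=c\,r^{\frac np-\sigma+sp'}$, so every $a_k$ equals $c\,R^{\frac np-\sigma+sp'}$.
\item Since $0\le u\le\rho^{sp'}$, we have $\sfE(u;B_\rho)\le c\rho^{sp'}$.
\end{itemize}
Hence the left side is $\gtrsim R^{\frac np+sp'}m_o^{\frac1{p-1}}$, while the right side is $\lesssim R^{\frac np+sp'}(m_o+2)$, and the ratio blows up as $R\downarrow0$. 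Lemmas~\ref{lem:tail-est}, \ref{lem:tail} and \ref{lem:poin} are pure function-space inequalities. So the weights $2^{k(\sigma-sp'-\frac np)}$ are out of reach via Lemma~\ref{lem:vi-1} for $p<2$, whatever summation trick you use.

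What your H\"older splitting does prove is a correct subquadratic version. Write $2^{-ksp}=2^{-ks\eta}\big(2^{-ks\frac{p-\eta}{p-1}}\big)^{p-1}$ and apply H\"older with exponents $\frac1{2-p},\frac1{p-1}$. This gives weights $2^{k(\sigma-sp'+\frac{s\eta}{p-1}-\frac np)}$ and $c=c(n,p,s,\tilde\alpha,\eta)$. Equivalently, you can keep the $\ell^{p-1}$ form $R^{\sigma p}\big(\sum_k a_k^{p-1}\big)^{p'}$.

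Either version suffices for the only application, Step~1 of the proof of Theorem~\ref{thm:higher-diff}. If $\frac{s\eta}{p-1}\le\frac12(sp'-1)$, then after Minkowski and \eqref{cov-2} the $k$-series is dominated by $\sum_k 2^{-k(sp'-1)/2}$, uniformly in $\sigma<1$. For the $\ell^{p-1}$ form, use Minkowski in $\ell^{p'}$ instead of \eqref{eq:sum-minkowski} in $\ell^p$.

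So your argument establishes a corrected statement, and on this point it is more careful than the paper. As a proof of the lemma exactly as written, it necessarily falls short when $1<p<2$.
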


\begin{proof}
We apply Lemma~\ref{lem:vi-1} and then use the fractional embedding~\eqref{eq:fract-embedding-s-sigma}
to replace the term $R^{s p} [u]_{W^{s,p}(B_{4R}(x_o))}^p$ by
$R^{\sigma p} [u]_{W^{\sigma,p}(B_{8R}(x_o))}^p$.
This yields
\begin{align*}
    \int_{B_{R}(x_o)} |\boldsymbol{\tau}_h^2 v|^p \, \dx 
    &\leq
    c \Big(\frac{|h|}{R}\Big)^{\theta} 
    \Big[ R^{\sigma p} [u]_{W^{\sigma,p}(B_{8R}(x_o))}^p \\
    &\qquad\qquad\qquad\quad + 
    R^n \Tail \big(u - (u)_{x_o;8R}; B_{8R}(x_o)\big)^p \\
    &\qquad\qquad\qquad\quad + 
    R^{(1+\epsilon)p} \|f\|_{L^{\tilde\alpha p}(B_{8R}(x_o))}^{p'} \Big].
\end{align*}
At this stage, it remains to estimate the tail contribution on the right-hand side. 
Let $m_o\in\mathbb{N}$ be the largest integer such that 
$B_{2^{m_o}8R}(x_o)\Subset B_\rho(y_o)$. 
From the tail estimate in Lemma~\ref{lem:tail-est} in the case $p\ge2$, it follows that
\begin{align} \label{eq:tail-rhoi-estimate}
    \Tail \big(u-(u)_{x_o;8R}; B_{8R}(x_o)\big) 
    &\leq 
    \sfI+ c\,\sfI\sfI,
\end{align}
where 
$$
    \sfI
    :=
    2^{-m_o sp'} \Tail \big(u-(u)_{x_o;2^{m_o} 8R};B_{2^{m_o} 8R(x_o)}\big),
$$
and
$$
    \sfI\sfI
    :=
    \sum_{k=1}^{m_o} 2^{-ksp'} 
    \bigg[ \bint_{B_{2^k 8R}(x_o)} \big|u-(u)_{x_o;2^k 8R}\big|^p \, \d x \bigg]^\frac{1}{p}.
$$
In the case $1<p<2$ we have 
\begin{align}\label{eq:tail-rhoi-estimate-}
    \Tail \big(u-(u)_{x_o;8R}; B_{8R}(x_o)\big)^{p-1} 
    &\leq 
    \sfI^{p-1} + c\,\sfI\sfI\sfI^{p-1},
\end{align}
where 
\begin{align*}
    \sfI\sfI\sfI^{p-1}
    :=
    \sum_{k=1}^{m_o} 2^{-ksp} \bigg[ \bint_{B_{2^k8R}(x_o)} \big|u-(u)_{x_o,2^k8R}\big|^{p} \, \d x \bigg]^\frac{p-1}{p} .
\end{align*}
Let us now consider the common term $\sfI$. 
We apply Lemma~\ref{lem:tail} with $(r,R)$ replaced by $(2^{m_o}8R,\rho)$ (note that in the case $1<p<2$ we take both sides to the power $\frac{1}{p-1}$) and obtain
\begin{align*}
    \sfI
    &\le 
    c\bigg[1 + \frac{|x_o-y_o|}{r}\bigg]^{\frac{n}{p-1}+sp'} 
    \Big(\frac{8R}{\rho}\Big)^{sp'} \Tail \big(u-(u)_{y_o,\rho}; B_\rho(y_o)\big) \\
    &\phantom{\le\,} + 
    c\, 2^{-m_o sp'}
    \Big(\frac{\rho}{2^{m_o}8R}\Big)^{\frac{n}{p-1}}
    \bigg[ \bint_{B_{\rho}(y_o)} \big|u-(u)_{y_o,\rho}\big|^{p} \, \d x \bigg]^\frac{1}{p} \\
    &\le 
    c \Big(\frac{R}{\rho}\Big)^{sp'} \Bigg[
    \Tail \big(u-(u)_{y_o,\rho}; B_\rho(y_o)\big) +
    \bigg[ \bint_{B_{\rho}(y_o)} \big|u-(u)_{y_o,\rho}\big|^{p} \, \d x \bigg]^\frac{1}{p} \Bigg].
\end{align*}
Here we also used that fact that $\frac{|x_o-y_o|}{r}\le \frac{\rho}{2^{m_o}8R}\le 4$ by the choice of $m_o$. 

For the second term on the right-hand side of~\eqref{eq:tail-rhoi-estimate}, 
we apply the fractional Poincaré inequality from Lemma~\ref{lem:poin}, 
with $\sigma$ in place of $s$. 
This yields in the case $p\ge 2$ that
\begin{align*}
    \sfI\sfI
    &\leq 
    c\, R^{\sigma- \frac{n}{p}} 
    \sum_{k=1}^{m_o} 2^{k(\sigma-sp' - \frac{n}{p})}  [u]_{W^{\sigma,p}(B_{2^k 8R}(x_o))}.
\end{align*}
and in the case $1<p<2$ that 
\begin{align*}
    \sfI\sfI\sfI^{p-1}
    \leq 
    c\, R^{(\sigma- \frac{n}{p})(p-1)} 
    \sum_{k=1}^{m_o} 2^{k(\sigma-sp' - \frac{n}{p})(p-1)}  [u]^{p-1}_{W^{\sigma,p}(B_{2^k 8R}(x_o))}.
\end{align*}
Inserting the estimates for $\sfI$ and $\sfI\sfI$, respectively $\sfI\sfI\sfI$ into~\eqref{eq:tail-rhoi-estimate}, respectively~\eqref{eq:tail-rhoi-estimate-}
and then substituting the resulting bound into the inequality for the second-order differences of $u$, 
we obtain the asserted estimate.
\end{proof}

The following theorem establishes quantitative higher differentiability for local weak solutions to the fractional $p$-Poisson equation~\eqref{eq:frac-p-lap}. 
Under suitable integrability assumptions on the right-hand side $f$, 
it shows that the solution itself belongs to $W^{1+\gamma,p}_{\mathrm{loc}}(\Omega)$ for some $\gamma>0$, 
providing an explicit local estimate for the gradient in terms of the data and the nonlocal tail of $u$.

\begin{theorem}[Higher differentiability]\label{thm:higher-diff}
Let $p>1$ and $s\in(0,1)$ with $sp'>1$, and let $\tilde\alpha\in[\alpha,\frac{1}{p-1}]$ be such that $\epsilon>0$, where $\alpha$ and $\epsilon$ are defined in~\eqref{def:alpha} and~\eqref{def:eps}. Assume in addition that
$f \in L^{\tilde{\alpha}p}_{\mathrm{loc}}(\Omega)$. Then any local weak solution $u$ to~\eqref{eq:frac-p}
belongs to
\begin{equation*}
u \in W^{1+\gamma,p}_{\mathrm{loc}}(\Omega)
\quad \text{for every } \gamma \in (0,\gamma_o),
\qquad
\gamma_o := \frac{\theta-p}{\theta-sp+\bar\epsilon p}\,\bar\epsilon,
\end{equation*}
where $\bar\epsilon:=\epsilon \min\{1,p-1\}$ and $\theta$ is given by~\eqref{def:theta}.
Moreover, for every $\gamma\in(0,\gamma_o)$ there exists a constant
$c = c(n,p,s,\tilde{\alpha},\gamma)>0$ such that for every ball
$B_R\Subset\Omega$ the quantitative estimate
\begin{align*}
    &\|\nabla u\|_{L^p(B_{\frac12 R})}
    + R^\gamma [\nabla u]_{W^{\gamma,p}(B_{\frac12 R})}\\
    &\qquad\qquad\le
    \frac{c}{R}
    \bigg[
        R^s [u]_{W^{s,p}(B_R)}
        + R^{\frac{n}{p}}
        \Tail\big(u-(u)_{R}; B_R\big)  
        + R^{1+\epsilon}
        \|f\|_{L^{\tilde{\alpha}p}(B_R)}^{\frac{1}{p-1}}
    \bigg]
\end{align*}
holds. Finally, in the limit $\epsilon\downarrow0$ one has
$\gamma\downarrow 0$ and $c\uparrow\infty$.
\end{theorem}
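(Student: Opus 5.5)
The plan is a finite iteration in the differentiability exponent, driven by a comparison of second-order differences of $u$ and of its $(s,p)$-harmonic replacement, in the spirit of \cite{BDLMS-1,BDLMS-2}. Suppose that $u\in W^{\sigma,p}(B_\rho(y_o))$ for some $\sigma\in[s,1)$, with a quantitative bound; initially $\sigma=s$. Fix $x_o\in B_{\rho/2}(y_o)$ and a step $h$ with $|h|$ small. For such $h$ we choose an \emph{auxiliary} radius $R=R(h)$ linked to the step by $R=|h|^{\lambda}\rho^{1-\lambda}$ for some $\lambda\in(0,1)$ to be tuned; this is the atomic decomposition at a finer scale. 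We then write
\[
\boldsymbol{\tau}^2_h u=\boldsymbol{\tau}^2_h v+\boldsymbol{\tau}^2_h(u-v),
\]
where $v$ solves \eqref{CD-v} on $B_{8R}(x_o)$.

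For the first term we use Lemma~\ref{lem:vi}, which gives a factor $(|h|/R)^\theta$ times $R^{\sigma p}[\ldots]^p+(R/\rho)^{spp'}R^n\sfE^p+R^{(1+\epsilon)p}\|f\|^{p'}$. For the second term we use the trivial bound $\int_{B_R}|\boldsymbol{\tau}^2_h w|^p\le c\,|h|^{sp}[w]^p_{W^{s,p}}$ from Lemma~\ref{lem:N-FS}, since $w=u-v$ has compact support. Combined with Lemma~\ref{lem:fract-level-comparison} (with $\delta$ chosen as a suitable power of $|h|/R$ when $p<2$), this bounds it by $|h|^{sp}R^{(1-s+\epsilon)p}\|f\|^{p'}$, plus $\delta\,|h|^{sp}[u]^p_{W^{s,p}}$ when $p<2$.

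Next we cover $B_{\rho/2}(y_o)$ by balls $B_R(z_i)$ using Lemma~\ref{lem:covering} and sum over $i$. The bounded overlap of the dilated balls $B_{2^k8R}(z_i)$ (a factor $2^{nk}$) is absorbed by the decay $2^{k(\sigma-sp'-n/p)}$ in Lemma~\ref{lem:vi}; this uses $\sigma<1<sp'$, together with Minkowski's inequality \eqref{eq:sum-minkowski} for the $k$-sum. For the $f$-terms we use $\sum_i\|f\|_{L^{\tilde\alpha p}(B_{8R}(z_i))}^{p'}\le c\,(\rho/R)^{n(1-\frac{1}{\tilde\alpha(p-1)})_+}\|f\|^{p'}$, via Hölder and the overlap bound. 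This gives
\[
\int_{B_{\rho/2}}|\boldsymbol{\tau}^2_h u|^p\le c\Big[\Big(\tfrac{|h|}{R}\Big)^{\theta}R^{\sigma p}+|h|^{sp}R^{(1-s+\epsilon)p}\cdots\Big]\mathcal M^p,
\]
where $\mathcal M$ collects the data. Inserting $R=|h|^\lambda\rho^{1-\lambda}$ and optimizing $\lambda$, both terms behave like $|h|^{\beta p}$ with a new exponent $\beta$. Since $\theta>p$, and the $f$-term has the gain $\epsilon>0$, one checks that $\beta>\sigma$ by an amount bounded below uniformly. In the terminal step $\beta$ exceeds $1$, reaching at most $1+\gamma_o$ with $\gamma_o$ as stated; this balance between $\theta-p$ and $\bar\epsilon$ is what produces the formula for $\gamma_o$.

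When $\beta<1$ we apply Lemma~\ref{lem:Domokos} and then Lemma~\ref{lem:FS-N}, obtaining $u\in W^{\sigma',p}$ with $\sigma<\sigma'<\beta$ on a slightly smaller ball, and iterate. Because the increment is uniform, finitely many steps, depending only on $n,p,s,\tilde\alpha,\gamma$, reach $\beta>1$. At that point \eqref{est-1st-diffquot>1} gives $\nabla u\in L^p$, and Lemma~\ref{lem:2nd-Ni-FS} gives $\nabla u\in W^{\gamma,p}$ for every $\gamma<\gamma_o$. Lemma~\ref{lem:GN}, or the previous $L^p$ gradient bound, controls the lower-order term $\int|\nabla u|^p$ appearing there. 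Shrinking radii by a fixed factor at each step and rescaling yields the stated estimate on $B_{R/2}$, after bounding $\sfE$ by Poincaré (Lemma~\ref{lem:poin}).

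The main obstacle is the exponent bookkeeping in the optimization over $R(h)$, particularly for $1<p<2$. There the comparison carries the extra term $\delta[u]^p_{W^{s,p}}$ with $\delta^{-(2-p)/(p-1)}$ in front of $f$, which is why $\bar\epsilon=\epsilon\min\{1,p-1\}$ appears. We would handle it by choosing $\delta=(|h|/R)^{\mu}$ with $\mu$ calibrated so that this term also gains a power of $|h|$, and then tracking that the total increment per step stays bounded below. As $\epsilon\downarrow0$ the increment degenerates, which explains $\gamma\downarrow0$ and $c\uparrow\infty$.
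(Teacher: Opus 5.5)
Your proposal is correct and follows essentially the same route as the paper. The shared steps are the $|h|$-dependent radius $|h|^\beta\rho^{1-\beta}$; comparison with $v_i$ on each covering ball via Lemmas~\ref{lem:vi}, \ref{lem:N-FS} and \ref{lem:fract-level-comparison}, with $\delta$ a power of $|h|/\rho$ when $p<2$; summation via Lemma~\ref{lem:covering} and Minkowski; balancing the exponents; and the finite iteration through Lemmas~\ref{lem:Domokos} and \ref{lem:FS-N}, ending with \eqref{est-1st-diffquot>1} and Lemma~\ref{lem:2nd-Ni-FS}.

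Two points need tightening. First, to obtain every $\gamma<\gamma_o$ you must iterate until the second-difference exponent exceeds $1+\gamma$, not merely $1$; this forces $\sigma$ close to $1$, exactly as in the paper's choice of $j_o$ with $\theta_{j_o}>1+\gamma$. Second, passing from the shrunken balls to $B_{R/2}$ requires a covering argument, with the off-centre tails controlled by Lemma~\ref{lem:tail}; rescaling alone does not give it.
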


\begin{proof}
The proof is divided into several steps. 
We first establish quantitative estimates for second–order difference quotients.
These bounds are then used to prove almost $W^{1,p}$–differentiability.
Combining the resulting fractional differentiability with the estimates for the difference quotients yields differentiability beyond first order.
Finally, collecting the previous arguments, we conclude the desired estimate.

\smallskip
\noindent
\textit{Step~1: Estimate for second-order difference quotients.}
Let $B_\rho(y_o)\subset \Omega$ and \(\sigma \in [s,1)\), and assume that \(u \in W^{\sigma,p}(B_\rho(y_o))\).
Moreover, fix \(\beta \in (0,1)\) and let \(h \in \mathbb{R}^n\) satisfy
\[
    0<|h| \le h_o\rho, 
    \qquad\mbox{where }
    h_o
    := 
    \min\Big\{ \big(\tfrac{1}{16}\big)^{\frac{1}{\beta}},
\big(\tfrac{1}{7}\big)^{\frac{1}{1-\beta}} \Big\}.
\]
Applying Lemma~\ref{lem:covering} with
\((y_o,\tfrac12\rho,|h|^\beta\rho^{1-\beta})\) in place of \((x_o,R,r)\),
we infer the existence of a finite index set \(I\) and a family of points
\(\{x_i\}_{i\in I}\subset B_{\frac12 \rho}(y_o)\) such that
\begin{equation}\label{cov-1}
        B_{\frac12 \rho}(y_o)
    \subset
    \bigcup_{i\in I} B_{|h|^\beta\rho^{1-\beta}}(x_i),
   \qquad
    |I|
    \le
    c(n)\Big(\frac{\rho}{|h|}\Big)^{\beta n} ,
\end{equation}
and
\begin{equation}\label{cov-2}
     \sup_{x\in\mathbb{R}^n}
    \sum_{i\in I}
    \chi_{B_{2^k|h|^\beta\rho^{1-\beta}}(x_i)}(x)
    \le
    c(n)\,2^{nk}\qquad\forall\, k\in\N.
\end{equation}
In particular, \(\big\{B_i\equiv B_{|h|^\beta\rho^{1-\beta}}(x_i)\big\}_{i\in I}\) is a covering of
\(B_{\frac12 \rho}(y_o)\) with \(x_i\in B_{\frac12 \rho}(y_o)\) and satisfying
$16 B_i\subset B_\rho(y_o)$
for every $i\in I$.
For each \(i\in I\), let \[v_i\in W^{s,p}(8B_{i})\cap L_{sp}^{p-1}(\R^n)\] denote the unique weak solution to
\[
\left\{
\begin{array}{cl}
(-\Delta_p)^s v_i = 0,
& \text{in } 8 B_i, \\[6pt]
v_i = u,
& \text{a.e.\ in } \mathbb{R}^n \setminus 8 B_i.
\end{array}
\right.
\]
Choose \(m_o \in \mathbb{N}_0\) such that
\[
\tfrac{1}{32}\rho < 2^{m_o}|h|^\beta\rho^{1-\beta} \le \tfrac{1}{16}\rho.
\]
Then, by splitting we have
\begin{align}\label{Eq:tau-h-B-i}
    \int_{B_i} |\boldsymbol\tau_h^2 u|^p \,\d x
    &\le
    2^{p-1}\int_{B_i} |\boldsymbol\tau_h^2 (u - v_i)|^p \,\d x
    + 2^{p-1}\int_{B_i} |\boldsymbol\tau_h^2 v_i|^p \,\d x 
\end{align}
and we treat both contributions separately. The first one will be controlled in terms of 
\begin{equation}\label{def:Ii}
    \sfI_i
    :=
    \Big(\frac{|h|}{\rho}\Big)^{sp+(\sigma-s+\bar\epsilon)\beta p} 
    \Big[ \rho^{\sigma p} [u]_{W^{\sigma,p}(16B_i)}^p +
    \rho^{(1+\epsilon)p}
    \|f\|_{L^{\tilde\alpha p}(8B_i)}^{p'}\Big]. 
\end{equation}
In fact, the first integral of second order difference on the right-hand side is vacuously controlled by an integral of first order difference over a larger domain. Then, we estimate it by combining
Lemma~\ref{lem:N-FS} with the comparison estimate from
Lemma~\ref{lem:fract-level-comparison}.
In the case $p\ge 2$ we infer that  
\begin{align}\label{change-1}
    \int_{B_i} \big|\boldsymbol\tau_h^2 (u - v_i)\big|^p \,\d x
    &\le
    2^{p-1}\int_{\mathbb{R}^n} \big|\boldsymbol\tau_h (u - v_i)\big|^p \,\d x \nonumber\\
    &\le
    c\, |h|^{sp} [u-v_i]_{W^{s,p}(\mathbb{R}^n)}^p \nonumber\\
    &\le
    c\, |h|^{sp} \big(|h|^\beta\rho^{1-\beta}\big)^{(1-s+\epsilon)p}
    \|f\|_{L^{\tilde\alpha p}(8B_i)}^{p'} \nonumber\\
    &\le 
    c\, \Big(\frac{|h|}{\rho}\Big)^{sp+(\sigma-s+\bar\epsilon)\beta p} \rho^{(1+\epsilon)p}
    \|f\|_{L^{\tilde\alpha p}(8B_i)}^{p'} \nonumber\\
    &\le  
    c\,\sfI_i ,
\end{align}
since $\sigma<1$ and $\bar\varepsilon=\varepsilon$ in the present case.
In the same spirit, the case $p\in(1,2)$ yields that, for any $\delta\in(0,1]$,
\begin{align*}
    \int_{B_i} & \big|\boldsymbol\tau_h^2 (u - v_i)\big|^p \,\d x\\
    &\le
    2^{p-1}\int_{2B_i} \big|\boldsymbol\tau_h (u - v_i)\big|^p \,\d x \nonumber\\
    &\le
    c\, |h|^{sp} [u-v_i]_{W^{s,p}(16 B_i)}^p\nonumber\\
    &\le
    c\, |h|^{sp} \Big[\delta[u]_{W^{s,p}(16B_i)}^p +
    \delta^{-\frac{2-p}{p-1}} \big(|h|^\beta\rho^{1-\beta}\big)^{(1-s+\epsilon)p}
    \|f\|_{L^{\tilde\alpha p}(8B_i)}^{p'}\Big] \nonumber\\
    &\le
    c\, |h|^{sp} \Big[\delta\big(|h|^\beta\rho^{1-\beta}\big)^{(\sigma-s)p} [u]_{W^{\sigma,p}(16B_i)}^p\\
    &\qquad\qquad\qquad
    +
    \delta^{-\frac{2-p}{p-1}} \big(|h|^\beta\rho^{1-\beta}\big)^{(1-s+\epsilon)p}
    \|f\|_{L^{\tilde\alpha p}(8B_i)}^{p'}\Big] \nonumber\\
    &=
    c\, \Big(\frac{|h|}{\rho}\Big)^{sp} 
    \bigg[\delta\Big(\frac{|h|}{\rho}\Big)^{(\sigma-s)\beta p} \rho^{\sigma p} [u]_{W^{\sigma,p}(16B_i)}^p \\
    &\qquad\qquad\qquad +
    \delta^{-\frac{2-p}{p-1}} \Big(\frac{|h|}{\rho}\Big)^{(1-s+\epsilon)\beta p}\rho^{(1+\epsilon)p}
    \|f\|_{L^{\tilde\alpha p}(8B_i)}^{p'}\bigg] .
\end{align*}
Here, to estimate the $\delta$-term we also invoked \eqref{eq:fract-embedding-s-sigma}.
We now balance the powers of $\frac{|h|}{\rho}$ by choosing the parameter $\delta$ in the form
\[
    \delta := \Big(\frac{|h|}{\rho}\Big)^{(1-\sigma+\varepsilon)\beta p(p-1)} .
\]
With this choice, the two appearing powers of $\frac{|h|}{\rho}$ inside
the brackets are unified to be
$(\sigma-s)\beta p + (1-\sigma+\varepsilon)\beta p(p-1)
$. A direct computation shows that 
$$
    (\sigma-s)\beta p + (1-\sigma+\epsilon)\beta p(p-1)
    \ge 
    \big(\sigma-s+\epsilon(p-1)\big)\beta p
    =
    \big(\sigma-s+\bar\epsilon\big)\beta p,
$$
so that 
\begin{align}\label{change-1<}
    \int_{B_i} \big|\boldsymbol\tau_h^2 (u - v_i)\big|^p \,\d x 
    &\le 
    c\,\sfI_i.
\end{align}  
Hence, in either case the first term on the right-hand side of \eqref{Eq:tau-h-B-i} is controlled all the same in terms of $\sfI_i$.

We next turn to the estimation of the second term on the right-hand side of \eqref{Eq:tau-h-B-i}. Since \(u\in W^{\sigma,p}(B_\rho(y_o))\), Lemma~\ref{lem:vi} can be applied
on each ball \(B_i=B_{|h|^\beta\rho^{1-\beta}}(x_i)\) with \(R=|h|^\beta\rho^{1-\beta}\).
In particular, we have \(\frac{|h|}{R}=(\frac{|h|}{\rho})^{1-\beta}\) and
\(|h|\le \tfrac17 R = \tfrac17 |h|^\beta\rho^{1-\beta}\), the latter being guaranteed by
the choice of \(h_o\).
Hence, Lemma~\ref{lem:vi} yields
\begin{align*}
    \int_{B_i} \big|\boldsymbol
    \tau_h^2 v_i\big|^p \,\dx
    &\le
    c\, \big[ \sfI\sfI_i + \sfI\sfI\sfI_i + \sfI\sfV_i \big],
\end{align*}
where
\begin{align*}
    \sfI\sfI_i
    &:=
    \Big(\frac{|h|}{\rho}\Big)^{\theta(1-\beta)} \big(|h|^\beta\rho^{1-\beta}\big)^{\sigma p}
    \Bigg[
        \sum_{k=0}^{m_o}
        2^{k(\sigma-sp' - \frac{n}{p})}
        [u]_{W^{\sigma,p}(2^k 8B_i)}
    \Bigg]^p\\
    \sfI\sfI\sfI_i
    &:=
    \Big(\frac{|h|}{\rho}\Big)^{\theta(1-\beta)+spp'\beta} \big(|h|^{\beta}\rho^{1-\beta}\big)^n
    \sfE\big(u;B_\rho(y_o)\big)^p,
\end{align*}
and
\begin{align*}
     \sfI\sfV_i
    &:=
    \Big(\frac{|h|}{\rho}\Big)^{\theta(1-\beta)} \big(|h|^{\beta}\rho^{1-\beta}\big)^{(1+\epsilon)p}
    \|f\|_{L^{\tilde\alpha p}(8B_i)}^{p'}
\end{align*}
and $c=c(n,p,s,\tilde\alpha)$. 
Here, $\theta$ is defined in~\eqref{def:theta}.
We re-write
\begin{align*}
    \sfI\sfV_i
    &=
    \Big(\frac{|h|}{\rho}\Big)^{\theta(1-\beta)+(1+\epsilon)p\beta} 
    \rho^{(1+\epsilon)p}
    \|f\|_{L^{\tilde\alpha p}(8B_i)}^{p'}  .
\end{align*}
We now examine the exponent of $\frac{|h|}{\rho}$. In the case $p\ge2$ we have
$\theta = sp+2$, and therefore
\begin{align*}
    \theta(1-\beta)+(1+\epsilon)p\beta
    &=
    sp + (1-s+\epsilon)p\beta + 2(1-\beta)\ge 
    sp + (\sigma-s+\bar\epsilon)p\beta,
\end{align*}
where in the last step we used that $\sigma\le 1$ and $\bar\varepsilon =\varepsilon$.
In the case $p\in(1,2)$ we have $\theta = p+\tfrac12 sp$, and hence
\begin{align*}
    \theta(1-\beta)+(1+\epsilon)p\beta
    &=
    sp + (1-s+\epsilon)p\beta + (p-\tfrac12 sp)(1-\beta) \\
    &\ge 
    sp + (\sigma-s+\bar\epsilon)p\beta, 
\end{align*}
where we used that $p-\tfrac12 sp>0$, $\sigma\le1$, and $\bar\varepsilon\le\varepsilon$. 
This allows us to reduce the exponent of $\frac{|h|}{\rho}$ from 
$\theta(1-\beta)+(1+\varepsilon)p\beta$ to $sp + (\sigma-s+\bar\varepsilon)p\beta$, 
so that 
\[
    \sfI\sfV_i \le \sfI_i.
\] 
As a result, the sum $\sfI_i + \sfI\sfI_i + \sfI\sfI\sfI_i$
effectively controls the right-hand side of \eqref{Eq:tau-h-B-i}, 
which we now sum over the balls of the covering. Since $B_{\frac12\rho}(y_o) \subset \bigcup_{i\in I} B_i$, we obtain
\begin{align*}
    \int_{B_{\frac12 \rho}(y_o)} \big|\boldsymbol\tau_h^2 u\big|^p \,\dx
    &\le
    \sum_{i\in I}
    \int_{B_i} \big|\boldsymbol\tau_h^2 u\big|^p \,\dx 
    \le
    c \sum_{i\in I}
    \big[ \sfI_i+\sfI\sfI_i + \sfI\sfI\sfI_i  \big].
\end{align*}
We next estimate the sum of the terms \(\sfI\sfI_i\).
By Minkowski’s inequality~\eqref{eq:sum-minkowski}, we obtain
\begin{align*}
        \sum_{i\in I}\Bigg[
        \sum_{k=0}^{m_o}
        2^{k(\sigma-sp' - \frac{n}{p})}
        [u]_{W^{\sigma,p}(2^k 8B_i)}
    \Bigg]^p 
    &\le \Bigg[\sum_{k=0}^{m_o}\bigg[\sum_{i\in I}2^{pk(\sigma-sp' - \frac{n}{p})}
        [u]^p_{W^{\sigma,p}(2^k 8B_i)}\bigg]^{\frac1p}\Bigg]^p\\
        &= \Bigg[\sum_{k=0}^{m_o}2^{k(\sigma-sp' - \frac{n}{p})}\bigg[\sum_{i\in I}
        [u]^p_{W^{\sigma,p}(2^k 8B_i)}\bigg]^{\frac1p}\Bigg]^p\\
        &\le c\Bigg[\sum_{k=0}^{m_o}2^{k(\sigma-sp')} 
        [u]_{W^{\sigma,p}(B_\rho(y_o))} \Bigg]^p
\end{align*}
for some $c=c(n,p)$. To get the last line we also invoked \eqref{cov-2}, so that
\begin{align*}
\sum_{i\in I}[u]^p_{W^{\sigma,p}(2^k 8B_i)}
&= \sum_{i\in I}\iint_{B_\rho(y_o)\times B_\rho(y_o)}\frac{|u(x)-u(y)|^p}{|x-y|^{n+sp}}\chi_{2^k 8B_i}(x)\chi_{2^k 8B_i}(y)\d x \d y\\
&\le
\sum_{i\in I}\iint_{B_\rho(y_o)\times B_\rho(y_o)}\frac{|u(x)-u(y)|^p}{|x-y|^{n+sp}}\chi_{2^k 8B_i}(x)\d x \d y\\
&\le c(n)2^{nk}[u]^p_{W^{\sigma,p}(B_\rho(y_o))}.
\end{align*}
Therefore, we estimate
\begin{align*}
    \sum_{i\in I} \sfI\sfI_i
    &\le
    c \,\Big(\frac{|h|}{\rho}\Big)^{\theta(1-\beta)} \big(|h|^\beta\rho^{1-\beta}\big)^{\sigma p}
    \Bigg[
        \sum_{k=0}^{m_o}
        2^{k(\sigma-sp')}
        [u]_{W^{\sigma,p}(B_\rho(y_o))}
    \Bigg]^p \\
    &\le c\,
    \Big(\frac{|h|}{\rho}\Big)^{\theta(1-\beta)+\sigma p\beta} \rho^{\sigma p}
    \bigg( \frac{1}{1-2^{\sigma-sp'}} \bigg)^p
    [u]_{W^{\sigma,p}(B_\rho(y_o))}^p,
\end{align*}
since \(\sigma<1<sp'\). 
We next estimate the contribution of the terms $\sfI\sfI\sfI_i$.
Recalling \eqref{cov-1}$_2$, i.e., $|I|\le c(n)\,(\rho/|h|)^{\beta n}$, we infer that
\begin{align*}
    \sum_{i\in I}& \sfI\sfI\sfI_i\\
    &\le
    c\, |I| \Big(\frac{|h|}{\rho}\Big)^{\theta(1-\beta)+spp'\beta} \big(|h|^{\beta}\rho^{1-\beta}\big)^n
    \sfE\big(u;B_\rho(y_o)\big)^p \\
    &\le
    c \,\Big(\frac{|h|}{\rho}\Big)^{\theta(1-\beta)+spp'\beta} \rho^n
    \sfE\big(u;B_\rho(y_o)\big)^p \\
    &\le
    c \,\Big(\frac{|h|}{\rho}\Big)^{\theta(1-\beta)+\sigma p\beta} 
    \Big[\rho^{\sigma p}[u]_{W^{\sigma,p}(B_\rho(y_o))}^p
        + \rho^{n}\Tail\big(u-(u)_{y_o,\rho};B_\rho(y_o)\big)^p\Big],
\end{align*}
where we also used the fractional Poincar\'e inequality from Lemma~\ref{lem:poin} and the facts that $\sigma<sp'$ and $|h|/\rho\le 1$.
Taking into account the assumption
\(\tilde\alpha \le \frac{1}{p-1}\), which ensures that
\(\frac{1}{\tilde\alpha(p-1)} \ge 1\), and invoking again
the covering property~\eqref{cov-2}, we infer that
\begin{align*}
    \sum_{i\in I} \sfI_i
    &\le
    c\, \Big(\frac{|h|}{\rho}\Big)^{sp+(\sigma-s+\bar\epsilon)p\beta} 
    \sum_{i\in I}\Big[ \rho^{\sigma p}
    [u]_{W^{\sigma,p}(16B_i)}^p +
    \rho^{(1+\epsilon)p}
    \|f\|_{L^{\tilde\alpha p}(8B_i)}^{p'} \Big] \\
    &\le 
    c\, \Big(\frac{|h|}{\rho}\Big)^{sp+(\sigma-s+\bar\epsilon)p\beta}\\
    &\qquad\cdot
    \Bigg[
    \rho^{\sigma p}
    [u]_{W^{\sigma,p}(B_\rho(y_o))}^p
    +
    \rho^{(1+\epsilon)p}\bigg[
    \sum_{i\in I}\int_{B_\rho (y_o)}|f|^{\tilde\alpha p}\chi_{8B_i}\dx
    \bigg]^\frac1{\tilde\alpha (p-1)}
    \Bigg]\\
    &\le
    c\, \Big(\frac{|h|}{\rho}\Big)^{sp+(\sigma-s+\bar\epsilon)p\beta}
    \Big[
    \rho^{\sigma p}
    [u]_{W^{\sigma,p}(B_\rho(y_o))}^p
    +
    \rho^{(1+\epsilon)p}
    \|f\|_{L^{\tilde\alpha p}(B_{\rho (y_o))}}^{p'} \Big].
\end{align*}
The bound for the \(W^{\sigma,p}\)-seminorm on \(16B_i\) is a special case
of the estimate on \(2^k 8B_i\) obtained earlier, corresponding to the
choice \(k=1\). Hence, collecting the preceding estimates, we conclude that
\begin{align*}
    \int_{B_{\frac12 \rho}(y_o)} \big|\boldsymbol\tau_h^2 u\big|^p \,\dx
    &\le
    c \bigg[ \Big(\frac{|h|}{\rho}\Big)^{\theta(1-\beta)+\sigma p\beta} + \Big(\frac{|h|}{\rho}\Big)^{sp + (\sigma-s+\bar\varepsilon)\beta p} \bigg] \sfK^p,
\end{align*}
where $c=c(n,p,s,\tilde\alpha,\sigma)$ and 
\begin{equation*}
    \sfK^p
    :=
    \rho^{\sigma p}[u]_{W^{\sigma,p}(B_{\rho}(y_o))}^p + 
    \rho^{n}\Tail\big(u-(u)_{y_o,\rho};B_\rho(y_o)\big)^p + \rho^{(1+\epsilon) p}\|f\|_{L^{\tilde\alpha p}(B_\rho(y_o))}^{p'}.
\end{equation*}
We now choose
\[
    \beta 
    :=
    \frac{\theta-sp}{\theta-sp+\bar\epsilon p}.
\]
This choice is dictated by the requirement that the two exponents of the factor
\(\frac{|h|}{\rho}\) appearing on the right-hand side coincide. Indeed, a
straightforward computation shows that with this value of \(\beta\) one has
\[
    \theta(1-\beta)+\sigma p\beta
    =
    sp+(\sigma-s+\bar\epsilon)\beta p,
\]
and that their common value is given by
\[
    sp+(\sigma-s+\bar\epsilon)\beta p
    =
    \frac{\sigma(\theta-sp)+\bar\epsilon\theta}{\theta-sp+\bar\epsilon p}\, p.
\]
Consequently, the previous estimate simplifies to
\begin{align}\label{sstep1}
    \int_{B_{\frac12 \rho}(y_o)} \big|\boldsymbol\tau_h^2 u\big|^p \,\dx
    \le
    c\, \Big(\frac{|h|}{\rho}\Big)^{\frac{\sigma(\theta-sp)+\bar\epsilon\theta}{\theta-sp+\bar\epsilon p}\, p}
    \sfK^p,
\end{align}
where \(c=c(n,p,s,\tilde\alpha,\sigma)\).

\smallskip

\noindent
\textit{Step 2: Almost \(W^{1,p}\)-differentiability.} 
As in Step~1, let \(B_\rho(y_o)\subset\Omega\) and assume that
\(u\in W^{\sigma,p}(B_\rho(y_o))\) for some \(\sigma\in[s,1)\).
We first note that \(\theta>p\).
Indeed, by the definition in~\eqref{def:theta}, if \(p\in(1,2)\) then
\(\theta=p+\tfrac12 sp>p\), whereas if \(p\ge2\) we have
\(\theta=sp+2>(p-1)+2>p\).
Consequently, for the exponent of \(\frac{|h|}{\rho}\) appearing
in~\eqref{sstep1} we infer that
\[
    \frac{\sigma(\theta-sp)+\bar\epsilon\theta}{\theta-sp+\bar\epsilon p}\, p
    >
    \frac{\sigma(\theta-sp)+\bar\epsilon p}{\theta-sp+\bar\epsilon p}\, p .
\]
From~\eqref{sstep1} we therefore obtain
\begin{align*}
    &\int_{B_{\frac12\rho}(y_o)} \big|\boldsymbol\tau_h^2 \big(u-(u)_{y_o,\rho}\big)\big|^p \, \d x \leq 
    c^p\sfK^p  \Big(\frac{|h|}{\rho}\Big)^{\frac{\sigma(\theta-sp)+\bar\epsilon p}{\theta-sp+\bar\epsilon p} p},
\end{align*}
where we used that first-order difference quotients annihilate constants, i.e.,
\(
\boldsymbol\tau_h (u-(u)_{y_o,\rho})=\boldsymbol\tau_h u
\),
and hence the same holds for second-order differences. Since \(u\in W^{\sigma,p}(B_\rho(y_o))\), the right-hand side is finite.
At this stage, we first apply Lemma~\ref{lem:Domokos}\,(i) to reduce
second-order finite differences to first-order ones, and then invoke
Lemma~\ref{lem:FS-N} to translate the resulting estimate into a bound for
the fractional seminorm.
The precise argument is as follows. We apply Lemma~\ref{lem:Domokos}\,(i) with
$(w,q,\gamma,M,r,R,d)$ replaced by
\[
\bigg( u-(u)_{y_o,\rho}, \,p,\, 
\frac{\sigma(\theta-sp)+\bar\epsilon p}{\theta-sp+\bar\epsilon p}
,\,c\,\sfK\rho^{-\frac{\sigma(\theta-sp)+\bar\epsilon p}{\theta-sp+\bar\epsilon p}},\,\tfrac12\rho, \,\rho, \,h_o\rho\bigg).
\]
This yields
\begin{align*}
    \int_{B_{\frac12\rho}(y_o)} &\big|\boldsymbol\tau_h (u-(u)_{y_o,\rho})\big|^p \, \d x \\
    &\leq c\,|h|^{\frac{\sigma(\theta-sp)+\bar\epsilon p}{\theta-sp+\bar\epsilon p} p} \Bigg[
    \bigg(\frac{\sfK}{\rho^{\frac{\sigma(\theta-sp)+\bar\epsilon p}{\theta-sp+\bar\epsilon p}}}\bigg)^p +
    \frac{ \|u-(u)_{y_o,\rho}\|_{L^p(B_\rho(y_o))}^p}{(h_o\rho)^{\frac{\sigma(\theta-sp)+\bar\epsilon p}{\theta-sp+\bar\epsilon p}p}}
    \Bigg] \\
    &\leq 
    c\, \Big(\frac{|h|}{\rho}\Big)^{\frac{\sigma(\theta-sp)+\bar\epsilon p}{\theta-sp+\bar\epsilon p} p}
    \sfK^p ,
\end{align*}
for any $0<|h|\le \frac12 h_o\rho$, where the constant \(c\) depends only on
\(n,p,s,\sigma, \tilde \alpha\).  Since \(s>\tfrac{p-1}{p}\), the exponent of 
\(\frac{|h|}{\rho}\) admits the lower bound 
\[
    \frac{\sigma(\theta-sp)+\bar\epsilon p}{\theta-sp+\bar\epsilon p} \,p  
    =
    \bigg(1-(1-\sigma)\frac{\theta-sp}{\theta-sp+\bar\epsilon p}\bigg)p
    >
    \bigg(1-(1-\sigma)\frac{\theta-sp}{\theta-sp+\bar\epsilon}\bigg)p .
\]
This allows us to apply Lemma~\ref{lem:FS-N} with
\((w,q,M,\gamma,\beta,R,d)\) replaced by
\[
\bigg(u-(u)_{y_o,\rho},\,p,\,c\,\sfK\rho^{-\frac{\sigma(\theta-sp)+\bar\epsilon p}{\theta-sp+\bar\epsilon p}}, \,\frac{\sigma(\theta-sp)+\bar\epsilon p}{\theta-sp+\bar\epsilon p}, \,1-\frac{(1-\sigma)(\theta - sp)}{\theta-sp+\bar\epsilon}, \,\tfrac12\rho,\, \tfrac12 h_o\rho\bigg). 
\]
We thus conclude that $u\in W^{1-\frac{(1-\sigma)(\theta - sp)}{\theta-sp+\bar\epsilon},p}(B_{\frac12\rho}(y_o))$ and the following quantitative estimate holds:
\begin{align}\label{start-iteration}\nonumber
    [u]_{W^{1-\frac{(1-\sigma)(\theta - sp)}{\theta-sp+\bar\epsilon},p}(B_{\frac12\rho}(y_o))}^p & \\
    &\hspace{-80pt}\leq 
    c 
    \Bigg[ \frac{(h_o\rho)^{\left[\frac{\sigma(\theta-sp)+\bar\epsilon p}{\theta-sp+\bar\epsilon p}-(1-\frac{(1-\sigma)(\theta - sp)}{\theta-sp+\bar\epsilon})\right]p} }{\rho^{\frac{\sigma(\theta-sp)+\bar\epsilon p}{\theta-sp+\bar\epsilon p}p}}\sfK^p + 
    \frac{\|u-(u)_{y_o,\rho}\|^p_{L^p(B_{\frac12 \rho}(y_o))}}{(h_o\rho)^{(1-\frac{(1-\sigma)(\theta - sp)}{\theta-sp+\bar\epsilon})p}}
    \Bigg] \nonumber\\
    &\hspace{-80pt}\le 
    \frac{c}{\rho^{(1-\frac{(1-\sigma)(\theta - sp)}{\theta-sp+\bar\epsilon})p}}\, \sfK^p.
\end{align}
Moreover, the constant \(c\) depends only on \(n,p,s,\sigma , \tilde \alpha\). To obtain the last inequality, we used the definition of \(h_o\) together
with the definition of \(\sfE\), which allows us to absorb the
$\sfE$-term into \(\sfK^p\). The estimate shows that the fractional differentiability improves from
\(W^{\sigma,p}\) to \(W^{1-\frac{1-\sigma}{\theta-sp+\bar\epsilon p},p}\).
Since the inequality is stable under rescaling, this improvement can be
iterated to obtain higher fractional regularity.
For the concrete iteration of \eqref{start-iteration}, we define sequences
\[
    \sigma_o := s, \qquad
    \sigma_i := 1-\frac{(1-\sigma_{i-1})(\theta-sp)}{\theta-sp+\bar\epsilon}
    = 1-(1-s)\Big(\frac{\theta-sp}{\theta-sp+\bar\epsilon}\Big)^i, \qquad i\in\mathbb{N},
\]
and 
\[
    \rho_i:=\frac{\rho}{2^i}
    \qquad i\in\mathbb{N}_0.
\]
Then estimate~\eqref{start-iteration} applies with \(\sigma=\sigma_o=s\)
and yields that the \(W^{\sigma_1,p}\)-seminorm of \(u\) on
\(B_{\rho_1}(y_o)\) is finite.
This allows us to apply~\eqref{start-iteration} with
\((\rho,\sigma)\) replaced by \((\rho_1,\sigma_1)\), and to deduce
that the \(W^{\sigma_2,p}\)-seminorm of \(u\) on \(B_{\rho_2}(y_o)\) is
finite.
Using at each step the corresponding
rescaled version of \(\sfK\), we obtain
\begin{align*}
    [u]_{W^{\sigma_i,p}(B_{\rho_i}(y_o))}^p
    &\le
    \frac{c}{\rho_{i-1}^{\sigma_i p}}
    \Big[
        \rho_{i-1}^{\sigma_{i-1} p}
        [u]_{W^{\sigma_{i-1},p}(B_{\rho_{i-1}}(y_o))}^p \\
    &\qquad\qquad + 
        \rho_{i-1}^n
        \Tail\big(u-(u)_{y_o,\rho_{i-1}};B_{\rho_{i-1}}(y_o)\big)^p \\
    &\qquad\qquad + 
    \rho_{i-1}^{(1+\varepsilon)p}
        \|f\|_{L^{\alpha p}(B_{\rho_{i-1}}(y_o))}^{p'}
    \Big] \\
    &\le
    \frac{c}{\rho^{\sigma_i p}}
    \Big[
        \rho^{\sigma_{i-1} p}
        [u]_{W^{\sigma_{i-1},p}(B_{\rho_{i-1}}(y_o))}^p \\
    &\qquad\qquad + 
        \rho^n \Tail\big(u-(u)_{y_o,\rho_{i-1}};B_{\rho_{i-1}}(y_o)\big)^p \\
    &\qquad\qquad + 
        \rho^{(1+\varepsilon)p}
        \|f\|_{L^{\alpha p}(B_{\rho}(y_o))}^{p'}
    \Big].
\end{align*}
Here, the constant \(c\) depends on \(n,p,s\) and \(i\). 
For the tail contribution we invoke
Lemma~\ref{lem:tail}, which yields
\begin{align}\label{est-1}
    \Tail&\big(u - (u)_{y_o,\rho_{i-1}};B_{\rho_{i-1}}(y_o)\big) \nonumber\\
    &\leq 
    c\, \frac{\Tail \big(u-(u)_{y_o,\rho};B_{\rho}(y_o)\big)}{2^{(i-1)sp'}}
    +
    c\, 2^{\frac{n}{p}(i-1)} \bigg[ \bint_{B_{\rho}(y_o)} |u-(u)_{y_o,\rho}|^p \, \d x \bigg]^\frac{1}{p} \nonumber\\
    &\leq 
    c\, 2^{-(i-1)sp'}
    \Tail \big(u-(u)_{y_o,\rho};B_{\rho}(y_o)\big) +
    c\, 2^{\frac{n}{p}(i-1)}\rho^{s-\frac{n}{p}} [u]_{W^{s,p}(B_{\rho}(y_o))} ,
\end{align}
where $c = c(s,n,p)$. 
Inserting this estimate into the previous inequality, we obtain
\begin{align*}
      [u]_{W^{\sigma_i,p}(B_{\rho_i}(y_o))}^p
    &\le
    \frac{c}{\rho^{\sigma_i p}} 
    \Big[
        \rho^{\sigma_{i-1}p}[u]_{W^{\sigma_{i-1},p}(B_{\rho_{i-1}(y_o)})}^p +
        \rho^{sp} [u]_{W^{s,p}(B_{\rho}(y_o))}^p \\
        &\phantom{\le\, \frac{c}{\rho^{\sigma_i p}} 
    \Big[}
        + \rho^n\Tail\big(u-(u)_{y_o,\rho};B_\rho(y_o)\big)^p
        + \rho^{(1+\epsilon)p}\|f\|_{L^{\tilde \alpha p}(B_{\rho}(y_o))}^{p'}
    \Big],
\end{align*}
where $c=c(n,p,s,i)$. 
Iterating this inequality for \(i=1,\dots,j\), we conclude that for every
\(j\in\mathbb{N}\)
\begin{align}\label{est-2}
    [u]_{W^{\sigma_j,p}(B_{\rho_j}(y_o))}^p &\leq 
    \frac{c}{\rho^{\sigma_jp}} 
    \Big[ \rho^{sp}[u]_{W^{s,p}(B_{\rho}(y_o))}^p + 
    \rho^n\Tail\big(u-(u)_{y_o,\rho};B_\rho(y_o)\big)^p \nonumber\\
    &\phantom{\le\,\frac{c}{\rho^{\sigma_jp}} 
    \Big[ } + \rho^{(1+\epsilon)p}\|f\|_{L^{\tilde\alpha p}(B_\rho(y_o))}^{p'}\Big],
\end{align}
where the constant \(c\) depends only on \(n,p,s\) and \(j\). 
Since \(\sigma_j \uparrow 1\) as \(j\to\infty\) and $B_\rho(y_o)\subset\Omega$ was arbitrary, a standard covering argument yields
\[
    u \in W^{\sigma,p}_{\mathrm{loc}}(\Omega)
    \quad \text{for every } \sigma\in(0,1).
\]

\smallskip
\noindent
\textit{Step 3: Differentiability beyond first order.}
Let $\tilde{\alpha}$ be as in the statement of the theorem, that is,
$\tilde{\alpha}\in [\alpha,\frac{1}{p-1}]$, with $\alpha$ given by~\eqref{def:alpha}. For $j \in \N$ to be chosen later, we apply inequality \eqref{sstep1} with $(\rho,\sigma)$ replaced by $(\rho_j,\sigma_j)$  and obtain 
\begin{align*}
    \int_{B_{\rho_{j+1}}(y_o)} & |\boldsymbol{\tau}_h^2 u|^p \,\dx \\
    &\leq
    c \Big(\frac{|h|}{\rho}\Big)^{\theta_j p}
    \Big[
        \rho^{\sigma_j p}
        [u]_{W^{\sigma_j,p}(B_{\rho_j}(y_o))}^p
        + \rho^{n}\,
        \Tail\big(u-(u)_{y_o,\rho_j};B_{\rho_j}(y_o)\big)^p \\
    &\qquad\qquad\qquad
        + \rho^{(1+\varepsilon)p}
        \|f\|_{L^{\tilde \alpha p}(B_{\rho_j}(y_o))}^{p'}
    \Big],
\end{align*}
for any $h\in\R^n$ with $0<|h|<h_o\rho_j$, 
where $c=c(n,p,s,j)$ and 
\[
\theta_j
:=
\frac{\sigma_j(\theta-sp)+\bar\epsilon\theta}{\theta-sp+\bar\epsilon p}.
\]
Using \eqref{est-1} and \eqref{est-2} to control the terms on the right-hand
side, we further estimate
\begin{align}\label{second-diff}
    \int_{B_{\rho_{j+1}}(y_o)} |\boldsymbol{\tau}_h^2 u|^p \,\dx
    \le
    c^p \Big(\frac{|h|}{\rho}\Big)^{\theta_j p}
    \widetilde{\sfK}^p,
\end{align}
where
\begin{align*}
    \widetilde{\sfK}^p
    :=
    \rho^{s p}\,[u]_{W^{s,p}(B_{\rho}(y_o))}^p
    + \rho^{n}\,\Tail\big(u-(u)_{y_o,\rho};B_{\rho}(y_o)\big)^p
    + \rho^{(1+\varepsilon)p}
    \|f\|_{L^{\tilde\alpha p}(B_{\rho}(y_o))}^{p'} .
\end{align*}
In order for the above estimate to be effective, it is crucial that the
exponents \(\theta_j\) exceed \(1\) for large \(j\).
To this end, we compute
\begin{align*}
    \lim_{j\to\infty}\theta_j
    &=
    \frac{\theta-sp+\bar\epsilon\theta}{\theta-sp+\bar\epsilon p} \quad 
    =
    1+\frac{\theta-p}{\theta-sp+\bar\epsilon p}\,\bar\epsilon =
    1+\gamma_o
    >
    1,
\end{align*}
since \(\theta>p\). In particular, this yields a genuine gain beyond first-order
differentiability. Indeed, for any \(\gamma\in(0,\gamma_o)\) there exists \(j_o\in\mathbb{N}\)
such that \(\theta_{j_o}>1+\gamma\). Observe that $j_o$ depends only on $n,p,s, \tilde \alpha,\gamma$.  Next, we apply Lemma~\ref{lem:Domokos}, i.e.~\eqref{est-1st-diffquot>1}, with $(w,q,M,\gamma,r,R,d)$ replaced by 
\[
\Big(u-(u)_{y_o,\rho_{j_o}}, p, c\,\rho^{-\theta_{j_o}}\widetilde\sfK, \theta_{j_o}, \rho_{j_o+1}, \rho_{j_o},  h_o\rho_{j_o}\Big).
\]
Then, for any $h\in\R^n$ with $0<|h|\le \frac12 h_o\rho_{j_o}$, we obtain
\begin{align*}
    \int_{B_{\rho_{j_o+1}}(y_o)} |\boldsymbol{\tau}_h u|^p \,\dx
    &\le
    c\, |h|^p \Bigg[\frac{\widetilde\sfK^p}{\rho^{\theta_{j_o} p}} (h_o\rho_{j_o})^{(\theta_{j_o}-1)p} + \frac{\|u-(u)_{y_o,\rho_{j_o}}\|^p_{L^p(B_{\rho_{j_o}}(y_o))}}{(h_o\rho_{j_o})^p}  \Bigg] \\ 
    &\qquad\le
    c\, \frac{|h|^p}{\rho^p} \widetilde\sfK^p .
\end{align*}
To obtain the last line, we applied the fractional Poincar\'e inequality from Lemma~\ref{lem:poin}.
The constant $c$ may depend on $j_o$.
By a standard property of difference quotients, we may therefore conclude
that the weak gradient \(\nabla u\) exists and satisfies
\(\nabla u \in L^p(B_{\rho_{j_o+1}}(y_o);\mathbb{R}^n)\).
Moreover, the following quantitative estimate holds:
\begin{align}\label{grad-est}
    \|\nabla u\|_{L^p(B_{\rho_{j_o+1}}(y_o))}^p
    \le
    \frac{c}{\rho^p}\,\widetilde{\sfK}^p .
\end{align}
Having established the existence of the weak gradient, we may apply
inequality~\eqref{second-diff} to conclude fractional differentiability
of \(\nabla u\). More precisely, Lemma~\ref{lem:2nd-Ni-FS} applied with $(w,q,M,\gamma,\beta,R,d)$ replaced by 
\[
    \Big(u, p, c\,\rho^{-\theta_{j_o} p}\widetilde\sfK, \theta_{j_o}-1, \gamma, \rho_{j_o+2}, \tfrac12 h_o\rho_{j_o}\Big)
\]
ensures that $u\in W^{1+ \gamma, p}(B_{\rho_{j_o+2}}(y_o))$ and that
\begin{align}\label{grad-diff-est}\nonumber
    &[\nabla u]_{W^{1+ \gamma, p}(B_{\rho_{j_o+2}}(y_o))}^p \\
    &\qquad \leq 
    c\, \big(h_o\rho_{j_o}\big)^{(\theta_{j_o}-1-\gamma)p} \bigg[ \frac{\widetilde\sfK^p}{\rho_{j_o}^{\theta_{j_o} p}} + \frac{\rho^p}{(h_o\rho_{j_o})^{\theta_{j_o} p}} \int_{B_{\rho_{j_o+1}}(y_o)} |\nabla u|^p \,\dx \bigg] \nonumber\\
    &\qquad \leq 
    \frac{c}{\rho^{(1+\gamma)p}} \widetilde\sfK^p .
\end{align}
Here, in the last inequality we used~\eqref{grad-est}, and the constant
\(c\) depends only on \(n,p,s\) and \(j_o\).
As the choice of \(B_\rho(y_o)\subset\Omega\) was arbitrary, we conclude
that \(u\in W^{1+\gamma,p}_{\mathrm{loc}}(\Omega)\).
\smallskip

\noindent
\textit{Step 4: Covering argument and final estimate.}
Let \(\rho=\tfrac12 R\).
We choose a finite covering of \(B_{\frac12 R}\) by balls
\(\{B_{\rho_{j_o+2}}(y_\ell)\}_{\ell=1}^M\) such that the enlarged balls
\(B_\rho(y_\ell)\) have uniformly bounded overlap, namely each point of
\(\mathbb{R}^n\) belongs to at most \(c(n,j_o)\) of them, and
\(B_\rho(y_\ell)\subset B_R\) for every \(\ell=1,\dots,M\).
We then apply inequalities~\eqref{grad-est} and~\eqref{grad-diff-est} on
each ball \(B_\rho(y_\ell)\) and sum over \(\ell=1,\dots,M\). In this way, we obtain 
\begin{align*}
    \|\nabla u&\|_{L^p(B_{\frac12 R})}^p +
    R^{\gamma p} [\nabla u]_{W^{1+ \gamma, p}(B_{\frac12 R})}^p \\
    &\leq 
    \sum_{\ell=1}^M 
    \Big[ \|\nabla u\|^p_{L^p(B_{\rho_{j_o+1}}(y_\ell))} + 
    R^{\gamma p} [\nabla u]_{W^{1+ \gamma, p}(B_{\rho_{j_o+2}}(y_\ell))}^p \Big]\\
    &\leq 
    \frac{c}{R^{p}} 
    \sum_{\ell=1}^M
    \bigg[ \rho^{sp} [u]^p_{W^{s, p}(B_{\rho}(y_\ell))} + 
    \rho^n \Tail\big(u-(u)_{y_\ell,\rho}; B_{\rho}(y_\ell)\big)^p\\
    &\qquad\qquad\qquad+ 
    \rho^{(1+\epsilon)p}\|f\|_{L^{\tilde \alpha p}(B_{\rho}(y_\ell))}^{p'} \bigg] \\
    &\leq 
    \frac{c}{R^{p}} 
    \bigg[ R^{sp} [u]^p_{W^{s, p}(B_{R})} + 
    R^n \Tail\big(u-(u)_R; B_{R}\big)^p + 
    R^{(1+\epsilon)p}\|f\|_{L^{\tilde \alpha p}(B_{R})}^{p'} \bigg].
\end{align*}
To obtain the last line we used Lemma~\ref{lem:tail} and the fractional Poincar\'e inequality from Lemma~\ref{lem:poin}.
\end{proof}

\section{Higher differentiability for variable coefficients}\label{S:4}
We study nonlocal operators $(-a\Delta_p)^s$ with variable coefficients 
$a\colon\mathbb{R}^n \times \mathbb{R}^n \to \mathbb{R}_+$ satisfying the ellipticity, boundedness, and continuity conditions in \eqref{eq:a-condition}.

\subsection{Comparison estimate}
In order to compare the local weak solution $u$ of the nonhomogeneous problem~\eqref{eq:frac-p-lap} with a simpler reference, 
we first freeze the coefficient $a(x,y)$ inside the ball $B_R(x_o)$ and consider the associated homogeneous Dirichlet problem 
\eqref{CD-homo-a}. The function $v$ is then the unique weak solution to this frozen-coefficient problem. 

The precise setup is as follows. Let $B_R(x_o)\subset\Omega$, and define the frozen coefficient
\begin{equation}\label{def:frozen} 
a_{x_o,R}(x,y)=
\left\{
    \begin{array}{cl}
      (a)_{x_o,R}, & \mbox{if $(x,y)\in B_R(x_o)\times B_R(x_o)$,} \\[6pt]
      a(x,y), & \mbox{if $(x,y)\notin B_R(x_o)\times B_R(x_o)$;}
    \end{array}
\right.
\end{equation}
Denote by $v \in W^{s,p}(B_R(x_o)) \cap L^{p-1}_{sp}(\R^n)$ the unique weak solution to the corresponding homogeneous Dirichlet problem
\begin{equation}\label{CD-homo-a} 
\left\{
    \begin{array}{cl}
      (- a_{x_o,R}\Delta_p)^s v  = 0, & \mbox{in $B_R(x_o)$,} \\[6pt]
      v = u, & \mbox{a.e.~in $\R^n \setminus B_R(x_o)$.}
    \end{array}
\right.
\end{equation}
and set $w := u-v$. The forthcoming Lemma~\ref{lem:compar-var-coeff} provides precise control of $w$, 
capturing the effect of the inhomogeneity $f$ as well as the frozen coefficients, 
for both the superquadratic ($p\ge 2$) and subquadratic ($p\in(1,2)$) cases.

\begin{lemma}[Comparison estimate with variable coefficients]\label{lem:compar-var-coeff}
Let $p>1$ and $s\in(0,1)$ with $sp'>1$, and let $\tilde\alpha\in[\alpha,\frac{1}{p-1}]$ be such that $\epsilon>0$, where $\alpha$ and $\epsilon$ are defined in~\eqref{def:alpha} and~\eqref{def:eps}.
Then there exists a constant 
$c=c(n,p,s,C_o,C_1,\tilde\alpha)>0$ with the following property. 
Whenever $u$ is a local weak solution to~\eqref{eq:frac-p-lap} in the sense of Definition~\ref{def:weak-sol} with coefficients $a$ satisfying~\eqref{eq:a-condition}, $f\in L^{\tilde\alpha p}(\Omega)$, $B_{2R}\equiv B_{2R}(x_o)\Subset\Omega$ with $R \leq R_o$ where $R_o$ is from~\eqref{eq:a-condition}, and $v\in W^{s,p}(B_R(x_o)) \cap L^{p-1}_{sp}(\R^n)$ denotes the unique weak solution to the homogeneous Dirichlet problem~\eqref{CD-homo-a} with frozen coefficients $a_{x_o,R}$ in the sense of Definition~\ref{def:weak-sol-D}, the function
\[
w := u-v
\]
satisfies the following estimates:

\medskip
\noindent
\emph{(i) Superquadratic case.} 
If $p\in[2,\infty)$, then
\begin{align*}
    R^{-sp}\|w\|_{L^p(B_R)}^p + [w]_{W^{s,p}(\R^n)}^p
    &\leq 
    c\,R^{\frac{\chi p}{p-1}}[u]^p_{W^{s,p}(B_R)} +
    c\, R^{(1-s+\epsilon)p} \|f\|_{L^{\tilde\alpha p}(B_R)}^{p'}.
\end{align*}

\medskip
\noindent
\emph{(ii) Subquadratic case.} 
If $p\in(1,2]$,  then for every $\delta\in(0,1]$ we have
\begin{align*}
    R^{-sp}\|w\|_{L^p(B_R)}^p &+ [w]^p_{W^{s,p}(B_{2R})}\\
    &\le 
    \bigg[\delta 
     +
     \frac{R^{\frac{\chi p}{p-1}}}{\delta^{\frac{2-p}{p-1}p'}} \bigg]
     [u]_{W^{s,p}(B_{2R})}^p 
     + 
     \frac{c}{\delta^{\frac{2-p}{p-1}p'}} 
     R^{(1-s+\epsilon)p}
    \|f\|_{L^{\tilde\alpha p}(B_R)}^{p'} .
\end{align*}
\end{lemma}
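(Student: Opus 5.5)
We test the weak formulations of $u$ and $v$ with $\varphi := w = u-v$. This is admissible: $w\in W^{s,p}(\R^n)$ vanishes outside $B_R$, and $\spt(w)\Subset\Omega$ since $B_{2R}\Subset\Omega$. Abbreviate
\[
\Phi(t) := |t|^{p-2}t,\qquad U := u(x)-u(y),\qquad V := v(x)-v(y),\qquad d\mu := \frac{\d x\,\d y}{|x-y|^{n+sp}}.
\]
Subtracting the two identities, and adding and subtracting $a_{x_o,R}\Phi(U)$, gives
\begin{align*}
\iint a_{x_o,R}\big(\Phi(U)-\Phi(V)\big)(U-V)\,d\mu
&= 2\int_{B_R} f w\,\dx\\
&\quad+ \iint_{B_R\times B_R}\big((a)_{x_o,R}-a(x,y)\big)\Phi(U)(U-V)\,d\mu .
\end{align*}
Only $B_R\times B_R$ survives in the last integral, because $a_{x_o,R}=a$ off $B_R\times B_R$. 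This is the only new term compared with Lemma~\ref{lem:fract-level-comparison}.

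For the new term we use \eqref{eq:a-condition}$_2$. Since $R\le R_o$, for $x,y\in B_R$ we have
\[
|a(x,y)-a(x_o,x_o)|\le C_1R^\chi ,
\]
and averaging the same bound gives $|(a)_{x_o,R}-a(x_o,x_o)|\le C_1R^\chi$. Hence $|(a)_{x_o,R}-a(x,y)|\le 2C_1R^\chi$ on $B_R\times B_R$. Hölder and Young then give
\[
\text{(coefficient term)}\le cR^\chi [u]^{p-1}_{W^{s,p}(B_R)}[w]_{W^{s,p}(B_R)}\le \tilde\delta [w]^p_{W^{s,p}(\R^n)}+c\,\tilde\delta^{-\frac1{p-1}}R^{\frac{\chi p}{p-1}}[u]^p_{W^{s,p}(B_R)}.
\]
The $f$-term is handled by Lemma~\ref{lem:fw} exactly as in Lemma~\ref{lem:fract-level-comparison}.

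For $p\ge2$ the left-hand side is bounded below by $2^{2-p}C_o[w]^p_{W^{s,p}(\R^n)}$, using the ellipticity $a_{x_o,R}\ge C_o$ (averaging preserves the bound) and the standard monotonicity inequality. Choosing $\tilde\delta$ small absorbs both $\tilde\delta$-terms, which proves the seminorm part of (i). The $L^p$ part, $R^{-sp}\|w\|^p_{L^p(B_R)}$, follows from Lemma~\ref{lem:coz17b-4.7}, since $w=0$ on $B_{2R}\setminus B_R$, and the seminorm of $w$ over $B_{2R}$ is bounded by the one over $\R^n$.

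The case $1<p<2$ is the main obstacle, because monotonicity only gives
\[
\iint a_{x_o,R}\big(\Phi(U)-\Phi(V)\big)(U-V)\,d\mu\ \ge\ c\iint (|U|+|V|)^{p-2}|U-V|^2\,d\mu .
\]
We follow \cite[Lemma~3.2]{BDLM} and write
\[
|U-V|^p=\big[(|U|+|V|)^{p-2}|U-V|^2\big]^{p/2}(|U|+|V|)^{(2-p)p/2}.
\]
Integrating over $B_{2R}\times B_{2R}$ and applying Hölder with exponents $\tfrac2p$ and $\tfrac2{2-p}$ gives
\[
[w]^p_{W^{s,p}(B_{2R})}\le \mathcal M^{p/2}\big([u]_{W^{s,p}(B_{2R})}+[v]_{W^{s,p}(B_{2R})}\big)^{(2-p)p/2},
\]
where $\mathcal M$ denotes the monotone quantity on the left. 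Next we bound $\mathcal M$ by the right-hand side of the tested identity and use
\[
[v]_{W^{s,p}(B_{2R})}\le[u]_{W^{s,p}(B_{2R})}+[w]_{W^{s,p}(B_{2R})}.
\]
Young with parameter $\delta$ then yields
\[
[w]^p_{W^{s,p}(B_{2R})}\le \delta [u]^p_{W^{s,p}(B_{2R})}+c\,\delta^{-\frac{2-p}{p}}\mathcal M .
\]
The quantity $\mathcal M$ carries the $\tilde\delta[w]^p$, $R^{\chi p'}[u]^p$ and $f$ contributions. Choosing $\tilde\delta$ of order $\delta^{\frac{2-p}{p}}$ absorbs the $[w]^p$ term, but then $\tilde\delta^{-1/(p-1)}$ becomes a negative power of $\delta$ multiplying both the coefficient term and the $f$ term. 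That is the source of the $\delta^{-\frac{2-p}{p-1}p'}$ factors in (ii).

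The delicate bookkeeping is to make the two exponents of $\delta$ match the stated ones, and to bound the tail parts of $\mathcal M$ outside $B_{2R}\times B_{2R}$. Those pieces are harmless because $w$ vanishes outside $B_R$. Finally, Lemma~\ref{lem:coz17b-4.7} again gives the $L^p$ bound for $w$.
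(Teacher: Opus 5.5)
Your proposal is correct and follows essentially the paper's route. You test with $w$, isolate the coefficient-difference term on $B_R\times B_R$ and bound it via \eqref{eq:a-condition}$_2$, H\"older and Young, treat $\int_{B_R} fw\,\dx$ by Lemma~\ref{lem:fw}, and in the subquadratic case combine the lower bound by $(|U|+|V|)^{p-2}|U-V|^2$ with Young's inequality. The variations are harmless: you apply H\"older at the level of integrals where the paper applies Young pointwise, and you use Lemma~\ref{lem:coz17b-4.7} instead of Sobolev's embedding for the $L^p$ part.

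The bookkeeping you flag as delicate is routine. The integrand $(\Phi(U)-\Phi(V))(U-V)$ is nonnegative, so restricting to $B_{2R}\times B_{2R}$ leaves no tail pieces to control. Your choice $\tilde\delta\sim\delta^{\frac{2-p}{p}}$ yields the factor $\delta^{-\frac{2-p}{p-1}}$, which is dominated by the stated $\delta^{-\frac{2-p}{p-1}p'}$ because $\delta\le 1$ and $p'>1$.
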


\begin{proof}
Testing the equations for $u$ and $v$ with $w := u-v$ and then taking the difference, we obtain
\begin{align*}
    \iint_{\R^n \times \R^n} &
    \frac{\big[a(x,y)\boldsymbol{(u(x) - u(y))}^{p-1} - a_{x_o,R}(x,y)\boldsymbol{(v(x) - v(y))}^{p-1}\big] }{|x-y|^{n+sp}}\\
    & \qquad\qquad\qquad\cdot[w(x) - w(y)]\, \d x \d y\\
    &= 
    \int_{B_R} f w \, \d x ,
\end{align*}
where for $a\in\R$ we write
$$
    \boldsymbol{a}^{p-1}
    :=
    |a|^{p-2} a
    \quad\mbox{if $a\not=0$}
    \qquad\mbox{and}\qquad 
    \boldsymbol{a}^{p-1}:=0
    \quad\mbox{if $a=0$}.
$$
Properly splitting the integrand on the left-hand side, we get
\begin{equation*}
    \sfI=\sfI\sfI+
    \sfI\sfI\sfI,
\end{equation*}
where
\begin{align*}
    \sfI&:=
    \iint_{\R^n \times \R^n}
    a_{x_o,R}(x,y)\frac{\sfD (x,y)}{|x-y|^{n+sp}} \, \d x \d y,
\end{align*}
\begin{align*}
    \sfI\sfI&:=\iint_{\R^n \times \R^n} [a_{x_o,R}(x,y)-a(x,y)]
    \frac{\boldsymbol{(u(x) - u(y))}^{p-1}  (w(x) - w(y))}{|x-y|^{n+sp}} \, \d x \d y,
\end{align*}
and
\begin{align*}
    \sfI\sfI\sfI
    &:=\int_{B_R} f w \, \d x.
\end{align*}
Here, we defined
\begin{equation*}
    \sfD (x,y)
    :=\big(\boldsymbol{(u(x)-u(y))}^{p-1}-
    \boldsymbol{(v(x)-v(y))}^{p-1}\big)
    (w(x)-w(y)).
\end{equation*}
We emphasize that $\sfD(x,y)\ge 0$ for a.e.~$(x,y)\in\R^n\times\R^n$,
which follows from the monotonicity of the map
$\boldsymbol\xi \mapsto\boldsymbol \xi^{p-1}$
and 
\begin{equation}
\label{wx-y}
    w(x) - w(y)
    =
    (u(x)-u(y)) - (v(x)-v(y)).
\end{equation} 
When $p\ge 2$, we have the pointwise inequality
\begin{equation*}
    \big(\boldsymbol{\eta}^{p-1}- \boldsymbol{\xi}^{p-1}\big)(\eta-\xi) \geq \tfrac{1}{c(p)} |\eta-\xi|^p
    \quad\mbox{for any $\xi,\eta\in \R$.}
\end{equation*}
Using \eqref{wx-y} and the ellipticity bound $a_{x_o,R} \ge C_o > 0$, we obtain a lower bound for the left-hand side:
\begin{equation*}
    \frac{C_o}{c(p)} [w]_{W^{s,p}(B_{2R})}^p
    \le \iint_{B_{2R}\times B_{2R} }
    a_{x_o,R}(x,y)\frac{\sfD (x,y)}{|x-y|^{n+sp}} \, \d x \d y
 \le\sfI\sfI+\sfI
    \sfI\sfI.
\end{equation*}
In the sub-quadratic case $p \in (1,2)$, a lower bound for $\sfI$ follows from ~\cite[Lemma~2.2]{BDLMS-1}; cf.~\cite[Lemma~2.1]{Acerbi-Fusco}, \cite[Lemma~2.2]{GiaquintaModica:1986}. 
More precisely, we have
\begin{align*}
    \sfD(x,y) 
    &\ge 
    \frac{1}{c(p)} \big(|u(x)-u(y)| + |v(x)-v(y)|\big)^{p-2} |w(x)-w(y)|^2.
\end{align*}
Applying Young's inequality with exponents $(\frac{2}{2-p},\frac{2}{p})$ and using the previous display, we get
\begin{align*}
    |w(x)-w(y)|^p
    &= 
    \delta^{\frac{2-p}{2}} \big(|u(x)-u(y)|+|v(x)-v(y)|\big)^{\frac{(2-p)p}{2}} \\
    &\quad \cdot 
    \delta^{-\frac{2-p}{2}} \big(|u(x)-u(y)|+|v(x)-v(y)|\big)^{\frac{(p-2)p}{2}} |w(x)-w(y)|^p \\
    &\le 
    \delta \big(|u(x)-u(y)|+|v(x)-v(y)|\big)^p \\
    &\quad + 
    \delta^{-\frac{2-p}{p}} \big(|u(x)-u(y)|+|v(x)-v(y)|\big)^{p-2} |w(x)-w(y)|^2 \\
    &\le 
    8 \delta |u(x)-u(y)|^p + 2 \delta |w(x)-w(y)|^p 
    + \frac{c(p)}{\delta^{\frac{2-p}{p}}} \, \sfD(x,y).
\end{align*}
Choosing $\delta\in\bigl(0,\tfrac14\bigr]$ in the preceding inequality,
the second term on the right-hand side can be absorbed into the left-hand side.
As a consequence, we obtain
\begin{align*}
    |w(x)-w(y)|^p
    \le 
    16\,\delta\,|u(x)-u(y)|^p
    + \frac{c(p)}{\delta^{\frac{2-p}{p}}}\,\sfD (x,y).
\end{align*}
Multiplying both sides by $a_{x_o,R}(x,y)\,|x-y|^{-n-sp}$ and integrating over
$B_{2R}\times B_{2R}$, we infer, using the ellipticity bounds
$C_o\le a_{x_o,R}\le C_1$ and the nonnegativity of the integrand in $\sfI$, that
\begin{align*}
    \frac{C_o}{c(p)}&[w]_{W^{s,p}(B_{2R})}^p \\
    &\leq 
    \frac{16C_1}{c(p)}\delta [u]_{W^{s,p}(B_{2R})}^p +
    \frac{1}{\delta^{\frac{2-p}{p}}} \iint_{B_{2R}\times B_{2R} }
    a_{x_o,R}(x,y)\frac{\sfD (x,y)}{|x-y|^{n+sp}} \, \d x \d y \\
    &\leq \frac{16C_1}{c(p)}\delta [u]_{W^{s,p}(B_{2R})}^p +
    \frac{1}{\delta^{\frac{2-p}{p}}}\sfI\\
    &=
    \frac{16C_1}{c(p)}\delta [u]_{W^{s,p}(B_{2R})}^p +
    \frac{1}{\delta^{\frac{2-p}{p}}} 
    \big[\sfI \sfI+\sfI\sfI\sfI\big].
\end{align*}
The term $\sfI\sfI$ on the right-hand side can be estimated by exploiting
the Hölder continuity of the coefficient, Hölder's inequality, and Young's
inequality. Indeed, we have
\begin{align*}
    \frac{\sfI\sfI}{\delta^{\frac{2-p}{p}}}
    &=
    \frac{1}{\delta^{\frac{2-p}{p}}}\iint_{B_R\times B_R}
    \big[(a)_{x_o,R}-a(x,y)\big]
    \frac{\boldsymbol{(u(x)-u(y))}^{p-1}\,(w(x)-w(y))}{|x-y|^{n+sp}}
    \,\d x \d y \\
    &\le
    \frac{c\,R^{\chi}}{\delta^\frac{2-p}p}
    [u]_{W^{s,p}(B_R)}^{p-1}
    [w]_{W^{s,p}(B_R)} \\
    &\le
    \frac{C_o}{2c(p)}[w]_{W^{s,p}(B_R)}^p
    +
    \frac{c\, R^{\frac{\chi p}{p-1}}}{\delta^\frac{2-p}{p-1}}
    [u]_{W^{s,p}(B_R)}^p\\
    &=
    \frac{C_o}{2c(p)}[w]_{W^{s,p}(B_R)}^p
    +
    \frac{c\, R^{\frac{\chi p}{p-1}}}{\delta^\frac{1}{p-1}}\delta
    [u]_{W^{s,p}(B_R)}^p.
\end{align*}
Inserting this estimate into the previous inequality and absorbing the first
term on the right-hand side into the left-hand side, we arrive at
\begin{align*}
     [w]_{W^{s,p}(B_{2R})}^p
     &\le
     c\,\delta \bigg[1
     +
     \Big(\frac{R^{\chi p}}{\delta}\Big)^\frac1{p-1}\bigg]
     [u]_{W^{s,p}(B_{2R})}^p
     +c\, \frac{\sfI\sfI\sfI}{\delta^\frac{2-p}{p-1}}.
\end{align*}
It remains to estimate the term involving the inhomogeneity $f$.
By Lemma~\ref{lem:fw}, we have for every $\tilde\delta\in(0,1)$ that 
\begin{align*}
    \sfI\sfI\sfI
    \le
    \tilde\delta\,[w]_{W^{s,p}(B_{2R})}^p
    +
    c\tilde\delta^{-\frac{1}{p-1}} R^{(1-s+\epsilon)p}\,
    \|f\|_{L^{\tilde\alpha p}(B_R)}^{p'} .
\end{align*}
Choosing $\tilde\delta=\frac{\delta^\frac{2-p}{p-1}}{2c}$, we can re-absorb $\frac12[w]_{W^{s,p}(B_{2R})}^p$ into the left-hand side with the result
\begin{align*}
     [w]_{W^{s,p}(B_{2R})}^p
     \le
     c\,\delta \bigg[1
     +
     \Big(\frac{R^{\chi p}}{\delta}\Big)^\frac1{p-1}\bigg]
     [u]_{W^{s,p}(B_{2R})}^p
     + \frac{c}{\delta^{\frac{2-p}{p-1}p'}} 
     R^{(1-s+\epsilon)p}\,
    \|f\|_{L^{\tilde\alpha p}(B_R)}^{p'}.
\end{align*}
Substituting $c\,\delta$ by $\delta$ we obtain the comparison estimate for the $W^{s,p}$-norm of $w$ in the remaining case $p\in(1,2)$. The inequality for its $L^p$-norm now follows from Sobolev's inequality, as in~\cite[Corollary~3.3]{BDLM}.
\end{proof}

\subsection{Higher differentiability of the gradient}
The following lemma provides higher differentiability and gradient estimates for solutions
to the fractional $p$-Laplace equation with frozen coefficients, and parallels Lemma~\ref{lem:2nd-diffquot-homo}.

\begin{lemma}[Higher differentiability for frozen-coefficient solutions]\label{lem:vi-a}
Let $p>1$, $s\in(0,1)$ with $sp'>1$, and let $B_R\equiv B_R(x_o)\subset\R^n$. Whenever $v\in W^{s,p}_{\rm loc}(B_R)\cap L^{p-1}_{sp}(\R^n)$ is a local weak solution to 
\[
(- a_{x_o,R}\Delta_p)^s v = 0 \quad \text{in } B_R,
\]
then 
\begin{equation*}
    v \in W^{1+\gamma,p}_{\rm loc}(B_R) 
    \quad \text{for every } \gamma \in (0,\gamma_o),
    \qquad 
    \gamma_o := \frac{\theta - p}{\theta - sp + \bar\epsilon p}\,\bar\epsilon,
\end{equation*}
where $\bar\epsilon := \min\{1,p-1\}\,(sp'-1)$ and $\theta$ is defined in~\eqref{def:theta}. 
Moreover, there exists a constant 
$c=c(n,p,s,C_o,C_1,\gamma)>0$ such that for every such $v$ one has
\begin{align*}
    \|\nabla v\|_{L^p(B_{\frac14R})}
    &+   R^\gamma[\nabla v]_{W^{\gamma,p}(B_{\frac14R})} \\
    &\leq
    \frac{c}{R} \Big[ R^{s}[v]_{W^{s,p}(B_{\frac12R})} +
    R^\frac{n}{p} \Tail\big(v-(v)_{\frac12R};B_{\frac12 R}\big)\Big],
\end{align*}
and, for every $h\in\R^n\setminus\{0\}$ with $|h|\le \frac1{32}R$,
\begin{align*}
    \int_{B_{\frac14 R}} |\boldsymbol{\tau}_h^2 v|^p \,\dx
    &\le 
    c \Big(\frac{|h|}{R}\Big)^{(1+\gamma)p} 
    \Big[ R^{sp} [v]_{W^{s,p}(B_{\frac12 R})}^p + R^n \Tail(v-(v)_{\frac12 R}; B_{\frac12 R})^p \Big].
\end{align*}
Moreover, for every $q\in(1,\infty)$, we have $v \in W^{1,pq}_{\rm loc}(B_R)$, and there exists a constant 
$c=c(n,p,s,C_o,C_1,q)>0$ such that
\begin{align*}
    \|\nabla v\|_{L^{pq}(B_{\frac14 R})} 
    &\le 
    c \, R^{-\frac{n}{p}\left(1-\frac{1}{q}\right)-1} 
    \Big[ R^s [v]_{W^{s,p}(B_R)} + 
    R^{\frac{n}{p}} \Tail\big(v-(v)_{\frac12 R}; B_{\frac12 R}\big) \Big].
\end{align*}
\end{lemma}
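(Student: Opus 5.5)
The key observation is that on $B_R\times B_R$ the frozen coefficient $a_{x_o,R}$ equals the constant $(a)_{x_o,R}\in[C_o,C_1]$. Dividing the equation by this constant, $v$ is a local weak solution in $B_R$ of an operator that coincides with $(-\Delta_p)^s$ for interactions inside $B_R$. Off $B_R\times B_R$ the kernel is $a(x,y)/(a)_{x_o,R}$, which is merely measurable with bounds $C_o/C_1\le\cdot\le C_1/C_o$. The plan is to rerun the homogeneous theory of Section~2.5 for this operator. Concretely, I would rewrite the weak formulation, for test functions $\varphi$ supported in some $B_\rho(z)\Subset B_R$, as the $(-\Delta_p)^s$-form over $B_R\times B_R$ plus a nonlocal remainder
\[
\int_{B_R}\varphi(x)\,g(x)\,\dx,\qquad
g(x):=\frac{2}{(a)_{x_o,R}}\int_{\R^n\setminus B_R}(a(x,y)-(a)_{x_o,R})\frac{\boldsymbol{(v(x)-v(y))}^{p-1}}{|x-y|^{n+sp}}\,\dy .
\]
On $B_{\rho}(z)$ with $\operatorname{dist}(B_\rho(z),\partial B_R)\gtrsim R$, the function $g$ is bounded in terms of $\|v\|_{L^\infty}$ and $\Tail(v)$. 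In particular, $g$ enters only as a bounded right-hand side.

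First, I would obtain local boundedness of $v$ from Theorem~\ref{thm:sup-est}, which only needs \eqref{eq:a-condition}$_1$. Next, I would verify that the second-order difference estimates of \cite{BDLMS-1, BDLMS-2} quoted in Lemma~\ref{lem:2nd-diffquot-homo} go through. Their proofs test with $\boldsymbol\tau_{-h}(\eta^p\,\cdot\,\boldsymbol\tau_h v)$, and the far-off interactions are always absorbed into tail terms. The coefficient is translation invariant exactly where the local part is differentiated, namely on $B_R\times B_R$, where it is constant. So the only change is in the tail-type terms, which are estimated by absolute values and hence tolerate a bounded measurable coefficient. This gives the bound of Lemma~\ref{lem:2nd-diffquot-homo} on $B_{R/4}$ with exponent $\theta$, in terms of $R^{sp}[v]^p_{W^{s,p}(B_{R/2})}+R^n\Tail(v-(v)_{R/2};B_{R/2})^p$, with constants depending also on $C_o,C_1$. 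Similarly, Theorem~\ref{thm:Lq-gradient} extends, which gives the final $W^{1,pq}$ estimate. Here $pq$ ranges over $(p,\infty)$, and the prefactor $R^{-\frac np(1-\frac1q)-1}$ is just the scaling of that theorem written with $L^{pq}$.

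For the $W^{1+\gamma,p}$ statement, I would rerun the proof of Theorem~\ref{thm:higher-diff} with $f\equiv0$, where no comparison is needed on the small balls. Alternatively, and more economically, I would view the remainder $g$ above as a right-hand side in $L^\infty\subset L^{\tilde\alpha p}$ with $\tilde\alpha=\frac1{p-1}$. This choice gives $\epsilon=sp'-1$ in \eqref{def:eps}, which explains $\bar\epsilon=\min\{1,p-1\}(sp'-1)$ and the same $\gamma_o$. The iteration of Steps~1--3 then yields the $L^p$-gradient bound and the $[\nabla v]_{W^{\gamma,p}}$ bound. The second-difference estimate with exponent $(1+\gamma)p$ follows from \eqref{second-diff} with $\theta_{j_o}>1+\gamma$, or alternatively from Lemma~\ref{lem:W-1+gamma}. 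The $\|g\|_{L^\infty}$ contributions are converted back into $[v]_{W^{s,p}}$ and tail terms using Theorem~\ref{thm:sup-est}, Lemma~\ref{lem:t} and Lemma~\ref{lem:poin}, after subtracting $(v)_{R/2}$.

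The main obstacle I expect is the bookkeeping in the comparison step on the small balls $B_i$. There the $p$-harmonic replacement should be taken for the operator with kernel constant on $B_R\times B_R$, which is again frozen on $8B_i\times 8B_i$. The substitute for $f$ is the nonlocal term coming from $\R^n\setminus B_R$, which is smooth enough but depends on $v$ itself. I would handle this by estimating it by $c\,R^{-sp}\bigl(\|v-(v)_{R/2}\|_{L^\infty(B_{3R/8})}+\Tail(v-(v)_{R/2};B_{R/2})\bigr)^{p-1}$ uniformly on $B_{3R/8}$. This bound is fixed before the iteration, so it behaves like a given datum.
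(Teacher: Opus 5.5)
Your ``more economical alternative'' is the paper's proof of the $W^{1+\gamma,p}$ statement and of the first two estimates. For $\varphi$ supported in $B_{\frac12R}$, the weak formulation becomes $(-\Delta_p)^sv=g$, with $g$ the exterior interaction. Theorem~\ref{thm:higher-diff} is then applied with $\tilde\alpha=\frac1{p-1}$, so $\epsilon=sp'-1$, which produces the stated $\bar\epsilon$ and $\gamma_o$. Finally $\|g\|$ is converted back, and the $\boldsymbol\tau_h^2$ bound follows from Lemma~\ref{lem:W-1+gamma}.

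Two simplifications are worth noting.
\begin{itemize}
\item Once $g$ is treated as a datum, $v$ is a genuine local weak solution of the constant-coefficient equation in $B_{\frac12R}$. So Theorem~\ref{thm:higher-diff} applies as a black box with ordinary $(s,p)$-harmonic replacements, and your ``main obstacle'' never arises.
\item Boundedness of $g$ is not needed for these two estimates. For $x\in B_{\frac12R}$,
\[
|g(x)|\le cR^{-sp}\Big[|v(x)-(v)_{\frac12R}|^{p-1}+\Tail\big(v-(v)_{\frac12R};B_{\frac12R}\big)^{p-1}\Big],
\]
so Lemma~\ref{lem:poin} bounds $\|g\|_{L^{p'}(B_{\frac12R})}$ directly by the $B_{\frac12R}$-quantities on the right. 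This avoids the sup estimate and any extra covering to reach $B_{\frac14R}$.
\end{itemize}

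The genuine gap is your first route, and your $W^{1,pq}$ estimate rests on it alone. You claim that the arguments behind Lemma~\ref{lem:2nd-diffquot-homo} and Theorem~\ref{thm:Lq-gradient} survive a merely measurable coefficient off $B_R\times B_R$, because exterior terms are only ``estimated by absolute values''. This is false.

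Condition~\eqref{eq:a-condition} controls $a$ only near the diagonal. For example, take $\phi$ supported in $B_{4R}\setminus B_{2R}$ and $\psi$ any measurable function with $0\le\psi\le1$ supported in $B_{\frac12R}$. Then $a(x,y)=1+\omega[\psi(x)\phi(y)+\psi(y)\phi(x)]$ is admissible for small $\omega>0$, and $(a)_{x_o,R}=1$. Now let $p=2$ and take exterior datum equal to a constant $M>0$ on $\operatorname{spt}\phi$. Up to a constant factor, $v$ solves $(-\Delta)^sv=\psi\,k$ in $B_{\frac12R}$, with $k$ continuous and positive. The bound $\int|\boldsymbol\tau_h^2v|^2\le c|h|^{2s+2}$ (the case $\theta=2s+2$) would force $\psi k$, and hence $\psi$, to have positive Besov smoothness locally. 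That fails for rough $\psi$: bounded data cap the differentiability at order about $2s<1+s$.

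Likewise, for $p=2$ and $s<\frac12$, a range Theorem~\ref{thm:Lq-gradient} covers, bounded data do not even give Lipschitz bounds. So the proofs in \cite{BDLMS-1,BDLMS-2} must use the smoothness in $x$ of the exterior kernel, and ``Theorem~\ref{thm:Lq-gradient} extends'' is unsupported.

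The paper closes this step using the observation you already made.
\begin{itemize}
\item By Theorem~\ref{thm:sup-est}, $g\in L^\infty(B_{\frac12R})$.
\item Since $sp'>1$, the Calder\'on--Zygmund estimate of \cite[Theorem~1.1 and Remark~1.2]{BDLM} for the constant-coefficient fractional $p$-Poisson equation applies and gives $\nabla v\in L^{pq}$ for every $q$.
\item $\|g\|_{L^\infty(B_{\frac12R})}$ is then bounded via the pointwise estimate above, Theorem~\ref{thm:sup-est} on $B_{\frac12R}\subset B_R$, and Lemma~\ref{lem:poin}.
\end{itemize}
This last step is why $[v]_{W^{s,p}(B_R)}$, rather than $[v]_{W^{s,p}(B_{\frac12R})}$, appears in the final inequality.
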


\begin{proof}
Throughout the proof we omit the reference to the center $x_o$. 
Let $\varphi \in W^{s,p}(\R^n)$ with $\operatorname{spt}(\varphi) \subset B_{\frac12 R}$. 
By the weak formulation of $v$ and a straightforward algebraic manipulation, we have
\begin{align*}
    \iint_{\R^n\times\R^n}\frac{\boldsymbol{(v(x)-v(y))}^{p-1}(\varphi(x)-\varphi(y))}{|x-y|^{n+sp}} \,\dx\dy
    &=\sfI,
\end{align*}
where
\begin{align*}
    \sfI&:=
    \iint_{\R^n\times\R^n}\frac{(a)_{R}-a_{R}(x,y)}{(a)_{R}}\frac{\boldsymbol{(v(x)-v(y))}^{p-1}(\varphi(x)-\varphi(y))}{|x-y|^{n+sp}} \,\dx\dy
\end{align*}
Since $(a)_{R} = a_{x_o,R}(x,y)$ in $B_R \times B_R$ and $\operatorname{spt}(\varphi) \subset B_{R/2}$, the right-hand side reduces to
\[
    \sfI=
    2\iint_{B_{\frac12 R}\times(\R^n\setminus B_{R})} \frac{(a)_{R}-a_{R}(x,y)}{(a)_{R}}\frac{\boldsymbol{(v(x)-v(y))}^{p-1}\varphi(x)}{|x-y|^{n+sp}}\,\dx\dy.
\]
Hence, $v$ satisfies
\[
    (-\Delta_p)^s v = g \quad \text{in } B_{\frac12 R},
\]
where the inhomogeneity $g$ is given by
\[
    g(x) := 2 \int_{\R^n \setminus B_R} 
    \frac{(a)_R - a_{x_o,R}(x,y)}{(a)_R} \frac{
    |v(x)-v(y)|^{p-2} (v(x)-v(y))}{|x-y|^{n+sp}} \dy.
\]
Since $v \in L^p_{\rm loc}(B_R)\cap L^{p-1}_{sp}(\R^n)$ and $|x-y|\ge \frac12 R$ for $x\in B_{\frac12 R}$ and $y\in \R^n \setminus B_R$, we have $g \in L^{p'}(B_{\frac12 R})$. Hence, we may apply Theorem~\ref{thm:higher-diff} (see also~\cite[Theorem~1.2]{DKLN-higherdiff}) with $\tilde\alpha = \frac{1}{p-1}$ to conclude that
\[
    v \in W^{1+\gamma,p}_{\rm loc}(B_{\frac12 R}) \quad \text{for any } \gamma \in (0,\gamma_o),
\]
Moreover, 
Theorem~\ref{thm:higher-diff} yields the quantitative estimate
\begin{align}\label{est:v*}\nonumber
    \|\nabla &v\|_{L^p(B_{\frac14 R})}
    +  R^\gamma[\nabla v]_{W^{\gamma,p}(B_{\frac14R})}\\
    &\leq
    \frac{c}{R} \Big[
         R^s[v]_{W^{s,p}(B_{\frac12 R})}
        +  
        R^{\frac{n}{p}}\Tail\big(v-(v)_{\frac12 R}; B_{R/2}\big) +
        R^{sp'}\|g\|_{L^{p'}(B_{\frac12 R})}^{\frac{1}{p-1}}
    \Big],
\end{align}
where $c=c(n,p,s,\gamma)$. Note that for our choice $\tilde\alpha = \frac{1}{p-1}$, one has $
    \epsilon = sp' - 1$,
see~\eqref{def:eps}. 
In the preceding estimates it remains to estimate the $L^{p'}$-norm of $g$.
Using the assumptions~\eqref{eq:a-condition} on the coefficient $a$, we have 
\begin{align}\label{est-f}
    |g(x)|
    &\le 
    c\, \bigg|\int_{\R^n\setminus B_{R}} \frac{|v(x)-v(y)|^{p-1}}{|x-y|^{n+sp}}\,\dy \bigg| \nonumber\\
    &\le 
    c\, \bigg|\int_{\R^n\setminus B_{R}} \frac{|v(x)-(v)_{\frac12 R}|^{p-1}}{|x-y|^{n+sp}}\,\dy \bigg| +
    c\, \bigg|\int_{\R^n\setminus B_{R}} \frac{|v(y)-(v)_{\frac12 R}|^{p-1}}{|x-y|^{n+sp}}\,\dy \bigg|  \nonumber\\
    &\le 
    \frac{c}{R^{sp}} |v(x)-(v)_{\frac12 R}|^{p-1} +
    c\, \bigg|\int_{\R^n\setminus B_{\frac12 R}} \frac{|v(y)-(v)_{\frac12 R}|^{p-1}}{|y-x_o|^{n+sp}}\,\dy \bigg|  \nonumber\\
    &\le 
    \frac{c}{R^{sp}} \Big[ |v(x)-(v)_{\frac12 R}|^{p-1} +
    \Tail\big(v-(v)_{\frac12 R};B_{\frac12 R}\big)^{p-1}\Big]  ,
\end{align}
for a.e. $x\in B_{\frac12 R}$.
Taking both sides to the power $p'$, integrating over $B_{\frac12 R}$ and using Lemma~\ref{lem:poin} yields
\begin{align*}
    \|g\|_{L^{p'}(B_{\frac12 R})}^{p'}
    &\le 
    \frac{c}{R^{spp'}}\bigg[ \int_{B_{\frac12 R}} |v(x)-(v)_{\frac12 R}|^{p} \,\dx +
    c\,R^n \Tail\big(v-(v)_{\frac12 R};B_{\frac12 R}\big)^p \bigg]\\
    &\le 
    \frac{c}{R^{spp'}} \Big[ R^{sp}[v]_{W^{s,p}(B_{\frac12R})}^p +
    c\,R^n \Tail\big(v-(v)_{\frac12 R};B_{\frac12 R}\big)^p\Big] ,
\end{align*}
where  $c=c(n,s,p,C_o,C_1)$. 
Plugging this estimate for $\|g\|_{L^{p'}(B_{\frac12 R})}$ into \eqref{est:v*} gives the first asserted bound for $v$.
The second claimed estimate for the second-order finite differences of $v$ then follows directly from the first one by Lemma~\ref{lem:W-1+gamma}.

Previously, we only exploited that $g \in L^{p'}(B_{\frac12 R})$. However, in view of Theorem~\ref{thm:sup-est} we know that actually $g \in L^\infty(B_{\frac12 R})$. This improvement allows us to invoke~\cite[Theorem~1.1 and Remark~1.2]{BDLM}, yielding $\nabla v \in L^{pq}_{\mathrm{loc}}(B_{\frac12 R})$ for any $q>1$, together with the estimate
\begin{align*}
    \bigg[ \bint_{B_{\frac14R}} & |\nabla v|^{pq} \, \d x \bigg]^\frac{1}{q} \\
    &\leq 
    \frac{c}{R^{(1-s)p}}
    \mint_{B_{\frac12R}}\int_{B_{\frac12R}}
    \frac{|v(x)-v(y)|^p}{|x-y|^{n+sp}}\,\d x\d y +
    \frac{c}{R^p}
    \mathrm{Tail}\big(v-(v)_{\frac12R}; B_{\frac12R}\big)^p \\
    &\phantom{\le\,} +
    \frac{c}{R^{(1-s)p}}\bigg[\mint_{B_{\frac12R}}
    |R^sg|^{\frac{pq}{p-1}}\,\d x\bigg]^\frac1q,
\end{align*}
where $c=c(n,s,p,q)$. 
At this point, it remains to estimate the $L^{pq'}$-norm of $g$. 
Applying~\eqref{est-f}, Theorem~\ref{thm:sup-est}, and the fractional Poincar\'e inequality from Lemma~\ref{lem:poin}, we obtain
 \begin{align*}
    \bigg[\mint_{B_{\frac12R}} &
    |R^sg|^{\frac{pq}{p-1}}\,\d x\bigg]^\frac1q \\
    &\le 
    c\,R^{sp'} \|g\|_{L^{\infty}(B_{\frac12 R})}^{p'} \\
    &\le 
    c\, R^{sp'-spp'}\Big[\|v-(v)_{\frac12 R}\|_{L^{\infty}(B_{\frac12 R})}^p +
    \Tail\big(v-(v)_{\frac12 R};B_{\frac12 R}\big)^p \Big]\\
    &\le 
    c\, R^{-sp}\Big[ R^{-n}\|v-(v)_{\frac12 R}\|^p_{L^{p}(B_{R})} +
    \Tail\big(v-(v)_{\frac12 R};B_{\frac12 R}\big)^p \Big]\\
    &\le 
    c\, R^{-sp}\Big[R^{-n} \|v-(v)_{R}\|^p_{L^{p}(B_{R})} +
    \Tail\big(v-(v)_{\frac12 R};B_{\frac12 R}\big)^p \Big]\\
    &\le 
    c\, R^{-sp} \Big[ R^{sp-n}[v]_{W^{s,p}(B_{R})}^p +
    \Tail\big(v-(v)_{\frac12 R};B_{\frac12 R}\big)^p\Big] .
\end{align*}
Substituting the preceding estimate into the chain of inequalities above 
yields the third inequality and thus completes the proof of the lemma.
\end{proof}

The following statement is the analogue of Lemma~\ref{lem:vi} for the frozen-coefficient operator.

\begin{lemma}\label{lem:wi}
Let $p>1$ and $s\in(0,1)$ with $sp'>1$, let $0<C_o\le C_1$, and let $\tilde\alpha\in[\alpha,\frac{1}{p-1}]$ be such that $\epsilon>0$, where $\alpha$ and $\epsilon$ are defined in~\eqref{def:alpha} and~\eqref{def:eps}.
Let $\theta=\theta(s,p)$ be given by~\eqref{def:theta}
and let $\varepsilon$ be as in~\eqref{def:eps}.
Then there exists a constant
$c=c(n,p,s,C_o,C_1,\tilde\alpha)>0$ with the following property.
Assume that
$u\in W^{\sigma,p}(B_\rho(y_o))\cap L^{p-1}_{sp}(\mathbb{R}^n)$,
with $\sigma\in[s,1)$, is a weak solution to~\eqref{eq:frac-p-lap}
in $B_\rho(y_o)$ with coefficients $a$ satisfying~\eqref{eq:a-condition}.
Suppose that $\rho\le1$, $f\in L^{\tilde\alpha p}(B_\rho(y_o))$,
$x_o\in B_{\frac12 \rho}(y_o)$, and $R\in(0,\frac1{16}\rho]$.
Let
$v\in W^{s,p}(B_{8R}(x_o))\cap L^{p-1}_{sp}(\mathbb{R}^n)$
denote the unique weak solution to~\eqref{CD-homo-a}
in $B_{8R}(x_o)$ with frozen coefficients $a_{x_o,8R}$.
Then, for any $m_o\in\mathbb{N}_0$ such that
$
\frac{1}{32}\rho < 2^{m_o}R \le \frac{1}{16}\rho
$
and for any step size $h\in\mathbb{R}^n$ with $0<|h|\le \frac17R$, there holds
\begin{align*}
    \int_{B_R(x_o)} \big|\boldsymbol{\tau}_h^2 v\big|^p \, \d x
    &\le
    c \Big(\frac{|h|}{R}\Big)^{\theta}
    \Bigg[
        R^{\sigma p}
        \Bigg(
            \sum_{k=0}^{m_o}
            2^{k(\sigma-sp' - \frac{n}{p})}
            [u]_{W^{\sigma,p}(B_{2^k 8R}(x_o))}
        \Bigg)^p \\
    &\qquad\qquad\quad
        + \Big(\frac{R}{\rho}\Big)^{spp'} R^n
        \sfE \big(u;B_\rho(y_o)\big)^p \\
    &\qquad\qquad\quad
        + R^{(1+\varepsilon)p}
        \|f\|_{L^{\tilde\alpha p}(B_{8R}(x_o))}^{p'}
    \Bigg].
\end{align*}
\end{lemma}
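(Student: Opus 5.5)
The plan follows the proof of Lemma~\ref{lem:vi}, replacing the constant-coefficient ingredients (Lemma~\ref{lem:vi-1} and Lemma~\ref{lem:fract-level-comparison}) by their frozen-coefficient counterparts (Lemma~\ref{lem:vi-a} and Lemma~\ref{lem:compar-var-coeff}). We omit $x_o$ from the notation.

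\emph{Step 1: second differences of $v$.} Apply the second estimate of Lemma~\ref{lem:vi-a} on $B_{8R}$, suitably rescaled so that the estimate is over $B_R$ with $|h|\le\frac17R$. This is legitimate because $v$ solves the frozen problem in $B_{8R}$ with coefficient $a_{x_o,8R}$. We obtain
\[
\int_{B_R}|\boldsymbol\tau_h^2v|^p\,\dx\le c\Big(\frac{|h|}{R}\Big)^{(1+\gamma)p}\Big[R^{sp}[v]^p_{W^{s,p}(B_{4R})}+R^n\Tail\big(v-(v)_{4R};B_{4R}\big)^p\Big].
\]
This gives the exponent $(1+\gamma)p$ rather than $\theta$. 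So either the statement's $\theta$ must be read as a generic exponent bigger than $p$, or $\gamma$ must be fixed, e.g.\ $\gamma=\gamma_o/2$, and $\theta$ redefined for this section; the rest of the argument only uses $\theta>p$. Alternatively, as in the first half of the proof of Lemma~\ref{lem:vi-a}, write $v$ as a solution of $(-\Delta_p)^sv=g$ in $B_{4R}$ with $g\in L^\infty$ controlled by $\sfE$-type quantities. Then argue via Theorem~\ref{thm:higher-diff} and Lemma~\ref{lem:2nd-diffquot-homo}, accepting whichever exponent results. I regard this exponent bookkeeping as the main obstacle.

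\emph{Step 2: replacing $v$ by $u$.} Write $[v]\le[u]+[u-v]$ and use Lemma~\ref{lem:compar-var-coeff} on $B_{8R}$. For $p\ge2$ this gives
\[
R^{sp}[u-v]^p\le cR^{sp+\frac{\chi p}{p-1}}[u]^p_{W^{s,p}(B_{8R})}+cR^{(1+\epsilon)p}\|f\|^{p'}_{L^{\tilde\alpha p}(B_{8R})}.
\]
Here we use $R\le\rho\le1$, which absorbs $R^{\chi p/(p-1)}\le1$ into the $[u]$ term; this is exactly why $\rho\le1$ is assumed. For $p<2$ take $\delta=1$; the bracket is then at most $1+R^{\chi p/(p-1)}\le2$.

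For the tail, invoke Lemma~\ref{lem:tv-u}, since $v=u$ outside $B_{8R}$. Then use Lemma~\ref{lem:tail} from $B_{4R}$ to $B_{8R}$, the Poincar\'e inequality Lemma~\ref{lem:poin}, and the $L^p$ part of Lemma~\ref{lem:compar-var-coeff}. This gives, exactly as in the proof of Lemma~\ref{lem:vi-1},
\[
R^n\Tail(v-(v)_{4R};B_{4R})^p\le c\Big[R^n\Tail(u-(u)_{8R};B_{8R})^p+R^{sp}[u]^p_{W^{s,p}(B_{8R})}+R^{(1+\epsilon)p}\|f\|^{p'}_{L^{\tilde\alpha p}(B_{8R})}\Big].
\]

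\emph{Step 3: from $8R$ to $\rho$.} Upgrade $R^{sp}[u]^p_{W^{s,p}(B_{8R})}$ to $R^{\sigma p}[u]^p_{W^{\sigma,p}(B_{8R})}$ by \eqref{eq:fract-embedding-s-sigma}. Then treat $\Tail(u-(u)_{8R};B_{8R})$ verbatim as in the proof of Lemma~\ref{lem:vi}. Concretely, use the dyadic iteration of Lemma~\ref{lem:tail-est} up to $2^{m_o}8R$, using its $p\ge2$ and $1<p<2$ versions separately. The endpoint term is controlled by Lemma~\ref{lem:tail} with $B_{2^{m_o}8R}(x_o)\subset B_\rho(y_o)$ and $|x_o-y_o|/(2^{m_o}8R)\le4$, which yields $(R/\rho)^{spp'}R^n\sfE(u;B_\rho(y_o))^p$. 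The intermediate averages are bounded by Lemma~\ref{lem:poin} with exponent $\sigma$, producing the sum $\sum_k2^{k(\sigma-sp'-\frac np)}[u]_{W^{\sigma,p}(B_{2^k8R})}$. Collecting the three steps gives the claim.
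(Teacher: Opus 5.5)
Your proposal is correct and matches the paper's proof step for step. The paper also replaces Lemma~\ref{lem:2nd-diffquot-homo} by Lemma~\ref{lem:vi-a} applied with $\gamma=\frac12\gamma_o$, so that $\theta$ is effectively redefined as $p(1+\frac12\gamma_o)>p$; this resolves the exponent issue you flagged in exactly the way you proposed. It then uses $\rho\le1$ to reduce Lemma~\ref{lem:compar-var-coeff} to the $\delta=1$ form of Lemma~\ref{lem:fract-level-comparison}, and repeats the proof of Lemma~\ref{lem:vi} verbatim. Keep that first option and drop your alternative route through Lemma~\ref{lem:2nd-diffquot-homo}: it is unnecessary, and it would not apply directly anyway, since $v$ does not solve the homogeneous constant-coefficient equation.
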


\begin{proof}
The proof follows the same scheme as that of Lemma~\ref{lem:vi}.
The only difference is that Lemmas~\ref{lem:compar-var-coeff}
and~\ref{lem:vi-a} are employed in place of
Lemmas~\ref{lem:fract-level-comparison} and~\ref{lem:vi-1},
respectively. More precisely, one first establishes the analogue of Lemma~\ref{lem:vi-1}.
To this end, Lemma~\ref{lem:2nd-diffquot-homo} is replaced by
Lemma~\ref{lem:vi-a}, applied with $\gamma=\frac12\gamma_o$.
As a consequence, one may choose
\[
\theta = p\Bigl(1+\frac12\gamma_o\Bigr) > p,
\]
where the constant $\theta$ depends only on $s$ and $p$.
The resulting estimate is then combined with the comparison estimate
from Lemma~\ref{lem:compar-var-coeff}.
Since $\rho\le1$, the right-hand side in the latter comparison estimate
reduces to that of Lemma~\ref{lem:fract-level-comparison} with the choice
$\delta=1$.
This yields the desired analogue of Lemma~\ref{lem:vi-1}.
Once this preliminary result is available, the proof of
Lemma~\ref{lem:wi} proceeds verbatim as in the proof of
Lemma~\ref{lem:vi}. 
\end{proof}

\begin{theorem}[Higher differentiability]\label{thm:higher-diff-2}
Let $p>1$ and $s\in(0,1)$ with $sp'>1$, let $0<C_o\le C_1$, and let $\tilde\alpha\in[\alpha,\frac{1}{p-1}]$ be such that $\epsilon>0$, where $\alpha$ and $\epsilon$ are defined in~\eqref{def:alpha} and~\eqref{def:eps}.
Assume that $f\in L^{\tilde{\alpha}p}_{\mathrm{loc}}(\Omega)$ and that
$u$ is a local weak solution to~\eqref{eq:frac-p-lap} with coefficients
$a$ satisfying~\eqref{eq:a-condition}.
Then there exists
\(
\gamma=\gamma(p,s,\chi,\tilde\alpha)>0
\)
such that
\[
u\in W^{1+\gamma,p}_{\mathrm{loc}}(\Omega).
\]
Moreover, there exists a constant
$c=c(n,p,s,C_o,C_1,\chi,\tilde\alpha)>1$ such that for every ball
$B_R\Subset\Omega$ with $R\le R_o$,  where $R_o$ is from~\eqref{eq:a-condition}, the estimate
\begin{align*}
    &\|\nabla u\|_{L^p(B_{\frac12 R})}
    + R^\gamma [\nabla u]_{W^{\gamma,p}(B_{\frac12 R})} \\
    &\quad\le
    \frac{c}{R}
    \bigg[
        R^s [u]_{W^{s,p}(B_R)}
        + R^{\frac{n}{p}}
        \Tail\big(u-(u)_{R}; B_R\big)
        + R^{1+\varepsilon}
        \|f\|_{L^{\tilde{\alpha}p}(B_R)}^{\frac{1}{p-1}}
    \bigg]
\end{align*}
holds.
Finally, in the limit $\epsilon\downarrow0$ one has
$\gamma\downarrow0$ and $c\uparrow\infty$.
\end{theorem}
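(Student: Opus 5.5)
The plan is to repeat the four-step proof of Theorem~\ref{thm:higher-diff} almost verbatim. The only changes are that every comparison with an $(s,p)$-harmonic replacement becomes a comparison with the frozen-coefficient replacement \eqref{CD-homo-a}. Accordingly, Lemma~\ref{lem:fract-level-comparison} is replaced by Lemma~\ref{lem:compar-var-coeff}, and Lemma~\ref{lem:vi} by Lemma~\ref{lem:wi}.

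First, by scaling (or simply restricting to $R\le \min\{R_o,1\}$), we may assume that all radii in the covering construction are at most $1$. This ensures that Lemma~\ref{lem:wi} applies and that $R^{\chi p/(p-1)}\le 1$.

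\emph{Step 1: second-order differences on the covering balls.} Fix $B_\rho(y_o)$ with $u\in W^{\sigma,p}(B_\rho(y_o))$ and $\beta\in(0,1)$. Take the covering $\{B_i\}$ of Lemma~\ref{lem:covering} with radius $|h|^\beta\rho^{1-\beta}$, and let $v_i$ solve \eqref{CD-homo-a} in $8B_i$ with frozen coefficient $a_{x_i,8r_i}$. We split $\boldsymbol\tau_h^2u=\boldsymbol\tau_h^2(u-v_i)+\boldsymbol\tau_h^2 v_i$.

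The term $\boldsymbol\tau_h^2 v_i$ is controlled by Lemma~\ref{lem:wi} with $\theta=p(1+\tfrac12\gamma_o)>p$. For $u-v_i$ we combine Lemma~\ref{lem:N-FS} with Lemma~\ref{lem:compar-var-coeff}. Compared with the constant-coefficient case, there is a new term:
\[
|h|^{sp}\,(|h|^\beta\rho^{1-\beta})^{\chi p/(p-1)}[u]^p_{W^{s,p}(16B_i)} \quad (p\ge2),
\]
and, for $p<2$, the analogous term carrying the factor $\delta^{-\frac{2-p}{p-1}p'}$.

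Using \eqref{eq:fract-embedding-s-sigma}, we convert $[u]_{W^{s,p}}$ into $(|h|^\beta\rho^{1-\beta})^{\sigma-s}[u]_{W^{\sigma,p}}$. The new term then has the form $(|h|/\rho)^{sp+(\sigma-s+\chi/(p-1))\beta p}\rho^{\sigma p}[u]^p_{W^{\sigma,p}}$, up to a harmless factor $\rho^{\chi p/(p-1)}\le1$. It is therefore of the same type as $\sfI_i$ in \eqref{def:Ii}, provided $\bar\epsilon$ is replaced by
\[
\hat\epsilon:=\min\{\bar\epsilon,\ \chi\min\{1,(p-1)^{-1}\}\cdot c_p\},
\]
with $c_p$ absorbing the $\delta$-balancing when $p<2$.

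For $p<2$ we choose $\delta=(|h|/\rho)^{\kappa\beta p}$ with $\kappa>0$ small, depending on $\chi,p,\epsilon$. The choice must make both the $\delta[u]^p$ term and the term $R^{\chi p/(p-1)}\delta^{-\frac{2-p}{p-1}p'}[u]^p$ gain a positive power $(|h|/\rho)^{\hat\epsilon\beta p}$, while keeping the $f$-term gain positive. This is possible because each requirement is an open linear condition in $\kappa$ that holds at $\kappa=0^+$.

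Summing over the covering exactly as before, and choosing $\beta=\frac{\theta-sp}{\theta-sp+\hat\epsilon p}$, gives the analogue of \eqref{sstep1} with $\bar\epsilon$ replaced by $\hat\epsilon>0$. Since $\hat\epsilon$ depends only on $p,s,\chi,\tilde\alpha$, this is where $\gamma=\gamma(p,s,\chi,\tilde\alpha)$ comes from.

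\emph{Steps 2--4.} These proceed verbatim. We iterate $\sigma_i=1-(1-s)\big(\frac{\theta-sp}{\theta-sp+\hat\epsilon}\big)^i$ with shrinking radii $\rho_i=2^{-i}\rho$, using Lemmas~\ref{lem:Domokos}, \ref{lem:FS-N} and \ref{lem:tail} to reach every $\sigma<1$. We then pick $j_o$ with $\theta_{j_o}>1+\gamma$, for $\gamma<\gamma_o:=\frac{\theta-p}{\theta-sp+\hat\epsilon p}\hat\epsilon$. Applying \eqref{est-1st-diffquot>1} yields $\nabla u\in L^p$, and Lemma~\ref{lem:2nd-Ni-FS} yields $\nabla u\in W^{\gamma,p}$. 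Finally, a covering of $B_{\frac12R}$ together with Lemmas~\ref{lem:tail} and \ref{lem:poin} gives the stated estimate, with the same three terms on the right-hand side. As $\epsilon\downarrow0$ we have $\hat\epsilon\downarrow0$, so $\gamma_o\downarrow0$ and the constants blow up.

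The main obstacle is Step~1 for $1<p<2$. There the coefficient error enters through $\delta^{-\frac{2-p}{p-1}p'}R^{\chi p/(p-1)}$, and one must check that a single choice of $\delta$ simultaneously produces a strictly positive extra power of $|h|/\rho$ in the three terms: the $\delta$-term, the $\chi$-term and the $f$-term. I would first try $\delta=(|h|/\rho)^{\kappa\beta p}$ with
\[
\kappa=\min\Big\{\epsilon(p-1),\ \tfrac12\chi\tfrac{p-1}{(2-p)p'}\cdot\tfrac{1}{p-1}\Big\}
\]
(or any smaller value), and verify the exponent inequalities directly.
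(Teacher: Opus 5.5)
Your proposal is correct and follows essentially the paper's proof: Step~1 is redone with Lemma~\ref{lem:compar-var-coeff} and Lemma~\ref{lem:wi} (with $\theta=p(1+\frac12\gamma_o)$ and $\rho\le 1$), the coefficient error is absorbed by shrinking $\bar\epsilon$ together with a power-type choice $\delta=(|h|/\rho)^{\kappa\beta p}$ when $p<2$, and Steps~2--4 are repeated verbatim. One slip: your tentative $\kappa=\epsilon(p-1)$ is too large for $p\le\frac{1+\sqrt5}{2}$. Since $\frac{2-p}{p-1}p'=\frac{(2-p)p}{(p-1)^2}$, the $f$-term is then left with gain $\epsilon\,\frac{p^2-p-1}{p-1}\le 0$ as $\sigma\uparrow 1$. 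The paper instead takes $\kappa=\bar\epsilon=\min\{1,(p-1)^2\}\min\{\epsilon,\frac{\chi}{p-1}\}$, which works because $1-(2-p)p=(p-1)^2$; this is consistent with your own remark that any sufficiently small $\kappa$ suffices.
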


\begin{remark}\label{rem:go}
Let $\gamma_o$ be the parameter from Lemma~\ref{lem:vi-a}. 
Since the proof of Theorem~\ref{thm:higher-diff-2} relies on the a priori estimate from Lemma~\ref{lem:vi-a}, it is evident that $\gamma < \gamma_o$.
\end{remark}

\begin{proof}
The proof follows the same strategy as the one for the fractional Poisson equation
with constant coefficients $a\equiv1$, cf.\ Theorem~\ref{thm:higher-diff}.
We therefore only indicate the modifications that are required in the present setting.
Throughout the proof, Steps~1--4 refer to the corresponding steps in the proof of
Theorem~\ref{thm:higher-diff}.

\smallskip
\noindent
\textit{Step~1: Estimate for second-order difference quotients.}
We proceed as in the proof of Theorem~\ref{thm:higher-diff} up to
inequality~\eqref{Eq:tau-h-B-i}. 
It remains to estimate the first term on the right-hand side of
\eqref{Eq:tau-h-B-i}, namely
\[
     \int_{B_i} \big|\boldsymbol\tau_h^2 (u - v_i)\big|^p \,\d x,
\]
in terms of the quantity
\[
    \sfI_i
    :=
    \Big(\frac{|h|}{\rho}\Big)^{sp+(\sigma-s+\bar\epsilon)\beta p} 
    \Big[
        \rho^{\sigma p} [u]_{W^{\sigma,p}(16B_i)}^p
        +
        \rho^{(1+\epsilon)p}
        \|f\|_{L^{\tilde\alpha p}(8B_i)}^{p'}
    \Big],
\]
where
\begin{equation}\label{def:bareps}
    \bar\epsilon
    :=
    \min\{1,(p-1)^2\}
    \min\bigg\{\epsilon,\frac{\chi}{p-1}\bigg\}.
\end{equation}
Observe that the quantity $\sfI_i$ coincides with the one defined
in~\eqref{def:Ii}, up to the different definition of $\bar\epsilon$
in Theorem~\ref{thm:higher-diff} and in~\eqref{def:bareps}.
For this estimate, we proceed as in~\eqref{change-1}, applying
the comparison estimate for variable coefficients from
Lemma~\ref{lem:compar-var-coeff}.
In the case $p\ge2$ one has
\begin{align*}
    \int_{B_i} &
    \big|\boldsymbol\tau_h^2 (u - v_i)\big|^p \,\d x\\
    &\le
    c |h|^{sp} [u-v_i]_{W^{s,p}(\mathbb{R}^n)} \\
    &\le
    c |h|^{sp} \Big[
    \big(|h|^\beta\rho^{1-\beta}\big)^{\frac{\chi p}{p-1}}[u]^p_{W^{s,p}(8B_i)} 
    +
    \big(|h|^\beta\rho^{1-\beta}\big)^{(1-s+\epsilon)p}
    \|f\|_{L^{\tilde\alpha p}(8B_i)}^{p'} \Big] \\
    &\le
    c |h|^{sp} \Big[
    \big(|h|^\beta\rho^{1-\beta}\big)^{\frac{\chi p}{p-1}+(\sigma-s)p}[u]^p_{W^{\sigma,p}(8B_i)}  +
    \big(|h|^\beta\rho^{1-\beta}\big)^{(1-s+\epsilon)p}
    \|f\|_{L^{\tilde\alpha p}(8B_i)}^{p'} \Big] \\
    &=
    c \Big(\frac{|h|}{\rho}\Big)^{sp+(\sigma-s)\beta p} 
    \bigg[ 
    \Big(\frac{|h|}{\rho}\Big)^{\frac{\chi}{p-1}\beta p} \rho^{(\sigma+\frac{\chi}{p-1})p} [u]^p_{W^{\sigma,p}(8B_i)} \\
    &\qquad\qquad\qquad\qquad\quad +
    \Big(\frac{|h|}{\rho}\Big)^{(1-\sigma+\epsilon)\beta p} \rho^{(1+\epsilon)p}
    \|f\|_{L^{\tilde\alpha p}(8B_i)}^{p'} \bigg] \\
    &\le
    c \Big(\frac{|h|}{\rho}\Big)^{sp+(\sigma-s)\beta p} 
    \bigg[ 
    \Big(\frac{|h|}{\rho}\Big)^{\frac{\chi}{p-1}\beta p} \rho^{\sigma p} [u]^p_{W^{\sigma,p}(8B_i)} 
    +
    \Big(\frac{|h|}{\rho}\Big)^{\epsilon\beta p} \rho^{(1+\epsilon)p}
    \|f\|_{L^{\tilde\alpha p}(8B_i)}^{p'} \bigg]\\
    &\le 
    c\, \sfI_i,
\end{align*}
where the last inequality follows from the definition of $\bar\epsilon$ in~\eqref{def:bareps} and the assumption $\rho \le R \le 1$. This provides the analogue of~\eqref{change-1} for the present setting.
In the case $p\in(1,2)$, one obtains for any $\delta \in (0,1]$ that
\begin{align*}
    \int_{B_i} \big|\boldsymbol\tau_h^2 (u - v_i)\big|^p \,\d x
    &\le
    c |h|^{sp} [u-v_i]_{W^{s,p}(16B_i)} \\
    &\le
    c |h|^{sp} \Bigg[
    \bigg[\delta +
    \frac{\big(|h|^\beta\rho^{1-\beta}\big)^{\frac{\chi p}{p-1}}}{\delta^{\frac{2-p}{p-1}p'}}\bigg][u]^p_{W^{s,p}(16B_i)} \\
    &\qquad\qquad\quad +
    \frac{\big(|h|^\beta\rho^{1-\beta}\big)^{(1-s+\epsilon)p}}{\delta^{\frac{2-p}{p-1}p'}} 
    \|f\|_{L^{\tilde\alpha p}(8B_i)}^{p'}\Bigg] \nonumber\\
    &\le
    c |h|^{sp} \Bigg[
    \bigg[\delta +
    \frac{\big(|h|^\beta\rho^{1-\beta}\big)^{\frac{\chi p}{p-1}}}{\delta^{\frac{2-p}{p-1}p'}}\bigg]\big(|h|^\beta\rho^{1-\beta}\big)^{(\sigma-s)p} [u]_{W^{\sigma,p}(16B_i)}^p \\
    &\qquad\qquad\quad + 
    \frac{\big(|h|^\beta\rho^{1-\beta}\big)^{(1-s+\epsilon)p}}{\delta^{\frac{2-p}{p-1}p'}}
    \|f\|_{L^{\tilde\alpha p}(8B_i)}^{p'} \Bigg] \\
    &=
    c \Big(\frac{|h|}{\rho}\Big)^{sp+(\sigma-s)\beta p} \Bigg[
    \bigg[\delta +
    \Big(\frac{|h|}{\rho}\Big)^{\frac{\chi}{p-1}\beta p} \frac{\rho^{\frac{\chi p}{p-1}}}{\delta^{\frac{2-p}{p-1}p'}}\bigg]
    \rho^{\sigma p}[u]_{W^{\sigma,p}(16B_i)}^p \\
    &\qquad\qquad\quad + 
    \Big(\frac{|h|}{\rho}\Big)^{(1-\sigma+\epsilon)\beta p} \frac{\rho^{(1+\epsilon)p}}{\delta^{\frac{2-p}{p-1}p'}}
    \|f\|_{L^{\tilde\alpha p}(8B_i)}^{p'} \Bigg].
\end{align*}
We now set 
\[
\delta=\Big(\frac{|h|}{\rho}\Big)^{\bar\epsilon\beta p}.
\]
Then
\begin{align*}
    \delta +
    \Big(\frac{|h|}{\rho}\Big)^{\frac{\chi}{p-1}\beta p} \frac{\rho^{\frac{\chi p}{p-1}}}{\delta^{\frac{2-p}{p-1}p'}}
    =
    \Big(\frac{|h|}{\rho}\Big)^{\bar\epsilon\beta p} +
    \rho^{\frac{\chi p}{p-1}} \Big(\frac{|h|}{\rho}\Big)^{[\frac{\chi}{p-1}-\frac{2-p}{p-1}p'\bar\epsilon]\beta p}
    \le 
    2\Big(\frac{|h|}{\rho}\Big)^{\bar\epsilon\beta p},
\end{align*}
where we used that \(\rho \le R \le 1\) and that
$$
    \frac{\chi}{p-1}-\frac{2-p}{p-1}p'\bar\epsilon
    \ge 
    \frac{\bar\epsilon}{(p-1)^2}-\frac{2-p}{p-1}p'\bar\epsilon
    =
    \frac{\bar\epsilon}{(p-1)^2} \big[1-(2-p)p\big]
    =
    \bar\epsilon
$$
by the definition of \(\bar\epsilon\) in~\eqref{def:bareps}. Moreover, we have 
$$
    \delta^{-\frac{2-p}{p-1}p'}\Big(\frac{|h|}{\rho}\Big)^{(1-\sigma+\epsilon)\beta p}
    \le  
    \Big(\frac{|h|}{\rho}\Big)^{-\frac{2-p}{p-1}p'\bar\epsilon\beta p} \Big(\frac{|h|}{\rho}\Big)^{\epsilon\beta p}
    \le 
    \Big(\frac{|h|}{\rho}\Big)^{\bar\epsilon\beta p},
$$
since \(\sigma<1\) and 
$$
    \epsilon-\frac{2-p}{p-1}p'\bar\epsilon
    \ge 
    \frac{\bar\epsilon}{(p-1)^2}-\frac{2-p}{p-1}p'\bar\epsilon
    =
    \frac{\bar\epsilon}{(p-1)^2} \big[1-(2-p)p\big]
    =
    \bar\epsilon
$$
by the definition of \(\bar\epsilon\) in~\eqref{def:bareps}.
In conclusion, we obtain
\begin{align*}
    \int_{B_i} \big|\boldsymbol\tau_h^2 (u - v_i)\big|^p \,\d x
    &\le
    c \Big(\frac{|h|}{\rho}\Big)^{sp+(\sigma-s+\bar\epsilon)\beta p} 
    \Big[ \rho^{\sigma p}[u]_{W^{\sigma,p}(16B_i)}^p +
    \rho^{(1+\epsilon)p}
    \|f\|_{L^{\tilde\alpha p}(8B_i)}^{p'} \Big] \\
    &=
    c\, \sfI_i .
\end{align*}
Note that this inequality is precisely the analogue of~\eqref{change-1<}.

The estimates for the remaining terms remain unchanged, apart from the choice of $\theta$, 
since Lemma~\ref{lem:vi-a} exactly parallels Lemma~\ref{lem:vi}. 
Hence, we arrive at the analogue of~\eqref{sstep1}, namely 
\begin{align*}
    \int_{B_{\frac12 \rho}(y_o)} \big|\boldsymbol\tau_h^2 u\big|^p \,\dx
    \le
    c \Big(\frac{|h|}{\rho}\Big)^{\frac{\sigma(\theta-sp)+\bar\epsilon\theta}{\theta-sp+\bar\epsilon p}\, p}
    \sfK^p,
\end{align*}
where $c=c(n,p,s,C_o,C_1,\chi,\tilde\alpha,\sigma)$, $\theta=\theta(s,p)$, and
\begin{equation*}
    \sfK^p
    :=
    \rho^{\sigma p}[u]_{W^{\sigma,p}(B_{\rho}(y_o))}^p + 
    \rho^{n}\Tail\big(u-(u)_{y_o,\rho};B_\rho(y_o)\big)^p + \rho^{(1+\epsilon) p}\|f\|_{L^{\tilde\alpha p}(B_\rho(y_o))}^{p'}.
\end{equation*}

\smallskip

\noindent
\textit{Steps 2--4.} 
From this point onward, the proof proceeds exactly as in Theorem~\ref{thm:higher-diff}, 
with the only modification being the definition of $\bar\epsilon$ in~\eqref{def:bareps}. 
Since no further adjustments are needed, we omit the details and refer the reader to Steps 2--4 in the proof of Theorem~\ref{thm:higher-diff}. 
\end{proof}

\section{Comparison estimate at the gradient level.}

We next establish a comparison estimate at the gradient level.
\begin{proposition}[Comparison at the gradient level]\label{prop:comparison}
Let $p>1$ and $s\in(0,1)$ with $sp'>1$, let $0<C_o\le C_1$, and let $\tilde\alpha\in[\alpha,\frac{1}{p-1}]$ be such that $\epsilon>0$, where $\alpha$ and $\epsilon$ are defined in~\eqref{def:alpha} and~\eqref{def:eps}.
Let $\delta\in(0,1]$. Then there exist constants 
$c=c(n,p,s,C_o,C_1,\chi,\tilde\alpha)>0$, 
$\gamma=\gamma(p,s,\chi,\tilde\alpha)>0$, 
and a radius $\widetilde R_o=\widetilde R_o(\delta , n,p,s, C_o,C_1 ,\chi,\tilde\alpha, R_o)\in(0,1]$, where $R_o$ is from~\eqref{eq:a-condition}, 
such that the following holds. 
Whenever $u$ is a local weak solution to~\eqref{eq:frac-p-lap} in the sense of Definition~\ref{def:weak-sol} with coefficients $a$ satisfying~\eqref{eq:a-condition}, $f\in L^{\tilde\alpha p}(\Omega)$, 
$B_{2R}\equiv B_{2R}(x_o)\Subset\Omega$ with $0<R\le\widetilde R_o$, 
and $v\in W^{s,p}(B_R(x_o)) \cap L^{p-1}_{sp}(\R^n)$ denotes the unique weak solution to the homogeneous Dirichlet problem~\eqref{CD-homo-a} with frozen coefficients $a_{x_o,R}$ in the sense of Definition~\ref{def:weak-sol-D}, the following estimate holds:
\begin{align*}
    \|\nabla u &- \nabla v\|_{L^p(B_{\frac14 R})}\\ 
    &\leq 
    \delta  
    \Big[ 
    \| \nabla u \|_{L^{p}(B_{R})} +  
    R^{\frac{n}{p}-1} \Tail \big(u-(u)_{{R}}; B_{R}\big)\Big] 
    +
    \frac{c}{\delta^{\frac1\gamma}} R^{\epsilon} \| f \|_{L^{\tilde \alpha p}(B_{R})}^\frac{1}{p-1}.
\end{align*}
\end{proposition}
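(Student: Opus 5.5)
The plan is to interpolate between a fractional-level smallness of $w:=u-v$ and a higher-differentiability bound for $w$, using the Gagliardo--Nirenberg inequality of Lemma~\ref{lem:GN} on $B_{\frac14R}$. Concretely, Lemma~\ref{lem:GN} gives
\[
\|\nabla w\|_{L^p(B_{\frac14R})}\le \frac{c}{R}\|w\|_{L^p(B_{\frac14R})}+c\,\|w\|_{L^p(B_{\frac14R})}^{\frac{\gamma}{1+\gamma}}[\nabla w]_{W^{\gamma,p}(B_{\frac14R})}^{\frac1{1+\gamma}},
\]
where $\gamma$ is the exponent from Theorem~\ref{thm:higher-diff-2}. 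By Remark~\ref{rem:go}, the same $\gamma$ is admissible for $v$ in Lemma~\ref{lem:vi-a}.

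\emph{Bounding $\|w\|_{L^p}$.} The small factor comes from Lemma~\ref{lem:compar-var-coeff}, which controls $R^{-s}\|w\|_{L^p(B_R)}$. For $p\ge2$ it is bounded by $cR^{\frac{\chi}{p-1}}[u]_{W^{s,p}(B_R)}+cR^{(1-s+\epsilon)}\|f\|^{\frac1{p-1}}$. For $1<p<2$ it is bounded by $(\delta_1+R^{\chi p/(p-1)}\delta_1^{-\dots})^{1/p}[u]_{W^{s,p}(B_{2R})}$ plus an $f$-term. I then convert $[u]_{W^{s,p}}$ into gradient form using Lemma~\ref{lem:FS-S}, namely $R^s[u]_{W^{s,p}(B_R)}\le cR\|\nabla u\|_{L^p(B_R)}$. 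In the subquadratic case the seminorm lives on $B_{2R}$, so I will either rework the comparison on $B_R$ (the proof only needs $B_R\times B_R$ plus the cross terms) or accept $B_{2R}$ and adjust radii. The outcome is
\[
R^{-1}\|w\|_{L^p}\le \eta\,\|\nabla u\|_{L^p(B_R)}+c_\eta R^{\epsilon}\|f\|^{\frac1{p-1}},
\]
where $\eta$ can be made small by taking $R\le\widetilde R_o$ (the factor $R^{\chi/(p-1)}$) and, for $p<2$, by choosing $\delta_1$.

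\emph{Bounding $[\nabla w]_{W^{\gamma,p}}$.} I write $[\nabla w]\le[\nabla u]+[\nabla v]$ on $B_{\frac14R}$. For $u$, apply Theorem~\ref{thm:higher-diff-2} on $B_{R/2}$ (centred suitably) to get $R^{\gamma}[\nabla u]\le \frac cR[R^s[u]_{W^{s,p}(B_{R/2})}+R^{n/p}\Tail+R^{1+\epsilon}\|f\|^{1/(p-1)}]$. For $v$, apply Lemma~\ref{lem:vi-a}. Then:
\begin{itemize}
\item replace the $v$-seminorm and $v$-tail by those of $u$ plus $w$-errors via Lemma~\ref{lem:compar-var-coeff}, Lemma~\ref{lem:tv-u} and Lemma~\ref{lem:tail};
\item use Lemma~\ref{lem:FS-S} to bound $R^{s}[u]_{W^{s,p}(B_R)}$ by $R\|\nabla u\|_{L^p(B_R)}$.
\end{itemize}
This yields $R^{\gamma}[\nabla w]_{W^{\gamma,p}}\le cM$, with $M:=\|\nabla u\|_{L^p(B_R)}+R^{\frac np-1}\Tail(u-(u)_R;B_R)+R^{\epsilon}\|f\|^{1/(p-1)}$.

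\emph{Combining.} Inserting both bounds and applying Young's inequality to the product $\|w\|^{\gamma/(1+\gamma)}[\nabla w]^{1/(1+\gamma)}$ gives $\|\nabla w\|\le c\,\eta^{\gamma/(1+\gamma)}M+c(\dots)R^\epsilon\|f\|^{1/(p-1)}$. I choose $\eta$ so that $c\eta^{\gamma/(1+\gamma)}\le\delta$; this fixes $\widetilde R_o$ depending on $\delta$. The $f$-part is then bounded by
\[
c\Big(\|\nabla u\|+\Tail\Big)^{\frac1{1+\gamma}}\big(R^{\epsilon}\|f\|^{1/(p-1)}\big)^{\frac{\gamma}{1+\gamma}},
\]
and Young's inequality with weight $\delta$ splits this into $\delta M+c\,\delta^{-1/\gamma}R^\epsilon\|f\|^{1/(p-1)}$, which matches the claimed power $\delta^{-1/\gamma}$.

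The main obstacle is the subquadratic case. There the comparison bound involves $[u]_{W^{s,p}(B_{2R})}$ and the negative power $\delta_1^{-\frac{2-p}{p-1}p'}$ in front of the $f$-term, and the parameters $\delta_1$, the smallness of $R$, and the Young weights have to be balanced so that only $\delta^{-1/\gamma}$ survives in the $f$-term. I would first try choosing $\delta_1$ as a power of $\delta$ and $R$ small depending on $\delta$, tracking exponents as in the proof of Theorem~\ref{thm:higher-diff-2}; if needed, I will lower $\gamma$ (allowed by Remark~\ref{rem:go}) to absorb the extra powers.
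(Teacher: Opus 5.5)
Your proposal is correct and follows essentially the paper's route. The ingredients are: Lemma~\ref{lem:GN} applied to $u-v$ on $B_{\frac14R}$; smallness of $\|u-v\|_{L^p}$ from Lemma~\ref{lem:compar-var-coeff}, with the auxiliary parameter a power of $\delta$ and $R\le\widetilde R_o(\delta)$; the $W^{\gamma,p}$-bounds from Theorem~\ref{thm:higher-diff-2} and Lemma~\ref{lem:vi-a} (via Remark~\ref{rem:go}), turned into gradient terms by Lemma~\ref{lem:FS-S}; Young's inequality; and finally a redefinition of $\gamma$ to absorb the extra negative powers of $\delta$ for $p<2$. That last step is legitimate simply because $\delta\le1$, not because of Remark~\ref{rem:go}.

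On the radius issue you flag, the paper's proof likewise produces $\|\nabla u\|_{L^p(B_{2R})}$ from the subquadratic comparison and writes $B_R$ in the final line without comment, so your ``accept $B_{2R}$'' option is the realistic one. Reworking the comparison on $B_R$ alone is not straightforward, because Lemma~\ref{lem:fw} and the cross pairs $B_R\times(B_{2R}\setminus B_R)$ bring in $u$ on $B_{2R}$.
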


\begin{proof}
Let $\gamma_o$ be the constant from Lemma~\ref{lem:vi-a}, and let 
$\gamma = \gamma(p,s,\chi,\tilde{\alpha})>0$ be the one from Theorem~\ref{thm:higher-diff-2}. 
Then $\gamma < \gamma_o$; see Remark~\ref{rem:go}.  
Applying the interpolation inequality from Lemma~\ref{lem:GN}, we obtain
\begin{align*}
    \|\nabla u &- \nabla v\|_{L^p(B_{\frac14 R})} \\
    &\leq 
    \frac{c}{R^{\frac{\gamma}{1+\gamma}}}
    \|u-v\|_{L^p(B_{{\frac14R}})}^{\frac{\gamma}{1+\gamma}} 
    \bigg[ 
    \frac{1}{R}\,\|u-v\|_{L^p(B_{{\frac14R}})}
    + 
    R^\gamma [\nabla u-\nabla v]_{W^{\gamma,p}(B_{{\frac14R}})}\bigg]^{\frac{1}{1+\gamma}}.
\end{align*}
By Theorem~\ref{thm:higher-diff-2} together with the embedding from Lemma~\ref{lem:FS-S}, we deduce
\begin{align*}
    & [\nabla u]_{W^{\gamma,p}(B_{{\frac14R}})}\\
    &\quad\leq 
    \frac{c}{R^{1+\gamma}} \Big[ R \| \nabla u \|_{L^{p}(B_{R})} +  R^\frac{n}{p} \Tail \big(u-(u)_{R}; B_{R}\big) + R^{1+\epsilon} \| f \|_{L^{\tilde \alpha p}(B_{R})}^\frac{1}{p-1} \Big],
\end{align*}
where $c=c(n,p,s,C_o,C_1,\chi,\tilde\alpha)>1$ and $R \leq R_o$. Furthermore, by Lemmas~\ref{lem:vi-a}, \ref{lem:tail}, and \ref{lem:FS-S}, we have
\begin{align*}
    &[\nabla v]_{W^{\gamma,p}(B_{\frac14 R})} \\
    &\quad\leq
    \frac{c}{R^{1+\gamma}} \Big[ R^{s}[v]_{W^{s,p}(B_{{\frac12R}})} +
    R^\frac{n}{p} \Tail\big(v-(v)_{{\frac12R}};B_{{\frac12R}}\big)\Big] \\
    &\quad\leq
    \frac{c}{R^{1+\gamma}} \Big[ 
    R^{s}[u-v]_{W^{s,p}(B_{R})} +
    R^{s}[u]_{W^{s,p}(B_{R})} \\
    &\qquad\qquad\quad + 
    R^\frac{n}{p} \Tail\big(u-(u)_{R};B_{R}\big) +
    \|u-(u)_R\|_{L^p(B_R)} \Big] \\
    &\quad \leq 
    \frac{c}{R^{1+\gamma}}\Big[ 
    R^s[u-v]_{W^{s,p}(B_{R})} +
    R\|\nabla u\|_{L^p(B_R)} +
    R^{\frac{n}{p}} \Tail \big(u-(u)_{R}; B_R\big)\Big],
\end{align*}
where
$c=c(n,p,s,C_o,C_1,\chi)>0$.
Inserting these estimates above, we obtain 
\begin{align}\label{grad-comp-1}\nonumber
    \|\nabla u &- \nabla v\|_{L^p(B_{\frac14 R})} \\
    &\leq 
    \frac{c}{R^{\frac{\gamma}{1+\gamma}}}
    \|u-v\|_{L^p(B_{R})}^{\frac{\gamma}{1+\gamma}} \nonumber\\
    &\quad \cdot
    \bigg[ 
    \frac{1}{R}\,\|u-v\|_{L^p(B_{R})} +
    \frac{1}{R^{1-s}}[u-v]_{W^{s,p}(B_{R})} + 
    \| \nabla u \|_{L^{p}(B_R)} \nonumber\\
    &\qquad +  
    R^{\frac{n}{p}-1} \Tail \big(u-(u)_{R}; B_R\big) + 
    R^{\epsilon} \| f \|_{L^{\tilde \alpha p}(B_R)}^\frac{1}{p-1}\bigg]^{\frac{1}{1+\gamma}}  ,
\end{align}
where $c=c(n,p,s,C_o,C_1,\chi,\tilde\alpha)>1$. 

Denote by $\tilde \delta \in (0,1]$ be the parameter from Lemma~\ref{lem:compar-var-coeff} (ii). Note that assuming 
\begin{equation*}
     R \le \tilde \delta^{\frac{1}{\chi p'}[1 + \frac{(2-p)_+}{p-1} p']}  
\end{equation*}
we have
$$
\frac{R^{\frac{\chi p}{p-1}}}{\tilde\delta^{\frac{(2-p)_+}{p-1}p'}}
    \le 
    \tilde\delta,
$$
for all $p \in (1,\infty)$. Hence, combining Lemma~\ref{lem:compar-var-coeff} (i) and (ii) together with Lemma~\ref{lem:FS-S} gives
\begin{align*}
    \|u-v\|_{L^p(B_{R})} & + R^s[u-v]_{W^{s,p}(B_{R})} \\
    &\le 
    c\tilde\delta^{\frac{1}{p}} R^{s}
     [u]_{W^{s,p}(B_{2R})} + 
     \frac{c}{\tilde\delta^{\frac{(2-p)_+}{(p-1)^2}}} 
     R^{1+\epsilon}
    \|f\|_{L^{\tilde\alpha p}(B_R)}^{\frac{1}{p-1}}\\
    &\le 
    c\tilde\delta^{\frac{1}{p}} R
     \| \nabla u \|_{L^{p}(B_{2R})} + 
     \frac{c}{\tilde\delta^{\frac{(2-p)_+}{(p-1)^2}}} 
     R^{1+\epsilon}
    \|f\|_{L^{\tilde\alpha p}(B_R)}^{\frac{1}{p-1}}.
\end{align*}
In~\eqref{grad-comp-1}, we apply the preceding estimate twice: first with $\tilde\delta\in(0,1]$ to control the term $\|u-v\|_{L^p(B_R)}^{\frac{\gamma}{1+\gamma}}$, and second with $\tilde\delta=1$ to bound 
\[
\frac{1}{R}\,\|u-v\|_{L^p(B_R)} + \frac{1}{R^{1-s}} [u-v]_{W^{s,p}(B_R)}.
\]
Proceeding in this way, we obtain
\begin{align*}
    \|\nabla u &- \nabla v\|_{L^p(B_{\frac12 R})} \\
    &\leq 
    c
    \bigg[\tilde\delta^{\frac{1}{p}} 
     \| \nabla u \|_{L^{p}(B_{2R})} + 
     \frac{R^\epsilon}{\tilde\delta^{\frac{(2-p)_+}{(p-1)^2}}} 
    \|f\|_{L^{\tilde\alpha p}(B_R)}^{\frac{1}{p-1}} \bigg]^{\frac{\gamma}{1+\gamma}} \\
    &\phantom{\le\,}\cdot
    \bigg[ 
    \| \nabla u \|_{L^{p}(B_R)} +  
    R^{\frac{n}{p}-1} \Tail \big(u-(u)_{R}; B_R\big) + 
    R^{\epsilon} \| f \|_{L^{\tilde \alpha p}(B_R)}^\frac{1}{p-1}\bigg]^{\frac{1}{1+\gamma}} \\
    &\le 
    \tfrac12\delta \bigg[ 
    \| \nabla u \|_{L^{p}(B_R)} +  
    R^{\frac{n}{p}-1} \Tail \big(u-(u)_{R}; B_R\big) + 
    R^{\epsilon} \| f \|_{L^{\tilde \alpha p}(B_R)}^\frac{1}{p-1}\bigg] \\
    &\phantom{\le\,} +
    c\delta^{-\frac{1}{\gamma}}
    \bigg[\tilde\delta^{\frac{1}{p}} 
     \| \nabla u \|_{L^{p}(B_{2R})} + 
     \frac{R^\epsilon}{\tilde\delta^{\frac{(2-p)_+}{(p-1)^2}}} 
    \|f\|_{L^{\tilde\alpha p}(B_R)}^{\frac{1}{p-1}} \bigg] .
\end{align*}
We now set
\[
\tilde\delta :=  \frac{\delta^{\frac{p(1+\gamma)}{\gamma}} }{(2c)^{p}},
\]
so that
$
c\, \tilde\delta^{\frac{1}{p}} = \tfrac12 \, \delta^{1+\frac{1}{\gamma}}$. 
Note that this choice of $\tilde\delta$ together with $R \leq R_o$ requires
\[
R \le \min\Bigg\{R_o,
\bigg(\frac{\delta^{\frac{1+\gamma}{\gamma}}}{2c}\bigg)^{\frac{p-1}{\chi} \left[ 1 + \frac{(2-p)_+}{p-1}p' \right]} \Bigg\}
=: \widetilde R_o,
\]
so that the preceding estimates are valid. Moreover, with this choice of $\tilde\delta$, the preceding inequality becomes
\begin{align*}
    \|\nabla u &- \nabla v\|_{L^p(B_{\frac12 R})} \\
    &\le 
    \delta \Big[ 
    \| \nabla u \|_{L^{p}(B_R)} +  
    R^{\frac{n}{p}-1} \Tail \big(u-(u)_{R}; B_R\big) \Big] 
    +
     \frac{c\,R^\epsilon}{\delta^{\frac{1}{\gamma} + \frac{(\gamma + 1)(2-p)_+}{\gamma(p-1)}p'}}
    \|f\|_{L^{\tilde\alpha p}(B_R)}^{\frac{1}{p-1}}.
\end{align*}
Noting that the exponent
\[
\frac{1}{\gamma} + \frac{(\gamma + 1)(2-p)_+}{\gamma(p-1)}p'
\]
depends only on $p$, $s$, $\chi$, and $\tilde\alpha$, we thus obtain the comparison estimate with this exponent.
Finally, by a suitable redefinition of the parameter $\gamma$, the comparison estimate holds uniformly for all $p>1$.
\end{proof}

\section{Proof of the gradient estimate}\label{sec:proof}

\subsection{Choice of Parameters}\label{sec:parameters}
Let $s\in(0,1)$ with $sp'>1$ and 
\begin{equation*}
    r
    \in
    \bigg(\alpha p\,,\, \frac{n}{(p-1)(sp^{\prime}-1)}\bigg),
\end{equation*}
where $\alpha$ is defined in~\eqref{def:alpha}. Next, we set
$$
    q=\frac{rn(p-1)}{n-r(p-1)(sp^{\prime}-1)}
    > p
    \quad\mbox{and}\quad 
    \mu=\frac{r}{q}
    =
    \frac{n}{(p-1)\,[\,n+q(sp'-1)\,]},
$$
where the inequality $q>p$ follows from the fact that $q$ is monotonically increasing with respect to $r$ and the fact that $r>\alpha p$. 
We now specify the choice of the parameter $\tilde\alpha$.
In the case $s\in(\frac{1}{p'},s_o]$ we have
\[
    \alpha
    =
    \frac{n}{(p-1)\,[\,n+p(sp'-1)\,]}
    <
    \min\bigg\{\frac{1}{p-1}\,,\, \frac{n}{p(p-1)(sp'-1)} \bigg\}.
\]
Since the mapping
\[
(1,\infty)\ni q \longmapsto \frac{nq}{(p-1)[\,n+q(sp'-1)\,]}
\]
is increasing, the assumption $q>p$ implies
\[
\mu q>\alpha p\ge 1.
\]
In the case $s\in(s_o,1)$ we have 
$$
    \alpha
    =
    \frac{1}{p}
    <
    \min\bigg\{\frac{1}{p-1}\,,\, \frac{n}{p(p-1)(sp'-1)} \bigg\}.
$$
We additionally assume that
\[
    q
    > 
    \frac{n(p-1)}{n-(p-1)(sp'-1)}
    =
    \frac{n(p-1)}{n-1+p(1-s)},
\]
which guarantees that $\mu q > 1=\alpha p$.

Hence, in any case we may choose
\begin{equation}\label{restrict:tilde-alpha}
        \tilde\alpha
    \in
    \bigg(
        \alpha\,,\,
        \min\bigg\{\frac{1}{p-1},\frac{n}{p(p-1)(sp'-1)}\bigg\}
    \bigg)
\quad\text{such that}\quad
\tilde\alpha p< \mu q,
\end{equation}
and define $\epsilon$ according to~\eqref{def:eps}.
Note that this choice guarantees $\epsilon>0$. In fact, in the case $s\in(\frac{1}{p'},s_o]$
we have 
$$
    \epsilon
    >
    sp' - 1 - \frac{n}{p}\Big(\frac{1}{\alpha(p-1)}-1\Big)
    =
    0,
$$
while in the case $s\in(s_o,1)$
we have 
$$
    \epsilon
    >
    sp' - 1 - \frac{n}{p}\Big(\frac{p}{p-1}-1\Big)
    =
    sp' - 1 - \frac{n}{p(p-1)}
    >
    0.
$$
Moreover, the condition $\tilde\alpha<\frac{n}{p(p-1)(sp'-1)}$ implies that
\[
    \epsilon 
    <
    sp' - 1 - \frac{n}{p}\bigg[\frac{p(sp'-1)}{n}-1\bigg]
    =
    \frac{n}{p}.
\]

Recall that $r=\mu q$. Then, we can show that for every $f\in L^{\mu q}(B_R)$ it holds
\begin{align}\label{crazy-holder}
    \|f\|_{L^{\tilde\alpha p}(B_R)}^{\frac{1}{p-1}}
    \le
    \|f\|_{L^{\frac{n\mu p}{\,n-\epsilon p\,}}(B_R)}^{\mu}
    \|f\|_{L^{\mu q}(B_R)}^{\frac{\mu q(sp'-1)}{n}} .
\end{align}
The estimate follows from Hölder's inequality and can be interpreted
as an interpolation between the spaces
$L^{\frac{n\mu p}{n-\epsilon p}}(B_R)$ and $L^{\mu q}(B_R)$. Since the exponents involved are rather intricate, we spell out the
argument, including the application of Hölder's inequality and the
explicit computation of the corresponding exponents. By Hölder's inequality applied with the conjugate exponents
\(\frac{1}{1-\theta}\) and \(\frac{1}{\theta}\), where \(\theta\in(0,1)\)
will be chosen later, we obtain
\begin{align}\label{crazy-holder-}\nonumber
    \bigg[\int_{B_R} |f|^{\tilde\alpha p} \,\dx\dt \bigg]^{\frac{1}{\tilde\alpha p}} 
    &=
    \bigg[\int_{B_R} |f|^{\tilde\alpha p-\theta\mu q}|f|^{\theta\mu q} \,\dx\dt \bigg]^{\frac{1}{\tilde\alpha p}} \\
    &\le 
    \bigg[\int_{B_R} |f|^{\frac{\tilde\alpha p-\theta\mu q}{1-\theta}} \,\dx\dt \bigg]^{\frac{1-\theta}{\tilde\alpha p}} 
    \bigg[\int_{B_R} |f|^{\mu q} \,\dx\dt \bigg]^{\frac{\theta}{\tilde\alpha p}}.
\end{align}
We now choose
$$
    \theta := \frac{\tilde \alpha p (p-1) (sp'-1)}{n}.
$$
Note that \(\theta\in(0,1)\) by the choice of $\tilde\alpha$ and the assumption $sp'>1$. We compute 
\begin{align*}
    \frac{1-\theta}{\tilde\alpha(p-1)}
    =
    \frac{1}{\tilde\alpha(p-1)} - \frac{p (sp'-1)}{n}
    =
    1-\frac{\epsilon p}{n}
\end{align*}
and hence 
\begin{equation*}
    1-\theta
    =
    \frac{\tilde\alpha(p-1)(n-\varepsilon p)}{n}.
\end{equation*}
In order to proceed, we need to compute the integrability exponent of $|f|$
appearing in the first integral on the right-hand side. We obtain
\begin{align*}
    \frac{\tilde\alpha p-\theta\mu q}{1-\theta}
    &=
    \frac{n}{\tilde\alpha(p-1)(n-\epsilon p)}
    \bigg[\tilde\alpha p-\frac{\tilde\alpha p(p-1)(sp'-1)\mu q}{n}\bigg] \\
    &=
    \frac{p}{n-\epsilon p}
    \frac{n-(p-1)(sp'-1)\mu q}{p-1} \\
    &=
    \frac{\mu p}{n-\epsilon p}
    \bigg[\frac{n}{\mu(p-1)}-q(sp'-1)\bigg] \\
    &=
    \frac{n\mu p}{n-\epsilon p}
    >
    0,
\end{align*}
since $\epsilon<\frac{n}{p}$. In particular, this also implies that the exponent $\tilde\alpha p-\theta\mu q$ of $|f|$ is positive. Inserting the definition of $\theta$, the value of $1-\theta$ computed above and the preceding identity into~\eqref{crazy-holder-}, yields the claim~\eqref{crazy-holder}.

\subsection{Stopping time argument.}
Let $\delta\in(0,1)$ be fixed later in dependence on the data, and let 
$\widetilde R_o=\widetilde R_o(\delta,n,p,s,\chi,\tilde\alpha,R_o)\in(0,1]$ 
denote the radius given by Proposition~\ref{prop:comparison}. 
Let $B_R(x_o)\Subset\Omega$ be a ball with $R\le \widetilde R_o$.
In what follows, we suppress the center $x_o$ from the notation and write
$B_R\equiv B_R(x_o)$.
For \(\frac12 R\le r_1< r_2\le R\), we consider the concentric balls
\(
    B_{\frac12R}\subset B_{r_1}\subset B_{r_2}\subset B_R .
\)
With the parameter $\tilde\alpha$ introduced in \S\,\ref{sec:parameters}, we define
\begin{align}\label{def:lambda_0}\nonumber
    \lambda_o^p
    &:=
    \bint_{B_R} |\nabla u|^p \,\dx
    + \mathsf M^p
    |B_R|^{\frac{p}{n}(sp'- 1)}
    \bigg[
        \bint_{B_R} |f|^{\tilde\alpha p} \,\dx
    \bigg]^{\frac{1}{\tilde\alpha(p-1)}}\\
    &\phantom{:=}
    + R^{-p} \Tail\big(u-(u)_R;B_R\big)^p ,
\end{align}
where \(\mathsf M\ge1\) is a parameter to be fixed later. 

For any point $y_o \in B_{r_1}$ and radius $r$ with
\begin{equation}\label{range-r}
    \frac{r_2-r_1}{2^7} \leq r \leq \frac{r_2-r_1}{2},
\end{equation}
we have $B_r(y_o)\subset B_{r_2}$.
We define 
\begin{equation}\label{def:B}
    {\sf B}:=
    \bigg(
    \frac{2^{7}R}{r_2-r_1}
    \bigg)^{\frac{n}{p-1}+1}.
\end{equation}
Then, for 
\begin{equation} \label{eq:lambda-geq-Blambda0}
    \lambda > {\sf B} \lambda_o,
\end{equation}
we have
\begin{align} \label{eq:lambda-sub}
    &\bint_{B_r(y_o)} |\nabla u|^p \,\dx + 
    {\sf M}^p|B_r|^{\frac{p}{n}(sp'- 1)} \bigg[\bint_{B_r(y_o)} |f|^{\tilde\alpha p} \,\dx \bigg]^{\frac{1}{\tilde\alpha(p-1)}} \nonumber\\
    &\quad\le 
    \Big(\frac{R}{r}\Big)^n\bint_{B_R} |\nabla u|^p \,\dx + 
    \Big(\frac{R}{r}\Big)^{{n-\epsilon p}} {\sf M}^p|B_R|^{\frac{p}{n}(sp'- 1)} \bigg[\bint_{B_R} |f|^{\tilde\alpha p} \,\dx \bigg]^{\frac{1}{\tilde\alpha(p-1)}} \nonumber\\
    &\quad \le 
    {\sf B}^p\lambda_o^p 
    < 
    \lambda^p
\end{align}
for every $r$ as in~\eqref{range-r}. 
Here, we have also used the definition of $\epsilon$ from~\eqref{def:eps} and recalled that 
$\varepsilon p < n$, which has been shown from the assumption~\eqref{restrict:tilde-alpha}.
Now let $\lambda$ be as in \eqref{eq:lambda-geq-Blambda0} and consider the {\it super-level set}
$$
    \boldsymbol E(\lambda,r_1) 
    := 
    \big\{\mbox{$x \in B_{r_1}$: $x$ is a Lebesgue point of $ |\nabla u|$ and $|\nabla u|(x) > \lambda$}\big\}.
$$
In view of Lebesgue's differentiation theorem, for every
\(y_o\in \boldsymbol E(\lambda,r_1)\) we have
\[
    \lim_{\rho\downarrow 0}
    \bint_{B_\rho(y_o)} |\nabla u|^p \,\dx
    =
    |\nabla u(y_o)|^p
    >
    \lambda^p .
\]
By the absolute continuity of the integral, and since the contribution of
the inhomogeneity is nonnegative, we conclude that
\begin{align*}
    \bint_{B_\rho(y_o)} |\nabla u|^p \,\dx
    &+
    {\sf M}^p
    |B_\rho|^{\frac{p}{n}(sp'- 1)}
    \bigg[
        \bint_{B_\rho(y_o)} |f|^{\tilde\alpha p} \,\dx
    \bigg]^{\frac{1}{\tilde\alpha(p-1)}}
    >
    \lambda^p ,
\end{align*}
for all sufficiently small \(\rho>0\).
In combination with~\eqref{eq:lambda-sub}, the absolute continuity of the
integral guarantees the existence of a maximal radius
\(\rho_{y_o}\in\big(0,\tfrac{r_2-r_1}{2^7}\big)\) such that
\begin{equation*}
    \bint_{B_{\rho_{y_o}}(y_o)} |\nabla u|^p \,\dx + 
    {\sf M}^p|B_{\rho_{y_o}}|^{\frac{p}{n}(sp'- 1)} \bigg[\bint_{B_{\rho_{y_o}}(y_o)} |f|^{\tilde\alpha p} \,\dx \bigg]^{\frac{1}{\tilde\alpha(p-1)}}  
    =
    \lambda^p, 
\end{equation*}
and 
\begin{equation*}
    \bint_{B_\rho(y_o)} |\nabla u|^p \,\dx +
    {\sf M}^p|B_\rho|^{\frac{p}{n}(sp'- 1)} \bigg[\bint_{B_\rho(y_o)} |f|^{\tilde\alpha p} \,\dx \bigg]^{\frac{1}{\tilde\alpha(p-1)}}
    <
    \lambda^p 
\end{equation*}
for every $\rho\in (\rho_{y_o}, \frac{r_2-r_1}{2}]$.

\subsection{Covering of super-level sets}
As before, let \(\lambda\) satisfy~\eqref{eq:lambda-geq-Blambda0} and
consider the family of balls
\begin{equation*}
    \mathcal{F}
    :=
    \big\{
        B_{\rho_{y_o}}(y_o)
        \colon
        y_o\in \boldsymbol E(\lambda,r_1)
    \big\},
\end{equation*}
constructed in the previous step.
By Vitali's covering theorem, we can extract a countable subfamily of
pairwise disjoint balls \(\{B_i\equiv B_{\rho_{x_i}}(x_i)\}_{i\in\mathbb N}\subset\mathcal F\)
such that
\[
    \boldsymbol E(\lambda,r_1)
    \subset
    \bigcup_{i\in\mathbb N} B_{5\rho_{x_i}}(x_i).
\]
In particular, for every \(i\in\mathbb N\) we have
\(x_i\in \boldsymbol E(\lambda,r_1)\) and
\(\rho_i:=\rho_{x_i}\in\big(0,\tfrac{r_2-r_1}{2^7}\big)\), with
\(\rho_i\) determined by the stopping-time argument, so that
\begin{equation}\label{eq:intr}
    \bint_{B_i} |\nabla u|^p \,\dx +
    {\sf M}^p|B_i|^{\frac{p}{n}(sp'- 1)} \bigg[\bint_{B_i} |f|^{\tilde\alpha p} \,\dx \bigg]^{\frac{1}{\tilde\alpha(p-1)}} 
    = 
    \lambda^p,
\end{equation}
and
\begin{equation} \label{eq:stopping-time-above-maxradius}
    \bint_{B_\rho(x_i)} |\nabla u|^p \,\dx +
    {\sf M}^p|B_\rho|^{\frac{p}{n}(sp'- 1)} \bigg[\bint_{B_\rho(x_i)} |f|^{\tilde\alpha p} \,\dx \bigg]^{\frac{1}{\tilde\alpha(p-1)}}
    < 
    \lambda^p 
\end{equation}
for every $\rho\in \big(\rho_{i}, \frac12 (r_2-r_1)\big]$.
Finally, we introduce the notation
\[
    B_i^{(j)} := B_{2^j\rho_i}(x_i),
    \qquad i,j\in\mathbb N,
\]
and note that, by construction of the radii \(\rho_i\), we have
\(B_i^{(7)}\subset B_R\) for every \(i\in\mathbb N\).

\subsection{Comparison}
Let $v_i\in W^{s,p}(B_i^{(6)}) \cap L^{p-1}_{sp} (\R^n)$ be the unique weak solution to the Dirichlet problem
\begin{equation*} 
\left\{
\begin{array}{cl}
      \big(- a_{x_i,2^6\rho_i}\Delta_p\big)^s v_i  = 0 
      & 
      \mbox{in $ B_i^{(6)}$,} \\[7pt]
      v_i = u & \mbox{a.e.~in $\R^n \setminus B_i^{(6)}$,}
   \end{array}
\right.
\end{equation*}
where the frozen coefficient $a_{x_i,2^6\rho_i}(x,y)$ is defined according to~\eqref{def:frozen}. 
Let \(\mathsf A>1\) be a fixed constant and let \(\theta>p\) be an
integrability exponent, to be specified later depending only on the
data.
Invoking Lemma~\ref{lem:elementary-superlevel} with
\((\gamma,\alpha,K,a,b)\) replaced by
\((p,\theta,\mathsf A\lambda,\nabla u(x),\nabla v_i(x))\), and integrating
over the set
\(B_i^{(3)}\cap\{|\nabla u|>\mathsf A\lambda\}\), we infer that
\begin{align}\label{eq:grad-u-superlevel}\nonumber
    &\int_{B_i^{(3)}\cap\{|\nabla u|>\mathsf A\lambda\}} |\nabla u|^p \,\dx \\ 
    &\qquad\le
    2^{p}\int_{B_i^{(3)}} |\nabla u-\nabla v_i|^p \,\dx
    +
    \frac{2^{\theta-1}}{(\mathsf A\lambda)^{\theta-p}}
    \int_{B_i^{(3)}} |\nabla v_i|^{\theta} \,\dx =:\,\sfI+\sfI\sfI,
\end{align}
where the terms \(\sfI\) and \(\sfI\sfI\) are defined by the
preceding right-hand side. The contributions \(\sfI\) and \(\sfI\sfI\) are treated
separately.
For any \(\delta\in(0,1]\), the comparison estimate at the gradient level
from Proposition~\ref{prop:comparison}, applied on \(B_i^{(4)}\), yields
\begin{align*}
    \sfI
    &\leq 
    2^p \delta^p
    \bigg[ 
    \int_{B_i^{(6)}}|\nabla u|^p\,\d x
    + 
    \big(2^6\rho_i\big)^{n -p} \mathrm{Tail} 
    \big(u-(u)_{B_i^{(6)}};B_i^{(6)}
    \big)^p \bigg]\\
    &\phantom{\le\,}
    + 
    \frac{c}{\delta^\frac{p}{\gamma}} \big|B_i^{(6)}\big|^{\frac{\epsilon p}{n}}
    \bigg[\int_{B_i^{(6)}}
    |f|^{\tilde\alpha p}\,\d x \bigg]^{\frac{1}{\tilde\alpha(p-1)}},
\end{align*}
where $c=c(n,p,s,C_o,C_1,\chi,\tilde\alpha)$ and $\gamma = \gamma(p,s,\chi,\tilde{\alpha})>0$. 
Note that \(\rho_i< 2^{6}\rho_i \le \tfrac12 (r_2-r_1)\), so that
\eqref{eq:stopping-time-above-maxradius} can be applied.
First, we infer
\begin{equation*}
    \int_{B_i^{(6)}} |\nabla u|^p \,\dx
    <
    |B_i^{(6)}|\,\lambda^p
    =
    2^{6n}|B_i|\,\lambda^p .
\end{equation*}
Moreover, again by \eqref{eq:stopping-time-above-maxradius} and the definition of $\epsilon$ in~\eqref{def:eps} the last term is bounded by
\begin{equation*}
    |B_i^{(6)}|^{\frac{\varepsilon p}{n}}
    \bigg[
        \int_{B_i^{(6)}} |f|^{\tilde\alpha p}\,\dx
    \bigg]^{\frac{1}{\tilde\alpha(p-1)}}
    <
    \frac{|B_i^{(6)}|}{\mathsf M^p}\,\lambda^p
    =
    \frac{2^{6n}}{\mathsf M^p}|B_i|\,\lambda^p .
\end{equation*}
As for the tail term, observe that, since
\(2^6\rho_i \in \big(0,\tfrac12 (r_2-r_1)\big]\), by a dyadic scaling argument there exists
\(\ell\in\mathbb N_0\) such that
\begin{equation*}
    \tfrac14(r_2-r_1)
    <
    2^\ell\,2^6\rho_i
    \le 
    \tfrac12 (r_2-r_1).
\end{equation*}
Applying Lemma~\ref{lem:tail-est} on $B_i^{(6)}$, we obtain
\begin{align*}
    \mathrm{Tail} \big(u-(u)_{B_i^{(6)}}; B_i^{(6)}\big)
    &\le
    \sfT_1
    +
    \left[ \sum_{k=1}^\ell (\sfT_{2,k})^\tau \right]^\frac{1}{\tau} ,
\end{align*}
where
\begin{align*}
    \sfT_1
    &:=
    c\, 2^{-\ell sp'}
    \Tail \big(u-(u)_{x_i,2^\ell 2^6\rho_i}; B_{2^\ell 2^6\rho_i}(x_i)\big),
    \qquad
    \tau  =\min \{1,p-1\},
\end{align*}
and
\begin{align*}
    \sfT_{2,k}
    &:=
    c  2^{-ksp'}
    \bigg[
        \bint_{B_{2^k2^6\rho_i}(x_i)}
        |u-(u)_{x_i,2^k2^6\rho_i}|^p\, \d x
    \bigg]^\frac1p.
\end{align*}
To estimate the first term, we apply Lemma~\ref{lem:tail} with
$(x_o,r,y_o,R)$ replaced by $(x_i,2^\ell 2^6\rho_i,x_o,R)$ and obtain
\begin{align*}
    \sfT_1 
    &\le 
    c\, 2^{-\ell sp'}\bigg[1 + \frac{|x_i-x_o|}{2^\ell 2^6\rho_i}\bigg]^{\frac{n}{p-1}+sp'} 
    \Big(\frac{2^\ell 2^6\rho_i}{R}\Big)^{sp'} \Tail \big(u-(u)_{x_o,R}; B_R(x_o)\big) \\
    &\phantom{\le\,} + 
    c\, 2^{-\ell sp'}\Big(\frac{R}{2^\ell 2^6\rho_i}\Big)^{\frac{n}{p-1}}
    \bigg[ \bint_{B_{R}(x_o)} \big|u-(u)_{x_o,R}\big|^{p} \, \d x \bigg]^\frac{1}{p} \\
    &\le 
    c\,2^{-\ell sp'}\Big(\frac{R}{r_2-r_1}\Big)^{\frac{n}{p-1}} 
    \Bigg[ \Tail \big(u-(u)_{x_o,R}; B_R(x_o)\big) + 
    R\bigg[ \bint_{B_{R}(x_o)} |\nabla u|^{p} \, \d x \bigg]^\frac{1}{p} \Bigg] \\
    &\le 
    c\,2^{-\ell sp'}\Big(\frac{R}{r_2-r_1}\Big)^{\frac{n}{p-1}} 
    R \lambda_o \\
    &= 
    c\,2^{-\ell sp'}\Big(\frac{R}{r_2-r_1}\Big)^{\frac{n}{p-1}+1} 
    (r_2-r_1) \lambda_o \\
    &\le 
    c\,\underbrace{2^\ell 2^{-\ell sp'}}_{\le 1}
    \underbrace{\Big(\frac{2^7R}{r_2-r_1}\Big)^{\frac{n}{p-1}+1}}_{={\sf B}}\lambda_o
    \rho_i\\
    &\le c\,{\sf B} \lambda_o\rho_i\\
    &
    \le 
    c\,\lambda\rho_i.
\end{align*}
Using Poincar\'e's inequality, \eqref{eq:stopping-time-above-maxradius} and the assumption $sp'>1$, we estimate
    $$
    \left[ \sum_{k=1}^\ell (\sfT_{2,k})^{\tau} \right]^\frac{1}{\tau} \leq c   \lambda \rho_i \left[ \sum_{k=1}^\ell 2^{-k\tau (sp'-1)}\right]^\frac{1}{\tau} \leq  c  \lambda \rho_i.
    $$
Combining the above estimates, we obtain
\begin{equation}\label{est:tail-B5}
     \mathrm{Tail} \big(u-(u)_{B_i^{(6)}}; B_i^{(6)}\big)
    \le c\,\lambda\rho_i.
\end{equation}
Substituting the preceding inequalities into the right-hand side of the
above estimate for $\sfI$, we obtain
\begin{align*}
    \sfI 
    \le 
    c \bigg[\delta + 
    \frac{1}{\delta^\frac{p}{\gamma} \mathsf M^p}\bigg]  |B_i| 
    \lambda^p
    .
\end{align*}
The estimate holds for all $\delta\in(0,1]$.

To estimate the term $\sfI\sfI$ in~\eqref{eq:grad-u-superlevel}, we apply
Lemma~\ref{lem:vi-a} to $v_i$ on $B_i^{(6)}$. For the resulting tail term we recall that
$v_i=u$ on $\R^n \setminus B_i^{(6)}$, which allows us to apply
Lemma~\ref{lem:tv-u}. In this way, we obtain
\begin{align*}
    \sfI\sfI 
    &\leq 
    \frac{c }{(\mathsf A\lambda)^{\theta-p}}  |B_i|
    \left[ \rho_i^{-p(1-s) - n} [v_i]_{W^{s,p}(B_i^{(5)})}^p + 
    \rho_i^{-p}\mathrm{Tail} \big(v_i-(v_i)_{B_i^{(4)}}; B_i^{(4)}\big)^p \right]^\frac{\theta}{p} \\
    &\leq 
    \frac{c}{(\mathsf A\lambda)^{\theta-p}}|B_i| \big[ 
    \sfI\sfI_1 + 
    \sfI\sfI_2 + 
    \sfT_1 + \sfT_2 \big]^\frac{\theta}{p}, 
\end{align*}
where $c = c(n,p,s,\theta)> 0$ and we have set 
\begin{align*}
    \sfI\sfI_1
    :=
    \rho_i^{-(1-s)p-n} [u]_{W^{s,p}(B_i^{(5)})}^p ,
    \qquad 
    \sfI\sfI_2
    :=
    \rho_i^{-p(1-s)-n} [v_i-u_i]_{W^{s,p}(B_i^{(5)})}^p
\end{align*}
and 
\begin{equation*}
    \sfT_1
    :=
    \rho_i^{-p}\mathrm{Tail} \big(u-(u)_{B_i^{(4)}}; B_i^{(4)}\big)^p,
    \qquad
    \sfT_2
    :=
    \rho_i^{-p-n} \int_{B_i^{(6)}} |u-v_i|^p \,\dx  .
\end{equation*}
The first term can be estimated by means of Lemma~\ref{lem:FS-S}
and inequality~\eqref{eq:stopping-time-above-maxradius}. Indeed, we have
\begin{align*}
    \sfI\sfI_1
    \le 
    c \,\rho_i^{-n} \, \|\nabla u\|_{L^{p}(B_i^{(5)})}^p
    \le 
    c\,\lambda^p.
\end{align*}
Moreover, applying Lemma~\ref{lem:tail} with
$(x_o,r,y_o,R)=(x_i,2^4\rho_i,x_i,2^6\rho_i)$, we obtain
\begin{align*}
    \sfT_1
    &\le 
    c\, \rho_i^{-p}\mathrm{Tail} \big(u-(u)_{B_i^{(6)}}; B_i^{(6)}\big)^p
    +
    c \,\rho_i^{-p}
    \mint_{B_i^{(6)}} \big|u-(u)_{B_i^{(6)}}\big|^p\,\dx \\
    &\le
    c\, \rho_i^{-p}\mathrm{Tail} \big(u-(u)_{B_i^{(6)}}; B_i^{(6)}\big)^p +
    c \mint_{B_i^{(6)}} |\nabla u|^p\,\dx \\
    &\le 
    c\,\lambda^p ,
\end{align*}
where in the second inequality we used Poincar\'e's inequality,
while the last estimate follows from
\eqref{eq:stopping-time-above-maxradius} and \eqref{est:tail-B5}.
From the comparison estimate in Lemma~\ref{lem:compar-var-coeff},
applied with $\delta=1$, we infer
\begin{align*}
    \sfI\sfI_2 + \sfT_2
    &\le 
    c\,\rho_i^{-(1-s)p-n} [u]_{W^{s,p}(B_i^{(6)})} +
    c\,\rho_i^{\epsilon p-n} \|f\|_{L^{\tilde\alpha p}(B_i^{(6)})}^{p'} \\
    &\le 
    c \mint_{B_i^{(6)}} |\nabla u|^p\,\dx +
    c\,  \frac{\lambda^p}{\mathsf M^p}\\
    &\le 
    c\,\lambda^p .
\end{align*}
Here, the second inequality follows from Lemma~\ref{lem:FS-S},
while the last estimate is a consequence of the definition of
$\epsilon$ in~\eqref{def:eps} together with
\eqref{eq:stopping-time-above-maxradius}.
Moreover, we used that $\rho_i\le 1$ and $\mathsf M\ge 1$ in the application
of Lemma~\ref{lem:compar-var-coeff} and in the passage to the final line. 
Overall, we have shown that 
$$
    \sfI\sfI 
    \leq 
    \frac{c}{\mathsf A^{\theta-p}} \lambda^p |B_i|,
$$
for a constant $c = c(n,p,s,C_o,C_1,\theta)> 0$.
Combining this estimate with the bound for $\sfI$, we conclude that
\begin{align}\label{est:super-nabla-u}
    \int_{B_i^{(3)}\cap \left\{ |\nabla u| >\mathsf A \lambda \right\}} |\nabla u|^p \, \d x 
    \le 
    c\, {\sf Q} \, \lambda^p |B_i| ,
\end{align}
where 
\begin{equation}\label{def:M}
    {\sf Q}
    :=
    \delta + 
    \frac{1}{\delta^\frac{p}{\gamma} \mathsf M^p} +
    \frac{1}{\mathsf A^{\theta-p}}.
\end{equation}

\subsection{Estimates on super-level sets}
We estimate the second term on the left-hand side of \eqref{eq:intr}. By the definition of $\epsilon$ in~\eqref{def:eps} and Young’s inequality with exponents $\frac{n}{\epsilon p}$ and $\frac{n}{n-\epsilon p}$, we have
\begin{align*}
    {\sf M}^p|B_i|^{\frac{p}{n}(sp'- 1)} & \bigg[\bint_{B_i} |f|^{\tilde\alpha p} \,\dx \bigg]^{\frac{1}{\tilde\alpha(p-1)}} \\
    &=
    \mathsf M^p|B_i|^{\frac{\epsilon p}{n}-1} \|f\|^{p'}_{L^{\tilde\alpha p}(B_i)} \\
    &= 
    \lambda^{\frac{\epsilon p^2}{n}} \lambda^{-\frac{\epsilon p^2}{n}}\mathsf M^p|B_i|^{\frac{\epsilon p}{n}-1} \|f\|^{p'}_{L^{\tilde\alpha p}(B_i)} \\
    &\le 
    \tfrac14 \lambda^p +
    4^\frac{\epsilon p}{n-\epsilon p}|B_i|^{-1}
    \Big[\lambda^{-\frac{\epsilon p^2}{n}}\mathsf M^p\|f\|^{p'}_{L^{\tilde\alpha p}(B_i)}\Big]^{\frac{n}{n-\epsilon p}} \\
    &= 
    \tfrac14 \lambda^p +
    |B_i|^{-1}\Big[4^\frac{\epsilon p}{n}
    \lambda^{-\frac{\epsilon p^2}{n}}\mathsf M^p
    \Big]^{\frac{n}{n-\epsilon p}} 
    \|f\|_{L^{\tilde\alpha p}(B_i)}^{\frac{np'}{n-\epsilon p}} .
\end{align*}
By the interpolation inequality~\eqref{crazy-holder}, we have
\begin{align*}
    \|f\|_{L^{\tilde\alpha p}(B_i)}^{\frac{np'}{n-\epsilon p}}
    &\le 
    \mathsf F^{\frac{p(sp'-1)}{n-\epsilon p}}
    \int_{B_i} |f|^{\frac{n\mu p}{n-\epsilon p}} \,\dx,
\end{align*}
where 
\begin{equation*}
    \mathsf F
    :=
    \int_{B_R} |f|^{\mu q} \,\dx .
\end{equation*}
Combining the previous estimates, we obtain 
\begin{align*}
    {\sf M}^p|B_i|^{\frac{p}{n}(sp'- 1)} & \bigg[\bint_{B_i} |f|^{\tilde\alpha p} \,\dx \bigg]^{\frac{1}{\tilde\alpha(p-1)}} 
    &\le 
    \tfrac14 \lambda^p +
    |B_i|^{-1}\mathsf 
    \lambda^{-\frac{\epsilon p^2}{n-\epsilon p}}
    \mathsf K^{\frac{n p}{n-\epsilon p}}
    \int_{B_i} |f|^{\frac{n\mu p}{n-\epsilon p}} \,\dx ,
\end{align*}
where
\begin{equation*}
    \mathsf K
    :=
    \big[4^{\epsilon p}
    \mathsf F^{p(sp'-1)} \big]^{\frac{1}{n p}}\mathsf M .
\end{equation*}
We insert this inequality into \eqref{eq:intr} and decompose the domain of integration into the super-level sets \(B_i \cap \{|\nabla u| > \tfrac18 \lambda \}\) and \(B_i \cap \{|f|^\mu > \frac{1}{8\sf{K}}\lambda\}\)
and their complements. On the corresponding sub-level sets, $u$ and $f$ are estimated by their upper bounds. Consequently,
\begin{align*}
    |B_i|\lambda^p
    &=
    \int_{B_i} |\nabla u|^p \, \d x + 
   |B_i|^{\frac{p}{n}(sp'- 1)+1 } {\sf M}^p\bigg[\mint_{B_i} |f|^{\tilde\alpha p} \,\dx \bigg]^{\frac{1}{\tilde\alpha(p-1)}} 
    \\
    &\le 
    \tfrac14 \lambda^p|B_i| + 
    \int_{B_i} |\nabla u|^p \, \d x + 
    \lambda^{-\frac{\epsilon p^2}{n-\epsilon p}}
    \mathsf K^{\frac{n p}{n-\epsilon p}}
    \int_{B_i} |f|^{\frac{n\mu p}{n-\epsilon p}} \,\dx
    \\
    &= 
    \tfrac14 \lambda^p|B_i| \\
    &\phantom{\le\,} + 
    \int_{B_i \cap \{|\nabla u| > \lambda/8 \}} |\nabla u|^p \, \d x + 
    \lambda^{-\frac{\epsilon p^2}{n-\epsilon p}}
    \mathsf K^{\frac{n p}{n-\epsilon p}} \int_{B_i \cap \{|f|^\mu > \lambda/(8\mathsf K) \}} |f|^{\frac{n\mu p}{n-\epsilon p}} \, \d x \\
    &\phantom{\le\,} +
    \int_{B_i \cap \{|\nabla u| \le \lambda/8 \}} |\nabla u|^p \, \d x + 
    \lambda^{-\frac{\epsilon p^2}{n-\epsilon p}}
    \mathsf K^{\frac{n p}{n-\epsilon p}} \int_{B_i \cap \{|f|^\mu \le \lambda/(8\mathsf K) \}} |f|^{\frac{n\mu p}{n-\epsilon p}} \, \d x \\
    &\le 
    \Big[ \tfrac14 
    +
    \tfrac1{8^p}
    +
    \tfrac{1}{8^{\frac{np}{n-\epsilon p}}}\Big]
    |B_i| \lambda^p \\
    &\quad + 
    \int_{B_i \cap \{|\nabla u| > \lambda/4 \}} |\nabla u|^p \, \d x + 
    \mathsf K^{\frac{n p}{n-\epsilon p}} 
    \lambda^{-\frac{\epsilon p^2}{n-\epsilon p}}
    \int_{B_i \cap \{|f|^\mu > \lambda/(8\mathsf K) \}} |f|^{\frac{n\mu p}{n-\epsilon p}} \, \d x .
\end{align*}
On the right-hand side the  pre-factor of $|B_i|\lambda^p$ is less than $\frac12$. Absorbing
$\tfrac12 |B_i|\lambda^p$
into the left-hand side and inserting the resulting estimate into \eqref{est:super-nabla-u} gives
\begin{align*}
    &\int_{B_i^{(3)}\cap \left\{ |\nabla u| > \mathsf A \lambda \right\}} |\nabla u|^p \, \d x \\
    &\quad\leq 
    c\,{\sf Q}
    \bigg[\int_{B_i \cap \{|\nabla u| > \lambda/4 \}} |\nabla u|^p \, \d x + 
    \mathsf K^{\frac{n p}{n-\epsilon p}} 
    \lambda^{-\frac{\epsilon p^2}{n-\epsilon p}}
    \int_{B_i \cap \{|f|^\mu > \lambda/(8\mathsf K) \}} |f|^{\frac{n\mu p}{n-\epsilon p}} \, \d x \bigg].
\end{align*}
Since \(\{B_i^{(3)}\}_{i\in \N}\) covers the super-level set \(\boldsymbol E(\lambda,r_1)\), and hence \(\boldsymbol E(\mathsf A \lambda,r_1)\), and the balls \(B_i\) are pairwise disjoint and contained in \(B_{r_2}\), one has
\begin{align*}
    &\int_{\boldsymbol E(\mathsf A \lambda,r_1)} |\nabla u|^p \, \d x \\
    &\quad\le
    \sum_{i\in\N}
    \int_{B_i^{(3)}\cap \left\{ |\nabla u| >\mathsf A \lambda \right\}} |\nabla u|^p \, \d x\\
    &\quad\le
    c\,{\sf Q}
    \sum_{i\in\N}
    \bigg[\int_{B_i \cap \{|\nabla u| > \lambda/4 \}} |\nabla u|^p \, \d x\\
    &\qquad\qquad\quad\;+ 
    \mathsf K^{\frac{n p}{n-\epsilon p}} 
    \lambda^{-\frac{\epsilon p^2}{n-\epsilon p}}
    \int_{B_i \cap \{|f|^\mu > \lambda/(8\mathsf K) \}} |f|^{\frac{n\mu p}{n-\epsilon p}} \, \d x \bigg]\\
    &\quad\leq 
    c\,{\sf Q} \bigg[\int_{\boldsymbol E(\lambda/4,r_2)} |\nabla u|^p \, \d x + 
    \mathsf K^{\frac{n p}{n-\epsilon p}} 
    \lambda^{-\frac{\epsilon p^2}{n-\epsilon p}} 
    \int_{B_{r_2} \cap \{|f|^\mu > \lambda/(8\mathsf K) \}} |f|^{\frac{n\mu p}{n-\epsilon p}} \, \d x \bigg].
\end{align*}
Instead of \(\boldsymbol{E}(\lambda,r_1)\), we consider, for \(k \ge {\sf B} \lambda_o\), the set
\begin{equation*}
    \boldsymbol{E}_k(\lambda,r_1) 
    := 
    \Big\{ x \in B_{r_1} : x \text{ is a Lebesgue point of } |\nabla u| \text{ and } |\nabla u|_k(x) > \lambda \Big\},
\end{equation*}
where \(|\nabla u|_k := \min \{ |\nabla u|, k \}\) denotes the truncation of \(|\nabla u|\) at height \(k\).
 We rewrite the last inequality in terms of $\boldsymbol E_k$ and obtain
\begin{align*}
    &\int_{\boldsymbol E_k(\mathsf A \lambda,r_1)} |\nabla u|^p \, \d x \\
    &\qquad\leq 
    c\, {\sf Q}
    \bigg[\int_{\boldsymbol E_k(\lambda/4,r_2)} |\nabla u|^p \, \d x + 
    \mathsf K^{\frac{n p}{n-\epsilon p}} 
    \lambda^{-\frac{\epsilon p^2}{n-\epsilon p}} 
    \int_{B_{r_2} \cap \{|f|^\mu > \lambda/(4\mathsf K) \}} |f|^{\frac{n\mu p}{n-\epsilon p}} \, \d x \bigg].
\end{align*}
Multiplying the previous inequality by \(\lambda^{q-p-1}\) and integrating over \(({\sf B}\lambda_o, \infty)\) with respect to \(\lambda\) gives
\begin{align*}
\int_{{\sf B}\lambda_o}^\infty\lambda^{q-p-1} 
\int_{\boldsymbol E_k(\mathsf A \lambda,r_1)} |\nabla u|_k^p \, \d x \, \d \lambda
&\le 
c\,{\sf Q}\big[\sfI+\sfI\sfI]
\end{align*}
where
\begin{align*}
    \sfI
    &:=
    \int_{{\sf B}\lambda_o}^\infty \lambda^{q-p-1} 
    \int_{\boldsymbol E_k(\lambda/4,r_2)} |\nabla u|_k^p \, \d x\d\lambda
   ,
\end{align*}
and
\begin{align*}
     \sfI\sfI
     &:=
    \mathsf K^{\frac{np}{n-\epsilon p}}
    \int_{{\sf B}\lambda_o}^\infty\lambda^{q-p-1}
    \lambda^{-\frac{\epsilon p^2}{n-\epsilon p}}
    \int_{B_{r_2} \cap \{|f|^\mu > \lambda/(8\mathsf K)\}} |f|^{\frac{n\mu p}{n-\epsilon p}} 
    \, \d x\d \lambda.
\end{align*}
Applying a Fubini-type argument to the integral on the left-hand side yields
\begin{align*}
    &\int_{{\sf B} \lambda_o}^\infty \lambda^{q-p-1} \int_{\boldsymbol E_k(\mathsf A\lambda,r_1)} |\nabla u|^p \, \d x \d \lambda\\
    &\qquad = 
    \frac{1}{q-p} \bigg[ \mathsf A^{p-q} \int_{B_{r_1}} |\nabla u|_k^{q-p} |\nabla u|^p  \, \d x  - 
    ({\sf B} \lambda_o)^{q-p} \int_{B_{r_1}} |\nabla u|^p \, \d x \bigg].
\end{align*}
The terms on the right-hand side are estimated similarly. For $\sfI$, one has
\begin{align*}
   \sfI
    &\leq 
    \frac{4^{q-p}}{q-p} \int_{B_{r_2}} |\nabla u|_k^{q-p} |\nabla u|^p  \, \d x,
\end{align*}
and for $\sfI\sfI$, noting that $\tilde\alpha p<\mu q$ implies $q > \frac{np}{n-\eps p}$, we find
\begin{align*}
    \sfI\sfI
    &\leq 
    \mathsf K^{\frac{np}{n-\epsilon p}}
    \frac{(8\mathsf K)^{q-\frac{np}{n-\epsilon p}}}{q-\frac{np}{n-\epsilon p}} \int_{B_{r_2}} |f|^{\mu q} \, \d x =
    \frac{8^{q-\frac{np}{n-\epsilon p}}}{q-\frac{np}{n-\epsilon p}} \mathsf K^{q}
    \int_{B_{r_2}} |f|^{\mu q} \, \d x.
\end{align*}
Combining these estimates and recalling that $\mu q=r$ results in
\begin{align*}
    \int_{B_{r_1}} &|\nabla u|_k^{q-p} |\nabla u|^p  \, \d x \\
    &\leq 
    c_\ast \mathsf A^{q-p} {\sf Q} \int_{B_{r_2}} |\nabla u|_k^{q-p} |\nabla u|^p  \, \d x\\
    &\phantom{\le\,} +
     (\mathsf A{\sf  B}\lambda_o)^{q-p} \int_{B_{r_1}} |\nabla u|^p \, \d x  + 
    c_\ast \mathsf A^{q-p} {\sf Q} 
   \mathsf M^{q} \mathsf F^\frac{q(sp'-1)}{n} \int_{B_{r_2}} |f|^{r} \, \d x,
\end{align*}
where ${\sf Q}$ is defined in \eqref{def:M}. 

The strategy is now to absorb the first term on the right-hand side into the left-hand side. This requires careful choices of the parameters in ${\sf Q}$, i.e.~of $\theta$, $\mathsf A$, $\delta$, and $\mathsf M$.
We start by choosing $\theta = 2q$. Next, we choose $\mathsf A$ large enough such that
\begin{equation*}
     \frac{c_\ast}{\mathsf A^{q}} 
    = 
    \tfrac 16\quad\Longleftrightarrow\quad
    \mathsf A=
     (6c_\ast)^\frac1{q}.
\end{equation*}
Subsequently, we choose $\delta$ 
small enough such that 
\begin{equation*}
    c_\ast \mathsf A^{q-p} \delta \equiv 
    c_\ast(6 c_\ast)^{\frac{q-p}{q}}\delta
    = \tfrac16\quad\Longleftrightarrow\quad
    \delta= (6c_\ast)^{\frac{p}{q}-2}.
\end{equation*}
This also fixes $\widetilde R_o$ in dependence on $n,p,q,s,C_o,C_1,\chi, R_o$. Finally we choose $\mathsf M$ large enough such that 
\begin{align*}
    \frac{c_\ast \mathsf A^{q-p}}{\delta^\frac{p}{\gamma} \mathsf M^p} 
    \equiv
    \frac{c_\ast (6c_\ast)^{\frac{q-p}{q} + \frac{p}{\gamma} (2-\frac{p}{q})}}{\mathsf M^p}
    = 
    \tfrac16
    \quad\Longleftrightarrow\quad
    \mathsf M^p=(6c_\ast)^{(2-\frac{p}{q}) (1+ \frac{p}{\gamma})}.
\end{align*}
These choices ensure that, on the one hand, $c_\ast  \mathsf A^{q-p} {\sf Q}=\frac12$, and on the other hand $\mathsf M$ depends only on $n,p,q,s,C_o,C_1,\chi$ (recall that $\gamma=\gamma(p,s,\chi,\tilde{\alpha})$ and $\tilde{\alpha} = \tilde{\alpha}(n,p,q,s)$). 
With the aforementioned choices, we obtain
\begin{align*}
    \int_{B_{r_1}} &|\nabla u|_k^{q-p} |\nabla u|^p  \, \d x \\
    &\leq 
   \tfrac12 \int_{B_{r_2}} |\nabla u|_k^{q-p} |\nabla u|^p  \, \d x \\
   &\quad +  
   (\mathsf A\mathsf B\lambda_o)^{q-p} \int_{B_{R}} |\nabla u|^p \, \d x  + 
    \tfrac12 \mathsf M^{q} \left[ \int_{B_R} |f|^{r} \, \d x \right]^\frac{q(sp'-1)}{n} \int_{B_{r_2}} |f|^{r} \, \d x \\
    &\leq 
    \tfrac12 \int_{B_{r_2}} |\nabla u|_k^{q-p} |\nabla u|^p \, \d x \\
    &\quad + 
    \frac{c\, R^\beta\, \lambda_o^{q-p}}{(r_2-r_1)^{\beta }} \int_{B_R} |\nabla u|^p \, \d x  + 
    c \left[ \int_{B_R} |f|^{r} \, \d x \right]^\frac{1}{\mu (p-1)},
\end{align*}
where $\beta = (\frac{n}{p-1}+1)(q-p)$. To obtain the last line we also used the definition of $\sf B$ in~\eqref{def:B}. Note that the last inequality holds for any $\frac12 R \le r_1<r_2\le R$. Therefore, the standard iteration lemma~\cite[Lemma 6.1]{Giusti} can be applied. The application results in
\begin{equation*}
     \int_{B_{\frac12 R}} |\nabla u|_k^{q-p} |\nabla u|^p  \, \d x
     \le
     c\,
     \lambda_o^{q-p} \int_{B_R} |\nabla u|^p \, \d x + c  \left[ \int_{B_R} |f|^{r} \, \d x \right]^\frac{1}{\mu (p-1)}.
\end{equation*}
Using Fatou's lemma we pass to the limit  $k \to \infty$ and obtain
\begin{align*}
    \int_{B_{\frac12 R}} |\nabla u|^{q}  \, \d x  
    &\leq 
    c\, \lambda_o^{q-p} \int_{B_R} |\nabla u|^p \, \d x + 
    c \left[ \int_{B_R} |f|^{r} \, \d x \right]^\frac{1}{\mu (p-1)}.
\end{align*}
Recalling the definition of $\lambda_o$ from \eqref{def:lambda_0}, passing to the mean values and using Young's inequality with exponents $\frac{q}{p}$ and $\frac{q}{q-p}$, and H\"older's inequality together with the fact that $\tilde \alpha p < \mu q=r$, we obtain
\begin{align*}
    \bigg[ \bint_{B_{\frac12 R}} |\nabla u|^{q} \, \d x \bigg]^\frac{1}{q}
    &\leq 
    c \,\bigg[\bint_{B_{R}}  |\nabla u|^p  \, \d x\bigg]^\frac{1}{p} + c R^{sp'-1}\bigg[ \bint_{B_R} |f|^{r} \, \d x \bigg]^\frac{1}{\mu q(p-1)} \\
    &\phantom{\le\,} + \frac{c}{R} \, \mathrm{Tail} \big(u-(u)_{R};B_R\big).
\end{align*}
This concludes the proof of Theorem \ref{Thm:1}.

\medskip
\noindent
{\bf Acknowledgments.}  
This research was funded in whole or in part by the Austrian Science Fund (FWF) [10.55776/P36295] and [10.55776/P36272] and [10.55776/PAT1850524]. For open access purposes, the author has applied a CC BY public copyright license to any author accepted manuscript version arising from this submission.


\begin{thebibliography}{10}
\bibitem{Acerbi-Fusco}
\newblock E. Acerbi and N. Fusco. 
\newblock Regularity for minimizers of non-quadratic functionals: the case $1<p<2$. 
\newblock \emph{J. Math. Anal. Appl.} 140 (1989), 115--135.

\bibitem{AcMi}
\newblock E. Acerbi and G.~Mingione.
\newblock Gradient estimates for the $p(x)$-Laplacean system.
\newblock \emph{J. Reine Angew. Math.} 584 (2005), 117--148.

\bibitem{Acerbi-Min}
\newblock E.~Acerbi and G.~Mingione.
\newblock Gradient estimates for a class of parabolic systems.
\newblock \emph{Duke Math. J.} 136 (2007), no. 2, 285--320.

\bibitem{Adams-75}
R. A. Adams, Sobolev Spaces. New York Academic Press, 1975.

\bibitem{AKM-18}
\newblock B. Avelin, T. Kuusi, and G. Mingione. 
\newblock Nonlinear Calder\'on-Zygmund
theory in the limiting case. 
\newblock Arch. Ration. Mech. Anal. 227 (2018), no. 2,
pp. 663--714.


%

\bibitem{BS-25} 
\newblock A.~Biswas and A.~Sen. 
\newblock Improved Hölder regularity of fractional $(p,q)$-Poisson equation with regular data. 
\newblock arXiv:2507.09920.


\bibitem{BT-25} 
\newblock A.~Biswas and E.~Topp. 
\newblock Lipschitz regularity of fractional p-Laplacian. 
\newblock \emph{Ann. PDE} 11 (2025), no. 2, Paper No. 27, 43 pp.

\bibitem{BDLMS-1}
\newblock V.~Bögelein, F.~Duzaar, N.~Liao, G.~Molica Bisci, and R.~Servadei. 
\newblock Regularity for the fractional $p$-Laplace equation.
\newblock \emph{J. Funct. Anal.} 289 (2025), no. 9, Paper No. 111078, 69 pp.


\bibitem{BDLMS-2}
\newblock V.~Bögelein, F.~Duzaar, N.~Liao, G.~Molica Bisci, and R.~Servadei. 
\newblock Gradient regularity for $(s,p)$-harmonic functions.
\newblock \emph{Calc. Var. Partial Differential Equations} 64 (2025), no. 8, Paper No. 253, 63 pp.

\bibitem{BDLM}
\newblock V.~Bögelein, F.~Duzaar, N.~Liao, and K.~Moring. 
\newblock Gradient estimates for the fractional $p$-Poisson equation. 
\newblock \emph{J. Math. Pures Appl. (9)} 204 (2025), Paper No. 103764, 25 pp.

\bibitem{Brasco-Lindgren}
\newblock L.~Brasco and E.~Lindgren. 
\newblock Higher Sobolev regularity for the fractional $p$-Laplace equation in the superquadratic case. 
\newblock \emph{Adv. Math.} 304 (2017), 300--354.

\bibitem{Brasco-Lindgren-Schikorra}
\newblock L.~Brasco, E.~Lindgren, and A.~Schikorra. 
\newblock Higher H\"older regularity for the fractional $p$-Laplacian in the superquadratic case.
\newblock \emph{Adv. Math.} 338 (2018), 782--846.

\bibitem{BM}
\newblock H.~Brezis and P.~Mironescu.
\newblock Gagliardo-Nirenberg inequalities and non-inequalities: the full story.
\newblock {\em Ann. Inst. H. Poincaré C Anal. Non Linéaire} 35 (2018), no. 5, 1355--1376.
%

\bibitem{CP-98}
\newblock L.~Caffarelli and I.~ Peral.
\newblock On $W^{1,p}$ estimates for elliptic equations in divergence form.
\newblock \emph{Comm. Pure Appl. Math.} 51 (1998), 1--21.

\bibitem{CZ-1}
\newblock A.~P.~Calder\'on and A.~Zygmund. 
\newblock On the existence of certain singular integrals. 
\newblock \emph{Acta Math.} 88 (1952), 85--139.

\bibitem{CZ-2}
\newblock A.~P.~Calder\'on and A.~Zygmund. 
\newblock On singular integrals. 
\newblock \emph{Amer. J. Math.} 78 (1956),
289--309.

\bibitem{Coz17b}
\newblock M.~Cozzi.
\newblock Regularity results and Harnack inequalities for minimizers and solutions of nonlocal problems: a unified approach via fractional De Giorgi classes.
\newblock {\em J. Funct. Anal.} 272 (2017), no. 11, 4762--4837.

\bibitem{DiBe-Man}
\newblock E.~DiBenedetto and J.~Manfredi.
\newblock On the higher integrability of the gradient of weak solutions of certain degenerate elliptic systems. 
\newblock \emph{Amer. J. Math.} 115 (1993), 1107--1134.


\bibitem{DM-23}
\newblock C. De Filippis and G. Mingione. 
\newblock Nonuniformly elliptic Schauder theory.
\newblock \emph{Invent. Math.} 234. (2023), no. 3, pp. 1109--1196. 
\bibitem{DKP}
\newblock A.~Di Castro, T.~Kuusi, and G.~Palatucci.
\newblock Local behavior of fractional $p$-minimizers. 
\newblock  {\em Ann. Inst. H. Poincaré C Anal. Non Linéaire} 33 (2016), no. 5, 1279--1299.


\bibitem{DKLN-pot}
\newblock L.~Diening, K.-B.~Kim, H.-S.~Lee, and S.~Nowak.
\newblock Nonlinear nonlocal potential theory at the gradient level.
\newblock arXiv:2402.04809.
%
\bibitem{DKLN-higherdiff}
\newblock L.~Diening, K.-B.~Kim, H.-S.~Lee, and S.~Nowak.
\newblock Higher differentiability for the fractional $p$-Laplacian. 
\newblock \emph{Math. Ann.} 391 (2025), no. 4, 5631--5693.
%

\bibitem{Domokos-1} 
A. Domokos, 
On the regularity of $p$-harmonic functions in the Heisenberg group. PhD thesis University of Pittsburgh 2004. 

\bibitem{Hitchhikers-guide}
\newblock E.~Di Nezza, G.~Palatucci, and E.~Valdinoci.
\newblock Hitchhiker’s guide to the fractional Sobolev spaces. 
\newblock \emph{Bull. Sci. Math.} 136 (2012),  no.~5, 521--573.

\bibitem{DuMinKr}
\newblock F.~Duzaar, G.~Mingione, and J.~Kristensen.
\newblock The existence of regular boundary points for non-linear elliptic systems. 
\newblock \emph{J. Reine Angew. Math.} 602 (2007), 17--58.

%
\bibitem{DuMinSt}
\newblock F.~Duzaar, G.~Mingione, and K.~Steffen.
\newblock Parabolic systems with polynomial growth and regularity.
\newblock \emph{Mem. Amer. Math. Soc.} 214 (2011), no. 1005, x+118 pp.

%
\bibitem{GiaquintaModica:1986}
\newblock  M. Giaquinta and G. Modica.
\newblock Partial regularity of minimizers of quasiconvex integrals.
\newblock \emph{Ann. Inst. H. Poincar\'e Anal. Non Lin\'eaire} 3 (1986), no. 3, 185--208.

\bibitem{GJS-25}
\newblock D.~Giovagnoli, D.~Jesus, and L.~Silvestre.
\newblock $C^{1+\alpha}$ regularity for fractional $p$-harmonic functions.
\newblock arXiv:2509.26565.

\bibitem{Giusti}
E.~Giusti. Direct Methods in the Calculus of Variations. World Scientiﬁc Publishing Company, Tuck Link,
Singapore, 2003.

\bibitem{Iwaniecz}
\newblock T.~Iwaniec.
\newblock Projections onto gradient fields and $L^p$-estimates for degenerated elliptic operators. 
\newblock \emph{Studia Math.} 75 (1983), no. 3, 293--312.

\bibitem{Ki-Zh}
\newblock J.~Kinnunen and S.-L.~Zhou.
\newblock A local estimate for nonlinear equations with discontinuous coefficients.
\newblock \emph{Comm. Partial Differential Equations} 24 (1999), no. 11-12, 2043--2068.

\bibitem{Kri-Min:05}
\newblock J.~Kristensen and G.~Mingione.
\newblock The singular set of $\omega$-minima. 
\newblock \emph{Arch. Ration. Mech. Anal.} 177 (2005), no. 1, 93--114.

\bibitem{Kri-Min:06}
\newblock J.~Kristensen and G.~Mingione.
\newblock The singular set of minima of integral
functionals. 
\newblock \emph{Arch. Ration. Mech. Anal.} 180 (2006), no. 3, pp. 331--398. 

%

\bibitem{KNS-22}
\newblock T.~Kuusi, S.~Nowak, and Y.~Sire. 
\newblock Gradient regularity and first-order potential estimates for a class of nonlocal equations.
\newblock arXiv:2212.01950.

\bibitem{Min-07}
\newblock G. Mingione. 
\newblock Gradient potential estimates. 
\newblock \emph{J. Eur. Math. Soc.}
13 (2011), no. 2 pp. 459--486.

\bibitem{Min-11}
\newblock G.~Mingione.
\newblock The Calder\'on-Zygmund theory for elliptic problems with measure data. 
\newblock \emph{Ann. Sc. Norm. Super. Pisa Cl. Sci. (5)} 6 (2007), no. 2,
pp. 195--261.

\bibitem{Min-survey}
\newblock G.~Mingione.
\newblock Short tales from nonlinear Calderón-Zygmund theory. Nonlocal and nonlinear diffusions and interactions: new methods and directions, 159–204, Lecture Notes in Math., 2186, Fond. CIME/CIME Found. Subser., Springer, Cham, 2017.



\bibitem{Stein}
E.~Stein. Singular Integrals and Differentiability Properties of Functions. Princeton Math. Ser., vol. 30, Princeton University Press,
Princeton, N.J., 1970.








\end{thebibliography}
\end{document}